\documentclass[twoside]{irmaems}
\usepackage[latin1]{inputenc}
\usepackage{amssymb} 

\usepackage{amsmath} 
\usepackage{mathrsfs}
\usepackage{amsbsy} 
\usepackage[dvips]{graphicx}
\usepackage{amscd} 
\usepackage{latexsym}

\setcounter{page}{1}

\theoremstyle{plain} 
\newtheorem{theorem}{Theorem}[section] 
\newtheorem{lemma}[theorem]{Lemma} 
\newtheorem{corollary}[theorem]{Corollary} 
\newtheorem{proposition}[theorem]{Proposition} 
\newtheorem{proposition-definition}[theorem]{Proposition-Definition}

\newtheorem{problem}{Problem}

\theoremstyle{definition} 
\newtheorem{definition}{Definition}[section] 

\theoremstyle{remark} 
\newtheorem{remark}[theorem]{Remark} 
\newtheorem{example}{Example}[section]

\newcommand{\Z}{{\mathbb{Z}}} 
\newcommand{\C}{{\mathbb{C}}} 
\newcommand{\Q}{{\mathbb{Q}}} 

\newcommand{\R}{{\mathbb{R}}} 
\newcommand{\N}{{\mathbb{N}}} 

\newcommand{\si}{{\mathscr{S}}_{0,\infty}} 
\newcommand{\M}{{{\cal M}}} 
\newcommand{\B}{{{\cal B}}}

\newcommand{\s}{{\sigma}}

\def\pa{{\partial}}

\newcommand{\al}{\widehat{\alpha}} 
\newcommand{\be}{\widehat{\beta}} 
\newcommand{\T}{T^{*}}

\newcommand{\aps}{{{\alpha^{*}}}} 
 
\newcommand{\bps}{{{\beta^{*}}}} 
 \newcommand{\als}{{\overline{\alpha}}}    
 \newcommand{\bls}{{\overline{\beta}}}

\markboth{L. Funar, C. Kapoudjian and V. Sergiescu}{Asymptotically rigid mapping class groups and Thompson groups}

\begin{document}

\title{Asymptotically rigid mapping class groups and Thompson's groups}

\author{Louis Funar, Christophe Kapoudjian and Vlad Sergiescu 
}
\begin{small}\address{
Institut Fourier BP 74, UMR 5582 \\      
University of Grenoble I \\      
38402 Saint-Martin-d'H\`eres cedex, France  \\      
e-mail: \,{\tt \{funar,sergiesc\}@fourier.ujf-grenoble.fr}  \\
[4pt]
Laboratoire Emile Picard, UMR 5580\\      
University of Toulouse   III\\      
31062 Toulouse cedex 4, France\\      
e-mail:\, {\tt ckapoudj@picard.ups-tlse.fr}\\
} 
\end{small}

\maketitle

\begin{abstract}
We consider Thompson's groups from the perspective of 
mapping class groups of surfaces of infinite type. This point of view leads 
us to the braided Thompson groups, which are extensions 
of Thompson's groups by infinite (spherical) braid groups. 
We will outline the main features of these groups and some applications to 
the quantization of Teichm\"uller spaces. 
The chapter provides an introduction to the subject 
with an emphasis on some of the authors results.

\vspace{0.1cm}
\noindent 2000 MSC Classification: 57 M 07, 20 F 36, 20 F 38, 57 N 05.  
 
\noindent Keywords: mapping class group, Thompson group, Ptolemy groupoid, infinite braid group, 
quantization, Teichm\"uller space, braided Thompson group, Euler class, 
discrete Godbillon-Vey class, Hatcher-Thurston complex, combable group, 
finitely presented group, central extension, Grothendieck-Teichm\"uller group. 
\end{abstract}

\begin{small}
\tableofcontents
 \end{small}

\section{Introduction}
The purpose of this chapter is to present 
the recently developed interaction between
mapping class groups of surfaces, including braid groups, and
Richard J. Thompson's groups $F,T$ and $V$. 
We follow here the present authors'
geometrical approach, while giving some hints to the algebraic 
developments of Brin
and Dehornoy and the quasi-conformal approach of de Faria, Gardiner and Harvey.

\vspace{0.1cm}\noindent 
When compared to mapping-class groups, already thoroughly studied by Dehn
and Nielsen, Thompson's groups appear quite recent. Introduced by Richard J. 
Thompson in the middle of the 1960s, they originally developed from
algebraic logic; however, a PL representation of them was immediately obtained.

\vspace{0.1cm}\noindent 
Recall that Thompson's group $F$ is the group of PL homeomorphisms of $[0,1]$
which locally are of the form $2^n+\frac{p}{2^q}$, with breaks in 
$\Z\left[\frac{1}{2}\right]$.
The group $T$ acts in a similar way on the unit circle $S^1$. 
The group $V$ acts by left continuous bijections on $[0,1]$ as 
a group of affine interval exchanges. This 
action may be lifted to a continous one on the triadic
Cantor set.

\vspace{0.1cm}\noindent 
By conjugating these groups via the Farey map sending the rationals to the
dyadics, one obtains a similar definition as 
groups of piecewise ${\rm PSL}(2,\Z)$
maps with rational breakpoints; this definition already has a certain
2-dimensional flavour.
Observe that Thompson's groups act near the boundary of the hyperbolic
disk and thus near the boundary of the infinite binary tree. 
This observation played a basic
role in the beginning of the material discussed here.
From this point of view Thompson's group $T$ is a piecewise generalisation of
${\rm SL}(2,\Z)$; the mapping class group is a multi-handle generalisation of
${\rm SL}(2, \Z)$. 
In the same vein ${\rm SL}(n,\Z)$ is an arithmetic generalization 
and ${\rm Aut}(F_n)$ is a non-commutative one.
We also note that, following Thurston, the mapping class group 
$\Gamma_g$ acts on the
boundary of the Teichm\"uller space and preserves its 
piecewise projective integral structure.
  
\vspace{0.1cm}\noindent 
Another way to encode these groups is to consider pairs of binary trees
which represent dyadic subdivisions. Dually, this data gives a simplicial
bijection of the complementary forests, 
called partial automorphism of the infinite
binary tree $\tau_2$. Of course this does not extend as an automorphism of
$\tau_2$. However, one
observes the following simple but essential fact: if one thickens the
infinite tree $\tau_2$ to a surface $\si$, 
then the corresponding partial
homeomorphisms extend to the entire surface $\si$. 
Thus, two objects
appear here: the surface $\si$  and the mapping class group 
which lifts the elements of a Thompson group.
 
 \vspace{0.1cm}\noindent     
To make definitions precise, we are forced to endow the surface 
$\si$ with a
rigid structure which encodes its tree-like aspect. 
The homeomorphisms we
consider are asymptotically rigid, i.e. they preserve the rigid structure
outside a compact sub-surface. These homeomorphisms give rise to the
asymptotically rigid mapping class groups.

\vspace{0.1cm}\noindent 
We now give some details on the structure of this chapter.
We present in Section 1 various constructions of groups and spaces and
explain how the group $T$ itself is a mapping class group of 
$\si$. Next, we
introduce the (historically) first relation between Thompson groups and braid
groups, namely the extension:
\[ 1\to B_{\infty}\to A_T \to T \to 1 \]
In order to avoid working with non-finitely supported braids, the authors
chose to build $A_T$ from a convenient geometric homomorphism
\[  T\to {\rm Out}(B_{\infty})\]
However, retrospectively, while having definite advantages, this choice may
not have been the best.
The main theorem of \cite{gr-se} 
says that the group $A_T$ is almost acyclic -- the
corresponding group $A_{F'}$ being acyclic. The proofs of these 
theorems are quite
involved and far from the geometric-combinatorial topics  
discussed in the rest of
this chapter; this is why we shall present them rather sketchyly. However, we do
describe the group $A_T$ as a mapping class group. It is actually 
while trying to extend the Burau representation from 
$B_{\infty}$ to $A_T$ that the notion of
asymptotically rigid mapping class group was formulated.

\vspace{0.1cm}\noindent 
The next two sections, 2 and 3, are of central importance. 
We show that a group $\B$
which is an asymptotically rigid 
mapping class group of $\si$ and surjects onto $V$ is finitely presented.
While the acyclicity theorem mentioned above was formulated 
on the basis of homotopy-theoretic evidence, 
the group $\B$ and its finite presentability came largely
from conformal field theory  evidence. The Moore-Seiberg 
duality groupoid is finitely presented, a fact
mathematically established in \cite{ba-ki1,ba-ki2,fu-ge}.

\vspace{0.1cm}\noindent 
We begin by introducing Penner's Ptolemy groupoid, partly issued 
from the conformal field theory  work
of Friedan and Shenker. Its objects are ideal tesselations and 
its morphisms are compositions of flips. 
We then explain how Thompson's groups fit into this setting. 
A basic observation here is that the Ptolemy groupoid is isomorphic to a
sub-groupoid of the Moore-Seiberg stable duality groupoid. 
This duality groupoid is related in turn to a Hatcher-Thurston type 
complex for the surface $\si$. One main
result is that this complex is simply connected.

\vspace{0.1cm}\noindent 
Section 3 applies all this to the asymptotically rigid 
mapping class group $\B$ of $\si$.

\vspace{0.1cm}\noindent 
Let us emphasize here that our notion of asymptotically 
rigid mapping class group
is different from the asymptotic mapping class group considered recently by
various authors (see the chapter written by Matsuzaki in volume IV of this handbook).

\vspace{0.1cm}\noindent 
The kernel of the morphism from $\B$ onto $V$ is the compactly 
supported mapping class group of $\si$.  
Let us note that the group  $\B$  contains all genus zero 
mapping class groups as well as the braid groups. 
The main theorem states that $\B$  is
a finitely presented group. 
A quite compact symmetric set of relations is produced as well.

\vspace{0.1cm}\noindent
Section 4 is dedicated to the braided Ptolemy-Thompson group $T^*$. 
This is an extension of $T$ by the braid group $B_{\infty}$. 
It is an asymptotically rigid mapping class group of
$\si$ of a special kind. It is a simpler group than $A_T$ and 
will be used in Sections 5 and 6.
We prove that $T^*$, like $\B$, is a finitely presented group. 
We note that so far, $A_T$ is only known to be finitely generated.

\vspace{0.1cm}\noindent
In Section 5, we consider a relative abelianisation of $T^*$:
\[
1\to \Z=B_{\infty}/[B_{\infty}, B_{\infty}]\to T^*/[B_{\infty}, B_{\infty}]\to T\to 1
\]
We prove that this central extension is classified by a multiple of the
Euler class of $T$ that we detect to be $12\chi$, where $\chi$ 
is the Euler class pulled-back to $T$.
This fact eventually allows us to classify the 
dilogarithmic projective extension
of $T$ which arises in the quantization of the 
Teichm\"uller theory, as we explain as well.

\vspace{0.1cm}\noindent
In Section 6 we discuss an infinite genus mapping class group that maps onto $V$
which is proved to be (at least) finitely generated. It also has the property
of being homologically equivalent to the stable mapping class group. As already
mentioned, the proofs involve as a key ingredient the group $T^*$.

\vspace{0.1cm}\noindent
In Section 7 we introduce a simplicial unified approach to the various 
extensions of the group $V$. 
This includes the extension $BV$ of Matt Brin and Patrick Dehornoy 
coming from categories with multiplication and from the geometry of
algebraic laws, respectively.
Moreover, one can approach in this way the action of the 
Grothendieck-Teichm\"uller group on a $V$-completion $\widehat{\B}$ 
of $\B$, thus getting a quite neat presentation of the
entire setting.

\vspace{0.1cm}\noindent
A sample of open questions is contained in the final section.

\vspace{0.1cm}\noindent
We would like to dedicate these notes to the memory of Peter Greenberg and of
Alexander Reznikov. Their work is inspiring us forever.

\vspace{0.2cm}\noindent 
{\bf Acknowledgments.} The authors are indebted to L. Bartholdi,  
M. Bridson, M. Brin, J. Burillo, D. Calegari, F. Cohen, 
P. Dehornoy, D. Epstein, V. Fock, 
R. Geogheghan, E. Ghys, S. Goncharov, F. Gonz\'alez-Acu\~na,  
V. Guba,  P. Haissinsky, B. Harvey, V. Jones,  R. Kashaev, 
F. Labourie, P. Lochak, J. Morava,  H. Moriyoshi,   P. Pansu, A. Papadopolous, B. Penner, 
C. Pittet, M. Sapir, L. Schneps and H. Short for useful discussions and 
suggestions concerning this subject during the last few years.

\section{From Thompson's groups to mapping class groups of surfaces}

\subsection{Three equivalent definitions of the Thompson groups}

\subsubsection*{Groups of piecewise affine bijections}

{\it Thompson's group  $F$} \index{Thompson group $F$}\index{group! Thompson $F$} 
is the group of continuous and nondecreasing bijections of the interval $[0,1]$ which are piecewise dyadic affine. In other words, for each $f\in F$, there exist two subdivisions of $[0,1]$, $a_0=0<a_1<\ldots<a_n=1$ and $b_0=0<b_1\ldots<b_n$, with $n\in \N^*$, such that :
\begin{enumerate}
\item $a_{i+1}-a_i$ and $b_{i+1}-b_i$ belong to $\{\frac{1}{2^k},\; k\in \N\}$;
\item  the restriction of $f$ to $[a_i,a_{i+1}]$ is the unique nondecreasing affine map  onto  $[b_i, b_{i+1}]$.
\end{enumerate}

\vspace{0.1cm}\noindent 
Therefore, an element of $F$ is completely determined by the data of two dyadic subdivisions of  $[0,1]$ having the same cardinality.

\vspace{0.1cm}\noindent 
Let us identify the circle to the quotient space $[0,1]/0\sim 1$. {\it Thompson's group $T$} \index{Thompson group $T$}\index{group! Thompson $T$}  is the group of continuous and nondecreasing bijections of the circle which are piecewise dyadic affine.
In other words, for each $g\in T$, there exist two  subdivisions of $[0,1]$, $a_0=0<a_1<\ldots<a_n=1$ and $b_0=0<b_1\ldots<b_n$, with  $n\in \N^*$, such that :
\begin{enumerate}
\item $a_{i+1}-a_i$ and $b_{i+1}-b_i$ belong to $\{\frac{1}{2^k},\; k\in \N\}$.  
\item There exists $i_0\in \{1,\ldots, n\}$, such that, pour each $i\in \{0,\ldots, n-1\}$, the restriction of $g$ to $[a_i,a_{i+1}]$ is the unique nondecreasing map onto $[b_{i+i_0}, b_{i+i_0+1}]$. The indices must be understood  modulo $n$.
\end{enumerate}

\vspace{0.1cm}\noindent 
Therefore, an element of $T$ is completely determined by the data of two dyadic subdivisions of $[0,1]$ having the same cardinality, 
say $n\in \N^*$, plus an integer  $i_{0}$ mod $n$.

\vspace{0.1cm}\noindent 
Finally, {\it Thompson's group  $V$} \index{Thompson group $V$}\index{group! Thompson $V$}  is the group of bijections of $[0,1[$, which are right-continuous at each point, piecewise nondecreasing and dyadic affine. In other words, for each $h\in V$, there exist two subdivisions of $[0,1]$, $a_0=0<a_1<\ldots<a_n=1$ and $b_0=0<b_1\ldots<b_n$, with $n\in \N^*$, such that :
\begin{enumerate}
\item  $a_{i+1}-a_i$ and $b_{i+1}-b_i$ belong to $\{\frac{1}{2^k},\; k\in \N\}$;
\item there exists a permutation $\sigma\in \mathfrak{S}_n$, such that, for each  $i\in\{1,\ldots,n\}$, the restriction of  $h$ to $[a_{i-1},a_i[$ is the unique  nondecreasing affine map onto $[b_{\sigma(i)-1}, b_{\sigma(i)}[$.
\end{enumerate}
It follows that an element $h$ of  $V$ is completely determined by the data of two dyadic subdivisions of $[0,1]$ having the same cardinality, say $n\in \N^*$, plus a permutation $\sigma\in \mathfrak{S}_n$. Denoting $I_{i}=[a_{i-1},a_i]$ and $J_i= [b_{i-1}, b_i]$, these data can be summarized into a triple $((J_i)_{1\leq i\leq n},(I_i)_{1\leq i\leq n},\sigma\in \mathfrak{S}_n)$.

 \vspace{0.1cm}\noindent 
Such a triple is not uniquely determined by the element $h$. Indeed, a refinement of the subdivisions gives rise to a new triple defining the same $h$. This remark also applies to elements of $F$ and $T$.

\vspace{0.1cm}\noindent 
The inclusion  $F\subset T$ is obvious. The identification of the integer $i_{0}$ mod $n$ to the cyclic permutation $\sigma: k\mapsto k+i_{0}$ yields the inclusion $T\subset V$.

\vspace{0.1cm}\noindent 
R. Thompson proved that $F,T$ and $V$ are finitely presented groups and that $T$ and $V$ are simple \index{simple group} (cf. \cite{ca-fl-pa}). The group $F$ is not perfect ($F/[F,F]$ is isomorphic to $\Z^ 2$), but $F'=[F,F]$  is simple. However, $F'$ is not finitely generated (this is related to the fact that an element $f$ of  $F$ lies in $F'$ if and only if its support is included  in $]0,1[$).

\vspace{0.1cm}\noindent 
Historically, Thompson's groups $T$ and $V$ are the first examples of infinite simple and finitely presented groups. Unlike $F$, they are not torsion-free.

\begin{figure} 
\begin{center}
\includegraphics{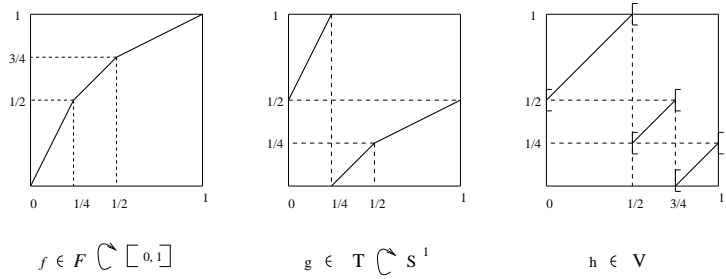}
\caption{Piecewise dyadic affine bijections representing elements of Thompson's groups}
\end{center}
\end{figure}

\subsubsection*{Groups of diagrams of finite binary trees}

A {\it finite binary rooted planar tree} \index{tree} is a finite planar tree having a unique 2-valent vertex, called the {\it root}, a set of monovalent vertices called the {\it leaves}, and whose other vertices are 3-valent. The planarity of the tree provides a canonical labelling of its leaves, in the following way. Assuming that the plane is oriented, the leaves are labelled from  1 to $n$, from left to right, the root being at the top and the leaves at the bottom.

\vspace{0.1cm}\noindent 
There exists a bijection between the set of dyadic subdivisions of $[0,1]$ and the set of  finite binary rooted planar trees. Indeed,  given such a tree, one may label its  vertices by dyadic intervals in the following way. First, the root is labelled by $[0,1]$. Suppose that a vertex is labelled by $I=[\frac{k}{2^n}, \frac{k+1}{2^n}]$, then its two descendant vertices 
are labelled by the two halves  $I$: $[\frac{k}{2^n}, \frac{2k+1}{2^{n+1}}]$ for the left 
one and  $[\frac{2k+1}{2^{n+1}},\frac{k+1}{2^n}]$ for the right one. 
Finally, the dyadic subdivision associated to the tree is 
the sequence of intervals which label its  leaves.

\vspace{0.1cm}\noindent 
As we have just seen, an element of Thompson's group $V$ is defined by 
the data of two dyadic subdivisions of $[0,1]$, with the same cardinality $n$, plus a permutation
 $\sigma\in \mathfrak{S}_n$. This amounts to encoding it by a pair of finite binary rooted trees with the same number of leaves $n\in \N^*$, plus a  permutation $\sigma\in \mathfrak{S}_n$. 

\vspace{0.1cm}\noindent 
Thus, an element $h$ of $V$ is represented by a triple $(\tau_1,\tau_0,\sigma)$, where $\tau_0$ and $\tau_1$ have the same number of leaves $n\in \N^*$, and  $\sigma$ belongs to 
the symmetric group $\mathfrak{S}_n$. Such a triple will be called a {\it symbol} for $h$.
It is convenient to interpret the permutation $\sigma$ as the bijection $\varphi_{\sigma}$ which
maps the $i$-th leaf of the source tree  $\tau_0$ to the $\sigma(i)$-th 
leaf of the target tree $\tau_1$.
When  $h$ belongs to  $F$, the permutation  $\sigma$, which is the identity, is not 
represented, and the symbol reduces to a pair of trees $(\tau_{1},\tau_{0})$. 
When $h$ belongs to  $T$, the cyclic permutation  is graphically materialized 
by a small circle surrounding the leaf number $\sigma(1)$ of $\tau_1$.

\vspace{0.1cm}\noindent 
One introduces the following equivalence relation on the set of symbols : two symbols are equivalent if they represent the same 
element of $V$. One denotes by $[\tau_1,\tau_0,\sigma]$ the equivalence class of the symbol. Therefore,
 $V$ is (in bijection with) the set of equivalence classes of symbols.  
The composition law of piecewise dyadic 
affine bijections is pushed out on the set of equivalence classes of symbols in the following way. In order to define 
 $[\tau_1',\tau_0',\sigma']\cdot[\tau_1,\tau_0,\sigma]$, one may suppose, at the price of refining both symbols, that
  the tree   $\tau_1$ coincides with the tree $\tau_0'$. The the product of the two symbols is 
 
\[ [\tau_1',\tau_1,\sigma']\cdot[\tau_1,\tau_0,\sigma]= [\tau_1',\tau_0,\sigma'\circ\sigma]. \]
   The neutral element is represented by any symbol  $(\tau, \tau, 1)$, for any finite binary rooted planar tree 
   $\tau$. The inverse of $[\tau_1,\tau_0,\sigma]$ is  $[\tau_0,\tau_1,\sigma^{-1}]$.
   
\vspace{0.1cm}\noindent 
   It follows that $V$ is isomorphic to the group of  equivalence classes of symbols endowed with this internal law.

\begin{figure}
\begin{center}
\includegraphics[scale=0.9]{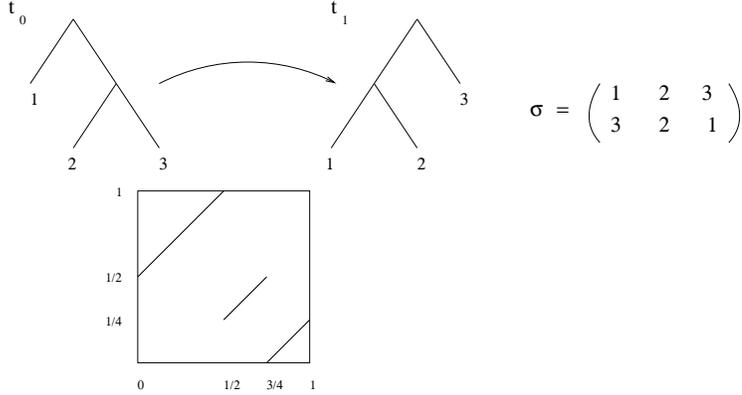}
\caption{Symbolic representation of an element of $V$, with its corresponding representation as a piecewise dyadic affine bijection}
\end{center}
\end{figure}

 \subsubsection*{Partial automorphisms of trees (\cite{ka-se})}

The beginning of the article \cite{ka-se} formalizes a change of point of view, consisting in considering, not the finite binary trees, 
but their complements in the infinite binary tree.

\vspace{0.1cm}\noindent 
Let ${\cal T}_2$ be the infinite binary rooted planar tree (all its vertices other than the root are 
3-valent). Each finite binary rooted planar tree $\tau$ can be embedded in a unique way into ${\cal T}_2$, assuming that the embedding maps the root of $\tau$ onto the root of ${\cal T}_2$, and respects the orientation.  Therefore, $\tau$ may be identified with 
a subtree  ${\cal T}_2$, whose root coincides with that of  ${\cal T}_2$. 

\begin{definition}[ cf. \cite{ka-se}]
A {\em partial isomorphism} \index{partial isomorphism} of ${\cal T}_2$ consists of the data of two finite binary rooted subtrees $\tau_0$ and $\tau_1$ of ${\cal T}_{2}$ having the same number of 
leaves $n\in\N^*$, and an isomorphism $q: {\cal T}_2\setminus \tau_0\rightarrow {\cal T}_2\setminus \tau_1$. The complements 
of  $\tau_0$ and $\tau_1$ have $n$ components, each one isomorphic to  ${\cal T}_2$, 
which are enumerated from 1 to $n$ according to the labelling of the leaves 
of the trees  $\tau_0$ and $\tau_1$. Thus, 
$ {\cal T}_2\setminus \tau_0=T^1_0\cup\ldots\cup T^n_0$ and $ {\cal T}_2\setminus \tau_1=T^1_1\cup\ldots\cup T^n_1$ where the $T^i_j$'s are the 
connected components. 
Equivalently, the partial isomorphism of ${\cal T}_2$
is given by a permutation $\sigma\in \mathfrak{S}_n$ and,  for $i=1,\ldots,n$, an  isomorphism
 $q_i: T^i_0\rightarrow T^{\sigma(i)}_1$. 

\vspace{0.1cm}\noindent 
Two partial automorphisms $q$ and $r$ can be composed if and only if the target of $r$ coincides with the source of $r$. One gets the partial automorphism $q\circ r$.
The composition provides a structure of inverse monoid on the  set of partial automorphisms, which is denoted ${\rm Fred}({\cal T}_2)$.

\begin{center}
\includegraphics[scale=0.9]{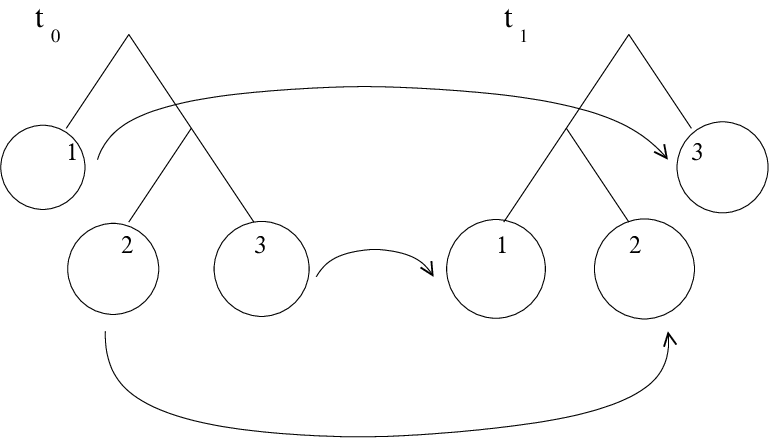}
\end{center}

\end{definition}

\vspace{0.1cm}\noindent 
One may construct a group from  ${\rm Fred}({\cal T}_2)$. Let $\partial {\cal T}_2$ be the boundary of  ${\cal T}_2$ (also called the set of  ``ends'' of ${\cal T}_2$) 
endowed with its usual topology, for which it is a Cantor set.

\vspace{0.1cm}\noindent 
The point is that a partial automorphism does not act (globally) on the tree, but does  act on its boundary.
 One has therefore a morphism  ${\rm Fred}({\cal T}_2)\rightarrow {\rm Homeo}(\partial{\cal T}_2)$, 
 whose image  $N$ is the {\it spheromorphism group of  Neretin}. \index{spheromorphism group of Neretin}

\vspace{0.1cm}\noindent 
 Let now  ${\rm Fred}^+({\cal T}_2)$ be the sub-monoid of  ${\rm Fred}({\cal T}_2)$, whose elements are the partial automorphisms which respect the local orientation of the edges.
Thompson's group  $V$ can be viewed as the subgroup of  $N$ which is the image of 
${\rm Fred}^+({\cal T}_2)$ by the above morphism.

\begin{remark}
There exists a Neretin group  $N_{p}$ for each integer $p\geq 2$, as introduced in \cite{ne} (with different notation). 
They are constructed in a similar way as $N$, by replacing 
the dyadic complete (rooted or unrooted) tree by the p-adic complete (rooted or unrooted) tree.
They are proposed as combinatorial  or $p$-adic analogues of the diffeomorphism group of the circle.
Some aspects of this analogy have been studied in \cite{ka0}.
\end{remark}

\subsection{Some properties of Thompson's groups}
Most readers of this section are probably more comfortable with the
mapping class group than with Thompson's groups. 
Therefore, we think that it will be useful to
gather here some of the classical and less classical properties 
of Thompson's groups. 
There is a fair amount of randomness in our choices  
and the only thing we would really like  to emphasize  is their ubiquity.
Thompson's groups became known in algebra because $T$ and $V$ 
were the first  infinite finitely presented simple groups. 
They were preceded by Higman's example
of an infinite finitely generated simple group in 1951. 
More recently, Burger and Mozes (see \cite{burg-mozes}) 
constructed an example which is also without torsion.

\vspace{0.1cm}\noindent 
Thompson used $F$ and $V$ to give new examples of groups with
an unsolvable word problem and also in his algebraic characterisation of 
groups with a solvable word problem (see \cite{T}) as  being those which  
embed in a finitely generated simple subgroup of a finitely presented group. 
The group $F$ was rediscovered in homotopy theory, as a universal conjugacy
idempotent, and later in universal algebra.
We refer to \cite{ca-fl-pa} for an  introduction from scratch to several
aspects of Thompson's groups, including their presentations, and also their
piecewise linear and projective representations. 
One can find as well an introduction to
the amenability problem for $F$, including a proof of the
Brin-Squier-Thompson theorem that $F$ does not contain a 
free group of rank 2. Last but not
least, one can find a list of the merely 25 notations in the literature for
$F$, $T$ and $V$. Fortunately, after \cite{ca-fl-pa} appeared, 
the notation has almost
stabilized.

\vspace{0.1cm}\noindent 
We also mention the survey \cite{ser} for various other aspects and  \cite{Gh} and 
\cite{navas} for the general topic of homeomorphisms of the circle.

\vspace{0.1cm}\noindent 
The groups $F$, $T$ and $V$ are actually ${\rm FP}_{\infty}$, 
i.e. they have classifying spaces with finite skeleton 
in each dimension; this was first proved by Brown  
and Geoghegan (see \cite{BrGe,br1}). 
Let us mention what is the rational cohomology of these
groups, computed by Ghys and Sergiescu in \cite{gh-se} and Brown in 
\cite{br2}. First, $H^*(F;\Q)$ 
is the product between the divided powers algebra
on one generator of degree 2 and the cohomology algebra of the
2-torus.

\vspace{0.1cm}\noindent 
The cohomology of $T$ is the quotient $\Q(\chi,\alpha)/\chi\cdot \alpha$, 
where $\chi$ is the Euler class and $\alpha$ a (discrete) Godbillon-Vey class. 
In what concerns the group  $V$ its rational cohomology vanishes
in each dimension. See \cite{ser} for more results with  either 
$\Z$ or with twisted coefficients.

\vspace{0.1cm}\noindent 
Here are other properties of these groups involving cohomology.
Using a smoothening of Thompson's group it is proved in \cite{gh-se} that
there is
a representation  $\pi_1(\Sigma_{12})\to {\rm Diff}(S^1)$ 
having Euler number $1$  and 
an invariant Cantor set.

\vspace{0.1cm}\noindent 
Reznikov showed that the group $T$ does not have Kazhdan's property $T$  
(see \cite{reznikov}),
and later Farley \cite{farley} proved that it has Haagerup property AT 
(also called a-T-menability). Therefore it
verifies the Baum-Connes conjecture (see also \cite{Fa}).
Napier and Ramachandran proved that $F$ is not a K\"ahler group \cite{rama}.
Cyclic cocycles on $T$ were introduced in \cite{oyko}. The group $T$ 
in relation with the symplectic automorphisms of $\mathbb C\mathbb P^2$ 
was considered by Usnich in \cite{usnich}. 

\vspace{0.1cm}\noindent 
A theorem of Brin \cite{brin-ihes} 
states that the group of outer automorphisms of $T$ is 
$\Z/2\Z$. Furthermore, in \cite{BCR} the authors computed the 
abstract commensurator of $F$. 
Using the above mentioned smoothening, it is proved in \cite{gh-se} that all
rotation numbers of elements in $T$ are rational. 
New direct proofs were given by
Calegari (\cite{calegari}), Liousse (\cite{liousse}) and 
Kleptsyn (unpublished).

\vspace{0.1cm}\noindent 
For the connection of $F$ and $T$ with the piecewise projective $C^1$-homeomorphisms, see for instance \cite{Gr,gr} and \cite{ma}.
The group $F$ is naturally connected to associativity in various frameworks
\cite{de2,GeGu,FL}. See also \cite{bri1,bri2} for the group $V$.

\vspace{0.1cm}\noindent 
Brin proved that the rank 2 free group is a limit of Thompson's group $F$
 (\cite{brin3}). Complexity
aspects were considered in \cite{birget}. Guba (\cite{Gu2}) 
showed that the Dehn function
for $F$ is quadratic. 
The group $F$ was studied in cryptography in \cite{RST,matucci,BT}.
Thompson's groups were studied from the viewpoint of $\C^*$-algebras and von
Neumann algebras; see for instance Jolissaint (\cite{Joli}) and 
Haagerup-Picioroaga \cite{HaagPi}.

\vspace{0.1cm}\noindent 
On the edge of logic and group theory, the interpretation of arithmetic 
in Thompson's groups 
was investigated by Bardakov-Tolstykh (\cite{BT}) and Altinel-Muranov (\cite{AM}).
Let us finally mention the work of Guba and Sapir on Thompson's groups
as diagram groups; see for instance \cite{GuS}.

\vspace{0.1cm}\noindent 
Let us emphasize here that we avoided to speak on
generalisations of Thompson's groups: 
this topic is pretty large and we think it would
not  be at its place here.
Let us close this section by mentioning again that our
choice was just to mention some developments related 
to Thompson's groups from the unique angle of ubiquity.

\subsection{Thompson's group  $T$ as a mapping class group 
of a surface}\label{thommcg}

The article \cite{ka-se} is partly devoted to developing the notion of an asymptotically rigid homeomorphism.

\begin{definition}[following \cite{ka-se}]
\begin{enumerate}
\item Let  $\mathscr{S}_{0,\infty}$ be the oriented surface of genus zero, 
which is the following  inductive limit of compact oriented genus zero surfaces with boundary 
$\mathscr{S}_{n}$ : Starting with a cylinder $\mathscr{S}_{1}$, one gets $\mathscr{S}_{n+1}$ from 
 $\mathscr{S}_{n}$ by gluing a pair of pants  (i.e. a three-holed sphere) along 
 each boundary circle of $\mathscr{S}_{n}$. This construction yields, for each $n\geq 1$, an embedding 
  $\mathscr{S}_{n}\hookrightarrow \mathscr{S}_{n+1}$, 
  with an orientation on $\mathscr{S}_{n+1}$ compatible with that of  $\mathscr{S}_{n}$. 
  The resulting inductive limit (in the topological category) of the $\mathscr{S}_{n}$'s is the surface
   $\mathscr{S}_{0,\infty}$: $$\mathscr{S}_{0,\infty}={\displaystyle \lim_{\stackrel{\rightarrow}{n}} \mathscr{S}_{n}}$$
  
\item By the above construction, the surface $\mathscr{S}_{0,\infty}$ is the union of a cylinder and of countably many 
pairs of pants.  This topological decomposition of  $\mathscr{S}_{0,\infty}$
will be called the {\em canonical pair of pants decomposition}. \index{canonical pants decomposition}
\end{enumerate}

\end{definition}

\vspace{0.1cm}\noindent 
The set of isotopy classes of  orientation-preserving  homeomorphisms  of $\mathscr{S}_{0,\infty}$ 
is an uncountable group. The group operation is map composition. 
By restricting to a certain type of homeomorphisms (called asymptotically rigid),
we shall obtain countable subgroups. We first need to complete the canonical decomposition to a richer structure.

\vspace{0.1cm}\noindent 
Let us choose an involutive homeomorphism $j$ of $\mathscr{S}_{0,\infty}$ which reverses the 
orientation, stabilizes 
each pair of pants of its canonical decomposition, and has fixed points along lines which decompose the pairs of pants into hexagons.
The surface $\mathscr{S}_{0,\infty}$ can be disconnected along those lines into two planar surfaces with boundary, one of which is called the {\em visible side} of 
$\mathscr{S}_{0,\infty}$, while the other is the {\em hidden side} of $\mathscr{S}_{0,\infty}$. The involution $j$ maps the visible side of $\mathscr{S}_{0,\infty}$ 
onto the hidden side, and vice versa.

\vspace{0.1cm}\noindent 
From now on, we assume that such an involution $j$ is chosen, hence a decomposition of the surface into a ``visible" and a ``hidden'' side.

\begin{definition}

  The data consisting of the canonical pants  decomposition of $\mathscr{S}_{0,\infty}$ together with the above decomposition into a visible and a hidden side is called
the {\em canonical rigid structure} \index{canonical rigid structure} of  $\mathscr{S}_{0,\infty}$.

\end{definition}

\vspace{0.1cm}\noindent  
The tree ${\cal T}_2$ may be embedded into the visible side of $\mathscr{S}_{0,\infty}$, as the dual 
tree to the pants decomposition.
This set of data is represented in Figure \ref{surfinfcyl}.

\begin{figure}

\begin{center}
\includegraphics{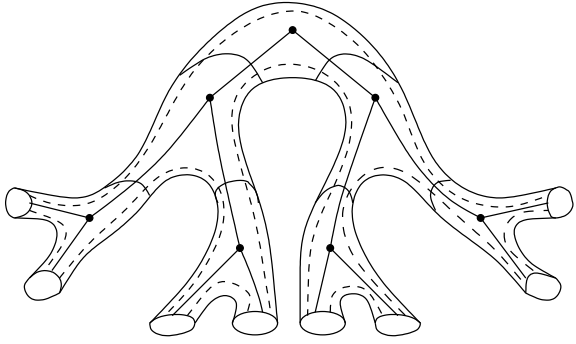}
\caption{Surface $\mathscr{S}_{0,\infty}$ with its canonical rigid structure}\label{surfinfcyl} 
\end{center}

\end{figure}

\vspace{0.1cm}\noindent 
The surface $\mathscr{S}_{0,\infty}$ appears already in \cite{fu-ka1}, 
endowed with a pants decomposition (with no cylinder), dual to the regular unrooted dyadic tree.

\vspace{0.1cm}\noindent 
In \cite{ka-se}, the notion of {\it asymptotically rigid homeomorphism} is defined. 
It plays a key role in  \cite{fu-ka1}, \cite{fu-ka2} and \cite{fu-ka3}.

\vspace{0.1cm}\noindent 
Let us introduce some more terminology.  
Any connected and compact subsurface of $\mathscr{S}_{0,\infty}$ 
which is the union of the cylinder and finitely many pairs of pants of the canonical decomposition will be called an {\it  admissible subsurface}
\index{admissible subsurface} of $\mathscr{S}_{0,\infty}$.
The {\it type} of such a subsurface $S$ is the number of connected components in its boundary.
The {\it tree of $S$} is the trace of  ${\cal T}_2$ on $S$. Clearly, the type of $S$ is equal 
to the number of leaves of its tree.

\begin{definition}[following  \cite{ka-se} and \cite{fu-ka1}]\label{asy}
A homeomorphism $\varphi$ of $\mathscr{S}_{0,\infty}$ is {\em asymptotically rigid} \index{asymptotically rigid homeomorphism} if there exist two 
admissible  subsurfaces $S_0$ and $S_1$ having the same type, such that 
$\varphi(S_{0})=S_{1}$ and whose restriction 
$\mathscr{S}_{0,\infty}\setminus{S_0}\rightarrow \mathscr{S}_{0,\infty}\setminus{S_1}$ is rigid, 
meaning that 
it maps each pants (of the canonical pants decomposition) onto a pants. 

\vspace{0.1cm}\noindent 
The {\em asymptotically rigid mapping class group} 
\index{asymptotically rigid mapping class group} of $\mathscr{S}_{0,\infty}$ 
is the group of isotopy classes of asymptotically rigid homeomorphisms. 
\end{definition}

\vspace{0.1cm}\noindent 
Though the proof of the following theorem is easy, this theorem is seminal as it is the starting point of a deeper study of the links between Thompson's groups 
and mapping class groups.

\begin{theorem}[\cite{ka-se}, Theorem 3.3]\label{mcg}
Thompson's group  $T$ can be embedded into the group of isotopy classes of orientation-preserving homeomorphisms of
 $\mathscr{S}_{0,\infty}$. An isotopy class belongs to the image of the embedding 
 if it may be represented by 
 an asymptotically rigid homeomorphism of
  $\mathscr{S}_{0,\infty}$ which globally preserves 
  the decomposition into visible/hidden sides.
  \end{theorem}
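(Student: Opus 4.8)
The plan is to construct the embedding explicitly from the description of $T$ by pairs of binary trees and then to recognize its image. Recall that an element $h\in T$ is given by a symbol $(\tau_1,\tau_0,\sigma)$ in which $\tau_0,\tau_1$ are finite binary rooted planar trees with the same number of leaves $n$ and $\sigma\in\mathfrak{S}_n$ is a cyclic permutation; equivalently, by a partial isomorphism $q\in{\rm Fred}^+({\cal T}_2)$ whose supports are $\tau_0,\tau_1$ and which induces a cyclic permutation of the $n$ complementary components. Embed $\tau_0$ and $\tau_1$ into the copy of ${\cal T}_2$ lying in the visible side of $\mathscr{S}_{0,\infty}$, and let $S_0,S_1$ be the admissible subsurfaces whose trees are $\tau_0,\tau_1$; they have the same type $n$. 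Each connected component of $\mathscr{S}_{0,\infty}\setminus S_j$ is a one-ended union of pairs of pants of the canonical decomposition, dual to a copy of ${\cal T}_2$ rooted at a leaf of $\tau_j$. The first key point is that an isomorphism between two such tree-components respecting the local orientation of the edges is realized, uniquely up to isotopy, by an orientation-preserving homeomorphism of the corresponding surface pieces which carries each pair of pants onto a pair of pants and commutes with the involution $j$. Assembling the $n$ pieces produced by $q$ yields a rigid homeomorphism $\mathscr{S}_{0,\infty}\setminus S_0\to\mathscr{S}_{0,\infty}\setminus S_1$. It then remains to fill in an orientation-preserving homeomorphism $\phi_0\colon S_0\to S_1$ restricting on $\partial S_0$ to this map, conjugating $j|_{S_0}$ to $j|_{S_1}$, and inducing on the cyclically ordered family of $n$ boundary circles the rotation $\sigma$; gluing gives an asymptotically rigid homeomorphism $\varphi_h$ of $\mathscr{S}_{0,\infty}$ preserving the visible/hidden decomposition.

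One then checks that $[\varphi_h]$ depends neither on the chosen representatives nor on the auxiliary choices. Two homeomorphisms of a compact planar surface which agree on the boundary and induce the same permutation of boundary components are isotopic, which pins down $\phi_0$ up to isotopy; and refining the symbol $(\tau_1,\tau_0,\sigma)$ replaces $(S_0,S_1)$ by a larger pair of admissible subsurfaces without changing the isotopy class, since the rigid exterior map is unaffected further out and the enlargement is absorbed into a fresh choice of core map. The homomorphism property is then formal: to compose two elements one first refines the symbols so that the target tree of the first equals the source tree of the second, after which the rigid parts compose by the monoid law of ${\rm Fred}^+({\cal T}_2)$ and the core parts compose up to isotopy. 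Injectivity follows by composing $T\to{\rm MCG}(\mathscr{S}_{0,\infty})$ with the natural action on the space of ends of $\mathscr{S}_{0,\infty}$: restricted to the Cantor set $\partial{\cal T}_2$ this recovers the tautological action of $h\in T\subset V\subset{\rm Homeo}(\partial{\cal T}_2)$, which is faithful. (Alternatively, a $\varphi_h$ isotopic to the identity fixes the isotopy classes of all canonical pants curves, forcing $\tau_0=\tau_1$ and $\sigma={\rm id}$ after reduction.)

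For the characterization of the image, one inclusion is built into the construction. Conversely, let $\varphi$ be asymptotically rigid, preserving the visible/hidden decomposition, with admissible $S_0,S_1$ of equal type $n$ such that $\varphi(S_0)=S_1$ and $\varphi$ is rigid outside. Rigidity means that the exterior restriction permutes the pairs of pants of the canonical decomposition respecting their adjacency, i.e.\ it is isotopic to the surface realization of a partial isomorphism $q$ of ${\cal T}_2$ with supports $\tau_0,\tau_1$, hence determines a permutation $\sigma\in\mathfrak{S}_n$ of the complementary components. Since $\varphi$ preserves the visible side --- a planar surface carrying the cyclic order of the ends of $\mathscr{S}_{0,\infty}$ --- the induced permutation of the $n$ boundary circles of $S_0$ must respect that cyclic order, so $\sigma$ is cyclic and $q$ defines an element $h\in T$; one finally verifies that $\varphi$ is isotopic to $\varphi_h$, so $[\varphi]$ lies in the image.

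I expect the main obstacle to be precisely the construction and well-definedness of the core homeomorphism $\phi_0\colon S_0\to S_1$: it must be simultaneously orientation-preserving, compatible on the boundary with the rigid exterior map, $j$-equivariant, and a realization of the cyclic rotation $\sigma$, and it must be canonical up to isotopy so that the assignment descends to equivalence classes of symbols and is multiplicative. The natural route is to use $j$-equivariance to reduce to extending a homeomorphism over the visible half of $S_0$, a disk with $n$ holes, where a cyclic rotation of the holes is obviously realizable and the space of such extensions rel boundary is connected; the one compatibility that genuinely has to be checked is that the reflection $j$ intertwines the rotation $\sigma$ on $\partial S_0$ with the corresponding rotation on $\partial S_1$ in the same way the rigid exterior map does, which holds because $j$ conjugates a rotation by $k$ to a rotation by $-k$ on both sides.
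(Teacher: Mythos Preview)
Your overall strategy is sound and matches what the paper indicates (it does not give a self-contained proof, citing \cite{ka-se} and calling it ``easy'', but the ingredients appear in the remark following the theorem and in the later proof that $\B$ surjects onto $V$): build the homeomorphism from a rigid exterior plus a core map $\phi_0:S_0\to S_1$, use the action on the space of ends for injectivity, and use the cyclic order on ends carried by the planar visible side for the image characterization.

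There is, however, one genuine gap. Your claim ``Two homeomorphisms of a compact planar surface which agree on the boundary and induce the same permutation of boundary components are isotopic'' is false: the mapping class group of an $n$-holed sphere rel boundary is nontrivial for $n\ge 2$ (it contains the boundary Dehn twists), so two choices of $\phi_0$ may differ by such a twist, and after gluing with the rigid exterior this produces a \emph{nontrivial} Dehn twist in $\mathscr{S}_{0,\infty}$ around a canonical pants curve. The $j$-equivariance is precisely what kills this ambiguity, as you anticipate in your last paragraph, but your description there is also off: the visible half of an admissible $S_0$ is not ``a disk with $n$ holes'' but simply a \emph{disk} --- it is a thickening of a finite subtree of ${\cal T}_2$, hence contractible, with boundary a single circle alternating between arcs of $\partial S_0$ and arcs of ${\rm Fix}(j)$. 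With this correction the argument is clean: any two $j$-equivariant choices of $\phi_0$ restrict to homeomorphisms of this disk agreeing on the $\partial S_0$-arcs, hence are isotopic by Alexander's trick, and doubling via $j$ gives the required isotopy on $S_0$. Equivalently, since $j$ reverses orientation it conjugates every Dehn twist to its inverse, so a $j$-equivariant class in the torsion-free group ${\rm MCG}(S_0,\partial S_0)$ must be trivial. This is exactly the content of the paper's unproved assertion (in the proof that $\B\to V$ splits over $T$) that the kernel of $\mathbf{T}\to T$ is trivial, and the paper's remark after the theorem --- that $T$ is the asymptotically rigid mapping class group of the planar surface $\mathscr{S}_{0,\infty}^+$ itself --- is the cleanest way to package it: work on the visible side first, where every admissible subsurface is a disk, and then double.
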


\vspace{0.1cm}\noindent 
We denote by $\mathscr{S}_{0,\infty}^+$ the visible side of $\mathscr{S}_{0,\infty}$.  This is a planar surface \index{planar surface} which inherits from the canonical decomposition of $\mathscr{S}_{0,\infty}$ a decomposition into hexagons (and one rectangle, corresponding to the visible side of the cylinder).
We could restate the above definitions by replacing pairs of pants by hexagons 
and the surface $\mathscr{S}_{0,\infty}$ by its visible side 
$\mathscr{S}_{0,\infty}^+$. 
Then Theorem \ref{mcg} states that $T$ can be embedded into the 
mapping class group of the {\it planar} surface $\mathscr{S}_{0,\infty}^+$.  
In fact $T$ is the {\em asymptotically rigid mapping class group} of 
$\mathscr{S}_{0,\infty}^+$, namely the group of mapping classes of 
those homeomorphisms of $\mathscr{S}_{0,\infty}^+$ 
which map  all but finitely many hexagons onto hexagons.

\subsection{Braid groups and Thompson groups}

A seminal result, which is the starting point of the article \cite{ka-se}, is a theorem of P. Greenberg and the third author (\cite{gr-se}). It states that there exists 
an extension of the derived subgroup $F'$ of Thompson's group $F$ by the stable braid group  $B_{\infty}$ (i.e. the braid group on a countable set of strands) 
$$ 1\rightarrow B_{\infty}\longrightarrow A\longrightarrow F'\rightarrow 1\;\;\; (Gr-Se),$$
where the group  $A$ is acyclic,\index{acyclic group} i.e. its integral homology vanishes. 
The existence of such a relation 
between Thompson's group and the braid group was conjectured by comparing their homology types.
On the one hand, it is proved in \cite{gh-se} that $F'$ has the homology of $\Omega S^3$, the space of based loops \index{loop space of $S^3$}
on the three-dimensional sphere. More precisely, the +-construction of the classifying space $BF'$ is homotopically 
equivalent to $\Omega S^3$. On the other hand,  F. Cohen proved that $B_{\infty}$ has the homology 
of  $\Omega^2 S^3$, the double loop space of $S^3$.
It turns out that both spaces ($\Omega S^3$ and $\Omega^2 S^3$)  are related by the path fibration 

$$\Omega^2 S^3\hookrightarrow  P(\Omega S^3)\rightarrow  \Omega S^3,$$   
where $P(\Omega S^3)$ denotes the space of based paths on $\Omega S^3$. The total space of this fibration,  $P(\Omega S^3)$, is contractible.  
Therefore, the existence of this natural fibration has led the authors of \cite{gr-se} to conjecture the existence of the short exact sequence
 $(Gr-Se)$.

\vspace{0.1cm}\noindent
The construction of $A$ amounts to giving 
a morphism $F'\to {\rm Out}(B_{\infty})$. 
In \cite{gr-se} one is lead to consider an extended binary tree and the braid group relative to its vertices. The group $F'$ 
acts on that tree by partial automorphisms and therefore induces the 
desired morphism.

\vspace{0.1cm}\noindent
Let us give a hint on how the acyclicity of $A$ is proved in \cite{gr-se}. 
Via direct computations, one shows that $H_1(A)=0$. One then proves that 
the fibration 
\[ {BB_{\infty}}_+ \to BA_+ \to BF'_+\]
can be delooped to a fibration 
\[ \Omega S^3 \to E \to S^3\]
Using the fact that $A$ is perfect one concludes that the space $E$ is contractible and so $A$ is acyclic.

\vspace{0.1cm}\noindent 
As a matter of fact, it is also proved in \cite{gr-se} that the short exact sequence $(Gr-Se)$ extends to the Thompson group $T$.
Indeed, there exists a short exact sequence  $$ 1\rightarrow B_{\infty}\longrightarrow A_T\longrightarrow T\rightarrow 1$$ whose pull-back via the embedding $F'\hookrightarrow T$ is $(Gr-Se)$. At the homology level, it corresponds 
to a fibration 
\[ \Omega^2 S^3 \to S^3\times \C P^{\infty} \to {\mathcal L}S^3\]
where ${\mathcal L}S^3$ denotes the free non-parametrized loop space of $S^3$.
This fact should not to be considered as anecdotical for the following reason. Let us divide
the groups $B_{\infty}$ and $A_T$ by the derived subgroup of  $B_{\infty}$. One obtains
a central extension of $T$ by $\Z=H_1(B_{\infty})$, which may be identified in the second cohomology group 
 $H^2(T,\Z)$ to the {\it discrete Godbillon-Vey class} of Thompson's group $T$.

\vspace{0.1cm}\noindent 
Let us emphasize that a simpler version of $A_T$, namely the 
braided Ptolemy-Thompson group $T^*$, will be presented later. Retrospectively, 
$A_T$ could be called then the marked braided Ptolemy-Thompson group. 

\vspace{0.1cm}\noindent 
One of the motivations of \cite{ka-se} is to pursue the investigations 
about the analogies between the diffeomorphism group of the circle ${\rm Diff}(S^1)$ and Thompson's group $T$. 
A remarkable aspect of this analogy concerns the Bott-Virasoro-Godbillon-Vey class. The latter 
is a differentiable cohomology class of degree 2. Recall that the Lie algebra of the group ${\rm Diff}(S^1)$ 
is the algebra  ${\rm Vect}(S^1)$ of vector fields on the circle. 
There is a map  $H^*({\rm Diff}(S^1),\R)\rightarrow H^*({\rm Vect}(S^1),\R)$, where the right hand-side denotes the Gelfand-Fuchs cohomology of ${\rm Vect}(S^1)$, which  is simply induced by the differentiation of cocycles. The image of the Bott-Virasoro-Godbillon-Vey class \index{Bott-Virasoro-Godbillon-Vey class} is a generator of $H^2({\rm Vect}(S^1),\R)$ corresponding to the universal central extension 
of ${\rm Vect}(S^1)$, known by the physicists as the {\it Virasoro Algebra}.
\index{Virasoro algebra}

\vspace{0.1cm}\noindent
 Let us explain the analogies 
 between the cohomologies of $T$ and ${\rm Diff}(S^1)$. By the cohomology of $T$ we mean Eilenberg-McLane cohomology, while the cohomology under consideration on ${\rm Diff}(S^1)$ is the differentiable one (as very little is known about its Eilenberg-McLane cohomology).
The striking result is the following: 
the ring of cohomology of $T$ (with real coefficients) and the ring of differentiable cohomology of ${\rm Diff}(S^1)$ are isomorphic. Both are generated 
by two classes of degree 2: the Euler class (coming from the action on the circle), and 
the Bott-Virasoro-Godbillon-Vey class. 
In the cohomology ring of $T$, the Bott-Virasoro-Godbillon-Vey class is called the {\it discrete Godbillon-Vey class}.  \index{discrete Godbillon-Vey class}
The isomorphism between the two cohomology rings does not seem to be induced by 
known embeddings of $T$ into ${\rm Diff}(S^1)$ (such embeddings have been constructed in \cite{gh-se}).

\vspace{0.1cm}\noindent 
A fundamental aspect of the Godbillon-Vey class concerns its relations with 
the projective representations  of ${\rm Diff}(S^1)$, especially those 
which may be derived into highest weight 
modules of the Virasoro Algebra. Pressley and Segal (\cite{pr-se}) introduced some representations $\rho$ of
 ${\rm Diff}(S^1)$ in the {\it restricted} linear group ${\rm GL}_{res}$ of the Hilbert space $L^2(S^1)$. 
 Pulling back by $\rho$ a certain cohomology class  (which we refer to as 
the {\it Pressley-Segal} class) of 
 ${\rm GL}_{res}$, one obtains on  ${\rm Diff}(S^1)$ some multiples of the Godbillon-Vey class (cf. \cite{ka-se}, \S 4.1.3 
 for a precise statement). 

\subsection{Extending the Burau representation}

In  \cite{ka-se}, we show that an analogous scenario exists for the discrete Godbillon-Vey class $\bar{gv}$ of $T$. 
We first remark that the Pressley-Segal extension of ${\rm GL}_{res}$ is itself a pull-back of 
$$1\rightarrow \C^*\longrightarrow \frac{{\rm GL}(\mathfrak{H})}{\mathfrak{T}_1}\longrightarrow \frac{{\rm GL}(\mathfrak{H})}{\mathfrak{T}}\rightarrow 1$$
where ${\rm GL}(\mathfrak{H})$ denotes the group of bounded invertible operators of the Hilbert space 
$\mathfrak{H}$, $\mathfrak{T}$ the group of operators having a determinant,
and  $\mathfrak{T}_1$ the subgroup of operators having determinant 1.

\vspace{0.1cm}\noindent 
The first step is to reconstruct the group $A_T$, \index{group $A_T$} not in a combinatorial way as in \cite{gr-se}, but as a mapping class group of a surface  $\mathscr{S}^t_{0,\infty}$. 
The latter is obtained from $\mathscr{S}_{0,\infty}$, by gluing, on each 
pair of pants of its canonical decomposition, 
an infinite cylinder or ``tube'', marked with countably many punctures (cf. Figure \ref{pantsurf2}). 
The precise definition of the group $A_{T}$ being rather technical, 
we refer for that the reader to  \cite{ka-se}.

\begin{figure}
\begin{center}
\begin{picture}(0,0)%
\includegraphics{pantsurf2.pstex}%
\end{picture}%
\setlength{\unitlength}{4972sp}%
\begingroup\makeatletter\ifx\SetFigFont\undefined%
\gdef\SetFigFont#1#2#3#4#5{%
  \reset@font\fontsize{#1}{#2pt}%
  \fontfamily{#3}\fontseries{#4}\fontshape{#5}%
  \selectfont}%
\fi\endgroup%
\begin{picture}(4270,2967)(2988,-2473)
\put(5176,-196){\makebox(0,0)[lb]{\smash{\SetFigFont{10}{12.0}{\rmdefault}{\mddefault}{\updefault}$v_0$}}}
\put(5041,389){\makebox(0,0)[lb]{\smash{\SetFigFont{10}{12.0}{\rmdefault}{\mddefault}{\updefault}$f_{v_0}$}}}
\put(4996,-286){\makebox(0,0)[lb]{\smash{\SetFigFont{10}{12.0}{\rmdefault}{\mddefault}{\updefault}$e_0$}}}
\put(5096,-286){\makebox(0,0)[lb]{\smash{\SetFigFont{12}{14.4}{\rmdefault}{\mddefault}{\updefault}$*$}}}
\end{picture}

\caption{Decomposition of ${\mathscr{S}}^t_{0,\infty}$ into  pants with tubes}\label{pantsurf2}
\end{center}
\end{figure}

\vspace{0.1cm}\noindent 
This new approach provides a setting that is convenient for an easy extension 
of the Burau representation 
of the braid group to $A_{T}$. We proceed as follows. The group  $A_T$ acts on the fundamental group 
of the punctured surface ${\mathscr{S}}^t_{0,\infty}$, which is a free group  of infinite countable rank. Moreover, 
the action is index-preserving, i.e. it induces the identity on $H_{1}(F_{\infty})$. Let  ${\rm Aut}^{ind}(F_{\infty})$ 
be the group of automorphisms of $F_{\infty}$ which are index-preserving.
The Magnus representation \index{Magnus representation} of ${\rm Aut}^{ind}(F_{n})$ extends to an infinite dimensional representation of 
${\rm Aut}^{ind}(F_{\infty})$ in the Hilbert space $\ell_2$ on the set of punctures 
of ${\mathscr{S}}^t_{0,\infty}$. Composing with the  map 
$A_{T}\rightarrow {\rm Aut}^{ind}(F_{\infty})$, one obtains 
a representation  $\rho_{\infty}^{\bf t}: A_T
 \rightarrow {\rm GL}(\ell^2)$ which extends the classical Burau representation of the braid group $B_{n}$. The scalar $t\in \C^*$ parameterizes a family of such representations.

\begin{theorem}[\cite{ka-se}, Theorem 4.7]
For each ${\bf t}\in\C^*$, the Burau representation \index{Burau representation} $\rho^{\bf t}_{\infty}:
B_{\infty}\rightarrow {\mathfrak T}$ extends to a
representation $\rho^{\bf t}_{\infty}$ of the mapping class group $A_T$
in the Hilbert space $\ell^2$ on the set of punctures  ${\mathscr{S}}^t_{0,\infty}$.  There exists a morphism 
of extensions \\

\setlength{\unitlength}{0.9cm}

\begin{picture}(10,2) 
\multiput(4,2)(1.5,0){2}{\vector(1,0){0.5}}   
\put(3.5,1.9){1} \put(4.7,1.9){$B_{\infty}$} \put(6.3,1.9){$ A_T$}
\multiput(7,2)(2,0){2}{\vector(1,0){1}}   

\put(8.4,1.9){$T$} 
\put(10.2,1.9){1}

\put(6.5,1.4){\vector(0,-1){.6}}  
\multiput(4,0.4)(1.2,0){2}{\vector(1,0){0.5}}   
\put(3.5,0.3){1} \put(4.7,0.3){${\mathfrak T}$} 
\put(6,0.3){${\rm GL}({\ell^2})$}
\multiput(7.4,0.4)(2.2,0){2}{\vector(1,0){0.4}}   
\put(8.1,0.3){$\frac{{\rm GL}({\ell^2})}{{\mathfrak T}}$}
\put(10.2,0.3){1}

\put(4.9,1.4){\vector(0,-1){.6}}

\put(8.5,1.4){\vector(0,-1){.6}}
\end{picture}

\noindent which induces a morphism of central extensions \\

\setlength{\unitlength}{0.9cm}

\begin{picture}(10,2) 
\multiput(3.8,2)(2,0){2}{\vector(1,0){0.4}}   
\put(3.5,1.9){1} \put(4.3,1.9){$H_1(B_{\infty})$} \put(6.3,1.9){$\frac{A_T}{[B_{\infty},B_{\infty}]}$}
\multiput(7.8,2)(1.5,0){2}{\vector(1,0){0.9}}   

\put(8.9,1.9){$T$}
\put(10.3,1.9){1}

\put(6.8,1.4){\vector(0,-1){.6}}  
\multiput(3.9,0.4)(1.4,0){2}{\vector(1,0){0.5}}   
\put(3.5,0.3){1} \put(4.7,0.3){$\C^*$} 
\put(6.1,0.3){$\frac{{\rm GL}({\ell^2})}{{\mathfrak T}_1}$}
\multiput(7.4,0.4)(2.4,0){2}{\vector(1,0){0.4}}   
\put(8.3,0.3){$\frac{{\rm GL}({\ell^2})}{{\mathfrak T}}$}
\put(10.3,0.3){1}

\put(4.9,1.4){\vector(0,-1){.6}}

\put(9,1.4){\vector(0,-1){.6}}
\end{picture}

\noindent 
The vertical arrows are injective if  ${\bf t}\in\C^*$ is not a root of unity.

\end{theorem}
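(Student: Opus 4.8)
The plan is to build the representation $\rho^{\mathbf t}_\infty$ of $A_T$ in three stages, each reducing to something known or to a direct geometric verification, and then to extract the two diagrams of extensions as formal consequences. First I would set up the free group $F_\infty = \pi_1(\mathscr{S}^t_{0,\infty})$ on a countable system of loops dual to the punctures of the tubes, and verify the key claim that the $A_T$-action on $F_\infty$ (induced by the mapping class action on the punctured surface) is \emph{index-preserving}, i.e. trivial on $H_1(F_\infty;\Z)$. This is where the specific geometry of $\mathscr{S}^t_{0,\infty}$ enters: an asymptotically rigid homeomorphism permutes tubes and punctures, but within each tube it acts, up to isotopy, by a braid-like motion which on homology just permutes the generators of a single summand back to themselves; summing over tubes one gets a permutation of a basis of $H_1(F_\infty)$ which, by the labelling conventions, is in fact the identity. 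One must check this is compatible with composition (so that $A_T \to \mathrm{Aut}^{ind}(F_\infty)$ is a homomorphism) and that on the subgroup $B_\infty \subset A_T$ it recovers the classical action of $B_n$ on $\pi_1$ of the $n$-punctured disk, which underlies the Burau representation.

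Second, I would recall that the Magnus/Fox-derivative construction gives, for finite rank, the reduced Burau representation of $\mathrm{Aut}^{ind}(F_n)$, and that (at a fixed parameter $\mathbf t$) this is compatible with the stabilization maps $F_n \hookrightarrow F_{n+1}$; passing to the colimit and completing in the $\ell^2$-norm on the puncture set yields a homomorphism $\mathrm{Aut}^{ind}(F_\infty) \to \mathrm{GL}(\ell^2)$. Here I would need to check that the Magnus matrices, which are a priori infinite, land in bounded operators — the crucial point being that an asymptotically rigid class acts rigidly outside a compact admissible subsurface, so the Magnus matrix differs from a permutation matrix only in a finite block, hence is automatically bounded and invertible with bounded inverse. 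Composing $A_T \to \mathrm{Aut}^{ind}(F_\infty) \to \mathrm{GL}(\ell^2)$ gives $\rho^{\mathbf t}_\infty$, and the finite-block structure shows at once that $\rho^{\mathbf t}_\infty(B_\infty) \subset \mathfrak T$, since an operator that is a permutation outside a finite block has a well-defined determinant; this proves the claim that the classical Burau representation $\rho^{\mathbf t}_\infty: B_\infty \to \mathfrak T$ is extended.

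Third, for the two diagrams: the top square commutes by construction once one knows $\rho^{\mathbf t}_\infty(B_\infty)\subset\mathfrak T$, because then $A_T$ maps into the normalizer-quotient $\mathrm{GL}(\ell^2)/\mathfrak T$ and the induced map $T \to \mathrm{GL}(\ell^2)/\mathfrak T$ is well-defined; exactness of the bottom row is the standard determinant sequence. Dividing the top row by $[B_\infty,B_\infty]$ on the left and by the preimage of $\mathfrak T_1$ on the right (i.e. pushing forward along $\det: \mathfrak T \to \C^*$, which kills $[B_\infty,B_\infty]$ since $B_\infty^{ab}=\Z$) produces the central-extension square, with the arrow $H_1(B_\infty)=\Z \to \C^*$ being $n \mapsto (\det\rho^{\mathbf t}_\infty(\gamma))^n$ for a generator $\gamma$. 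Finally, for injectivity of the vertical arrows when $\mathbf t$ is not a root of unity: on $H_1(B_\infty)=\Z$ the map sends the generator to $\det$ of an elementary Burau matrix, which is $-\mathbf t$ (up to sign/normalization), of infinite order precisely when $\mathbf t$ is not a root of unity; this forces the middle and left vertical maps to be injective on the relevant subgroups, and one propagates injectivity using the five lemma together with the known faithfulness of Burau on $B_\infty$ at generic parameters. I expect the \textbf{main obstacle} to be the boundedness and well-behavedness of the infinite Magnus matrices in $\ell^2$ — making precise that asymptotic rigidity forces a finite-block perturbation of a permutation — together with checking that the determinant (hence the class in $H^2(T,\Z)$) is computed correctly; the index-preserving property, while essential, should follow cleanly from the combinatorics of the rigid structure.
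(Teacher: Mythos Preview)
Your three-stage plan matches the outline the paper gives before the theorem (the paper does not prove the result here but refers to \cite{ka-se}): realize $A_T$ as acting index-preservingly on $F_\infty=\pi_1(\mathscr{S}^t_{0,\infty})$, extend the Magnus/Fox construction to infinite rank with values in $\mathrm{GL}(\ell^2)$, and compose. Your observation that asymptotic rigidity forces the Magnus matrix to be a finite-block perturbation of a permutation is exactly the mechanism that gives boundedness and lands $B_\infty$ in $\mathfrak{T}$.

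There is, however, a genuine error in your injectivity argument. You invoke ``the known faithfulness of Burau on $B_\infty$ at generic parameters'', but the Burau representation is \emph{unfaithful} for $B_n$ with $n\geq 5$ (Moody, Long--Paton, Bigelow), hence on $B_\infty$. The injectivity assertion in the theorem concerns only the second diagram, the one of \emph{central} extensions, and the argument there does not touch Burau faithfulness at all: the left vertical $H_1(B_\infty)=\Z\to\C^*$ sends the generator to $-\mathbf t$ (the determinant of an elementary Burau matrix), which has infinite order precisely when $\mathbf t$ is not a root of unity; the right vertical $T\to \mathrm{GL}(\ell^2)/\mathfrak{T}$ is injective because $T$ is simple and the map is nontrivial; the middle arrow is then injective by the five lemma. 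You should excise the faithfulness appeal entirely.

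A smaller point: your justification that the action on $H_1(F_\infty)$ is trivial (``by the labelling conventions, is in fact the identity'') is too glib. Elements of $A_T$ that project to nontrivial elements of $T$ do permute the tubes and hence the punctures; that the action is nonetheless index-preserving rests on the precise, rather technical, definition of $A_T$ in \cite{ka-se}, which the present survey deliberately omits. You cannot read this off from the sketch here and should flag it as an input from \cite{ka-se} rather than claim to verify it.
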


\section{From the Ptolemy  groupoid to the Hatcher-Thurston complex}

\subsection{Universal Teichm\"uller theory according to Penner}

In \cite{pe0} (see also \cite{pe}), R. Penner introduced his version of a universal Teichm\"uller space, together with an associated universal group. 
Unexpectedly, this group happens to be 
isomorphic to the Thompson group $T$. This connection  between Thompson groups and Teichm\" uller 
theory plays a key role in \cite{fu-ka1}, \cite{fu-ka2} and \cite{fu-ka3}. It is therefore appropriate to 
give some insight into Penner's approach.

\vspace{0.1cm}\noindent 
The universal Teichm\"uller space \index{universal Teichm\"uller space} according to Penner is a set ${\cal T}ess$ of ideal tessellations of the Poincar\'e disk, modulo the action of ${\rm PSL}(2,\R)$ (cf. Definition \ref{mos} below). 
The space ${\cal T}ess$ is homogeneous under the action of the group ${\rm Homeo}^+(S^1)$
of orientation-preserving homeomorphisms of the circle:  
$${\cal T}ess={\rm Homeo}^+(S^1)/{\rm PSL}(2,\R).$$
Denoting by ${\rm Diff}^+(S^1)$ the diffeomorphism group of $S^1$ and ${\rm Homeo}_{qs}(S^1)$ 
the group of quasi-symmetric homeomorphisms of $S^1$
 (a quasi-symmetric homeomorphism of the circle is induced by 
 a quasi-conformal homeomorphism of 
 the disk) one has the following inclusions

$${\rm Diff}^+(S^1)/{\rm PSL}(2,\R) \hookrightarrow {\rm Homeo}_{qs}(S^1)/{\rm PSL}(2,\R)\hookrightarrow {\rm Homeo}^+(S^1)/{\rm PSL}(2,\R),$$

\noindent which justify that ${\cal T}ess$ is a generalization of 
the ``well known'' universal Teichm\"uller spaces, namely Bers' space 
${\rm Homeo}_{qs}(S^1)/{\rm PSL}(2,\R)$, and the physicists' space ${\rm Diff}^+(S^1)/{\rm PSL}(2,\R)$.

\vspace{0.1cm}\noindent 
\noindent Moreover, Penner introduced some coordinates on
 ${\cal T}ess$, as well as  a ``formal'' symplectic form, whose pull-back 
 on ${\rm Diff}^+(S^1)/{\rm PSL}(2,\R)$ is the Kostant-Kirillov-Souriau form.

\begin{definition}[following \cite{pe0} and \cite{pe}]\label{mos}
Let $\mathbb D$ be the Poincar\'e disk. A {\em tessellation} 
\index{tessellation} of $\mathbb D$ 
is a locally finite and countable set of complete geodesics on $\mathbb D$ 
whose endpoints 
lie on the boundary circle  $S^1_{\infty}=\partial \mathbb D$ and are called vertices. The geodesics are  
called {\em arcs} or {\em edges}, forming a triangulation of $\mathbb D$.
A {\em marked tessellation} of $\mathbb D$ is a pair made of a 
tessellation plus a distinguished oriented edge 
(abbreviated d.o.e.) $\vec{a}$. One denotes by  ${\cal T}ess'$ 
the set of marked tessellations.
\end{definition}

\vspace{0.2cm}\noindent 
Consider the  basic ideal triangle having vertices 
at $1,-1, \sqrt{-1}\in S^1_{\infty}$ in the 
unit disk model  $\mathbb D$. The orbits of its sides 
by the group ${\rm PSL}(2,\Z)$ is the so-called {\em Farey tessellation} 
\index{Farey tessellation}\index{tessellation! Farey} $\tau_0$, 
as drawn in Figure \ref{farey}. 
Its ideal vertices are 
the rational points of $\partial \mathbb D$.
The marked Farey tessellation has its distinguished oriented edge 
$\vec{a_0}$ joining 
-1 to 1.  
\\

\begin{figure}
\begin{center}
\includegraphics[scale=0.8]{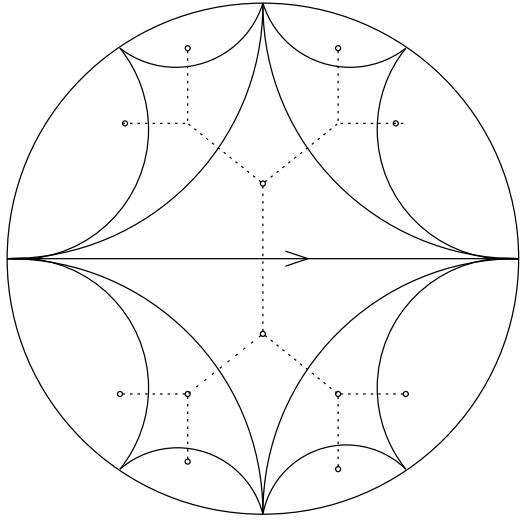} 
\caption{Farey tessellation and its dual tree}\label{farey}
\end{center}
\end{figure}

\vspace{0.1cm}\noindent 
The group ${\rm Homeo}^+(S^1)$ acts on the left on ${\cal T}ess'$ in the following way. Let $\gamma$ be an arc 
of a marked tessellation $\tau$, with endpoints $x$ and $y$, and $f$ be an element of ${\rm Homeo}^+(S^1)$; then 
 $f(\gamma)$ is defined as the geodesic with endpoints  $f(x)$ and $f(y)$. 
 If $\gamma$ is oriented from $x$ to $y$, then $f(\gamma)$ is oriented from $f(x)$ to $f(y)$. Finally, 
 $f(\tau)$ is the marked tessellation $\{f(\gamma),\gamma \in \tau\}$. 
 Viewing ${\rm PSL}(2,\R)$ as a subgroup of ${\rm Homeo}^+(S^1)$, one defines 
  ${\cal T}ess$ as the quotient space 
  ${\cal T}ess'/{\rm PSL}(2,\R)$.

\vspace{0.1cm}\noindent 
For any $\tau\in {\cal T}ess'$, let us denote by $\tau^0$ its set of ideal vertices. It is a countable and dense subset of the boundary circle, so that it may be proved
 that there exists 
a unique $f\in {\rm Homeo}^+(S^1)$ such that  $f(\tau_0)=\tau$. One denotes this homeomorphism 
 $f_{\tau}$. The resulting map

$${\cal T}ess'\longrightarrow  {\rm Homeo}^+(S^1),\;\;\, \tau\mapsto f_{\tau}$$

\noindent is a bijection. It follows that  ${\cal T}ess={\rm Homeo}^+(S^1)/{\rm PSL}(2,\R)$.

\vspace{0.2cm}
\noindent 
Since the action of ${\rm PSL}(2,\R)$ is 3-transitive, each element of ${\cal T}ess$ 
can be uniquely represented by its {\em normalized marked triangulation} 
containing the basic ideal triangle and whose d.o.e. is $\vec{a_0}$. 

\vspace{0.2cm}
\noindent 
The marked tessellation is of Farey-type if its canonical marked triangulation 
has the same vertices  and all but finitely many triangles (or sides) as the Farey triangulation. 
Unless explicitly stated otherwise all tessellations considered 
in the sequel will be Farey-type tessellations.  In particular, 
the ideal triangulations have the same vertices as $\tau_0$ and coincide 
with $\tau_0$ for all but finitely many ideal triangles.

\subsection{The isomorphism between Ptolemy and Thompson groups}\label{IKS}

\begin{definition}[Ptolemy groupoid]
The objects of the {\em (universal) Ptolemy groupoid}
\index{Ptolemy groupoid} $Pt$ are the 
marked tessellations of Farey-type. The morphisms 
are ordered pairs of marked tessellations modulo the common ${\rm PSL}(2,\R)$ 
action. 
\end{definition}

\vspace{0.2cm}
\noindent 
We now define  particular elements of $Pt$ called flips. \index{flip} 
Let $e$ be an edge of the marked tessellation represented by 
the normalized marked triangulation  $(\tau, \vec{a})$.  
The result of the flip $F_e$ on $\tau$ is the triangulation
$F_e(\tau)$ obtained  from $\tau$ by changing only 
the two neighboring triangles containing the edge $e$, according 
to the picture below: 

\vspace{0.2cm}
\begin{center}
\includegraphics{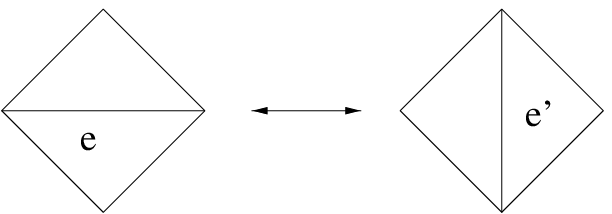}
\end{center}

\vspace{0.2cm}
\noindent 
This means that we remove $e$ from $\tau$ and then add the 
new edge $e'$ in order to get $F_e(\tau)$.  
In particular there is a natural correspondence $\phi:\tau\to F_e(\tau)$ 
sending $e$ to $e'$ and being the identity on all other edges. The result of a flip 
is the new triangulation together with this edge correspondence.

\vspace{0.2cm}
\noindent 
If $e$ is  not the d.o.e. of $\tau$ then $F_e(\vec{a})=\vec{a}$. 
If $e$ is the d.o.e. of $\tau$ then $F_e(\vec{a})=\vec{e'}$, where 
the orientation of $\vec{e'}$ is chosen so that 
the frame $(\vec{e}, \vec{e'})$ is positively oriented.

\vspace{0.2cm}
\noindent We define now the flipped tessellation $F_e((\tau, \vec{a}))$ 
to be the tessellation $(F_e(\tau), F_e(\vec{a}))$. 
It is proved in \cite{pe0} that flips generate the Ptolemy groupoid i.e. 
any element of $Pt$ is a composition of flips.

\vspace{0.2cm}
\noindent 
There is also a slightly different version of the Ptolemy groupoid 
which is quite useful in the case where we consider Teichm\"uller 
theory for surfaces of finite type. Specifically, we should assume that the 
tessellations are {\em labelled}, namely that their edges are indexed by 
natural numbers.

\begin{definition}[Labelled Ptolemy groupoid]
The objects of the {\em  labelled (universal) Ptolemy groupoid} $\widetilde{Pt}$ are the labelled  
marked tessellations. The morphisms  between  two objects 
$(\tau_1,\vec{a_1})$ and $(\tau_2,\vec{a_2})$ are {\em eventually trivial} 
permutation maps (at the labels level) $\phi:\tau_1\to \tau_2$ such that 
$\phi(\vec{a_1})=\vec{a_2}$. When marked tessellations are represented 
by their normalized tessellations, the latter coincide for all 
but finitely many triangles. Recall that $\phi$ is said to be 
eventually trivial 
if the induced correspondence at the level of the labelled tessellations
is the identity for all but finitely many edges. 
\end{definition}

\vspace{0.2cm}
\noindent 
Now flips make sense as elements of the labelled Ptolemy groupoid 
$\widetilde{Pt}$. Indeed the flip $F_e$ is endowed with 
the natural eventually trivial permutation  $\phi:\tau\to F_e(\tau)$ 
sending $e$ to $e'$ and being the identity for all other edges.

\vspace{0.2cm}
\noindent 
There is a standard procedure for converting a groupoid into a group, by 
using an a priori identification of all objects of the category. 
Here is how this goes  in the case of the Ptolemy groupoid. 
For any marked tessellation $(\tau, \vec{a})$ there is a 
characteristic map $Q_{\tau}:{\mathbb Q}-\{-1,1\}\to \tau$. 
Assume that $\tau$ is the canonical triangulation representing this 
tessellation. 
We first label by $\mathbb Q\cup{\infty}$ 
the vertices of $\tau$, by induction:
\begin{enumerate}
\item $-1$ is labelled by $0/1$, $1$ is labelled by $\infty=1/0$ and 
$\sqrt{-1}$ is labelled by $-1/1$. 
\item If we have a triangle in $\tau$ having two vertices already 
labelled by $a/b$ and $c/d$  then its third vertex is labelled $(a+c)/(b+d)$. 
Notice that vertices in the upper half-plane are labelled by negative 
rationals and those from the lower half-plane by positive rationals. 
\end{enumerate}
\vspace{0.2cm}
\noindent 
As it is well-known this labeling produces a bijection between 
the set of vertices of $\tau$ and $\mathbb Q\cup{\infty}$. 

\vspace{0.2cm}
\noindent 
Let now  $e$ be an edge of $\tau$, which is different from $\vec{a}$.
Let $v(e)$ be the vertex  opposite to $e$ of the triangle $\Delta$ of $\tau$ 
containing $e$ in its frontier and lying in the component of 
${\mathbb D}-e$ which does not contain 
$\vec{a}$. We then associate to $e$ the label of $v(e)$. 
We also give  $\vec{a}$ the label $0\in {\mathbb Q}$.  
In this way one obtains a bijection  
$Q_{\tau}:{\mathbb Q}-\{-1,1\}\to \tau$. 

\vspace{0.2cm}
\noindent 
Remark that if $(\tau_1,\vec{a_1})$ and $(\tau_2,\vec{a_2})$ are 
marked tessellations then there exists a unique map $f$ between their vertices 
sending triangles to triangles and marking on  marking. 
Then $f\circ Q_{\tau_1}=Q_{\tau_2}$. 

\vspace{0.2cm}
\noindent 
The role played by $Q_{\tau}$ is to allow flips to be indexed by 
the rationals and not by the edges of $\tau$. 

\begin{definition}[Ptolemy group \cite{pe0}]
Let ${\mathcal T}$ be the set of marked tessellations of Farey-type. 
Define the action of the free monoid  $M$ generated by ${\mathbb Q}-\{-1,1\}$
on ${\mathcal T}$ by means of:  
\[ q \cdot (\tau, \vec{a}) = F_{Q_{\tau}(q)}(\tau, \vec{a}),\,  \mbox{ for } \, 
q\in {\mathbb Q}-\{-1,1\}, (\tau, \vec{a})\in {\rm FT}.\]
We set  $f\sim f'$ on $M$  if the  two actions of $f$ and $f'$ on  
${\mathcal T}$ coincide. Then the induced composition law 
on  $M/\sim$ is a monoid structure for which each element has an inverse. This  
makes  $M/\sim$ a group, which is called the Ptolemy group $T$ \index{Ptolemy group} (see \cite{pe0} for more details).  
\end{definition}

\vspace{0.2cm}
\noindent 
In particular it makes sense to speak of flips in the present case. 
It is clear that flips generate the Ptolemy group. 

\vspace{0.2cm}
\noindent 
The notation $T$ for the Ptolemy group is not misleading because 
this group is isomorphic to the Thompson group $T$ and for this reason, 
we preferred to call it the Ptolemy-Thompson group. \index{Ptolemy-Thompson group $T$}

\vspace{0.2cm}
\noindent 
Given two marked tessellations $(\tau_1,\vec{a_1})$ and $(\tau_2,\vec{a_2})$ 
the above combinatorial isomorphism $f:\tau_1\to \tau_2$   
provides a map between the vertices of the tessellations, 
which are identified with $P^1(\mathbb Q)\subset S^1_{\infty}$. 
This map extends continuously to a homeomorphism of  $S^1_{\infty}$, 
which is piecewise-${\rm PSL}(2,\Z)$. 
This establishes an isomorphism between the Ptolemy group and 
the group of piecewise-${\rm PSL}(2,\Z)$ homeomorphisms of the circle. 

\vspace{0.2cm}
\noindent 
An explicit isomorphism with the group $T$ in the form introduced above 
was provided by Lochak and Schneps (see \cite{lo-sc}). 
In order to understand this isomorphism we will need another 
characterization of the Ptolemy groupoid, as follows.

\begin{definition}[Ptolemy groupoid second definition \cite{pe0,pe}]
The universal Ptolemy groupoid \index{Ptolemy groupoid}  $Pt'$ is the category 
whose objects are the marked tessellations. As for the morphisms, 
they are composed of morphisms of two types, called {\em elementary moves}:
\begin{enumerate} 

\item A-move: it is the data of a pair of marked tessellations $(\tau_1, \tau_2)$, where $\tau_1$ and
 $\tau_2$ only differ by the  d.o.e. The d.o.e. $\vec{a_1}$ of $\tau_1$ 
 is one of the two diagonals of a quadrilateral whose 4 sides belong to  $\tau_1$. Let us assume 
 that the vertices of this quadrilateral 
 are 
 enumerated in the cyclic direct order by $x,y,z,t$, in such a way that  $\vec{a_1}$ 
 is the edge oriented from $z$ to $x$.  Let $\vec{a_2}$ be the other diagonal, oriented 
 from $t$ to $y$. Then, 
$\tau_2$ is defined as the marked tessellation 
  $\tau_1\setminus\{\vec{a_1}\}\cup \{\vec{a_2}\}$, with oriented edge
$\vec{a_{2}}$.

\item B-move: it is the data of a pair of marked tessellations 
$(\tau_1, \tau_2)$, where $\tau_1$ and $\tau_2$ have the same edges, but only differ 
by the choice of the d.o.e. The marked edge $\vec{a_1}$  
is the side of the unique triangle of the tessellation  $\tau_1$ with ideal vertices $x,y,z$, enumerated in the direct order, in such a way that $\vec{a_1}$ 
is the edge from $x$ to $y$. 
Let $\vec{a_2}$ be the  edge   
oriented from  $y$ to $z$. Then, $\vec{a_2}$ is the d.o.e. of $\tau_2$.
\end{enumerate}

\noindent 
Relations between morphisms: if $\tau_1$ and $\tau_2$ are two marked 
tessellations such that there exist two sequences of elementary moves 
$(M_1,\ldots, M_k)$ and  $(M'_1,\ldots, M'_{k'})$  connecting  $\tau_1$ to $\tau_2$, then 
the morphisms $M_k \circ \ldots \circ M_1$ and $M'_{k'}\circ \ldots \circ M'_1$ are equal. 

\end{definition}

\begin{remark}
Given two marked tessellations $\tau_{1}$ and $\tau_{2}$ with the same sets of endpoints, 
there is a (non-unique) finite sequence of elementary moves connecting $\tau_{1}$ to $\tau_{2}$ if and only if $\tau_{1}$ and $\tau_{2}$ only differ by a finite number of edges.
\end{remark}

\noindent 
From the above remark, it follows that $Pt'$ is not a connected groupoid. 
Let $Pt=Pt'_{\Q}$ be the connected component of the Farey tessellation. 
It is the full sub-groupoid of $Pt'$ obtained by restricting to 
the tessellations whose set of 
ideal vertices are the rationals of the boundary circle $\partial \mathbb D$, 
and which differ from the Farey tessellation 
by only finitely many edges, namely 
the Farey-type tessellations. 
Then it is not difficult to prove that the two definitions of $Pt$ are actually equivalent. However the second definition makes the Lochak-Schneps isomorphism 
more transparent. \\
    
\noindent{\it Construction of the universal Ptolemy group}\\
    
\noindent    
Let $W$ be a symbol  $A$ or $B$. For any $\tau\in Ob(Pt')$, let us define the object $W(\tau)$, which is the target 
of the morphism of type $W$, whose source is $\tau$. For any sequence
 $W_1,\ldots, W_k$ of symbols $A$ or $B$, let us use the notation  $W_k\cdots W_2W_1 (\tau)$ for
  $W_k(...W_2( W_1(\tau))...)$.    
Let  $M$ be the free group on  $\{A,B\}$. Let us fix a tessellation  $\tau$ (the construction will not depend on this choice). Let $K$ be the subgroup of $M$ 
made of the elements $W_k\cdots W_2 W_1$ such that $W_k\cdots W_2 W_1(\tau)=\tau$ 
(it can be easily checked that this implies $W_k\cdots W_2 W_1(\tau')=\tau'$ for any $\tau\in Ob(Pt')$, and that $K$ is a normal subgroup of $M$).

\begin{definition}[\cite{pe0}, \cite{pe}] 
The group  $G= M/K$ is called the {\em universal Ptolemy group}. \index{Ptolemy group}
\end{definition}

\begin{theorem}[Imbert-Kontsevich-Sergiescu, \cite{im}]
The universal Ptolemy group  $G$ is anti-isomorphic to the  
Thompson group  $T$, 
which will be henceforth also called the Ptolemy-Thompson group in order 
to emphasize this double origin.  
\end{theorem}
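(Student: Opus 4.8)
The plan is to build the anti-isomorphism directly, by exploiting the fact that both the free group $M$ on $\{A,B\}$ and the group ${\rm Homeo}^+(S^1)$ act on the left on the set of Farey-type marked tessellations, and that these two actions \emph{commute}. First I would record the commutation: the moves $A$ and $B$ are defined purely in terms of incidence data of a marked tessellation — which edges bound which ideal triangles, the cyclic order of the ideal vertices involved, and the distinguished oriented edge — and all of this is preserved by any orientation-preserving homeomorphism of $S^1$. Hence $W(f(\tau))=f(W(\tau))$ for every $W\in M$ and every $f\in{\rm Homeo}^+(S^1)$; it suffices to check this for $W=A$ and $W=B$ and then extend over words and inverses.

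Next I would use the bijection $\tau\mapsto f_\tau$, where $f_\tau$ is the unique element of ${\rm Homeo}^+(S^1)$ with $f_\tau(\tau_0)=\tau$ and $\tau_0$ is the marked Farey tessellation, to define $\Phi\colon M\to{\rm Homeo}^+(S^1)$ by $\Phi(W)=f_{W(\tau_0)}$. When $W(\tau_0)$ is of Farey-type, the induced vertex correspondence $\tau_0\to W(\tau_0)$ extends to a homeomorphism of $S^1_\infty$ which is piecewise-${\rm PSL}(2,\Z)$ with breakpoints in $P^1(\Q)$ (this is exactly the statement recalled just before the theorem); conjugating by the Farey map, which sends $P^1(\Q)$ to the dyadic rationals and intertwines ${\rm PSL}(2,\Z)$-pieces with dyadic affine pieces, one lands in Thompson's group $T$ in the form defined in Section 2.1. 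Thus $\Phi$ takes values in $T$, and this identification of the concrete target is where Penner's and Lochak--Schneps' analysis enters.

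The anti-homomorphism property is then a one-line computation using the commuting actions: $\Phi(WW')$ is the unique homeomorphism carrying $\tau_0$ to $WW'(\tau_0)=W(W'(\tau_0))=W(\Phi(W')(\tau_0))=\Phi(W')(W(\tau_0))=\Phi(W')(\Phi(W)(\tau_0))$, whence $\Phi(WW')=\Phi(W')\circ\Phi(W)$. Since $\tau\mapsto f_\tau$ is injective, $\ker\Phi$ is precisely the set of words $W$ with $W(\tau_0)=\tau_0$, i.e. the normal subgroup $K$, so $\Phi$ descends to an injective anti-homomorphism $\overline{\Phi}\colon G=M/K\to T$. For surjectivity I would take $f\in T$, note that $f(\tau_0)$ is again a Farey-type marked tessellation (same vertex set $P^1(\Q)$, and differing from $\tau_0$ in only finitely many edges because $f$ is ${\rm PSL}(2,\Z)$ on all but finitely many Farey triangles), and invoke the connectivity of the groupoid $Pt=Pt'_{\Q}$ — any two Farey-type marked tessellations are joined by a finite sequence of $A$- and $B$-moves — to produce a word $W$ with $W(\tau_0)=f(\tau_0)$, so that $\overline{\Phi}(W)=f$. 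This makes $\overline{\Phi}$ an anti-isomorphism; composing with $g\mapsto g^{-1}$ would upgrade it to an honest isomorphism, but the natural map is the anti-isomorphism.

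The main obstacle is the step buried in the second paragraph: identifying the \emph{abstract} group $G=M/K$ with the concrete group $T$. Concretely, one must verify that every vertex map arising from a Farey-type tessellation is genuinely piecewise-${\rm PSL}(2,\Z)$ with rational breakpoints and conversely that all such homeomorphisms so arise, and one must establish the connectivity statement — equivalently, that the second (A/B-move) definition of $Pt$ coincides with the flip-based first one, so that $K$ really encodes all relations. Granting Penner's theorem that the flip version of the Ptolemy group is $T$ and the Lochak--Schneps dictionary, what is left is the bookkeeping of markings through $B$-moves together with the commuting-actions argument above, which is precisely what forces the comparison map to be an anti-isomorphism rather than an isomorphism.
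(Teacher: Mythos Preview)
Your argument is correct and matches almost exactly the second construction the paper gives (attributed to \cite{im}): define $f_g\in{\rm Homeo}^+(S^1)$ by $f_g(\tau_*)=g(\tau_*)$, check $f_{hg}=f_g\circ f_h$ from the fact that the $A/B$-moves are ${\rm Homeo}^+(S^1)$-equivariant, and conclude injectivity from the uniqueness of $f_\tau$ and surjectivity from connectivity of $Pt$. Your framing via the commuting-actions identity $W(f(\tau))=f(W(\tau))$ is precisely what the paper packages as ``the effect of $h$ on $g(\tau_*)$ equals the conjugate $f_g f_h f_g^{-1}$''.

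For completeness: the paper's \emph{primary} construction is a slight variant that avoids the ${\rm Homeo}^+(S^1)$-action altogether and works instead with the dual regular binary tree ${\rm T}_\tau$ of each tessellation. One takes the unique planar-tree isomorphism $\varphi_{\tau_*,g(\tau_*)}:{\rm T}_{\tau_*}\to{\rm T}_{g(\tau_*)}$ matching the distinguished oriented edges, reads off its induced boundary map $\partial\varphi_{\tau_*,g(\tau_*)}$, and lands directly in $T$ realised as partial automorphisms of the binary tree. The anti-homomorphism computation is then the cocycle identity $\varphi_{\tau_*,gh(\tau_*)}=\varphi_{h(\tau_*),gh(\tau_*)}\circ\varphi_{\tau_*,h(\tau_*)}$ together with the observation that $\varphi_{h(\tau_*),gh(\tau_*)}$ is the $\varphi_{\tau_*,h(\tau_*)}$-conjugate of $\varphi_{\tau_*,g(\tau_*)}$. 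This buys a cleaner identification of the target with the tree model of $T$ (no Farey map needed), whereas your route has the advantage of making the link with the piecewise-${\rm PSL}(2,\Z)$ model explicit; the two are of course equivalent.
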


\noindent 
Let us indicate a proof that relies on the definition of $T$ 
as a group of bijections of the boundary of the dyadic tree.  
Let  $\tau\in Pt$, and let  ${\rm T}_{\tau}$ be the regular 
(unrooted) dyadic tree which is dual to the tessellation $\tau$. 

\vspace{0.1cm}
\noindent 
Let $e_{\tau}$ be the edge of ${\rm T}_{\tau}$ which is transverse 
to the oriented edge 
 $\vec{a}_{\tau}$ of $\tau$. The edge $e_{\tau}$ is oriented in such a way that  
 $(\vec{a}_{\tau},\vec{e}_{\tau})$ is directly oriented in the disk. 
For each pair ($\tau$, $\tau'$) of marked tessellations of $Pt$, let
 $\varphi_{\tau,\tau'}\in Isom({\rm T}_{\tau},{\rm T}_{\tau'})$
be the unique isomorphism of planar oriented trees which maps the oriented edge  $\vec{e}_{\tau}$ onto 
the oriented edge $\vec{e}_{\tau'}$.  
As a matter of fact, the planar trees ${\rm T}_{\tau}$ and ${\rm T}_{\tau'}$  coincide outside two 
finite subtrees $t_{\tau}$ and $t_{\tau'}$ respectively, so that their boundaries  
$\partial {\rm T}_{\tau}$ and $\partial {\rm T}_{\tau'}$ may be canonically identified. Therefore,
$\varphi_{\tau,\tau'}$ induces a homeomorphism of $\partial {\rm T}_{\tau^*}$, 
denoted  $\partial \varphi_{\tau,\tau'}$. Clearly, $\partial \varphi_{\tau,\tau'}$ belongs to $T$, as it is induced on the boundary of the dyadic planar tree by a partial 
isomorphism which respects the local orientation of the edges.

\vspace{0.1cm}\noindent 
The map $g\in G\mapsto \partial\varphi_{\tau_{*}, g(\tau_{*})}\in {\rm Homeo} ( \partial {\rm T}_{\tau^*} )$ has $T$ as image, 
and is an anti-isomorphism onto  $T$.

\vspace{0.1cm}\noindent 
An explanation for the anti-isomorphy is the following. One has 
$\varphi_{\tau_{*}, gh(\tau_{*})}= \varphi_{h(\tau_{*}), g(h(\tau_{*}))}\varphi_{\tau_{*}, h(\tau_{*})}$. Now $\varphi_{h(\tau_{*}), g(h(\tau_{*}))}$ 
is the conjugate of 
$\varphi_{\tau_{*}, g(\tau_{*})}$ by $\varphi_{\tau_{*}, h(\tau_{*})}$, 
hence $\varphi_{\tau_{*}, gh(\tau_{*})}=
 \varphi_{\tau_{*}, h(\tau_{*})} \varphi_{\tau_{*}, g(\tau_{*})}$.

 \vspace{0.1cm}\noindent 
Following \cite{im}, it is also possible to construct an anti-isomorphism between $G$ and $T$, when the latter is realized as a subgroup of
  ${\rm Homeo}^+(S^1)$, viewing the circle as the boundary of the Poincar\'e disk.

\vspace{0.1cm}\noindent 
 For each $g\in G$, there exists a unique $f\in {\rm Homeo}^+(S^1)$ such that $f(\tau_{*})=g(\tau_{*})$. 
It is denoted by $f_{g}$. This provides a map  $f: G\rightarrow {\rm Homeo}^+(S^1)$, 
$g\mapsto f_{g}$, which is an anti-isomorphism. Indeed, for all $h$ and $g$ in $G$, the effect of
 $h$ on $\tau=g(\tau_{*})$ is the same as the effect of the conjugate 
 $f_{g}\circ f_{h}\circ f_{g}^{-1}$, so that
$(hg)(\tau_{*})=f_{g}\circ f_{h}\circ f_{g}^{-1}(\tau)=(f_{g}\circ f_{h})(\tau_{*})$. 
The morphism is injective, since $f_{g}=id$ implies that $g(\tau_{*})=\tau_{*}$, hence $g=1$.

\vspace{0.1cm}\noindent 
It is worth mentioning that a new presentation of $T$ has been obtained in \cite{lo-sc}, derived from the anti-isomorphism of $G$ and $T$.
It uses only two generators $\alpha$ and $\beta$, defined as follows. Let $\alpha\in T$ 
be the element induced by  $\varphi_{\tau_0, A.\tau_0}$, and 
 $\beta\in T$ induced by $\varphi_{\tau_0, B.\tau_0}$. 

\begin{theorem}[\cite{lo-sc}]
The Ptolemy-Thompson group \index{Ptolemy-Thompson group presentation}
\index{presentation! Thompson group}
$T$ is generated by two elements $\alpha$ and $\beta$, with relations:
$${\alpha}^4=1, {\beta}^3=1, (\beta\alpha)^5=1,$$        
$$[\beta\alpha\beta,\alpha^2\beta\alpha\beta\alpha^2]=1, [\beta\alpha\beta,\alpha^2\beta^2\alpha^2\beta\alpha\beta\alpha^2\beta\alpha^2]=1.$$ 

\end{theorem}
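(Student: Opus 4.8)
The strategy is to exploit the identification of $T$ with the universal Ptolemy group $G$ and then to pin down the relations among its two distinguished generators. By the Imbert--Kontsevich--Sergiescu theorem, $T$ is anti-isomorphic to $G=M/K$, where $M=F(A,B)$ is free on the two elementary moves and $\alpha,\beta\in T$ are the images of $A$ and $B$; in particular $T=\langle\alpha,\beta\rangle$. Let $\Gamma=\langle\,\alpha,\beta\mid R\,\rangle$ be the group defined by the five displayed relators $R$. What has to be shown is that the evident epimorphism $\Gamma\twoheadrightarrow T$ is an isomorphism.

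The first, easy half is that $R$ maps to the identity in $T$, which produces the epimorphism $\Gamma\twoheadrightarrow T$. Each of the five identities is checked by a finite computation on marked tessellations (equivalently, on the circle realisation of $T$ by piecewise-${\rm PSL}(2,\Z)$ homeomorphisms): one verifies that $\alpha$ has order $4$, that $\beta$ has order $3$ (it cyclically permutes the three sides of the triangle carrying the distinguished oriented edge), that $\beta\alpha$ has order $5$ (Penner's pentagon relation), and, using $\alpha^{4}=1$, that the two commutator relators assert precisely that $g:=\beta\alpha\beta$ commutes with the conjugates $\alpha^{2}g\alpha^{-2}$ and $(\alpha^{2}\beta^{2}\alpha^{2})\,g\,(\alpha^{2}\beta\alpha^{2})$ of itself --- geometrically, that three ``bump'' homeomorphisms supported on disjoint subtrees of ${\cal T}_2$ commute pairwise, exactly in the spirit of the classical finite presentation of $F$ by two generators and two commutator relators. (Note that the first three relators alone already define an infinite group, a hyperbolic triangle group, since $\tfrac14+\tfrac13+\tfrac15<1$; it is the two commutator relators that cut the quotient down to $T$.)

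The substantive half is the converse inclusion of relations: that $R$ already forces the kernel of $\Gamma\twoheadrightarrow T$ to be trivial. The plan is a comparison, by Tietze transformations, with a previously established finite presentation of $T$ --- for definiteness the Cannon--Floyd--Parry presentation \cite{ca-fl-pa} on generators $x_{0},x_{1},\dots,x_{n}$. One exhibits explicit words $w_{i}(\alpha,\beta)\in\Gamma$ representing the $x_{i}$ and, conversely, words in the $x_{i}$ representing $\alpha$ and $\beta$, and then verifies that, modulo $R$, these two substitutions are mutually inverse, that every Cannon--Floyd--Parry relator becomes a consequence of $R$, and that each relator of $R$ follows from the Cannon--Floyd--Parry relators. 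Granting this, the two presentations define isomorphic groups and we are done. The delicate point --- where essentially all the work lies --- is the rewriting itself: one must use the relations $\alpha^{4}=\beta^{3}=(\beta\alpha)^{5}=1$ to move letters past one another and the two commutator relators to recover the infinitely many ``distant commutations'' of $T$ from only two instances, and one must choose the $w_{i}$ so that this bookkeeping closes up. This is a bounded but intricate combinatorial verification, and is the real content of the Lochak--Schneps argument. An alternative route --- letting $\Gamma$ act on $\partial{\cal T}_2$ through $\Gamma\twoheadrightarrow T$ and proving the action faithful --- reduces to the same kind of combinatorics and brings no genuine simplification.
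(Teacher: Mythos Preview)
The paper does not actually prove this theorem: it is stated with attribution to \cite{lo-sc} and then used, with no argument given beyond the preceding definition of $\alpha,\beta$ as $\varphi_{\tau_0,A.\tau_0}$ and $\varphi_{\tau_0,B.\tau_0}$. So there is nothing in the paper to compare your proposal against.

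Taken on its own terms, your outline is sound but is only an outline. The easy direction is fine: the orders of $\alpha,\beta,\beta\alpha$ and the two commutator identities are indeed finite checks on marked tessellations, and your rewriting of the second commutator as $[g,(\alpha^2\beta^2\alpha^2)\,g\,(\alpha^2\beta^2\alpha^2)^{-1}]$ with $g=\beta\alpha\beta$ is correct (using $\alpha^{-2}=\alpha^2$, $\beta^{-2}=\beta$). Your side remark that the first three relators present the hyperbolic triangle group $\Delta(4,3,5)$ is also correct.

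For the hard direction you describe the right strategy --- a Tietze comparison with a known finite presentation of $T$ --- but you do not carry it out, and you correctly flag that this rewriting is where all the content lies. Two comments. First, the presentation in \cite{ca-fl-pa} that is most convenient here is not literally on generators $x_0,\ldots,x_n$ (that is the $F$-style infinite family, later reduced); one typically uses their two-generator presentation of $F$ together with a torsion element for $T$. Second, the Lochak--Schneps approach in \cite{lo-sc} is more geometric than a bare Tietze rewriting: they work inside the Ptolemy groupoid and analyse cycles of elementary moves directly, which is closer to the groupoid picture the paper has just set up than to a symbol-by-symbol translation into the Cannon--Floyd--Parry generators. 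Your alternative route via faithfulness of an action is, as you say, not genuinely simpler. In short: correct plan, honestly labelled as a plan; the substantive verification is deferred to the cited reference, exactly as in the paper.
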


\vspace{0.1cm}\noindent 
Let us make explicit the relation between the Cayley graph of $T$, for the above presentation, and the nerve of the category $Pt$.

\begin{definition}
Let $Gr(Pt)$ be the graph whose vertices are the objects of $Pt$, and 
whose edges correspond to the elementary moves of type $A$ and $B$.
\end{definition}

\vspace{0.1cm}\noindent 
From the anti-isomorphism between $G=M/K$ and $T$, it follows easily that $Gr(Pt)$ is 
precisely the Cayley graph of Thompson's group $T$, for its presentation on the generators $\alpha$ and $\beta$.

\vspace{0.1cm}\noindent 
We can use the same method to derive a labelled Ptolemy group $\widetilde{T}$ out of the labelled Ptolemy groupoid $\widetilde{Pt}$.  It is not 
difficult to obtain therefore the following: 

\begin{proposition}\label{permut}
We have an exact sequence 
\[ 1\to S_{\infty} \to \widetilde{T}\to T\to 1\]
where $S_{\infty}$ is the group of  eventually trivial 
permutations of the labels. Moreover, the group $\widetilde{T}$ is 
generated by the obvious lifts $\widetilde{\alpha}$ and $\widetilde{\beta}$ 
of the generators $\alpha,\beta$ of $T$. The pentagon relation now reads 
$(\widetilde{\beta}\widetilde{\alpha})^5=\sigma_{12}$, where 
$\sigma_{12}$ is the transposition exchanging the labels of the diagonals 
of the pentagon.
\end{proposition}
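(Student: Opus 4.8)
The plan is to carry out, on the labelled groupoid $\widetilde{Pt}$, exactly the ``groupoid-to-group'' construction of Section~\ref{IKS} that turned $Pt$ into $T$, and then to read off the three claims in turn.

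First I would set up the exact sequence. Fix as base object the labelled Farey tessellation $\widetilde{\tau}_0$, i.e. $\tau_0$ with its edges indexed by $\mathbb{Q}\setminus\{-1,1\}$ via the characteristic bijection $Q_{\tau_0}$. Applying to $\widetilde{Pt}$ the construction that produced $T$ from $Pt$ yields a group $\widetilde{T}$, whose elements are represented by words in the elementary moves and the pure label permutations, taken modulo acting identically on labelled Farey-type tessellations. Forgetting the labels is then a well-defined surjection $p\colon\widetilde{T}\to T$, since a word acting trivially on labelled tessellations acts a fortiori trivially on tessellations, and the flips map onto the generators of $T$. An element of $\ker p$ is a recipe bringing $\widetilde{\tau}_0$ back to a labelled tessellation with the same underlying marked tessellation but with the labels permuted; that permutation is eventually trivial, every eventually trivial permutation is realised this way, and a nontrivial one is nontrivial in $\widetilde{T}$. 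Hence $\ker p\cong S_{\infty}$ and $1\to S_{\infty}\to\widetilde{T}\to T\to 1$ is exact.

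Next I would take $\widetilde{\alpha},\widetilde{\beta}$ to be the obvious lifts: $\widetilde{\alpha}$ the $A$-move carrying the canonical label transport across its single flip, and $\widetilde{\beta}$ the $B$-move, which only changes the distinguished oriented edge and so carries the identity label permutation. Because $\alpha=p(\widetilde{\alpha})$ and $\beta=p(\widetilde{\beta})$ generate $T$, one has $\langle\widetilde{\alpha},\widetilde{\beta}\rangle\,S_{\infty}=\widetilde{T}$, so it remains to place $S_{\infty}$ inside $H:=\langle\widetilde{\alpha},\widetilde{\beta}\rangle$. Here the key computation is the pentagon relation: $(\beta\alpha)^5=1$ in $T$ is the cycle of five flips triangulating a pentagon, and since the label of a flipped edge travels to the new edge, following the five edges swept out by the $A$-moves shows that the underlying marked tessellation returns to its initial state while the two edges triangulating the pentagon have exchanged their labels, every other edge keeping its own; thus $(\widetilde{\beta}\widetilde{\alpha})^5=\sigma_{12}\in H$.

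Finally I would deduce $S_{\infty}\subseteq H$. The group $S_{\infty}$ is generated by transpositions, indeed by any family of transpositions whose associated graph on the label set is connected; and for each $g\in T$ a lift $h_g\in H$ conjugates $\sigma_{12}$ to another transposition of $S_{\infty}\cap H$, namely the one swapping the pair of labels obtained by transporting the original diagonal pair along $g$. The transitivity of the $T$-action on the edges of Farey-type tessellations makes this family of transpositions connected, so they generate $S_{\infty}$, whence $\widetilde{T}=\langle\widetilde{\alpha},\widetilde{\beta}\rangle$. I expect the main obstacle to lie precisely in the bookkeeping behind these last two steps: one must track which edge carries which label through the five flips, so as to be sure the net permutation realised by $(\widetilde{\beta}\widetilde{\alpha})^5$ is exactly the transposition of the two diagonals with the five sides of the pentagon left unpermuted, and one must check that the conjugates of this single transposition really do form a connected, hence generating, family in $S_{\infty}$ rather than only a proper $T$-invariant subgroup.
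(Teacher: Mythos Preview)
Your approach is correct and is precisely the one the paper gestures at: it states only that ``we can use the same method to derive a labelled Ptolemy group $\widetilde{T}$ out of the labelled Ptolemy groupoid $\widetilde{Pt}$'' and that the proposition is ``not difficult,'' without giving any further argument. Your proposal supplies exactly the details one would expect---the groupoid-to-group construction on $\widetilde{Pt}$, the forgetful map to $T$ with kernel $S_{\infty}$, the label-tracking through the five flips to obtain $(\widetilde{\beta}\widetilde{\alpha})^5=\sigma_{12}$, and the generation of $S_{\infty}$ from conjugates of this transposition---and the bookkeeping you flag as the main obstacle is indeed routine once one writes out the five flips on the pentagon explicitly.
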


\begin{remark} 
 Let us mention that the image of $G$ in ${\rm Homeo}^+(S^1)$ by the anti-isomorphism
  $f:g\mapsto f_{g}$ does not correspond to the piecewise dyadic affine version of $T$, as recalled in the preliminaries. 
  Let us view here the circle $S^1$ as the real projective line, and not as the quotient space $[0,1]/0\sim 1$.
 Under this identification, $f(G)$ is the group $P{\rm PSL}(2,\Z)$ 
 of orientation preserving homeomorphisms of the projective line, 
 which are piecewise ${\rm PSL}(2,\Z)$, with rational breakpoints.  This version of $T$ 
 is the starting point of a detailed study of the piecewise projective geometry of Thompson's group $T$, 
led in \cite{ma} and \cite{ma1}.
\end{remark}

 \subsection{A remarkable link between the Ptolemy  groupoid and the 
 Hatcher-Thurston complex of  ${\mathscr{S}}_{0,\infty}$, following \cite{fu-ka1}}

In  \cite{fu-ka1}, we give a generalization of the 
Ptolemy  groupoid which uses pairs of pants decompositions of the surface  ${\mathscr{S}}_{0,\infty}$.

\vspace{0.1cm}\noindent 
The surface $\mathscr{S}_{0,\infty}$ appears in \cite{ka-se} with its ``canonical rigid structure'' (see also section \ref{thommcg}). The constructions involved in \cite{fu-ka1} require 
to handle not only the canonical rigid structure of $\mathscr{S}_{0,\infty}$, but also a set of rigid structures.

\begin{figure}
\begin{center}
\includegraphics{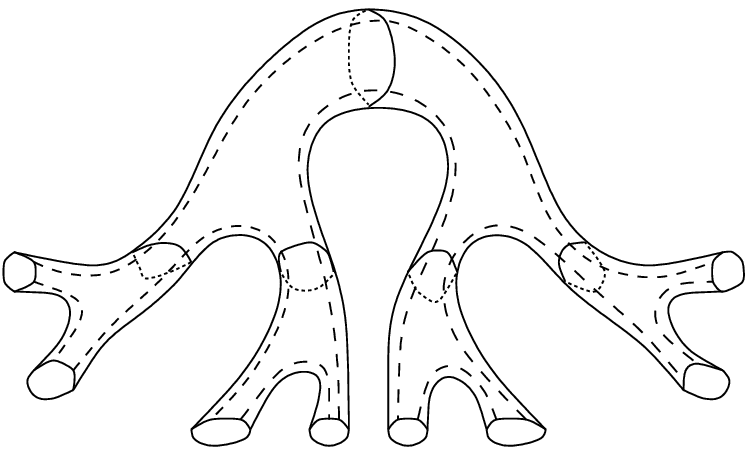}

\caption{Surface $\mathscr{S}_{0,\infty}$ with its canonical rigid structure}\label{surfinf0}
\end{center}
\end{figure}

\begin{definition}
A {\em rigid structure} \index{rigid structure} on $\mathscr{S}_{0,\infty}$ consists of the data of 
a pants decomposition of  $\mathscr{S}_{0,\infty}$ together with a 
decomposition of  $\mathscr{S}_{0,\infty}$ into two connected components, called the visible and the hidden side, which are compatible in the following sense. 
The intersection of each pair of pants with the visible or hidden sides of the surface is a hexagon.

\vspace{0.1cm}\noindent 
The choice of a reference rigid structure defines the {\em canonical rigid structure} \index{canonical rigid structure} (cf. Figure \ref{surfinf0}).  
The dyadic regular (unrooted) tree ${\cal T}_{*}$ is embedded onto the visible side of $\mathscr{S}_{0,\infty}$,
 as the dual tree to the canonical decomposition (into hexagons).

\vspace{0.1cm}\noindent 
A rigid structure is {\em marked} when one of the circles of the decomposition 
is endowed with an orientation. The choice of a circle of the canonical 
decomposition and of an orientation of this circle defines the 
canonical marked rigid structure.

\vspace{0.1cm}\noindent 
A rigid structure is {\em asymptotically trivial} \index{asymptotically trivial rigid structure} if it coincides 
with the canonical rigid structure outside a compact subsurface of $\mathscr{S}_{0,\infty}$.

\vspace{0.1cm}\noindent 
The set of isotopy classes of (resp. marked)
asymptotically trivial rigid structures is denoted $Rig(\mathscr{S}_{0,\infty})$ (resp.  $Rig'(\mathscr{S}_{0,\infty})$).
\end{definition}

In  \cite{fu-ka1}, we define the {\it stable groupoid of duality} ${\mathscr{D}}_0^s$, \index{stable groupoid of duality} which generalizes $Pt$, since it
contains a full sub-groupoid isomorphic to $Pt$. We first recall the definition of this sub-groupoid, 
which will be denoted ${{\mathscr{D}}_0^s}_{\Q}$. 

\begin{definition}
The objects of the groupoid ${{\mathscr{D}}_0^s}_{\Q}$ are 
the asymptotically rigid marked structures 
of $\mathscr{S}_{0,\infty}$
whose underlying decomposition into visible and hidden sides is the canonical one.

\vspace{0.1cm}\noindent 
The morphisms are composed of {\em elementary morphisms}, called {\em moves}, of two types, $A$ and $B$. 
\begin{enumerate}
\item A-move: Let $r_{1}$ be an object of ${{\mathscr{D}}_0^s}_{\Q}$. The distinguished oriented circle $\gamma$ 
separates two adjacent pairs of pants, whose union is a 4-holed sphere $\Sigma_{0,4}$. Up to isotopy, there exists a unique circle contained 
in $\Sigma_{0,4}$, whose geometric intersection number 
with $\gamma$ is equal to  2, and which is invariant by the involution $j$ interchanging the visible and hidden sides. 
Otherwise stated, the circle $\gamma'$ is the image of $\gamma$ by the rotation of angle
 $+\frac{\pi}{2}$ described in  Figure \ref{pto} which stabilizes both sides of $\mathscr{S}_{0,\infty}$ 
 and $\Sigma_{0,4}$.  
Let $r_{2}=r_{1}\setminus\{\gamma\}\cup \{\gamma'\}$. By definition, the pair $(r_{1},r_{2})$ is the $A$-move 
on the rigid marked structure $r_{1}$. Its source is $r_{1}$ while $r_{2}$ is its target.
\item $B$-move: Let $r_{1}$ be an object of ${{\mathscr{D}}_0^s}_{\Q}$. 
Let $P$ be the pair of pants of $r_{1}$  bounded by $\gamma$, which is on the left when one 
moves along $\gamma$ following its orientation. Let $\gamma''$ be the oriented circle of the boundary of $P$, which is 
the image of the oriented circle $\gamma$
by the rotation of order 3 and angle  $+\frac{2\pi}{3}$ described on  Figure \ref{pto} (it stabilizes both sides of
 $\mathscr{S}_{0,\infty}$ and $P$). Let  $r_{2}$ be the pants decomposition whose circles are the same as 
 those of $r_{1}$, but whose distinguished oriented circles is $\gamma''$. 
By definition, the pair $(r_{1},r_{2})$ is the $B$-move on $r_{1}$. Its source is $r_{1}$ while its target is $r_{2}$.

\end{enumerate}
 Relations among morphisms:  if $r_1$ and  $r_2$ are two objects of ${{\mathscr{D}}_0^s}_{\Q}$ such that there exist two 
 sequences of moves $(M_1,\ldots, M_k)$ and $(M'_1,\ldots, M'_{k'})$ transforming $r_1$ into $r_2$, 
 then  $M_k \circ \ldots \circ M_1= M'_{k'}\circ \ldots \circ M'_1$. 

\end{definition}

\begin{figure}        
\begin{center} 
\includegraphics[scale=0.8]{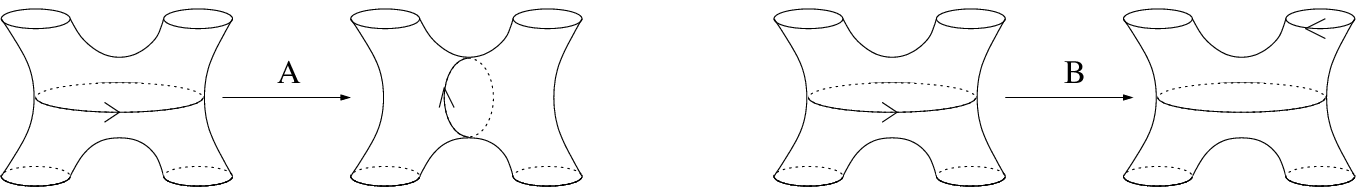}  
\end{center}
\caption{Moves in the groupoid ${\mathscr{D}}_{0,\Q}^s$ }\label{pto}          
\end{figure} 

\begin{remark}
There is a bijection between the set of objects of $Pt$ and  the set of objects of ${\mathscr{D}}_{0,\Q}^s$, 
which maps the {\it marked Farey tessellation}
onto the {\it canonical marked rigid structure} of $\mathscr{S}_{0,\infty}$. 
This bijection extends to a groupoid isomorphism $Pt\rightarrow {{\mathscr{D}}_0^s}_{\Q}$.
\end{remark}

\vspace{0.1cm}\noindent 
Via this isomorphism, the generators 
$\alpha$ and $\beta$ may be viewed as isotopy classes 
of asymptotically rigid homeomorphisms 
(which preserve the visible/hidden sides decomposition) of $\mathscr{S}_{0,\infty}$. 
The generator $\alpha$ corresponds to the mapping class such that $\alpha(r_*)=A(r_*)$, and $\beta$ 
to the mapping class such that  $\beta(r_*)=B(r_*)$. This gives a new proof of the 
existence of an embedding of $T$ into the mapping class group of 
$\mathscr{S}_{0,\infty}$, obtained in \cite{ka-se}.

\subsection{The Hatcher-Thurston complex of $\mathscr{S}_{0,\infty}$}

The Hatcher-Thurston complex of pants decompositions is first mentioned 
in the appendix of \cite{ha-th}. It is defined again in \cite{ha-lo-sc}, for any compact 
oriented surface, possibly with boundary, where it is proved that it is simply connected. We extend its definition to the non-compact 
surface $\mathscr{S}_{0,\infty}$.

\begin{definition}[\cite{fu-ka1}]
The Hatcher-Thurston complex \index{Hatcher-Thurston complex} ${\cal HT}(\mathscr{S}_{0,\infty})$ is a cell  2-complex.
\begin{enumerate}
\item Its vertices are the asymptotically trivial 
pants decompositions of $\mathscr{S}_{0,\infty}$.
\item Its edges correspond to pairs of decompositions $(p,p')$ such that  $p'$ is obtained from 
 $p$ by a local $A$-move, i.e. by replacing a circle $\gamma$ of $p$ by any circle $\gamma'$ whose geometric intersection number with $\gamma$ is equal to 2 (and does not intersect the other circles of $p$).
\item Its 2-cells fill in the cycles of moves 
of the following types: 
triangular cycles, pentagonal cycles (cf. Figure \ref{tr-pe}), and square cycles corresponding to the commutation 
of two $A$-moves with disjoint supports. 
\end{enumerate} 
\end{definition}    

\vspace{0.1cm}\noindent 
The Hatcher-Thurston complex ${\cal HT}(\mathscr{S}_{0,\infty})$ is an inductive limit of Hatcher-Thurston complexes 
of compact subsurfaces of $\mathscr{S}_{0,\infty}$. It is therefore simply connected.
 
\begin{figure}  
\begin{center}        
\includegraphics[scale=0.8]{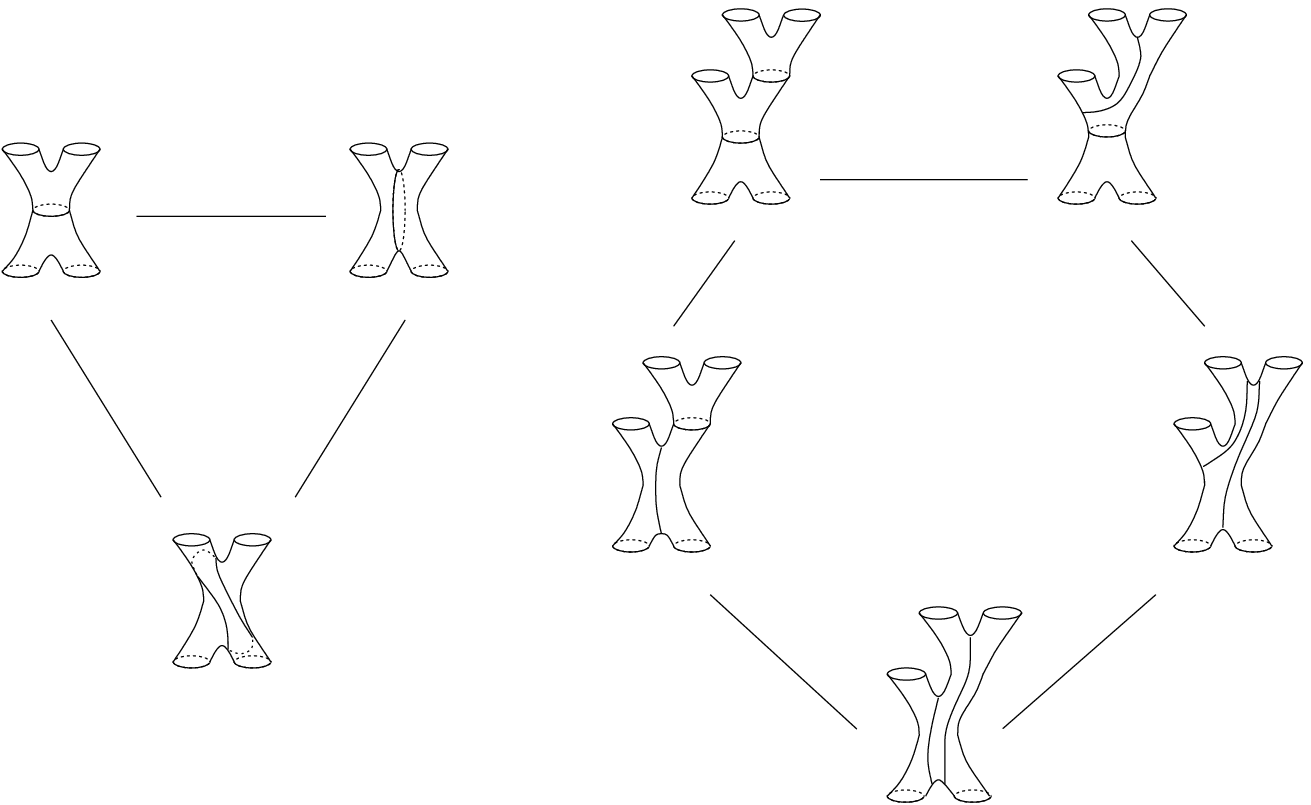}  
\caption{Triangular cycle and pentagonal cycle in ${\cal HT}(\mathscr{S}_{0,\infty})$}\label{tr-pe}\end{center}          
\end{figure}  

\vspace{0.1cm}\noindent 
The following proposition establishes a fundamental relation between 
the Cayley graph of Thompson's group $T$ (generated by $\alpha$ and $\beta$) and the 
Hatcher-Thurston complex of $\mathscr{S}_{0,\infty}$. The presentation of $T$ will be exploited to prove some 
useful properties of the Hatcher-Thurston complex.

\begin{proposition}[following \cite{fu-ka1}]
The forgetful map $Ob({\mathscr{D}}_{0,\Q}^s )\rightarrow {\cal HT}(\mathscr{S}_{0,\infty})$,
 which maps an asymptotically rigid marked structure onto 
 the underlying pants decomposition, extends to a cellular map 
 $\nu:Gr({\mathscr{D}}_{0,\Q}^s )\rightarrow {\cal HT}(\mathscr{S}_{0,\infty})$ 
 from the graph of the groupoid onto the 1-skeleton of the Hatcher-Thurston complex. 
 It maps an edge corresponding to an $A$-move onto an edge of type $A$ of ${\cal HT}(\mathscr{S}_{0,\infty})$, and collapses an edge corresponding to a $B$-move onto a vertex.
\end{proposition}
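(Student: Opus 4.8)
The statement to prove is that the forgetful map on objects, sending an asymptotically rigid marked structure to its underlying pants decomposition, extends to a cellular map $\nu\colon Gr({\mathscr{D}}_{0,\Q}^s)\to {\cal HT}(\mathscr{S}_{0,\infty})$ taking $A$-move edges to type-$A$ edges and collapsing $B$-move edges to vertices. The natural approach is to \emph{define} $\nu$ on vertices by the forgetful map $r\mapsto p(r)$ (forgetting both the distinguished oriented circle and the visible/hidden decomposition — though the latter is canonical here), then extend over edges, and finally verify that the closed edge-paths bounding $2$-cells of $Gr({\mathscr{D}}_{0,\Q}^s)$ (the triangular, pentagonal and square cycles coming from the relations among $A$- and $B$-moves) map to null-homotopic loops in ${\cal HT}(\mathscr{S}_{0,\infty})$, so that $\nu$ extends cellularly to the $2$-skeleton. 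Since ${\cal HT}(\mathscr{S}_{0,\infty})$ is only a $2$-complex and the proposition only claims a cellular map onto the $1$-skeleton, it suffices to handle vertices and edges plus the consistency of relations; but it is cleanest to observe that $Gr({\mathscr{D}}_{0,\Q}^s)$ is the Cayley graph of $T$ and ${\cal HT}$ is simply connected, so there is no obstruction beyond the local picture.

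\emph{Step 1: edges.} An $A$-move $(r_1,r_2)$ replaces the distinguished circle $\gamma$ of $r_1$ by the circle $\gamma'$ inside the four-holed sphere $\Sigma_{0,4}$ obtained by the $+\pi/2$ rotation; by construction $\gamma$ and $\gamma'$ have geometric intersection number $2$ and $\gamma'$ meets no other circle of $r_1$. Hence $(p(r_1),p(r_2))$ is exactly an edge of type $A$ in ${\cal HT}(\mathscr{S}_{0,\infty})$. A $B$-move $(r_1,r_2)$ changes only the distinguished oriented circle (via the $+2\pi/3$ rotation of a pair of pants), leaving the underlying pants decomposition unchanged, so $p(r_1)=p(r_2)$ and the edge is sent to a single vertex. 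Thus $\nu$ is already defined and cellular on the $1$-skeleton of $Gr({\mathscr{D}}_{0,\Q}^s)$.

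\emph{Step 2: extension over $2$-cells.} The relations in ${\mathscr{D}}_{0,\Q}^s$ say that any two sequences of $A$/$B$-moves with the same source and target are equal; combinatorially, via the isomorphism $Pt\cong {\mathscr{D}}_{0,\Q}^s$ and the Lochak--Schneps presentation, the $2$-cells of $Gr({\mathscr{D}}_{0,\Q}^s)$ are filled by the defining relators of $T$ on $\alpha,\beta$. Under $\nu$, every relator involving only $\beta$ (e.g. $\beta^3$) collapses to a point; a relator such as the pentagon $(\beta\alpha)^5$ maps to a loop of $A$-edges with some $B$-edges collapsed, and one checks directly from Figure \ref{tr-pe} that this is precisely a pentagonal cycle of ${\cal HT}(\mathscr{S}_{0,\infty})$ (together with collapsed segments); the commutation relations map to square cycles of disjoint $A$-moves; $\alpha^4$ maps to a loop one recognizes as a triangular-cycle combination (a pair of flips on the same $\Sigma_{0,4}$ returning to a pants decomposition). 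In every case the image loop bounds a $2$-cell of ${\cal HT}(\mathscr{S}_{0,\infty})$, or is already constant, so $\nu$ extends cellularly.

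\emph{Main obstacle.} The routine part is Step 1; the delicate part is Step 2 — matching the combinatorial relators of $T$ (or the relations among $A$- and $B$-moves in ${\mathscr{D}}_{0,\Q}^s$) with the geometrically-defined $2$-cells of ${\cal HT}(\mathscr{S}_{0,\infty})$, i.e. checking that a pentagon of moves in the groupoid maps onto a genuine pentagonal Hatcher--Thurston cycle on $\mathscr{S}_{0,4}$ and that $\alpha^4$ degenerates correctly. This is where one must carefully track what the distinguished oriented circle is doing (so that $B$-collapses are inserted in the right places) and invoke the Hatcher--Lochak--Schneps description of the $2$-cells on the four-holed sphere. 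Since ${\cal HT}(\mathscr{S}_{0,\infty})$ is simply connected (as an inductive limit of simply connected compact Hatcher--Thurston complexes), one could alternatively argue that the $1$-skeleton map automatically extends, but the explicit correspondence of cells is what makes the proposition useful in the sequel, so I would carry it out by hand on each relator.
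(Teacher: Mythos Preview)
Your Step 1 is correct and is, in fact, the entire content of the proposition. The paper gives no proof at all, because the statement is immediate from the definitions: $Gr({\mathscr{D}}_{0,\Q}^s)$ is defined as a \emph{graph} (vertices are marked rigid structures, edges are single $A$- or $B$-moves), with no $2$-cells whatsoever. So there is nothing to extend over beyond edges, and your verification that an $A$-move changes the pants decomposition by a single local $A$-move while a $B$-move leaves it unchanged is all that is required.

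Your Step 2 is therefore not part of the proposition, and your identification of it as ``the delicate part'' reflects a misreading of what is being claimed. What you are doing in Step 2 is in fact what the paper does \emph{after} the proposition, as a separate observation: it records where the relator cycles of the Cayley graph of $T$ land in ${\cal HT}(\mathscr{S}_{0,\infty})$, and uses this to prove the next proposition (simple connectivity of the reduced Hatcher--Thurston complex). Note also that your analysis of the $\alpha^4$ cycle is incorrect: since $\alpha^2$ rotates the distinguished oriented circle by $\pi$, the underlying unoriented circle and hence the pants decomposition return to the original after two $A$-moves, so $\nu$ sends the $\alpha^4$ cycle to a back-and-forth path along a single $A$-edge, not to anything involving a triangular cell.
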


\vspace{0.1cm}\noindent 
Under the isomorphisms $Gr({\mathscr{D}}_{0,\Q}^s)\approx  Gr(Pt)\approx Cayl(T)$, where 
$Cayl(T)$ is the Cayley graph of $T$ with generators  $\alpha$ and $\beta$, 
$\nu$ may be identified with a morphism from $Cayl(T)$ to ${\cal HT}(\mathscr{S}_{0,\infty})$.

\vspace{0.1cm}\noindent 
One can easily check that:  
\begin{enumerate}
\item the image by $\nu$ of the cycle of 10 moves associated to the relation  
$(\alpha\beta)^5=1$ is a pentagonal cycle of the Hatcher-Thurston complex;
\item the image by $\nu$ of the cycle associated to the relation $[\beta\alpha\beta,\alpha^2\beta\alpha\beta\alpha^2]=1$ is a 
square cycle  $(DC1)$,
 corresponding to the commutation of two $A$-moves 
 supported by two adjacent 4-holed spheres; 
 \item the image by $\nu$ of the cycle associated to the relation  
\[ [\beta\alpha\beta,\alpha^2\beta^2\alpha^2\beta\alpha\beta\alpha^2\beta\alpha^2]=1 \]
is a square cycle $(DC_2)$, corresponding to the commutation of two $A$-moves 
 supported by two  4-holed spheres separated by a pair of pants.
 \end{enumerate}

\begin{definition}[Reduced Hatcher-Thurston complex]\index{reduced Hatcher-Thurston complex}\index{Hatcher-Thurston complex! reduced}
Let ${\cal HT}_{red}(\mathscr{S}_{0,\infty})$ be the subcomplex of
  ${\cal HT}(\mathscr{S}_{0,\infty})$, which differs from the latter by the set of square 2-cells: a square 2-cell of ${\cal HT}(\mathscr{S}_{0,\infty})$ belongs to 
  ${\cal HT}_{red}(\mathscr{S}_{0,\infty})$ if and only if it is of type $(DC_1)$ 
  (corresponding to the commutation of $A$-moves supported by two adjacent 4-holed spheres), or of type
   $(DC_2)$ (corresponding to the commutation of $A$-moves 
   supported by two 4-holed spheres 
   separated by a pair of pants).
\end{definition}

\begin{proposition}[\cite{fu-ka1}, Proposition 5.5]
The subcomplex ${\cal HT}_{red}(\mathscr{S}_{0,\infty})$ is simply connected.
\end{proposition}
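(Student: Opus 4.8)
The plan is to deduce the simple connectivity of $\mathcal{HT}_{red}(\mathscr{S}_{0,\infty})$ from that of the full complex $\mathcal{HT}(\mathscr{S}_{0,\infty})$, which is already available from the preceding discussion. Since both complexes have the same $1$-skeleton, any loop in $\mathcal{HT}_{red}$ bounds a disk in $\mathcal{HT}$; the task is to show that this disk can be rebuilt using only triangular cells, pentagonal cells, and square cells of types $(DC_1)$ and $(DC_2)$. Equivalently, working at the level of $\pi_1$ of the $1$-skeleton, one must show that the normal closure of the boundaries of the retained $2$-cells already contains the boundary of every discarded square cell, i.e.\ every commutation square of two $A$-moves with disjoint supports that are \emph{not} in the configuration $(DC_1)$ or $(DC_2)$.

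The key step is therefore a reduction argument on the relative position of the two disjoint $A$-move supports. Two disjoint $4$-holed spheres $\Sigma, \Sigma'$ in $\mathscr{S}_{0,\infty}$ are joined by a unique embedded path in the dual tree $\mathcal{T}_2$; the cases $(DC_1)$ and $(DC_2)$ are precisely the situations where this path has combinatorial length $0$ or passes through exactly one intermediate pair of pants. First I would show that when the supports are ``far apart'', the commutation square can be filled by a sequence of the allowed square cells: one slides one of the $A$-moves step by step toward the other along the pants decomposition, each elementary slide being an instance of $(DC_1)$ or $(DC_2)$ (together with triangular/pentagonal cells used to normalize marked structures), until the two supports are adjacent or separated by a single pair of pants. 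This is the standard ``distance-reduction'' technique for showing that a subcomplex obtained by discarding ``long-range'' cells is still simply connected, and it relies on the fact that $\mathcal{HT}(\mathscr{S}_{0,\infty})$ is an inductive limit of finite-stage complexes, so every loop and every filling disk is supported on a compact subsurface.

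Concretely, the steps are: (1) reduce to loops in the $1$-skeleton and invoke simple connectivity of $\mathcal{HT}(\mathscr{S}_{0,\infty})$ to get a filling by arbitrary $2$-cells; (2) observe that triangular and pentagonal cells are retained, so only general disjoint-support square cells need to be eliminated; (3) for such a square, consider the tree-distance $d$ between the two $4$-holed sphere supports and argue by induction on $d$, expressing the square as a concatenation of squares with strictly smaller $d$ glued along $(DC_1)$-- and $(DC_2)$--type cells, plus triangles and pentagons to handle the marking; (4) the base cases $d \le 1$ are exactly $(DC_1)$ and $(DC_2)$ by definition. The main obstacle I expect is step (3): making the ``sliding'' move precise requires carefully tracking how the distinguished oriented circle (the marking) moves when one translates an $A$-move across a pair of pants, and verifying that the intermediate cycles that appear really are composites of the admissible cells rather than requiring a genuinely long-range commutation. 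Handling the marking cleanly is where the $B$-moves and the triangular/pentagonal relations do the bookkeeping, and getting that interplay right is the delicate part of the argument.
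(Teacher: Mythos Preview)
Your proposal takes a genuinely different route from the paper. The paper does not argue by induction on the tree-distance between the two $A$-move supports. Instead it exploits the morphism $\nu: Gr(\mathscr{D}_{0,\mathbb{Q}}^s)\to \mathcal{HT}(\mathscr{S}_{0,\infty})$ from the Cayley graph of $T$ (in the generators $\alpha,\beta$) to the $1$-skeleton of the Hatcher--Thurston complex. In the Lochak--Schneps presentation of $T$ there are exactly two commutator relations, and under $\nu$ their associated cycles are precisely $(DC_1)$ and $(DC_2)$, while $(\alpha\beta)^5=1$ maps to a pentagonal cycle. The fact that these relations already present $T$ is then used to conclude that every square cycle of $\mathcal{HT}(\mathscr{S}_{0,\infty})$ is a product of conjugates of $(DC_1)$, $(DC_2)$ and pentagons. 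In other words, the combinatorics you propose to carry out by hand is already encoded in the finite presentation of $T$, and the paper simply reads it off.

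Your distance-reduction strategy is reasonable in spirit, but step~(3) is where the genuine content lies, and as stated it is not justified. Knowing that $[A_1,A_3]$ and $[A_3,A_2]$ are null-homotopic for some intermediate $A_3$ does not by itself force $[A_1,A_2]$ to be null-homotopic; you need an explicit $2$-dimensional subdivision of the long-range square into admissible cells. Producing one typically requires rewriting $A_1$ (or $A_2$) as a product of $A$-moves with nearer supports via pentagon and triangle relations, and then distributing the commutator over that product---which is exactly the kind of computation the $T$-presentation packages for free. Your approach can very likely be pushed through, but it is longer and more delicate than the paper's, whose main advantage is that it replaces the geometric induction by an appeal to an already-established algebraic fact.
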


\vspace{0.1cm}\noindent 
We refer to \cite{fu-ka1} for the proof. It is based on the existence of the morphism of complexes $\nu$, and consists in proving, using the presentation of 
Thompson's group $T$, that any square cycle of ${\cal HT}(\mathscr{S}_{0,\infty})$ may be expressed as a product of conjugates of 
at most three types of cycles: the squares of types $(DC1)$ and $(DC2)$, and the pentagonal cycles.  

\section{The universal mapping class group in genus zero}

\subsection{Definition of the group $\B$} 

We have seen that $T$ is isomorphic to 
the group of mapping classes of asymptotically rigid homeomorphisms of $\mathscr{S}_{0,\infty}$ which globally 
preserve the decomposition of the surface into visible/hidden sides. It turns out that if one forgets the last condition, one obtains an interesting larger 
group, which is the main object of the article \cite{fu-ka1}.

\begin{definition}[\cite{fu-ka1}]
The universal mapping class group in genus zero \index{universal mapping vclass group in genus zero}
${\cal B}$ is the group of isotopy classes 
of (orientation-preserving) homeomorphisms 
of $\mathscr{S}_{0,\infty}$ which are asymptotically rigid, namely the 
asymptotically rigid mapping class group \index{asymptotically rigid mapping class group}
of $\mathscr{S}_{0,\infty}$ 
(see also Definition \ref{asy}).
\end{definition}

\vspace{0.1cm}\noindent 
From what precedes, $T$ imbeds into  $\B$. As a matter of fact, $\B$ is an extension of Thompson's group $V$.
 
\begin{proposition} [\cite{fu-ka1}, Proposition 2.4]
Let $K_{\infty}$ be the pure mapping class group of the surface $\mathscr{S}_{0,\infty}$, i.e. the group of mapping classes of  homeomorphisms 
which are compactly supported in $\mathscr{S}_{0,\infty}$. There exists a short exact sequence 
$$1\rightarrow K^*_{\infty}\longrightarrow \B \longrightarrow V\rightarrow 1$$
Moreover, the extension splits over  $T\subset V$.
 \end{proposition}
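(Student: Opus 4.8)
Define the map $\B\to V$ first. An asymptotically rigid homeomorphism $\varphi$ of $\mathscr{S}_{0,\infty}$ carries, by definition, the complement of an admissible subsurface $S_0$ rigidly onto the complement of an admissible subsurface $S_1$ of the same type, sending pairs of pants to pairs of pants; equivalently it induces a simplicial bijection of the complementary forests of the trees $t_{S_0},t_{S_1}$ inside ${\cal T}_2$. This is precisely the data of a partial automorphism of ${\cal T}_2$ in the sense of the excerpt, respecting the local orientation of edges, hence an element of $\mathrm{Fred}^+({\cal T}_2)$, and so its image in $N=\mathrm{Homeo}(\partial{\cal T}_2)$ lies in $V$. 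First I would check this assignment is well-defined on isotopy classes (two isotopic asymptotically rigid homeomorphisms induce the same boundary permutation, because the action on $\partial{\cal T}_2\cong\partial\mathscr{S}_{0,\infty}^+$ is an isotopy invariant) and independent of the choice of witnessing pair $(S_0,S_1)$ (any two choices are related by passing to a common admissible refinement, and on the overcounted pants the rigidity forces the induced partial automorphisms to agree — this is the standard ``refinement'' argument already used for symbols of $V$). Multiplicativity follows from the compatibility of compositions of partial automorphisms with composition of homeomorphisms. Surjectivity is the content of the tree-thickening observation in the introduction: given a partial automorphism witnessing an element of $V$, thicken ${\cal T}_2$ to $\mathscr{S}_{0,\infty}^+$, realize the simplicial bijection of forests by a rigid homeomorphism of the complement of the corresponding admissible subsurfaces, and extend it arbitrarily (orientation-preservingly) over the central admissible piece; this gives an asymptotically rigid homeomorphism mapping to the prescribed element.

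Next identify the kernel. If $\varphi$ maps to $1\in V$, then $\varphi$ induces the identity on $\partial\mathscr{S}_{0,\infty}$, and after an isotopy we may arrange that the witnessing subsurfaces satisfy $S_0=S_1=:S$ and the restriction of $\varphi$ to $\mathscr{S}_{0,\infty}\setminus S$ is \emph{the identity} rather than merely rigid (the rigid part acts on a disjoint union of copies of the one-holed-infinite-surface as the identity on the tree, hence is isotopic rel boundary to the identity there, pushing all the complication into $S$). Thus $\varphi$ is isotopic to a homeomorphism supported in a compact admissible subsurface, i.e. $[\varphi]\in K^*_\infty$. Conversely every compactly supported homeomorphism is trivially asymptotically rigid and dies in $V$. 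So $\ker=K^*_\infty$ and we have the short exact sequence $1\to K^*_\infty\to\B\to V\to 1$.

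For the splitting over $T$: by Theorem \ref{mcg}, $T$ is exactly the subgroup of $\B$ consisting of mapping classes of asymptotically rigid homeomorphisms that globally preserve the visible/hidden decomposition, and the composite $T\hookrightarrow\B\to V$ is the inclusion $T\subset V$ recalled in the preliminaries (one checks this on the generators $\alpha,\beta$: under the identification $Pt\cong{{\cal D}_0^s}_{\Q}$ they are the mapping classes realizing the $A$- and $B$-moves, and their images in $V=\mathrm{Fred}^+({\cal T}_2)/\!\sim$ are exactly the standard generators of $T\subset V$). Hence the inclusion $T\hookrightarrow\B$ is a set-theoretic, and in fact group-theoretic, section of $\B\to V$ over $T\subset V$.

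\emph{The main obstacle.} The delicate point is the well-definedness and surjectivity bookkeeping: making precise that ``rigid on the complement'' descends to a genuine, refinement-independent partial automorphism and that every partial automorphism lifts, all compatibly with the isotopy relation. Concretely one must verify that the ambiguity in extending over the central admissible subsurface $S$ is absorbed exactly by $K^*_\infty$ and not more — this is what pins the kernel down to the \emph{pure compactly supported} mapping class group rather than something larger, and it rests on the fact that a rigid homeomorphism of a complementary copy of $\mathscr{S}_{0,\infty}^+$-with-one-hole that is the identity on the dual tree is isotopic rel boundary to the identity. Everything else (multiplicativity, the identification of the section on generators) is a routine diagram chase once this geometric lemma is in hand; I would isolate it as a separate lemma before proving the proposition.
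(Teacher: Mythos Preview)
Your approach is essentially the paper's: you build $\B\to V$ by reading off the partial automorphism of ${\cal T}_2$ induced on the complement of a support, while the paper packages the same data as a symbol $(T_{\varphi(\Sigma)},T_\Sigma,\sigma(\varphi))$; these are the two equivalent descriptions of $V$ already set up in the preliminaries, so the constructions coincide. Surjectivity, the homomorphism property, and the section over $T$ via the visible/hidden--preserving subgroup are handled identically in both.

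The one place you diverge is in how much work you assign to the kernel step. You isolate as your ``main obstacle'' the lemma that a rigid self-map of a complementary branch inducing the identity on the dual tree is isotopic to the identity. Be careful here: if ``rigid'' meant only \emph{pants-preserving setwise}, this lemma would be false---an infinite product of Dehn twists about decomposition circles would be asymptotically rigid in that weak sense, lie in the kernel, yet fail to be compactly supported. The intended meaning (as in \cite{fu-ka1}, and consistent with the later general definition in this survey where a rigid structure is a decomposition into \emph{disks}) is stronger: a rigid map respects the full canonical rigid structure, i.e.\ the hexagon (visible/hidden) decomposition, so on each pants it is one of finitely many prescribed model homeomorphisms. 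With that reading, a rigid self-map that is the identity on the tree is literally (or up to an evident Alexander-type isotopy on hexagons) the identity, and the kernel identification is immediate---which is why the paper dispatches it in a single sentence. So your lemma is correct, but for the right reason it is nearly a tautology rather than a genuine obstacle; you should say so and make the strong meaning of ``rigid'' explicit.
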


\begin{proof} For the comfort of the reader, we recall the proof given in \cite{fu-ka1}.  Let us define the projection $\B \to V$.          
Consider $\varphi\in \B$ and let $\Sigma$ be a support for         
$\varphi$. We introduce the         
  symbol         
  $(T_{\varphi(\Sigma)},T_{\Sigma},\sigma(\varphi))$, where  
$T_{\Sigma}$ (resp. $T_{\varphi(\Sigma)}$) denotes the minimal finite binary subtree of ${\cal T}$ which contains ${q}(\Sigma)$ (resp. ${q}(\varphi(\Sigma))$), and $\sigma(\varphi)$ is the bijection induced by $\varphi$ between the set of leaves of both trees. The image of $\varphi$ in $V$ is the          
class of this triple, and it is easy to check that this correspondence induces         
a well-defined and surjective morphism $\B\rightarrow V$. The kernel is the subgroup of         
isotopy classes of homeomorphisms inducing the identity outside  some  
compact set, 
and hence is the direct limit of the pure mapping class groups. 
 
\vspace{0.1cm}  
\noindent        
Denote by ${\bf T}$ the subgroup of $\B$ consisting of mapping classes         
represented by asymptotically rigid homeomorphisms preserving the whole visible         
side  of $\s$. The image of ${\bf T}$ in $V$ is the subgroup of elements         
represented by symbols $(T_1,T_0,\sigma)$, where $\sigma$ is a bijection preserving the cyclic order of the labeling of the leaves of the 
trees. Thus, the image of ${\bf T}$ is Ptolemy-Thompson's group  
$T\subset V$. Finally, the kernel of the epimorphism ${\bf T}\to T$ is         
trivial. In the following, we shall identify $T$ with ${\bf T}$.      
\end{proof}

\vspace{0.1cm}\noindent 
As the kernel of this extension is not finitely generated, there is no evidence that ${\cal B}$ should be finitely generated. 
The main theorem of \cite{fu-ka1} asserts a stronger result.

\subsection{$\B$ is finitely presented}

\begin{theorem}[\cite{fu-ka1}, Th. 3.1]
The group $\B$ is finitely presented.
\end{theorem}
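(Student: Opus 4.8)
The plan is to exploit the simple connectivity of the (reduced) Hatcher--Thurston complex $\mathcal{HT}_{red}(\mathscr{S}_{0,\infty})$ together with the finite presentability of $V$, via the standard machinery for presenting a group acting on a simply connected $2$-complex. Concretely, $\mathcal{B}$ acts on $\mathcal{HT}_{red}(\mathscr{S}_{0,\infty})$ by the natural action on pants decompositions; I would first analyze this action: its quotient should be a finite complex (since admissible subsurfaces of a given type are finite in number up to the action, and the $A$-moves, triangular, pentagonal, $(DC_1)$ and $(DC_2)$ cells have finitely many orbits), and I would record the vertex, edge and face stabilizers. Then Brown's criterion (or the Haefliger--Brown presentation of the fundamental group of a complex of groups, or simply the classical result: if a group $G$ acts on a simply connected $2$-complex with finite quotient and finitely presented vertex stabilizers and finitely generated edge stabilizers, then $G$ is finitely presented) would immediately give finite presentability of $\mathcal{B}$ \emph{provided} the stabilizers are themselves finitely presented (resp. finitely generated).

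The key step is therefore to identify the stabilizers. The stabilizer of a vertex (an asymptotically trivial pants decomposition) is, up to finite index, the group of asymptotically rigid mapping classes preserving that decomposition; this is built out of a compactly supported piece (a mapping class group of a compact planar surface, which is finitely presented, being a subgroup/quotient built from classical braid and sphere mapping class groups) and the combinatorial symmetries of the tree at infinity, which form a group commensurable with $V$ or a parabolic-type subgroup of it — in any case finitely presented. Edge and face stabilizers are handled similarly, and only finite generation of them is needed. I would want to set this up carefully using the exact sequence $1\to K^*_\infty\to \mathcal{B}\to V\to 1$ from Proposition~2.4: the point is that $V$ is finitely presented (Thompson), $K^*_\infty$ is a directed union of mapping class groups of compact planar surfaces, and one leverages the $\mathcal{HT}_{red}$ action precisely to get a handle on the non-finitely-generated kernel by cutting it into finitely many orbits of locally finite pieces.

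Alternatively — and this may be the route actually taken in \cite{fu-ka1} — one writes down an explicit candidate finite presentation: take a finite generating set consisting of the generators $\alpha,\beta$ of $T$, a Dehn twist (or half-twist/braid generator) $\sigma$ along one canonical curve, together with a few transpositions realizing the passage from $T$ to $V$, and then verify relations. The relations split into: (i) the Lochak--Schneps relations for $T$; (ii) relations expressing the $V$-structure (idempotent/permutation relations, giving the extension of $T$ to $V$); (iii) braid relations among conjugates of $\sigma$; and (iv) the commutation relations $(DC_1)$, $(DC_2)$ coming from disjoint supports, which is exactly where the simple connectivity of $\mathcal{HT}_{red}$ enters to guarantee that no further relations are needed. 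One checks that the normal closure of these relations captures all relations by running the standard argument: any relator corresponds to a loop in the Cayley $2$-complex, push it into $\mathcal{HT}_{red}(\mathscr{S}_{0,\infty})$ via the map $\nu$, fill it there using simple connectivity, and lift the filling back, the square-cell fillings being precisely the $(DC_i)$ relations and the pentagon being $(\alpha\beta)^5$.

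The main obstacle, in either approach, is controlling the kernel $K^*_\infty$: it is an infinitely generated direct limit of mapping class groups, so finite presentability of $\mathcal{B}$ is genuinely a ``finiteness emerges from the action'' phenomenon rather than an extension-theoretic triviality. The delicate part is verifying that the stabilizers that arise are finitely presented (requiring the classical fact that mapping class groups of compact surfaces, and their relevant subgroups mixing in the tree symmetries, are finitely presented) and that only \emph{finitely many} orbits of relations are needed — i.e., that the $(DC_1)$ and $(DC_2)$ square cells, which have support of ``bounded combinatorial size,'' together with the triangles and pentagons, really do generate all $2$-cells of $\mathcal{HT}_{red}$ up to the $\mathcal{B}$-action. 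This last point is essentially the content of Proposition~5.5 quoted above, so the proof ultimately rests on that structural result about the reduced Hatcher--Thurston complex.
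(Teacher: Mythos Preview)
Your overall strategy --- apply Brown's criterion to a simply connected $2$-complex on which $\mathcal{B}$ acts cocompactly --- is exactly the paper's approach. However, there is a genuine gap in your stabilizer analysis, and it is precisely the crux of the proof.

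You assert that the vertex stabilizer of a pants decomposition in $\mathcal{HT}_{red}(\mathscr{S}_{0,\infty})$ is, up to finite index, built from a compactly supported mapping class group of a \emph{compact} planar surface together with a $V$-like piece, hence finitely presented. This is false: the stabilizer of a pants decomposition $p$ contains the Dehn twists along \emph{all} the (infinitely many) curves of $p$, and these generate an infinite-rank abelian subgroup. There is no single compact subsurface carrying the compactly supported part of the stabilizer. The paper states this explicitly: the stabilizers in $\mathcal{HT}_{red}$ are \emph{not finitely generated}. So $\mathcal{HT}_{red}$, while simply connected and cocompact, fails Brown's hypothesis on stabilizers, and your argument stops here.

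The paper's fix is to replace $\mathcal{HT}_{red}$ by a new complex $\mathcal{DP}(\mathscr{S}_{0,\infty})$ whose vertices are pants decompositions equipped with additional rigidifying data, so that stabilizers become small enough to be finitely presented. This complex is described as a mix of the Hatcher--Thurston complex and the Brown--Stein complex used to prove $V$ is $FP_\infty$; the extra structure simultaneously encodes the finiteness properties of the $\mathcal{M}(0,n)$ and of $V$. Your intuition that one must ``leverage the $\mathcal{HT}_{red}$ action precisely to get a handle on the non-finitely-generated kernel'' is correct, but the mechanism is not that the stabilizers in $\mathcal{HT}_{red}$ are already good --- it is that one must pass to a finer complex where they become good. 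Your alternative ``explicit presentation'' route has the same underlying problem: the filling argument via $\nu$ only controls relations up to elements of vertex stabilizers, and those stabilizers contribute infinitely many independent relations unless you have rigidified first.
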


\vspace{0.1cm}\noindent 
The proof is geometric, and inspired by the method of  Hatcher and Thurston for the presentation 
of mapping class groups of compact surfaces. It relies on the Bass-Serre theory, as generalized by K. Brown in 
\cite{br1}, which asserts the following. Let a group $G$ act on a simply connected 2-dimensional complex $X$, whose stabilizers of vertices are finitely presented, 
and whose stabilizers of edges are finitely generated. If the set of $G$-orbits of cells is finite (otherwise stated, the action is cocompact), then $G$ is finitely presented. 

\vspace{0.1cm}\noindent 
Clearly, the group ${\cal B}$ acts cellularly on the Hatcher-Thurston complex of  $\mathscr{S}_{0,\infty}$. However, the idea consisting in 
exploiting this action must be considerably improved if one wishes to prove the above theorem. Indeed, the complex ${\cal HT}(\mathscr{S}_{0,\infty})$ is simply connected, but it has infinitely many orbits of $\B$-cells. 
This is due to the existence of the square cycles, corresponding to the commutation of $A$-moves 
on disjoint supports. Let $\sigma$ be a 2-cell filling in such a square cycle; the $A$-moves which commute are supported on two 4-holed spheres, separated by a certain number of pairs of pants $n_{\sigma}$. Clearly, this integer is an invariant of the $\B$-orbit of $\sigma$, which can be arbitrarily large.\\
The interest for the reduced Hatcher-Thurston  ${\cal HT}_{red}(\mathscr{S}_{0,\infty})$ appears now clearly: it is both simply connected and finite modulo $\B$. 
Unfortunately, the stabilizers of the vertices or edges of ${\cal HT}_{red}(\mathscr{S}_{0,\infty})$ (which are the same as those of ${\cal HT}(\mathscr{S}_{0,\infty})$) 
under the action of ${\cal B}$ are not finitely generated. The idea, in order to overcome this difficulty, is to ``rigidify'' 
the pants decompositions so that the size of their stabilizers become more reasonable.
This leads us to introduce a complex ${\cal DP}(\mathscr{S}_{0,\infty})$, whose definition is rather technical (cf. \cite{fu-ka1}, \S5), 
which is a sort of mixing of the Hatcher-Thurston complex, and a certain $V$-complex, called the ``Brown-Stein complex'',  defined in \cite{br2}. 
The latter has been used in \cite{br2} to prove that $V$ has the $FP_{\infty}$ property.

\vspace{0.1cm}\noindent 
Therefore, our $\B$-complex  ${\cal DP}(\mathscr{S}_{0,\infty})$ encodes simultaneously some finiteness properties of 
the mapping class groups ${\cal M}(0,n)$ as well as of the Thompson group $V$.

\vspace{0.1cm}\noindent 
With the right complex in hand it is not difficult to find  
the explicit presentation for 
$\B$, by following the method described in \cite{br1}.

\section{The braided Ptolemy-Thompson group}

\subsection{Finite presentation}\label{ptdef}

In the continuity of our investigations  on the relations between 
Thompson groups and mapping class groups of surfaces  
we introduced and studied a group (in fact two groups which are quite similar) called the braided Ptolemy-Thompson group (\cite{fu-ka2}) $T^*$, which might appear 
as a simplified version of the group $A_{T}$ of \cite{gr-se}, and studied 
from a different point of view in \cite{ka-se}. Indeed, 
$T^*$, like $A_{T}$, is an extension of $T$ by the stable braid group $B_{\infty}$. Its definition is simpler than 
that of $A_{T}$, and is essentially topological.

\begin{definition}[from \cite{fu-ka2}]

\begin{enumerate}

\item Let $D$ be the planar surface with boundary obtained by thickening 
the dyadic complete (unrooted) planar tree. The decomposition into hexagons of $D$, which is dual to 
the tree, is called the {\em canonical decomposition}. \index{canonical decomposition}
By a {\em separating arc} of the decomposition 
we mean a connected component of the boundary of a hexagon which is not included in the boundary of $D$.
\item Let $D^{\sharp}$ be the surface $D$ with punctures corresponding to the vertices of the tree, and 
$D^*$ the surface $D$ whose punctures are the middles of 
the separating arcs of the canonical decomposition  (cf. Figure \ref{D}).
A connected subsurface of $D^{\sharp}$ or $D^*$ is {\em admissible} 
\index{admissibile subsurface}  
if it is the union of finitely many hexagons of the canonical decomposition.

\item Let $D^{\diamond}$ denote $D^{\sharp}$ or $D^*$. An orientation-preserving  homeomorphism $g$ of
 $D^{\diamond}$ is {\em asymptotically  rigid} \index{asymptotically rigid 
homeomorphism}
if it preserves globally the set of punctures, 
 and if there exist 
 two admissible subsurfaces $S_0$ and $S_1$ such that $g$ induces by restriction 
 a ``rigid'' homeomorphism from $D^{\diamond}\setminus S_0$ onto $D^{\diamond}\setminus S_1$, i.e. 
 a homeomorphism that respects the canonical decomposition and the punctures.
\end{enumerate}

\end{definition}

 \begin{figure} 
  \begin{center}   
\includegraphics[scale=0.8]{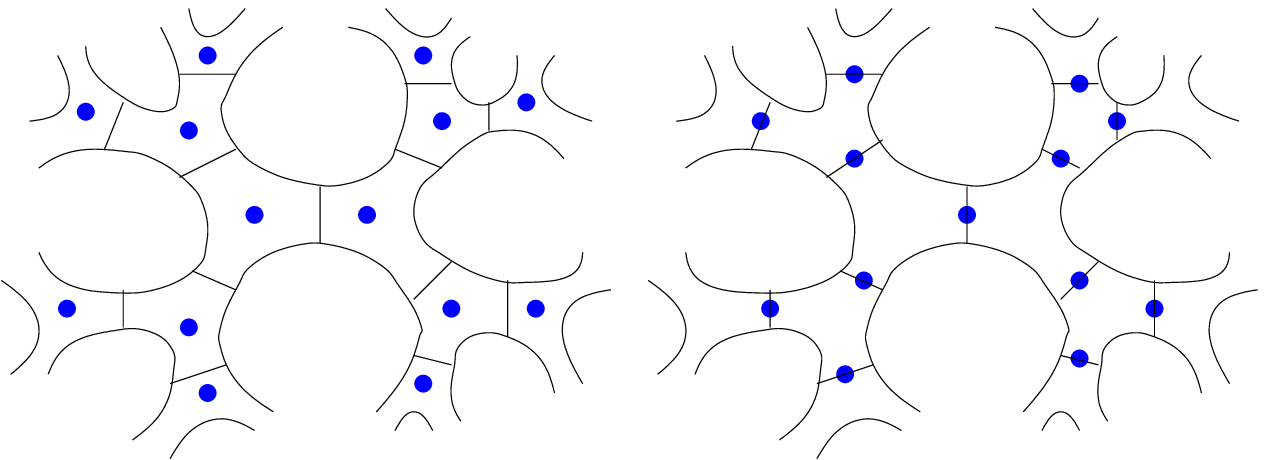}      
\caption{$D^{\sharp}$ and $D^{*}$ with their canonical rigid structures}\label{D}    
\end{center}  
\end{figure}

\vspace{0.1cm}\noindent 
Note that $D$ may be identified with the visible side of the surface $\mathscr{S}_{0,\infty}$ of  \cite{ka-se} and \cite{fu-ka1}. Its
canonical decomposition into hexagons is the trace on the visible side of $\mathscr{S}_{0,\infty}$ of the canonical pants decomposition of the latter.

\begin{definition}
The braided Ptolemy-Thompson group $T^{\diamond}$ \index{braided Ptolemy-Thompson group}
(where the symbol $\diamond$ may denote either $*$ or $\sharp$) is the 
group of isotopy classes of asymptotically rigid 
homeomorphisms of $D^{\diamond}$.
\end{definition}

\vspace{0.1cm}\noindent 
It is not difficult to see that there exists a short exact sequence 
$$1\rightarrow B_{\infty}\longrightarrow T^{\diamond}\longrightarrow T\rightarrow 1.$$ 
Unlike the extension of $T$ by $B_{\infty}$ which defines $A_{T}$, the above extensions, producing 
respectively the groups $T^{\sharp}$ and $T^*$, are not related to the discrete Godbillon-Vey class.

\vspace{0.1cm}\noindent 
The main result of \cite{fu-ka2} is a theorem concerning 
the group presentations.

 \begin{theorem}[\cite{fu-ka2}, Th. 4.5]
 The groups $T^{\sharp}$ and $T^*$ are finitely presented.
 \end{theorem}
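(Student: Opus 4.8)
The plan is to mimic, in the braided setting, the strategy already successfully carried out for $\B$: exhibit a group action on a simply connected $2$-complex with finitely presented vertex stabilizers, finitely generated edge stabilizers, and finitely many orbits of cells, then invoke K.\ Brown's presentability criterion from \cite{br1}. The natural candidate complex is the Hatcher--Thurston--type complex ${\cal HT}(\mathscr{S}_{0,\infty})$ (equivalently, for $T^*$, the complex built from the separating arcs of the canonical decomposition of $D$), on which $T^{\diamond}$ acts cellularly via its quotient action on pants decompositions together with the braiding of punctures. Since this complex is simply connected (it is an inductive limit of simply connected Hatcher--Thurston complexes of compact subsurfaces) and its reduced version ${\cal HT}_{red}(\mathscr{S}_{0,\infty})$ is both simply connected and cocompact under the relevant mapping class group, the only thing one has to control is the size of the cell stabilizers in the braided group.

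First I would set up the action carefully: an asymptotically rigid homeomorphism of $D^{\diamond}$ descends to an asymptotically rigid homeomorphism of $D$, hence permutes the pants/hexagon decompositions, giving a cellular action of $T^{\diamond}$ on (the reduced) Hatcher--Thurston complex compatible with the short exact sequence $1\to B_\infty\to T^{\diamond}\to T\to 1$. Second, I would identify the stabilizer of a vertex $p$ (a pants decomposition with finite support): modulo $B_\infty$ this stabilizer is the analogue inside $T$ of a finite product of mapping class groups $\prod {\cal M}(0,n_i)$ glued to a rigid exterior, and the full preimage in $T^{\diamond}$ is an extension of such a group by $B_\infty$; the point is that for the \emph{reduced} complex the orbit count is finite, so only finitely many isomorphism types of vertex and edge stabilizers occur. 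Third, I would verify that each vertex stabilizer is finitely presented and each edge stabilizer finitely generated: the vertex stabilizers are extensions of finitely presented groups (compactly supported genus-zero mapping class groups, which are finitely presented) by $B_\infty$ suitably truncated — here one uses that the braiding only involves finitely many strands moving nontrivially relative to the rigid structure, so the relevant braid subgroups are the ordinary $B_n$'s, which are finitely presented — and similarly for edges. Finally, with all the hypotheses of Brown's theorem in place, conclude that $T^{\sharp}$ and $T^*$ are finitely presented, and, exactly as for $\B$, read off an explicit finite presentation by tracking generators coming from $\alpha,\beta$ and a single braid generator $\sigma$ together with relations coming from the triangular, pentagonal, and two commuting-$A$-move square cycles, decorated by braid relations and by the effect of the pentagon on the punctures (cf.\ Proposition \ref{permut}).

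The main obstacle I anticipate is the same one encountered for $\B$: the naive Hatcher--Thurston complex has infinitely many $T^{\diamond}$-orbits of square $2$-cells (indexed by how many pants separate the two commuting $A$-moves), so one \emph{cannot} apply Brown's criterion directly to it. The fix is to pass to the reduced complex ${\cal HT}_{red}(\mathscr{S}_{0,\infty})$, which is known to be simply connected (Proposition \ref{tr-pe} region, i.e.\ \cite{fu-ka1}, Prop.\ 5.5) and cocompact; but then one must check that, even after "rigidifying" enough to tame the stabilizers (the ${\cal DP}$-type construction of \cite{fu-ka1}), the braid kernel does not destroy finite presentability of the vertex stabilizers — concretely, that the stabilizer of a rigidified pants decomposition is an iterated extension of finitely presented pieces (finite permutation data, finitely many $B_n$'s, finitely many ${\cal M}(0,n)$'s) rather than something involving all of $B_\infty$ at once. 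I expect this to work because the rigid structure outside a compact subsurface is preserved, pinning down all but finitely many strands, but making this precise — i.e.\ producing the correct braided analogue of the ${\cal DP}$-complex and its cell stabilizers — is the technical heart of the argument and the step that genuinely needs the detailed combinatorial bookkeeping of \cite{fu-ka2}.
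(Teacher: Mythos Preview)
Your overall strategy (Brown's criterion via an action on a simply connected $2$-complex) is the right one, and it is the strategy of \cite{fu-ka2}. But the specific complex you propose does not work, and the reason is precisely the point you try to argue around.

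If $T^{\diamond}$ acts on ${\cal HT}_{red}(\mathscr{S}_{0,\infty})$ (or the hexagon analogue for $D$) via the underlying action on pants/hexagon decompositions, then this action factors through the surjection $T^{\diamond}\to T$: a braid generator $\sigma_e$ is a half-twist in a disk containing two punctures, and once the punctures are forgotten it becomes isotopic to the identity on $D$. Hence the entire kernel $B_\infty$ lies in the stabilizer of \emph{every} vertex. Your claim that ``the relevant braid subgroups are the ordinary $B_n$'s'' is false here: asymptotic rigidity constrains the homeomorphism at infinity, but it does not bound the number of strands a stabilizing braid may move, because moving any finite number of strands still fixes the decomposition. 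Since $B_\infty$ is not finitely generated, the vertex stabilizers are not finitely generated either, and Brown's theorem does not apply. No amount of passing to the reduced complex or to a ${\cal DP}$-type rigidification fixes this, because the problem is that the complex does not see the punctures at all.

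What the paper does instead is build a genuinely larger complex on which $T^{\diamond}$ acts with small stabilizers. A vertex of this braided Hatcher--Thurston complex is a hexagon decomposition of $D^{\diamond}$ \emph{together with} the compatibility condition that each hexagon contains exactly one puncture (for $D^{\sharp}$) or that each separating arc passes through exactly one puncture (for $D^*$); thus vertices record the relative position of punctures and hexagons. Edges are of two kinds: the Hatcher--Thurston $A$-moves \emph{and} elementary braiding moves. Forgetting the punctures gives a map from this complex onto ${\cal HT}(D)$ whose fibre over a vertex is the Cayley complex of $B_\infty$, for the tree/graph presentation of \cite{se}. Simple connectivity of the total complex is then obtained from simple connectivity of the base (the reduced HT complex) together with simple connectivity of the fibres, and now the cell stabilizers really are finite-type mapping class groups, so Brown's criterion goes through. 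The missing idea in your proposal is exactly this: one must enlarge the vertex set to break the $B_\infty$-symmetry, and correspondingly add braiding edges, rather than hope the stabilizers shrink on their own.
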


\vspace{0.1cm}\noindent 
Moreover, an explicit presentation for $T^{\sharp}$ is given, with 3 generators. We show that $T^*$ is generated 
by  2 elements. By comparing their associated abelianized groups, one proves that 
$T^{\sharp}$ and $T^*$, though quite similar, are not isomorphic.

\vspace{0.1cm}\noindent 
As in \cite{fu-ka1}, we prove the above theorem by making  $T^{\sharp}$ and $T^*$ act on convenient simply-connected 2-complexes, 
The results of \S 4 are used once again, especially 
the reduced Hatcher-Thurston complex, by introducing braided 
versions of the Hatcher-Thurston complex of the surface $D^{\sharp}$ and $D^*$ (the pairs of pants being replaced 
by hexagons).

\vspace{0.1cm}\noindent 
In short, a vertex of these two complexes is a decomposition into hexagons which coincides with the canonical decomposition 
outside a compact subsurface $D$, such that:
\begin{enumerate} 
\item  in the $T^{\sharp}$-complex 
 each hexagon contains a puncture of $D^{\sharp}$ in its interior; 
\item in the $T^*$-complex  each separating arc 
passes through a puncture of $D^*$.
\end{enumerate}
 There are two types of edges: an $A$-move of Hatcher-Thurston, and a braiding move $B$ (cf. \cite{fu-ka2}, \S3). 
 Forgetting the punctures, one obtains fibrations from the complexes onto the Hatcher-Thurston complex of $D$, whose fibers 
 over the vertices are isomorphic to the Cayley complex of the stable braid group $B_{\infty}$. 
The presentation of $B_{\infty}$ that is convenient exploits the distribution of the punctures on a tree or a graph. It is given 
by a more general theorem of the third author (cf. \cite{se}).

\vspace{0.1cm}\noindent  
The groups $T^{\sharp}$ and $T^*$ share a number of properties 
which makes them  quite different from $\B$.  
For instance the cyclic orderability of $T$ together with the 
left orderability of $B_{\infty}$ leads to a  cyclic order on $T^*$. 
Using a result from \cite{ca} we obtain:

 \begin{proposition}[\cite{fu-ka2}, Prop. 2.13]
  The group $T^*$ can be embedded into the group of orientation-preserving homeomorphisms of the circle.
 \end{proposition}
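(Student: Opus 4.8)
The plan is to produce an explicit cyclic order on $T^*$ that is preserved by the left-translation action, and then invoke the general principle (this is the content of the cited result from \cite{ca}) that a countable group carrying a left-invariant cyclic order embeds into $\mathrm{Homeo}^+(S^1)$. So the work reduces to building the cyclic order. I would start from the two pieces of structure already available: the short exact sequence $1\to B_\infty\to T^*\to T\to 1$, the fact that $T$ is cyclically orderable (it acts faithfully and orientation-preservingly on $S^1$ by piecewise-$\mathrm{PSL}(2,\Z)$ homeomorphisms, which is precisely a left-invariant cyclic order on $T$), and the fact that $B_\infty$ is left-orderable (being a direct limit of the braid groups $B_n$, each of which is left-orderable by the Dehornoy order, with the orders compatible under the standard inclusions).

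The key algebraic step is the following lifting lemma: if $1\to N\to G\to Q\to 1$ is a short exact sequence of countable groups, $N$ is left-orderable, $Q$ is cyclically orderable, and moreover the $Q$-action on $N$ by conjugation (well-defined up to inner automorphisms of $N$, which is all we need since inner automorphisms preserve any conjugate of a given left order — more carefully, one wants the conjugation action of $G$ on $N$ to preserve the positive cone, or to permit a choice of left order on $N$ that is $G$-invariant), then $G$ is cyclically orderable. I would check that the Dehornoy-type order on $B_\infty$ is invariant, or can be made invariant, under the conjugation action coming from $T^*$: the asymptotically rigid homeomorphisms act on the punctures by permutations that are ``eventually the identity'' and respect the tree/graph structure, so the induced automorphisms of $B_\infty$ are of a controlled combinatorial type (they come from the mapping class group action), and one argues that such automorphisms send the Dehornoy positive cone into itself, or that one can average/choose among the $B_\infty$-translates of the order to get invariance. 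Given such an invariant left order on $B_\infty$ together with the cyclic order on $T$, one builds the cyclic order on $T^*$ coset-by-coset: three elements $g_1,g_2,g_3$ of $T^*$ with distinct images in $T$ are ordered according to the cyclic order of those images; if two or three have the same image in $T$, one passes to the fibre, which is a coset of $B_\infty$, and compares there using the (invariant) left order — the standard cocycle/consistency checks then show this is a genuine left-invariant cyclic order on $T^*$.

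With the left-invariant cyclic order in hand, the embedding follows from the general theorem: a countable group with a left-invariant cyclic order acts faithfully and in an orientation-preserving fashion on the circle (one realizes the order on the group as the cyclic order of a countable dense subset of $S^1$, then extends the left-translation action continuously to $S^1$; the extension is faithful because the action on the dense set already is). Concretely this is the cyclic analogue of the classical fact that a countable left-orderable group embeds into $\mathrm{Homeo}^+(\R)$, and it is exactly what is quoted from \cite{ca}. So the proof structure is: (1) recall $T$ cyclically orderable and $B_\infty$ left-orderable; (2) establish invariance of a left order on $B_\infty$ under the relevant automorphisms; (3) assemble a left-invariant cyclic order on $T^*$ from the extension; (4) apply the realization theorem to embed $T^*$ into $\mathrm{Homeo}^+(S^1)$.

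The main obstacle is step (2): verifying that the conjugation action of $T^*$ on its normal subgroup $B_\infty$ is compatible with a left order on $B_\infty$. The Dehornoy order on finite braid groups is not invariant under all automorphisms, so one must use the specific geometric nature of the automorphisms arising here — they are induced by asymptotically rigid homeomorphisms, hence ``localized'' in a strong sense — and one may need to pick the left order on $B_\infty$ adapted to the tree structure of the punctures (which is natural here, since the presentation of $B_\infty$ used throughout this section, from \cite{se}, is itself built on the distribution of punctures along a graph). If a fully invariant left order is not available, the fallback is to work with the space of all left orders on $B_\infty$ and use a compactness/fixed-point argument, or to observe that cyclic orderability only requires invariance up to the coarser data needed for the cocycle to be well defined; pinning down exactly which of these suffices, and matching it to the form of \cite{ca} being invoked, is where the real care is needed.
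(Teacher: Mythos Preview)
Your proposal is correct and follows essentially the same approach as the paper: combine the cyclic orderability of $T$ with the left orderability of $B_\infty$ to produce a cyclic order on $T^*$, then invoke the result from \cite{ca} to obtain the embedding into $\mathrm{Homeo}^+(S^1)$. You have in fact supplied considerably more detail than the paper does here (it gives only the one-sentence outline before citing \cite{fu-ka2}), and you correctly flag the invariance of the left order on $B_\infty$ under the $T^*$-conjugation action as the point requiring care.
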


\vspace{0.1cm}\noindent 
 Adapting one of the Artin  solutions of  
 the word problem in the braid group, we also prove 
 
 \begin{proposition}[\cite{fu-ka2}, Prop. 2.16]
 
 The word problem for the group  $T^*$ is solvable.
  \end{proposition}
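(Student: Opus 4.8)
The plan is to reduce the word problem in $T^*$ to the (classically solvable) word problems in $T$ and in $B_{\infty}$, using the short exact sequence
\[ 1\to B_{\infty}\to T^*\to T\to 1. \]
The word problem in $T$ is solvable because its elements have computable representatives --- pairs of finite binary trees, or piecewise dyadic affine maps --- which one composes and compares algorithmically; and the word problem in $B_{\infty}=\bigcup_n B_n$ is solved following Artin: any word involves only finitely many standard generators $\sigma_1,\dots,\sigma_N$, hence lies in $B_N$, and its triviality is decided by computing the induced automorphism of the free group $F_N$ and testing whether it is the identity.

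Given a word $w$ in a fixed finite generating set of $T^*$ --- each generator mapping to a known word in $\alpha$ and $\beta$ --- I would first compute its image $\overline{w}$ in $T$ and decide, using the solvability of the word problem in $T$, whether $\overline{w}=1$. If $\overline{w}\neq 1$ then $w\neq 1$ in $T^*$ and we are done. Otherwise $w$ represents an element of $\ker(T^*\to T)=B_{\infty}$, but presented as a word in the generators of $T^*$, and the task becomes to rewrite it \emph{effectively} as a word in the $\sigma_i$.

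For this I would use the explicit presentation of $T^*$ relative to the Lochak--Schneps presentation $T=\langle\alpha,\beta\mid R\rangle$ recalled above, obtained in the course of proving the finite-presentation theorem above (equivalently, read off from the description of $T^*$ as the asymptotically rigid mapping class group of $D^*$): its defining relations are the braid relations, the ``action'' relations expressing each $\sigma_i$ conjugated by a lifted Thompson generator as an explicit word in the $\sigma_j$, and, for each $r\in R$, one lifted relator $\widetilde r=b_r\in B_{\infty}$ with $b_r$ an explicit braid word. Since $\overline{w}=1$ in $T$, the word $\overline{w}$ lies in the normal closure of $R$ in the free group on $\alpha,\beta$; as such an expression exists, a naive enumeration of products of conjugates of the $r\in R$, freely reduced and compared with $\overline{w}$, terminates and yields an identity $\overline{w}=\prod_k u_k\, r_k^{\pm 1}\, u_k^{-1}$ in the free group. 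Applying the homomorphism sending $\alpha,\beta$ to their chosen lifts in $T^*$ (which sends $\overline{w}$ back to $w$) turns this into $w=\prod_k \widetilde u_k\, b_{r_k}^{\pm 1}\, \widetilde u_k^{-1}$ in $T^*$; pushing the Thompson words $\widetilde u_k$ through the braids $b_{r_k}$ by repeated use of the action relations rewrites $w$ as an explicit braid word $b\in B_N$ for a computable $N$, and one concludes by running Artin's algorithm on $b$, i.e. by deciding whether $b$ induces the trivial automorphism of $F_N$: indeed $w=1$ in $T^*$ if and only if $b=1$ in $B_{\infty}$.

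The step I expect to be the main obstacle is making this conjugation effective, i.e. controlling the \emph{support}: one must verify that conjugating $\sigma_i$ by a generator of $T^*$ produces a word in $\sigma_j$ with $|j-i|$ bounded, so that a word $w$ of length $\ell$ can only generate $\sigma_j$ with $j\le N(\ell)$ for a computable function $N$. This is precisely where the geometry enters --- a generator of $T^*$ is rigid outside a fixed compact admissible subsurface, hence carries a braiding of two punctures to the braiding of the two punctures obtained by applying it, with an index shift bounded by a constant depending only on that subsurface. Once this bookkeeping is carried out, the argument closes. A variant nearer to Artin's original solution would embed $T^*$ in $\mathrm{Aut}(F_{\infty})$ via its action on $\pi_1$ of the punctured surface and test whether the induced automorphism --- eventually a relabelling --- is trivial together with the induced permutation of punctures in $V$; this would, however, require separately establishing faithfulness of that action and a bound on how many free generators must be inspected.
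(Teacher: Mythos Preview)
Your approach is correct but takes a more algebraic and roundabout route than the paper's. The argument in \cite{fu-ka2} is essentially your ``variant'': given a word $w$ of length $\ell$ in the generators $\alpha^*,\beta^*$, one \emph{directly} computes an admissible punctured subsurface $S\subset D^*$ of size bounded linearly in $\ell$ that serves as a support for $w$, simply because each generator has a fixed compact support and composition enlarges supports in a controlled way (this is exactly the bookkeeping you allude to). One then checks whether $w(S)=S$ --- equivalently whether the image of $w$ in $T$ is trivial --- and if so, $w$ is a mapping class of the compact punctured surface $S$ fixing the boundary, hence an element of a finite braid group $B_N$ with $N$ bounded in terms of $\ell$; one finishes by Artin's solution in $B_N$. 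Your worries about this variant are unfounded: faithfulness is just the injectivity of Artin's representation $B_N\hookrightarrow\mathrm{Aut}(F_N)$ (or the Dehn--Nielsen--Baer theorem for compact surfaces), and the bound on how many free generators must be inspected is precisely the support bound.

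Your main line --- projecting to $T$, enumerating an expression of $\overline w$ in the normal closure of the relators, lifting and pushing Thompson letters through braids via the action relations --- also works, since the enumeration terminates once you know $\overline w=1$. But this blind search is wasteful and entirely avoidable: the geometric support computation replaces it by a direct, linear-time reduction to a specific $B_N$. Both routes rest on the same two ingredients (solvability in $T$ and in $B_n$); the paper's is simply more direct and yields explicit complexity bounds.
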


 \vspace{0.1cm}\noindent 
The group  $T^*$ is also used in the study of an asymptotically rigid mapping class group of infinite genus, whose rational homology is isomorphic to the ``stable homology of the mapping class group''.

\subsection{Asynchronous combability}

\vspace{0.2cm}
\noindent 
The aim of this section is to show that $T^{\star}$ has strong finiteness 
properties. Although it was known  that one can generate 
Thompson groups  using automata (\cite{GN}), very little was known about the 
geometry of their Cayley graphs. Recently, D. Farley proved (\cite{Fa}) 
that Thompson groups (and more generally picture groups, see \cite{GuS}) 
act properly by isometries on CAT(0) 
cubical complexes  (and hence are a-T-menable), and V.Guba (see \cite{Gu,Gu2}) 
obtained that  
the smallest Thompson group $F$ has quadratic Dehn function while $T$ and $V$ 
have polynomial Dehn functions.  It is known that automatic groups have quadratic Dehn functions 
on one side and  Niblo and Reeves (\cite{NR}) proved that any group acting properly 
discontinuously and cocompactly on a CAT(0) cubical complex is automatic. 
One might  therefore wonder whether Thompson groups are automatic.

\vspace{0.2cm}
\noindent 
We approach this problem from the 
perspective of mapping class groups, since one can view $T$ and $T^*$ as 
mapping class groups of a surface of infinite type. One of the far reaching 
results in this respect is the Lee Mosher theorem  (\cite{Mo}) stating that 
mapping class  groups of surfaces of finite type are automatic. 
Our main result in 
\cite{fu-ka4} shows that, when shifting to surfaces of infinite type, a slightly 
weaker result still holds.

\vspace{0.2cm}
\noindent 
We will follow below the terminology introduced by Bridson in \cite{A,Br,Br2},
in particular we allow very general combings. We refer  the reader 
to \cite{ECHLPT} for a thorough introduction to the subject. 

\vspace{0.2cm}
\noindent 
Let $G$ be a finitely generated group with a finite generating set 
$S$, such that $S$ is closed with respect to taking inverses,  
and  $C(G,S)$ be the corresponding Cayley graph. This graph is endowed with 
the word metric in which the distance $d(g,g')$ between the vertices 
associated 
to the elements $g$ and $g'$ of $G$ is the minimal length of a word 
in the generators $S$  representing  the 
element $g^{-1}g'$ of $G$. 

\vspace{0.2cm}
\noindent 
A {\em combing} \index{combing} of the group $G$ with generating set $S$ 
is a map which associates to any element $g\in G$ a 
path $\sigma_{g}$ in the Cayley graph associated to $S$
from $1$ to $g$. In other words $\sigma_{g}$ is a word in the 
free group generated by $S$ that represents  the 
element $g$ in $G$.  We can also represent $\sigma_g(t)$ as an infinite 
edge path in $C(G,S)$ (called  
combing path)  that joins the identity element to $g$, moving 
at each step to a neighboring vertex and which becomes eventually 
stationary at $g$. Denote by $|\sigma_g|$ the length of the path $\sigma_g$ 
i.e. the smallest $t$ for which $\sigma_g(t)$ becomes stationary.  

\begin{definition}
The combing $\sigma$ of the group $G$ is {\em synchronously bounded}
\index{synchronosuly bounded combing}
\index{combing! synchronosuly bounded} if it 
satisfies the synchronous fellow traveler property defined as follows. 
There exists $K$ such that the combing paths $\sigma_g$ and 
$\sigma_{g'}$ of  any two elements $g$, $g'$ at distance $d(g,g')=1$ 
are at most distance $K$ apart at each step i.e. 
\[ d(\sigma_g(t),\sigma_{g'}(t))\leq K, \; {\rm for \; any }\; t\in \R_+.\]
A group  $G$ having a synchronously bounded combing is  called synchronously
combable. 
\end{definition}

\vspace{0.2cm}
\noindent In particular, combings furnish normal forms for 
group elements. 
The existence of combings with special properties (like 
the fellow traveler property) has important consequences for the 
geometry of the group (see \cite{A,Br}).

\vspace{0.2cm}
\noindent 
We will  also introduce a slightly weaker condition (after 
Bridson and Gersten) as follows:

\begin{definition}
The combing $\sigma$ of the group $G$ is {\em asynchronously bounded} 
\index{asynchronously bounded combing}\index{combing! asynchronously bounded} 
if there exists $K$ such that  for any 
two elements $g$, $g'$ at distance $d(g,g')=1$ there exist ways to 
travel along   the combing paths $\sigma_g$ and 
$\sigma_{g'}$ at possibly different speeds so that corresponding  
points are at most distance $K$ apart.  Thus, there exists 
continuous increasing functions $\varphi(t)$ and $\varphi'(t)$ 
going from zero to infinity such that   
\[ d(\sigma_g(\varphi(t)),\sigma_{g'}(\varphi'(t)))\leq K, \; {\rm for \; any }\; t\in \R_+.\]
A group  $G$ having an asynchronously bounded combing is  called asynchronously
combable. 

\vspace{0.1cm}
\noindent
The asynchronously bounded combing $\sigma$ has {\em a departure function} 
\index{combing! departure function}
$D:\R_+\to \R_+$ if for all $r >0$, $g\in G$ and 
$0\leq s,t\leq |\sigma_g|$, the assumption  $|s-t| > D(r)$ implies that 
$d(\sigma_g(s),\sigma_g(t)) > r$. 
\end{definition}

\vspace{0.1cm}
\noindent
The main result of \cite{fu-ka4} can be stated as follows:

\begin{theorem}[\cite{fu-ka4}]
The group $T^{\star}$ is asynchronously combable. 
\end{theorem}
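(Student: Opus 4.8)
The plan is to exploit the extension $1\to B_\infty\to T^\star\to T\to 1$ together with the fact that both the base $T$ and the fibre $B_\infty$ carry well-behaved combings, and to assemble a combing of $T^\star$ from these. The guiding principle is a ``combing extension'' lemma in the spirit of Bridson's work: if $1\to N\to G\to Q\to 1$ is a short exact sequence, $Q$ is (a)synchronously combable, $N$ is (a)synchronously combable, and the combing of $N$ is in a suitable sense preserved (up to the fellow-traveller constant) by the conjugation action of lifts of the generators of $Q$, then $G$ is asynchronously combable. One first fixes a section (set-theoretic, but as geometric as possible) $s\colon T\to T^\star$; concretely one uses the embedding of $T$ into $T^\star$ as asymptotically rigid homeomorphisms supported away from the punctures, which is available here since the extension splits over all of $T$ when we realize $T$ as the Ptolemy--Thompson group. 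Then every element of $T^\star$ is written uniquely as $b\cdot s(q)$ with $b\in B_\infty$, $q\in T$, and the combing path for $b\cdot s(q)$ is taken to be: first comb out $b$ inside $B_\infty$ (using a bounded combing of the stable braid group, e.g.\ the one coming from the normal form associated to the distribution of punctures along the tree, as in \cite{se}), then follow the lift of a combing path for $q$ in $T$.

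\textbf{Key steps, in order.} (1) Record an asynchronous (indeed synchronous) combing of $T$ with a departure function. Here I would invoke the finite presentation of $T$ on $\alpha,\beta$ and, more to the point, the identification of the Cayley graph of $T$ with the graph $Gr(Pt)$ of the Ptolemy groupoid / a subcomplex of the simply connected reduced Hatcher--Thurston complex; combability of $T$ can also be quoted from the literature on Thompson's groups, but the geometric picture via $\mathscr{S}_{0,\infty}$ is what makes the fellow-traveller estimates tractable. (2) Record a combing of $B_\infty$ with the fellow-traveller property and a departure function; the braid groups $B_n$ are biautomatic, and the presentation of $B_\infty$ adapted to the tree (Theorem of the third author, \cite{se}) gives a combing that is compatible with the inclusions $B_n\hookrightarrow B_{n+1}$, which is essential since $B_\infty$ is not finitely generated as an abstract group but is finitely generated ``relative to'' each admissible subsurface. (3) Prove the conjugation-compatibility: for a generator $t$ of $T$ and its chosen lift $\tilde t\in T^\star$, conjugation by $\tilde t$ sends the standard braid generators to braids of uniformly bounded length, and more precisely distorts combing paths in $B_\infty$ by a bounded amount; this is where the explicit topological description of $T^\star$ as an asymptotically rigid mapping class group of $D^\star$ is used — a lift $\tilde t$ acts ``rigidly'' outside a compact admissible subsurface, so it only reshuffles finitely many punctures and the braid combing is essentially transported. (4) Glue: define $\sigma_{b\,s(q)}$ as the concatenation of the $B_\infty$-combing of $b$ with the lifted $T$-combing of $q$, check that this path is eventually stationary at $b\,s(q)$, has length controlled by $|b|_{B_\infty}+|q|_T$, and that two such paths for elements at distance $1$ in $T^\star$ asynchronously fellow-travel. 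The distance-$1$ analysis splits into the case where the two elements differ by a braid generator (handled by step 2 plus the fact that the $T$-part of the combing path is unchanged, so the ``tail'' fellow-travels trivially and the ``head'' fellow-travels by the $B_\infty$-combing property) and the case where they differ by a lift of a $T$-generator (handled by step 1 for the tail, and by step 3 to control the reshuffling of the already-combed braid head). (5) Produce the departure function for $\sigma$ from the departure functions of the two pieces.

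\textbf{Main obstacle.} The delicate point is step 3 together with the distance-$1$ estimate in the second case: when we right-multiply $b\,s(q)$ by a lift $\tilde t$ of a $T$-generator, the unique ``normal form'' of the product is not simply $b\,s(qt)$ but $b'\,s(qt)$ where $b' = b\cdot(\text{the braid } \beta_{q,t}\text{ recording how the rigid part of }\tilde t\text{ permutes/braids the punctures relative to }q)$. One must show that $\beta_{q,t}$ has length bounded \emph{independently of }$q$ — equivalently that the section $s$ can be chosen so that the $2$-cocycle $T\times T\to B_\infty$ measuring its failure to be a homomorphism takes values of uniformly bounded word-length — and then that prepending a bounded-length braid to an already-combed braid only perturbs the combing path boundedly (this is a standard property of combings that are ``quasigeodesic with departure function'', but it has to be checked for the specific $B_\infty$-combing). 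Controlling this cocycle is exactly where the asymptotic rigidity is indispensable, and where asynchrony (rather than synchrony) is forced on us, since the braid combing and the $T$-combing run on genuinely different length scales; I expect the bulk of the work in \cite{fu-ka4} to be in making this uniform-boundedness statement precise and in verifying the fellow-traveller inequality with the reparametrizations $\varphi,\varphi'$ explicitly.
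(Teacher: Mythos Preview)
Your approach differs substantially from the one in \cite{fu-ka4}, and it has a genuine gap.

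First, a factual error: the extension $1\to B_\infty\to T^\star\to T\to 1$ does \emph{not} split over all of $T$. It splits over $F$ and over the copy of ${\rm PSL}(2,\Z)$ generated by $\alpha^2,\beta$, but the relative abelianization $T^\star_{\rm ab}$ has class $12\chi\neq 0$ in $H^2(T;\Z)$, so there is no group-theoretic section $T\to T^\star$. You can of course still take a set-theoretic section, but then the cocycle $\beta_{q,t}$ you worry about in your ``main obstacle'' is genuinely nontrivial, and there is no reason its word-length in $T^\star$ should be bounded independently of $q$.

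The deeper problem is your step (2). The kernel $B_\infty$ is not finitely generated, so ``a combing of $B_\infty$'' has no intrinsic meaning relative to the finite generating set $\{\alpha^\star,\beta^\star\}$ of $T^\star$. The braid generators $\sigma_e$ for edges $e$ far from the base hexagon have word-length in $\{\alpha^\star,\beta^\star\}$ growing without bound (roughly linearly in the tree-distance from $e$ to the root), so a combing that first writes $b\in B_\infty$ as a word in the $\sigma_e$'s and then rewrites each $\sigma_e$ in terms of $\alpha^\star,\beta^\star$ will not satisfy any uniform fellow-traveller estimate. Bridson-type extension lemmas require the kernel to be finitely generated (and usually undistorted), and neither hypothesis holds here. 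Your awareness that $B_\infty$ is only ``finitely generated relative to each admissible subsurface'' is exactly the red flag: the admissible subsurface depends on the element, and with it the effective generating set.

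By contrast, the argument in \cite{fu-ka4} does not attempt to split $T^\star$ along the braid extension at all. It follows Mosher's strategy for finite-type mapping class groups: one embeds $T^\star$ into a (punctured, braided) Ptolemy groupoid built from tessellations of the surface, and then constructs the combing directly at the level of that groupoid, by a normalization procedure that successively flips edges. The combing of $T$ itself is obtained as a by-product of the same construction (hence the corollary immediately following the theorem), rather than being an input. This sidesteps entirely the issue of combing $B_\infty$ on its own, because the flips in the braided Ptolemy groupoid already live in the finitely generated group $T^\star$ and have bounded length there.
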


\noindent  In particular, in the course of the proof we also prove that:

\begin{corollary}
The Thompson group $T$ is asynchronously combable. 
\end{corollary}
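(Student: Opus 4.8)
The plan is to extract the combing of $T$ from the combing of $T^{\star}$ furnished by the theorem, by arranging the latter so that it is compatible with the projection $T^{\star}\to T$ in the exact sequence $1\to B_{\infty}\to T^{\star}\to T\to 1$. Concretely, I would take a finite generating set of $T^{\star}$ consisting of lifts $\widetilde{\alpha},\widetilde{\beta}$ of the Lochak--Schneps generators of $T$ together with one elementary braiding, and build the combing path $\sigma^{\star}_{g}$ of an element $g\in T^{\star}$ in two stages: first spell out the image $\bar g\in T$ as a canonical word $u(\bar g)$ in $\alpha,\beta$ by means of a combing $\sigma$ of $T$ (constructed below), lifted letter by letter to a path in the Cayley graph of $T^{\star}$ ending at a preferred lift of $\bar g$; then comb out the residual braid, using the presentation of $B_{\infty}$ adapted to the distribution of punctures on the tree. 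The edge map $C(T^{\star},S^{\star})\to C(T,S)$ induced on Cayley graphs is then surjective on vertices and $1$-Lipschitz, and it carries $\sigma^{\star}_{g}$ to $u(\bar g)=\sigma_{\bar g}$ followed by a stationary tail, simply because the braiding generators project to the identity of $T$. Pushing forward along this projection the asynchronous fellow traveler constant and the departure function of $\sigma^{\star}$ then shows at once that $\sigma$ is an asynchronously bounded combing of $T$ with a departure function; note that no set-theoretic section of $T^{\star}\to T$ is used, only a choice of one vertex over each vertex.

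Thus the real content is the combing $\sigma$ of $T$ itself, which I would build on the Ptolemy side, using the identification of $Gr(Pt)$ with the Cayley graph of $T$ on $\alpha,\beta$ recalled in Section 3. An element $g\in T$ is encoded by the normalized marked triangulation $g(\tau_{0})$, which differs from the Farey tessellation $\tau_{0}$ in only finitely many edges. I would let $\sigma_{g}$ be the canonical greedy sequence of $A$- and $B$-moves obtained by correcting the edges where $g(\tau_{0})$ and $\tau_{0}$ disagree in a fixed breadth-first order on the dual tree, starting from the distinguished edge, performing at each stage the flip (or marking move) prescribed by the generators that brings the current tessellation one step closer to $g(\tau_{0})$. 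One then checks that this halts, since the symmetric difference is finite and strictly decreasing, and that it produces a word in $\alpha,\beta$ of length comparable to the combinatorial size of $g$; crucially, because the procedure never returns to an already corrected region of the tree, the combing path admits a departure function, its excursions away from a given point having length bounded in terms of the distance of that point from the distinguished edge.

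The step I expect to be the main obstacle is the asynchronous fellow traveler property for $\sigma$: for $g$ and $g'=g\cdot s$ with $s\in\{\alpha^{\pm1},\beta^{\pm1}\}$, I must show that after suitable monotone reparametrizations the paths $\sigma_{g}$ and $\sigma_{g'}$ remain uniformly close. Multiplying by $s$ on the right is a move near the distinguished edge of the source tessellation, and it can change both which edges of $g(\tau_{0})$ must be corrected and the order in which the greedy algorithm reaches them, a priori arbitrarily far out in the tree; the task is to prove that the breadth-first discipline localizes this effect, so that the two runs differ only by a bounded detour at each stage, one of them possibly waiting -- which is exactly why only asynchronous, and not synchronous (hence not automatic), bounding is obtained here. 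This is the point where the simple connectivity of the reduced Hatcher--Thurston complex ${\cal HT}_{red}(\si)$ and the explicit finite presentation of $T$ recalled in Section 3 become decisive: the triangular, pentagonal and square $2$-cells supply homotopies of uniformly bounded area interpolating between competing greedy paths, and the finiteness of ${\cal HT}_{red}(\si)$ modulo $T$ keeps the complexity of these homotopies under control.
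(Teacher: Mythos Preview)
Your proposal is correct and aligns with the paper's approach: the paper states that the combability of $T$ is obtained ``in the course of the proof'' of the theorem on $T^{\star}$, via Mosher--Penner style combings in the Ptolemy groupoid, which is precisely the content of your second and third paragraphs. You have also correctly identified that the corollary is not a formal consequence of the theorem's statement but a byproduct of its proof structure --- the combing of $T^{\star}$ is built by first combing the image in $T$ and then the residual braid, so the $T$-combing is the genuine intermediate step.
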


\noindent The proof is largely inspired by the methods of L. Mosher. 
The mapping class group is embedded into the Ptolemy groupoid of some 
triangulation of the surface, as defined by L. Mosher and R. Penner.  
It suffices then to provide combings for the latter.

\begin{remark}
There are known examples of asynchronously combable groups with 
a departure function: 
asynchronously automatic groups (see \cite{ECHLPT}), 
the fundamental group of a Haken 3-manifold (\cite{Br}), 
or of a geometric 3-manifold (\cite{Br2}),
semi-direct products of $\Z^n$ by $\Z$ (\cite{Br}). 
Gersten (\cite{Ger}) proved that 
asynchronously combable groups with 
a departure function are of type ${\rm FP}_3$ and 
announced that they should actually be ${\rm FP}_{\infty}$. 
Recall that a group $G$ is ${\rm FP}_{n}$ if there is a projective 
$\Z[G]$-resolution of $\Z$ which is finitely generated in dimensions at most $n$ 
(see \cite{Geo}, Chapter 8 for a thorough discussion on this topic).  
Notice that there exist asynchronously combable groups  
(with departure function) which are 
not asynchronously automatic, for instance the {\sf Sol} and 
{\sf Nil} geometry groups  of closed 3-manifolds 
(see \cite{Bra});  in particular, they are not automatic. 
\end{remark}

\section{Central extensions of $T$ and quantization}

\subsection{Quantum universal Teichm\"uller space}
The goal of the quantization is, roughly speaking, 
to obtain {\em non-commutative deformations} of the action 
of the mapping class group on the Teichm\"uller space.  
It appears that the Teichm\"uller space of a surface 
has a particularly nice  {\em global} system of coordinate charts 
whenever the surface has at least one puncture, 
the so-called shearing coordinates introduced by Thurston 
(see \cite{Fun} for a survey). Each coordinate chart corresponds to 
fixing the isotopy class of a triangulation of the surface with 
vertices at the puncture. The mapping class group embeds into the 
{\em labelled} Ptolemy groupoid of the surface and there is a natural 
extension of the mapping class group action to 
an action of this groupoid on the set of coordinate charts. 
The necessity of considering labelled triangulations comes from the 
existence of triangulations with non-trivial automorphism groups. 
This theory extends naturally to the universal setting of  
Farey-type tessellations of the Poincar\'e disk ${\mathbb D}$, which behaves 
naturally as an infinitely punctured surface. Since there are no 
automorphisms of the binary tree which  induce eventually trivial 
permutations it follows that we do not need labelled tessellations.  
The analogue of the mapping class group is therefore  
the Ptolemy-Thompson group $T$. We will explain below (see Section \ref{dilog}) how one obtains by quantization a projective representation of 
$T$, namely a representation into the linear group modulo scalars, which is 
called the dilogarithmic representation.  
One of the main results of \cite{fu-se} (see also Sections \ref{comput} 
and \ref{ident}) is the fact that 
the dilogarithmic representation comes from a central extension 
of $T$ whose class is 12 times the Euler class generator. 
This result is very similar to the case of a finite type surface where 
the dilogarithmic representations come from a central extension of the 
mapping class group of a punctured surface having extension class 
12 times the Euler class plus the puncture classes (see \cite{fu-kas} 
for details).

\vspace{0.1cm}\noindent 
Here and henceforth, for the sake of brevity, 
we will use the term tessellation instead of marked tessellation. 
For each tessellation $\tau$ let $E(\tau)$ be the set of its edges. 
We associate further a skew-symmetric 
matrix  $\varepsilon(\tau)$ with entries $\varepsilon_{ef}$,  for all $e,f\in E(\tau)$, 
as follows. If $e$ and $f$ do not belong to the same triangle of $\tau$ or $e=f$  then 
$\varepsilon_{ef}=0$. Otherwise, $e$ and $f$  are distinct edges belonging 
to the same triangle of $\tau$ and thus have a common vertex. 
We obtain $f$ by rotating $e$  in the plane along that vertex 
such that the moving edge  is sweeping out the  respective triangle of $\tau$. 
If we rotate  clockwisely then $\varepsilon_{ef}=+1$ and otherwise  
$\varepsilon_{ef}=-1$. 

\vspace{0.2cm}
\noindent 
The pair $(E(\tau), \varepsilon(\tau))$ is called a {\em seed} in \cite{FG}. 
Observe, that in this particular case seeds are completely determined by tessellations.

 \vspace{0.2cm}
\noindent 
Let $(\tau,\tau')$ be a flip $F_e$ in the edge $e\in E(\tau)$. Then the associated seeds 
$(E(\tau), \varepsilon(\tau))$  and  $(E(\tau'), \varepsilon(\tau'))$  
are obtained one from another by a {\em mutation} in the direction $e$.  
Specifically, this means that   
there is an isomorphism $\mu_e:E(\tau)\to E(\tau')$ such that 
\[ \varepsilon(\tau')_{\mu_e(s)\mu_e(t)}=\left\{
\begin{array}{ll}
-\varepsilon_{st}, & \mbox{ \rm if } e=s \mbox{ \rm or } e=t, \\
\varepsilon_{st}, & \mbox{ \rm if } \varepsilon_{se}\varepsilon_{et} \leq 0, \\
\varepsilon_{st} + |\varepsilon_{se}|\varepsilon_{et},   
& \mbox{ \rm if } \varepsilon_{se}\varepsilon_{et} >0 
\end{array}
\right. 
\]
The map $\mu_e$ comes from the natural identification of the edges  of the two 
respective tessellations out of $e$ and $F_e(e)$. 

\vspace{0.2cm}
\noindent 
This algebraic setting appears in the description of the universal Teichm\"uller space 
${\mathcal T}$.  Its formal definition (see \cite{FG1,FG2})  is the  set of positive real 
points  of the cluster ${\mathcal X}$-space related to the set of seeds above. However, 
we can give a more intuitive description of it, following \cite{pe0}.  
Specifically,  this ${\mathcal T}$ is the space of {\em all} marked 
Farey-type tessellations from Section \ref{IKS}. 
Each tessellation $\tau$ gives rise to a coordinate system 
$\beta_{\tau}:{\mathcal T}\to \R^{E(\tau)}$.  The real number  
$x_e=\beta_{\tau}(e)\in \R$ specifies the amount of translation along the geodesic associated 
to the edge $e$  which is required when gluing together the two ideal triangles sharing that geodesic 
to obtain a given quadrilateral in the hyperbolic plane.  
These are called the shearing coordinates (introduced by Thurston and then 
considered by Bonahon, Fock and Penner) on the universal Teichm\"uller space and they provide a 
homeomorphism $\beta_{\tau}:{\mathcal T}\to \R^{E(\tau)}$. 
There is an explicit geometric formula (see also \cite{Fo,Fun}) for the shearing coordinates, as follows. 
Assume that the union of the two ideal triangles in ${\mathbb H}^2$ is the ideal quadrilateral 
of vertices $pp_0p_{-1}p_{\infty}$ and the common geodesic is $p_{\infty}p_0$. 
Then the respective shearing coordinate is the cross-ratio 
\[ x_e=[p,p_0,p_{-1},p_{\infty}]=\log\frac{(p_0-p)(p_{-1}-p_{\infty})}
{(p_{\infty}-p)(p_{-1}-p_0)}.\]

\vspace{0.2cm}
\noindent 
Let $\tau'$ be obtained from $\tau$ by a flip $F_e$ and set $\{x'_f\}$ for the 
coordinates associated to $\tau'$.   The map  
$\beta_{\tau,\tau'}:R^{E(\tau')}\to \R^{E(\tau)}$ given  by 
\[ \beta_{\tau,\tau'}(x'_s)=\left\{\begin{array}{ll}
x_s-\varepsilon(\tau)_{se}\log(1+\exp(-{\rm sgn}(\varepsilon_{se})x_e)), & \mbox{\rm if } s\neq e,\\ 
-x_e, &  \mbox{\rm if } s= e\\ 
\end{array}
\right. 
\]
relates the two coordinate systems, namely $\beta_{\tau,\tau'}\circ \beta_{\tau'}=\beta_{\tau}$.  

\vspace{0.2cm}
\noindent 
These coordinate systems provide a contravariant functor $\beta: Pt\to {\rm Comm}$ 
from the Ptolemy groupoid $Pt$ to the category ${\rm Comm}$ of commutative topological 
$*$-algebras over $\C$. We associate to a tessellation $\tau$ the 
algebra $B(\tau)=C^{\infty}(\R^{E(\tau)}, \C)$ of smooth  complex valued 
functions on $\R^{E(\tau)}$, with the $*$-structure given by $*f=\overline{f}$. 
Furthermore to any flip $(\tau,\tau')\in Pt$ one associates the map 
$\beta_{\tau,\tau'}:B(\tau')\to B(\tau)$. 

\vspace{0.2cm}
\noindent 
The matrices $\varepsilon(\tau)$ have a deep geometric meaning. 
In fact the  bi-vector  field 
\[ P_{\tau}=\sum_{e,f} \varepsilon(\tau)_{ef} \,\frac{\partial}{\partial x_e}\wedge 
\frac{\partial}{\partial x_f} \]
written here in the coordinates $\{x_e\}$ associated to $\tau$  
defines a Poisson structure on ${\mathcal T}$ which is invariant by the action 
of the Ptolemy groupoid. The associated Poisson bracket is then given by the formula 
\[ \{x_e,x_f\}=\varepsilon(\tau)_{ef}. \]

\vspace{0.2cm}
\noindent 
Kontsevich proved that  there is a canonical  formal quantization 
of a (finite dimensional) Poisson manifold. The universal Teichm\"uler space 
is not only a Poisson manifold but also endowed with a group action and 
our aim will be an equivariant quantization.  Chekhov, Fock  and Kashaev 
(see \cite{CF,CP,K,K2})  constructed an equivariant  quantization by means of 
explicit formulas. There are two ingredients in their approach. First, 
the Poisson bracket is given by constant coefficients, in any 
coordinate charts and  second,  the quantum (di)logarithm.

\vspace{0.2cm}
\noindent To any category $C$ whose morphisms are $\C$-vector spaces one 
associates its projectivisation $PC$ having the same objects and new 
morphisms given by ${\rm Hom}_{PC}(C_1,C_2)={\rm Hom}_C(C_1,C_2)/U(1)$, for any two objects 
$C_1,C_2$ of $C$. Here $U(1)\subset \C$ acts by scalar multiplication. 
A projective functor into $C$ is actually a functor into $PC$. 

\vspace{0.2cm}
\noindent 
Now let ${\rm A}^*$ be the category of topological $*$-algebras. 
Two functors $F_1, F_2:C\to {\rm A}^*$ essentially coincide if there exists a third functor $F$ and 
natural transformations $F_1\to F$, $F_2\to F$ providing dense inclusions 
$F_1(O)\hookrightarrow F(O)$ and $F_2(O)\hookrightarrow F(O)$, for any object $O$ of $C$.

\begin{definition}\label{quant} \index{quantization}
A {\em quantization}  ${\mathcal T}^h$ of the universal Teichm\"uller space 
is a family of contravariant projective functors 
$\beta^h:Pt\to {\rm A}^*$ depending smoothly on the real parameter $h$ such that: 
\begin{enumerate}
\item The limit $\lim_{h\to 0} \beta^h= \beta^0$ exists and essentially 
coincides with the functor 
$\beta$. 
\item The limit $\lim_{h\to 0}[f_1,f_2]/h$ is defined and coincides with 
the Poisson bracket on ${\mathcal T}$. Alternatively, for each $\tau$ we have a 
$\C(h)$-linear (non-commutative) product structure $\star$ 
on the vector space $C^{\infty}(\R^{E(\tau)},\C(h))$ such that 
\[ f\star g= fg + h \{f,g\} + {o}(h)\]
where $\{f,g\}$ is the Poisson bracket on functions on ${\mathcal T}$ and 
$\C(h)$  denotes the algebra of smooth $\C$-valued functions on the real parameter $h$. 
\end{enumerate}
\end{definition}

\vspace{0.2cm}
\noindent 
We associate to each tessellation $\tau$ the {\em Heisenberg algebra}
\index{Heisenberg algebra} $H_{\tau}^h$ which is 
the topological $*$-algebra over $\C$ generated by the elements $x_e, e\in E(\tau)$ 
subject to the relations
\[ [x_e,x_f]=2\pi i h \varepsilon(\tau)_{ef}, \;\,\; x_e^*=x_e. \]
We then define  $\beta^h(\tau)=H_{\tau}^h$. 

\vspace{0.2cm}
\noindent 
Quantization should associate a homomorphism 
$\beta^h((\tau,\tau')): H_{\tau'}^h\to H_{\tau}^h$ to each element 
$(\tau,\tau')\in Pt$. It actually suffices to consider the case where  
$(\tau,\tau')$ is the flip $F_e$ in the edge $e\in E(\tau)$. 
Let $\{x'_s\}$, $s\in E(\tau')$ be the generators of $H_{\tau'}^h$. 
We then set 
\[ \beta^h((\tau,\tau'))(x'_s)=\left\{\begin{array}{ll}
x_s -\varepsilon(\tau)_{se}\; \phi^h(-{\rm sgn}\left(\varepsilon(\tau)_{se})x_e\right), & \mbox{\rm if } 
s\neq e, \\
-x_s, & \mbox{\rm if } s= e \\
\end{array}
\right. 
\] 
Here $\phi^h$ is the {\em quantum logarithm} \index{quantum logarithm} function, namely 
\[ \phi^h(z) = -\frac{\pi h}{2}\int_{\Omega}
\frac{\exp(-it z)}{{\rm sh}(\pi t) \, {\rm sh }(\pi h t)}  dt \]
where the contour $\Omega$ goes along the real axes from $-\infty$ 
to $\infty$ bypassing the origin from above. 

\vspace{0.2cm}
\noindent Some properties of the quantum logarithm are collected below: 
\[ \lim_{h\to 0}\phi^h(z)=\log\left(1+\exp(z)\right), \;\;  \phi^h(z)-\phi^h(-z)=z, \]
\[ \overline{\phi^h(z)}=\phi^h\left(\overline{z}\right), \;\; 
 \frac{\phi^h(z)}{h}=\phi^{1/h}\left(\frac{z}{h}\right)\]

\vspace{0.2cm}
\noindent 
A convenient way to represent this transformation graphically is to associate to a 
tessellation its dual binary tree embedded in ${\mathbb H}^2$ and to assign to each edge 
$e$ the respective generator $x_e$. Then the action of a flip reads as follows: 

\vspace{0.2cm}
\begin{center}
\includegraphics{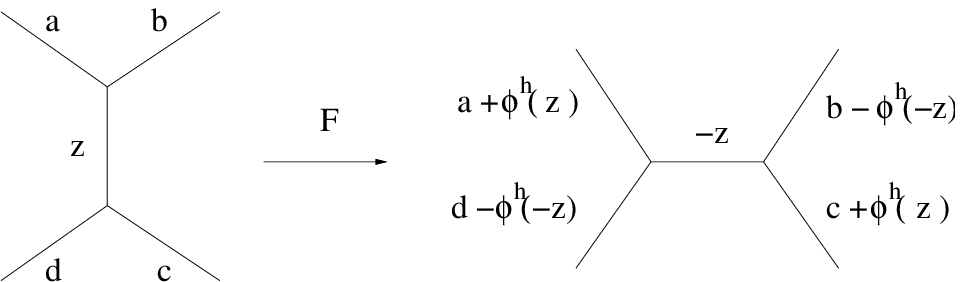}
\end{center}

\vspace{0.2cm}
\noindent We then have: 
\begin{proposition}[\cite{CF,FG3}]
The projective functor $\beta^h$ is well-defined and it is a quantization of the 
universal Teichm\"uller space $\mathcal T$. 
\end{proposition}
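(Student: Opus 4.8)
The plan is to reduce everything to properties of a single flip, since flips generate the Ptolemy groupoid $Pt$. Thus there are two things to check: that for every flip $F_e$ the prescribed assignment $\beta^h((\tau,\tau'))$ is a well-defined homomorphism of topological $*$-algebras $H^h_{\tau'}\to H^h_{\tau}$, and that the defining relations of $Pt$ are respected, at least after passing to the projectivisation. The functoriality of $\beta^h$ is then automatic, and it remains only to verify the two axioms of Definition \ref{quant}.

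First I would check well-definedness of a single flip. Writing $\{x'_s\}_{s\in E(\tau')}$ for the generators of $H^h_{\tau'}$ and $y_s:=\beta^h((\tau,\tau'))(x'_s)$, one must show that the $y_s$ are self-adjoint and satisfy $[y_s,y_t]=2\pi i h\,\varepsilon(\tau')_{st}$. Self-adjointness follows at once from $x_e^*=x_e$ and the symmetry $\overline{\phi^h(z)}=\phi^h(\overline z)$. For the commutators, the crucial structural fact is that every bracket $[x_e,x_f]=2\pi i h\,\varepsilon(\tau)_{ef}$ is central in $H^h_\tau$, so that $[x_s,g(x_e)]=[x_s,x_e]\,g'(x_e)$ for any smooth $g$; expanding $[y_s,y_t]$ then leaves a combination of the operators $(\phi^h)'(\pm x_e)$ with coefficients read off from $\varepsilon(\tau)$, and the three cases of the mutation rule correspond precisely to the three cases in the formula for $\beta^h((\tau,\tau'))$. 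The analytic input that makes the operator-valued terms collapse to the scalar $2\pi i h\,\varepsilon(\tau')_{st}$ is the functional equation $\phi^h(z)-\phi^h(-z)=z$, equivalently $(\phi^h)'(z)+(\phi^h)'(-z)=1$.

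Next I would verify the relations of $Pt$. Two flips with disjoint supporting quadrilaterals act on disjoint sets of generators with vanishing mutual brackets, so the corresponding maps commute. Applying $\beta^h(F_e)$ twice returns the identity, up to the relabelling $\mu_e$, again by $\phi^h(z)-\phi^h(-z)=z$. The substantial relation is the pentagon: the composite of the five maps attached to a pentagonal cycle of flips must equal the identity, and this is where one needs the operator pentagon identity for the non-compact quantum dilogarithm (Faddeev--Kashaev), rephrased in terms of $\phi^h$. That identity holds only up to a multiplicative constant, which is exactly why $\beta^h$ must be taken to be a \emph{projective} functor. Establishing the quantum pentagon identity is, I expect, the main obstacle, since it is a genuine analytic statement about the quantum dilogarithm rather than a formal manipulation; see \cite{K2,CF}.

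Finally, the two conditions of Definition \ref{quant} are checked by inspection. Since $\lim_{h\to 0}\phi^h(z)=\log(1+\exp(z))$, the homomorphism $\beta^h((\tau,\tau'))$ degenerates as $h\to 0$ to the classical coordinate change $\beta_{\tau,\tau'}$, so that $\beta^0$ essentially coincides with the functor $\beta$, giving condition (1). For condition (2), the defining relations $[x_e,x_f]=2\pi i h\,\varepsilon(\tau)_{ef}$ give $\lim_{h\to 0}\tfrac{1}{h}[x_e,x_f]=2\pi i\,\varepsilon(\tau)_{ef}$, which reproduces, up to the normalisation built into $h$, the Poisson bracket $\{x_e,x_f\}=\varepsilon(\tau)_{ef}$; representing $H^h_\tau$ by operators on $C^\infty(\R^{E(\tau)},\C(h))$ then yields the required star-product with $f\star g=fg+h\{f,g\}+o(h)$.
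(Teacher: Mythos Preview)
Your proposal is correct and follows the same route as the paper's (very terse) argument: check that each flip gives a $*$-homomorphism between Heisenberg algebras, verify the Ptolemy relations so that $\beta^h((\tau,\tau'))$ does not depend on the chosen decomposition into flips, and then read off the two conditions of Definition~\ref{quant} from $\lim_{h\to 0}\phi^h(z)=\log(1+\exp z)$ and the defining commutation relations. You supply considerably more detail than the paper, which merely asserts well-definedness and points to the classical limit.

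One small caveat: you attribute the need for projectivity to a scalar defect in the pentagon for $\beta^h$ itself. In the Chekhov--Fock setup the coordinate-change maps on the Heisenberg algebras in fact satisfy the pentagon on the nose; the scalar $\exp(2\pi i h)$ appears for the intertwining operators $\mathbf K_{(\tau,\tau')}$ acting on Hilbert space (this is the content of Proposition~\ref{dilogcar} later in the paper). The paper's phrasing ``projective functor'' for $\beta^h$ is somewhat loose here, and your argument goes through either way, but it is worth keeping the two levels---algebra automorphisms versus unitary intertwiners---separate.
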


\vspace{0.2cm}
\noindent 
One proves that $\beta^h((\tau,\tau'))$ is independent of the decomposition of 
the element $(\tau,\tau')$ as a product of flips. 
In the  classical limit $h\to 0$ the quantum flip tends to the usual formula 
of the coordinate  change induced by a flip. 
Thus the first requirement in Definition \ref{quant} is fulfilled, and 
the second one  is obvious, from the defining relations in the Heisenberg algebra $H_{\tau}^h$.

\subsection{The dilogarithmic representation of $T$}\label{dilog}
The subject of this section is to give a somewhat 
self-contained definition of the 
dilogarithmic representation of the group $T$. The case of general  cluster modular groupoids 
is developed in full detail in \cite{FG,FG3} and the group $T$ as a cluster modular groupoid 
is explained in \cite{FG2}.  

\vspace{0.2cm}
\noindent 
The quantization of a physical system in quantum mechanics should provide a Hilbert space 
and the usual procedure is to consider a Hilbert space representation of the algebra 
from Definition \ref{quant}. This is formalized in the notion of 
representation of a quantum space. 

\vspace{0.2cm}
\noindent 
\begin{definition}\label{repre} 
A projective $*$-representation \index{projective $*$-representation}
of the quantized universal Teichm\"uller space ${\mathcal T}^h$,  
specified by the functor $\beta^h:Pt\to {\rm A}^*$,  
consists of the following data: 
\begin{enumerate}
\item A projective functor $Pt\to {\rm Hilb}$ to the category of Hilbert spaces. In particular, 
one associates a Hilbert space ${\mathcal L}_{\tau}$ to each tessellation $\tau$ and 
a unitary operator $K_{(\tau,\tau')}:{\mathcal L}_{\tau}\to {\mathcal L}_{\tau'}$, defined 
up to a scalar of absolute value 1.  
\item A $*$-representation $\rho_{\tau}$ of the Heisenberg algebra $H_{\tau}^h$ 
in the Hilbert space ${\mathcal L}_{\tau}$, such that the operators ${\mathbf K}_{(\tau,\tau')}$ 
intertwine the representations  $\rho_{\tau}$ and  $\rho_{\tau'}$ i.e.
\[   \rho_{\tau}(w)= {\mathbf K}_{(\tau,\tau')}^{-1} \rho_{\tau'}\left(\beta^h((\tau,\tau'))(w)\right)
{\mathbf K}_{(\tau,\tau')}, \,\,\, w\in H_{\tau}^h.\]
\end{enumerate}
\end{definition}

\vspace{0.2cm}
\noindent 
{\em The classical Heisenberg $*$-algebra} \index{classical Heisenberg algebra} $H$ is generated by $2n$ elements 
$x_s,y_s$, $1\leq s\leq n$ and relations 
\[ [x_s, y_s]=2\pi i \, h, \;\; [x_s,y_t]=0, \mbox{\rm if } s\neq t, \;\; [x_s, x_t]=[y_s,y_t]=0, \mbox{\rm for all } s, t\] 
with the obvious $*$-structure. 
The single irreducible integrable $*$-representation 
$\rho$ of $H$ makes it act   
 on the Hilbert space $L^2(\R^n)$ by means of the operators:  
\[ \rho(x_s)f(z_1,\ldots,z_n)= z_sf(z_1,\ldots,z_n) , \;\;\, \rho(y_s)=-2\pi i \, h \frac{\partial f}{\partial z_s}. \]
{\em The Heisenberg algebras} $H_{\tau}^h$  are defined by commutation relations with 
constant coefficients and hence their representations can be constructed 
by selecting a Lagrangian subspace in the generators $x_s$   --  called a polarization  -- 
and letting the generators act as linear combinations in the above 
operators $\rho(x_s)$ and $\rho(y_s)$. 

\vspace{0.2cm}
\noindent 
The Stone-von Neumann theorem holds then for these algebras. 
Specifically, there exists a unique unitary irreducible Hilbert space representation 
of given central character that is integrable i.e. which can be integrated to the 
corresponding Lie group. Notice that  there exist in general also 
non-integrable unitary representations. 

\vspace{0.2cm}
\noindent 
In particular we obtain representations of $H_{\tau}^h$ and  $H_{\tau'}^h$. 
The uniqueness of the representation yields the existence of  an intertwiner ${\mathbf K}_{(\tau,\tau')}$
(defined up to a scalar) between the two representations. 
However,  neither the Hilbert spaces nor the representations $\rho_{\tau}$ are 
canonical, 
as they depend on the choice of the polarization.

\vspace{0.2cm}
\noindent 
We will give below {\em the construction of a canonical representation} when
the quantized Teichm\"uller space  is replaced by its double. 
We need first to switch to another system of coordinates, 
coming from the cluster  ${\mathcal A}$-varieties. 
Define, after Penner (see \cite{pe0}),  
the  {\em universal decorated Teichm\"uller space} ${\mathcal A}$ to be the space 
of all marked tessellations endowed with one horocycle for each vertex (decoration).   
Alternatively (see \cite{FG1}), 
${\mathcal A}$ is the set of positive real points of the 
cluster ${\mathcal A}$-space related to the previous set of seeds. 

\vspace{0.2cm}
\noindent 
Each tessellation $\tau$  yields a coordinate system 
$\alpha_{\tau}:{\mathcal A}\to \R^{E(\tau)}$ which associates to the edge 
$e$ of $\tau$ the coordinate $a_e=\alpha_{\tau}(e)\in \R$. 
The number $\alpha_{\tau}(e)$  is the algebraic distance between 
the two horocycles on ${\mathbb H}^2$ centered at vertices of $e$, 
measured along the geodesic associated to $e$.  These are the so-called {\em lambda-length} \index{lambda-length coordinates of Penner}
coordinates of Penner. 

\vspace{0.2cm}
\noindent 
There is a canonical  map $p:{\mathcal A}\to {\mathcal T}$  (see \cite{pe0}, Proposition 3.7 and 
\cite{FG1}) 
such that, in the coordinate systems induced by a tessellation $\tau$, the corresponding map 
$p_{\tau} :\R^{(E(\tau)}\to \R^{E(\tau)}$ is given by 
\[ p_{\tau}\left(\sum_{t\in E(\tau)} \varepsilon(\tau)_{st}a_t\right)=x_s.\]

\vspace{0.2cm}
\noindent 
Let  $(\tau,\tau')$ be the flip on the edge $e$ and set $a'_s$ be the coordinates system 
associated to $\tau'$. Then the flip induces the  following 
change of coordinates:  
\[ \alpha_{\tau,\tau'}(a_s)=
a_s, \: \mbox{\rm if }\: s\neq e \]
\[ \alpha_{\tau,\tau'}(a_e)=
-a_e + \log\left(\exp\left(\sum_{t;\varepsilon(\tau)_{et}>0}
\varepsilon(\tau)_{et} a_t\right)+ \exp\left(-\sum_{t;\varepsilon(\tau)_{et}<0}
\varepsilon(\tau)_{et} a_t\right)\right).
\]
It can be verified that $p_{\tau}$ are compatible with the action of the 
Ptolemy groupoid on the respective coordinates.

\vspace{0.2cm}
\noindent 
{\em The vector space} ${\mathcal L}_{\tau}$ is defined as the space of 
square integrable functions  with finite dimensional support 
on ${\mathcal A}$ with respect to the $\alpha_{\tau}$ coordinates 
i.e. the functions $f:\R^{E(\tau)}\to \C$,  with support contained in 
some $R^F\times \{0\}\subset R^{E(\tau)}$, for some finite subset 
$F\subset E(\tau)$. The coordinates on $\R^{E(\tau)}$ are the $a_e, e\in E(\tau)$. 
The function $f$ is square integrable if 
\[ \int_{\R^F} |f|^2 \bigwedge _{e\in F}da_e  < \infty \]
for any such $F$ as above. Let $f, g \in {\mathcal L}_{\tau}$. Then let 
 $\R^F\times \{0\}$ contain the intersection of their supports. 
Choose $F$ minimal with this property. Then the scalar product 
\[ \langle f, g \rangle= \int_{\R^F} f(a)\overline{g(a)} \bigwedge _{e\in F}da_e \]
makes ${\mathcal L}_{\tau}$ a Hilbert space.

\vspace{0.2cm}
\noindent 
To define the intertwining operator ${\mathbf K}$ we set now: 
\begin{align*} 
G_e((a_s)_{s\in F}) &=
\int \exp\left(\int_{\Omega}
\frac{\exp(it \sum_{s\in F}\varepsilon(\tau)_{es}a_s) \sin(tc)}
{2i {\rm sh}(\pi t){\rm sh}(\pi h t)} \frac{dt}{t} + \right. \\
& 
\left.\frac{c}{\pi i h}\left(\sum_{s; \varepsilon(\tau)_{es}<0}\varepsilon(\tau)_{es}a_s 
+a_e\right)\right) dc.  
\end{align*}
The key ingredient in the construction of this function is the 
{\em quantum dilogarithm} \index{quantum dilogarithm}
(going back to  Barnes (\cite{Bar}) and 
used by Baxter (\cite{Bax}) and Faddeev (\cite{Fad})): 
\[ \Phi^h(z) = \exp\left(-\frac{1}{4}\int_{\Omega}
\frac{\exp(-it z)}{{\rm sh}(\pi t) \, {\rm sh }(\pi h t)}  \frac{dt}{t} \right)\]
where the contour $\Omega$ goes along the real axes from $-\infty$ 
to $\infty$ bypassing the origin from above. 

\vspace{0.2cm}
\noindent Some properties of the quantum dilogarithm are collected below: 
\[ 2\pi i h d \log \Phi^h(z)=\phi^h(z), \; \;  \lim_{\Re z\to -\infty}\Phi^h(z)=1\]
\[ \lim_{h\to 0}\Phi^h(z)/\exp(-{\rm Li}_2(-\exp(z)))=2\pi i h, \, \, \mbox{\rm where } 
{\rm Li}_2(z)=\int_0^z\log(1-t)dt \]
\[ \Phi^h(z)\Phi^h(-z)=\exp\left(\frac{z^2}{4\pi i h}\right) \exp\left(-\frac{\pi i}{12}(h+h^{-1})\right)\]
\[ 
 \overline{\Phi^h(z))}=\left(\Phi^h\left(\overline{z}\right)\right)^{-1}, \;\; \Phi^h(z)=\Phi^{1/h}\left(\frac{z}{h}\right).\]

\vspace{0.2cm}
\noindent 
Now let  $f\in {\mathcal L}_{\tau}$, namely $f:\R^F\times \{0\}\to \C$. 
Let $(\tau,\tau')$ be the flip $F_e$ on the edge $e$. 
Let $a_s, s\in F$ be the coordinates in $\R^F$. If $e\not\in F$ then we set 
\[ {\bf K}_{(\tau,\tau')}=1.\]
 If $e\in F$ then the coordinates associated to $\tau'$ are $a_s, s\neq e$ and $a'_e$. 
Set then 
\[ ({\bf K}_{(\tau,\tau')} f)(a_s, _{s\in F, f\neq e}, a'_e)= 
\int G_e((a_s)_{s\in F, s\neq e}, a_e+a'_e) f((a_s)_{s\in F}) da_s. \]

\vspace{0.2cm}
\noindent 
The last piece of data is {\em the representation of the Heisenberg algebra} 
\index{representation of Heisenberg algebra} 
$H_{\tau}^h$ in the Hilbert space ${\mathcal L}_{\tau}$. We can 
actually  do better, namely enhance the space with a bimodule structure. 
Set 
\[ \rho^-_{\tau} (x_s) =-\pi i h \frac{\partial}{\partial a_s} + \sum_{t}\varepsilon(\tau)_{st} a_t \]
and 
\[ \rho^+_{\tau} (x_s)=\pi i h \frac{\partial}{\partial a_s} + \sum_{t}\varepsilon(\tau)_{st} a_t. \]
Then $\rho^-_{\tau} $ gives a left module and $\rho^+_{\tau} $ a  right module structure on 
${\mathcal L}_{\tau}$ and the two actions commute. 
We have then: 

\begin{proposition}[\cite{CF,FG3,FG}]
The data $({\mathcal L}_{\tau}, \rho^{\pm}_{\tau}, {\mathbf K}_{(\tau,\tau')})$ 
is a projective $*$-representation of the  quantized universal Teichm\"uller space.   \index{projective $*$-representation of the  quantized Teichm\"uller space}
\end{proposition}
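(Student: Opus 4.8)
The plan is to verify the two conditions in Definition \ref{repre} in turn, treating the functoriality of $\mathbf{K}$ as the final and hardest point; the detailed verification is carried out in \cite{CF,FG3,FG}, so I would mainly indicate the structure of the argument and the key identities.

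\emph{The Heisenberg part.} The operators $\rho^{\pm}_{\tau}(x_s)$ equal, up to the real multiplication operator $\sum_t \varepsilon(\tau)_{st}a_t$, the derivations $\mp\pi i h\,\partial/\partial a_s$. A direct computation using only the skew-symmetry of $\varepsilon(\tau)$ gives $[\rho^{-}_{\tau}(x_s),\rho^{-}_{\tau}(x_t)] = 2\pi i h\,\varepsilon(\tau)_{st}$, and similarly for $\rho^{+}_{\tau}$; for real $h$ the operator $i\,\partial/\partial a_s$ is symmetric on the finitely supported square-integrable functions and the multiplication part is real, so both are $*$-representations of $H^h_{\tau}$. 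The cross terms in $[\rho^{-}_{\tau}(x_s),\rho^{+}_{\tau}(x_t)]$ cancel, so the left and right actions commute and $\mathcal{L}_{\tau}$ acquires the claimed $H^h_{\tau}$-bimodule structure. This step is routine.

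\emph{Unitarity and intertwining of $\mathbf{K}$.} For a flip $F_e$ the kernel $G_e$ is built from the quantum dilogarithm $\Phi^h$, and on the real axis $\Phi^h$ has modulus $1$ since $\overline{\Phi^h(z)}=(\Phi^h(\bar z))^{-1}$; hence, restricted to each finite-dimensional coordinate slab, $\mathbf{K}_{(\tau,\tau')}$ is a composition of a Fourier transform with multiplications by unit-modulus functions, so it preserves $\langle f,g\rangle=\int_{\R^F} f\overline{g}\bigwedge_{e\in F}da_e$ and is unitary up to a scalar of modulus $1$. The intertwining identity
\[ \rho_{\tau}(w)= \mathbf{K}_{(\tau,\tau')}^{-1}\,\rho_{\tau'}\!\left(\beta^h((\tau,\tau'))(w)\right)\mathbf{K}_{(\tau,\tau')},\qquad w\in H^h_{\tau}, \]
is checked on the generators $x_s$; the crucial input is the functional equation $2\pi i h\,d\log\Phi^h(z)=\phi^h(z)$, which converts conjugation by the $\Phi^h$-kernel into the additive shift by $\phi^h$ that appears in the definition of $\beta^h((\tau,\tau'))$. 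Combined with the bimodule compatibility this matches both $\rho^{-}$ and $\rho^{+}$.

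\emph{Well-definedness of the projective functor — the main obstacle.} Since $\beta^h$ is already a functor on $Pt$ by the preceding Proposition, it remains to show that whenever a product of flips is trivial in $Pt$ the corresponding product of operators $\mathbf{K}$ is a scalar of modulus $1$. Because $Pt$ is generated by flips subject only to the commutation of flips with disjoint support, the involution-type relation for $F_e^2$, and the pentagon relation, it suffices to treat these three cases. Disjoint flips act on different coordinates, so their $\mathbf{K}$'s manifestly commute. The $F_e^2$ relation reduces, after the change of variables, to $\Phi^h(z)\Phi^h(-z)=\exp(z^2/4\pi i h)\exp(-\tfrac{\pi i}{12}(h+h^{-1}))$: the Gaussian factor is absorbed by the quadratic term in the Heisenberg action, and the residual constant is exactly the source of the projectivity. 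The pentagon relation is the heart of the matter: one writes out the five successive local coordinate changes around the relevant pentagon configuration of the tessellation, together with the five corresponding integral operators, and reduces the whole identity to the operator pentagon (Faddeev--Kashaev) identity $\Phi^h(\hat p)\Phi^h(\hat q)=\Phi^h(\hat q)\Phi^h(\hat p+\hat q)\Phi^h(\hat p)$ for a Heisenberg pair $[\hat p,\hat q]=2\pi i h$. Setting up this bookkeeping of variables and supports correctly, and invoking the known pentagon identity for $\Phi^h$, is where the real work lies; the scalar ambiguities accumulated along the way are harmless because the normalizations above show they have modulus $1$, which is all that is required for a \emph{projective} $*$-representation.
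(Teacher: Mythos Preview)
The paper does not actually prove this proposition: immediately after stating it, the text says ``The proof of this result is given in \cite{FG} and a particular case is explained with lots of details in \cite{Go}.'' So there is no in-paper argument to compare against; your outline is a faithful sketch of what is carried out in those references, and in that sense you have supplied more than the paper itself does.

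One small correction worth making concerns your attribution of the projectivity. You locate the nontrivial scalar in the involution relation $F_e^2$, via the inversion identity $\Phi^h(z)\Phi^h(-z)=\exp(z^2/4\pi i h)\exp(-\tfrac{\pi i}{12}(h+h^{-1}))$. In fact, as the paper records in Proposition~\ref{dilogcar}, after the full bookkeeping the square of a flip is \emph{exactly} the identity operator (the Gaussian and constant factors cancel against the Fourier-type normalizations in the kernel $G_e$), and the only relation that produces a genuine scalar obstruction is the pentagon, where the composition of the five flip operators equals $\exp(2\pi i h)$ times the identity. So the projectivity is entirely concentrated in the pentagon relation, and the subsequent identification of the central extension $\widehat{T}$ (Propositions~\ref{dilogcar} and the material of Section~\ref{ident}) depends on this precise accounting. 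Apart from this reallocation of the scalar, your structure---Heisenberg commutation check, unitarity from $|\Phi^h|=1$ on the real line, intertwining from $2\pi i h\,d\log\Phi^h=\phi^h$, and functoriality from the Faddeev--Kashaev operator pentagon---is correct.
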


\vspace{0.2cm}
\noindent
The data $({\mathcal L}_{\tau}, \rho^{\pm}_{\tau}, {\mathbf K}_{(\tau,\tau')})$ is called  the dilogarithmic representation of the Ptolemy groupoid.\index{dilogarithmic representation of Ptolemy groupoid}  
The proof of this result is given in \cite{FG} and  a particular case is explained 
with lots of details  in \cite{Go}. 

\vspace{0.2cm}
\noindent
The last step in our construction is to observe that a 
representation of the Ptolemy groupoid 
$Pt$ {\em induces a representation of the Ptolemy-Thompson group} 
\index{dilogarithmic representation Ptolemy-Thompson group} $T$ by means of 
an identification of the Hilbert spaces ${\mathcal L}_{\tau}$ for all $\tau$.

\vspace{0.2cm}
\noindent
Projective representations are equivalent to representations of 
central extensions by  means of the following well-known procedure.
To a general group $G$,  Hilbert space $V$ and homomorphism 
$A:G\to {\rm PGL}(V)$ we can associate a central extension $\widetilde{G}$ 
of $G$ by $\C^*$ which resolves the projective representation $A$ 
to a linear representation $\widetilde{A}:\widetilde{G}\to {\rm GL}(V)$. 
The extension $\widetilde{G}$ is the pull-back on $G$ of the 
canonical central $\C^*$-extension ${\rm GL}(V)\to {\rm PGL}(V)$. 

\vspace{0.2cm}
\noindent
However the central extension which we consider here is a subgroup 
of the $\C^*$-extension defined above, obtained by using 
a particular section over $G$. Let us 
write $G=F/R$ as the quotient of the free group $F$ by the normal 
subgroup $R$ generated by the relations.  Then our data 
consists in a homomorphism $\overline{A}:F\to {\rm GL}(V)$ with 
the property that $\overline{A}(r)\in\C^*$, for each 
relation $r\in R$, so that $\overline{A}$ induces $A:G\to {\rm PGL}(V)$. 
This data will  be called 
{\em an almost-linear representation}, in order to distinguish 
it from a  projective representation of $G$. 

\vspace{0.2cm}
\noindent
The {\em central extension $\widehat{G}$ of $G$ associated to 
$\overline{A}$} \index{central extension}
is $\widehat{G}=F/(\ker \overline{A}\cap R)$, namely the smallest  central 
extension  of $S$ resolving the projective representation $A$ to a linear 
representation compatible with $\overline{A}$. Then 
$\widehat{G}$ is a central extension of $G$ by the subgroup 
$\overline{A}(R)\subset \C^*$ and hence it is naturally a subgroup 
of $\widetilde{G}$. In other terms $\overline{A}$ determines a 
projective representation $A$ and a section over $G$ whose 
associated 2-cocycle takes values in $\overline{A}(R)$ and which 
describes the central extension $\widehat{G}$.

\vspace{0.2cm}
\noindent
Now, the intertwiner functor $\mathbf K$ \index{intertwinner}  is actually an almost-linear  
representation (in the obvious sense) of the 
Ptolemy groupoid and thus induces an  
almost-linear representation of the Ptolemy-Thompson group $T$ 
into the unitary group. 
We can extract from \cite{FG} the following results  (see also the equivalent 
construction at the level of Heisenberg  algebras in \cite{BBL}):

\begin{proposition}\label{dilogcar}
The dilogarithmic almost-linear 
representation $\mathbf K$ has  the following properties: 
\begin{enumerate}
\item images of disjoint flips in $\widehat{T}$  commute with each other; 
\item the square of a flip is the identity;
\item the composition of the  lifts of the five flips from the 
pentagon relation below is $\exp(2\pi i h)$ times the identity.

\begin{center}
\includegraphics{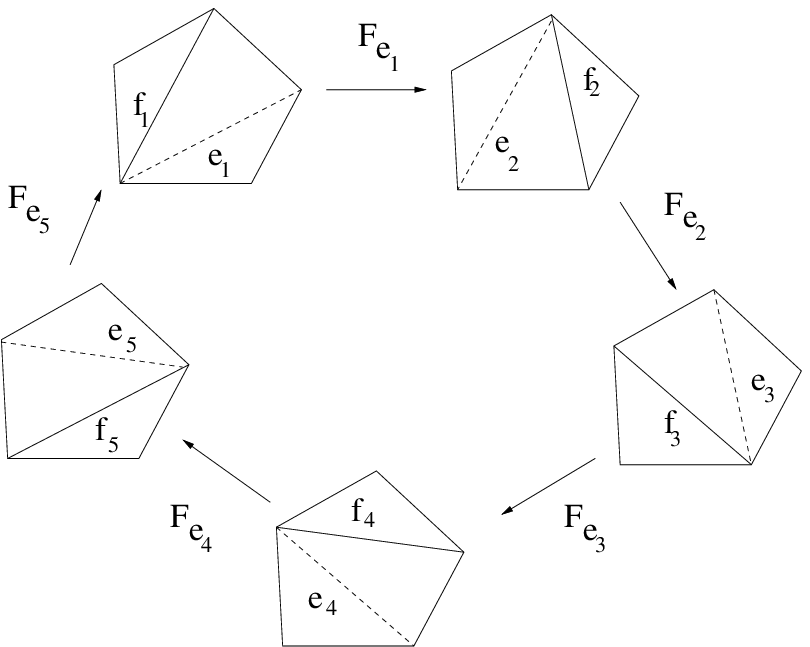}
\end{center}

\end{enumerate}
\end{proposition}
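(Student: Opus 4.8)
The plan is to read off all three statements from the explicit integral formulas for the intertwiners ${\mathbf K}_{(\tau,\tau')}$ and from the functional identities satisfied by the quantum dilogarithm $\Phi^h$; since the construction and its basic properties are those of \cite{FG} (see also \cite{CF,FG3,Go}), I only describe the mechanism. Throughout, for a flip $F_e$ on an edge $e$ of a tessellation $\tau$ write $\mathbf{K}_e:={\mathbf K}_{(\tau,F_e(\tau))}$ for the associated operator, given by convolution against the kernel $G_e$; recall that the composite of any sequence of flips realising a relation of $T$ acts by a scalar, by the well-definedness of $\beta^h$.

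For (1), suppose $e$ and $e'$ do not lie in a common triangle of $\tau$, i.e. $\varepsilon(\tau)_{ee'}=0$, so that the two $A$-moves have disjoint supports (these are the square cells of type $(DC_1)$ or $(DC_2)$). Then $G_e$ involves only the coordinates $a_s$ with $\varepsilon(\tau)_{es}\neq 0$ and $G_{e'}$ only those with $\varepsilon(\tau)_{e's}\neq 0$, and these two groups of variables are disjoint. Hence $\mathbf{K}_e$ and $\mathbf{K}_{e'}$ are integral operators acting in independent variables; they commute as operators, and so do their images in $\widehat{T}$.

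For (2), flipping $e$ twice returns the underlying tessellation to $\tau$, so $\mathbf{K}_e^2$ intertwines the Heisenberg representations $\rho^{\pm}_\tau$ with themselves; by Stone--von Neumann uniqueness it is a scalar. To compute it one writes $\mathbf{K}_e^2$ as the convolution of $G_e$ with itself in the variable $a_e$, uses the inversion relation $\Phi^h(z)\Phi^h(-z)=\exp\!\left(\frac{z^2}{4\pi i h}\right)\exp\!\left(-\frac{\pi i}{12}(h+h^{-1})\right)$ together with the Gaussian Fourier transform; the convolution collapses to a Dirac mass, the Gaussian normalisations cancel, and one gets $\mathbf{K}_e^2=\mathrm{id}$ with no residual phase, i.e. $\mathbf{K}_e$ is an involution.

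For (3), the five flips of the pentagon are the classical five-term relation, so their composite is the identity morphism of the base tessellation in $Pt$; on the quantised side the composite of the five intertwiners is therefore a scalar, and evaluating it is exactly the operator form of the quantum pentagon identity for $\Phi^h$ (the Faddeev--Kashaev five-term relation): for self-adjoint $P,Q$ with $[P,Q]=2\pi i h$ one has $\Phi^h(P)\Phi^h(Q)=\Phi^h(Q)\Phi^h(P+Q)\Phi^h(P)$. Transporting this identity through the polarisation changes that relate the five Hilbert spaces ${\mathcal L}_\tau$ around the pentagon, and keeping track of the normalisation of each $\mathbf{K}$, one obtains an overall factor $\exp(2\pi i h)$; this is precisely the defect by which the lift of $(\beta\alpha)^5$ fails to be trivial, in accordance with Proposition \ref{permut}. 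The only genuinely non-formal point, and hence the main obstacle, is this last computation: one must control the zero/pole lattice of $\Phi^h$, the contour $\Omega$ and the quantum beta-integral underlying the five-term relation in order to pin the scalar down as exactly $\exp(2\pi i h)$ rather than some other phase. This is carried out in detail in \cite{FG} (and, for a single pentagon, in \cite{Go}), to which we refer.
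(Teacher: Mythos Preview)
Your approach matches the paper's: part (1) is read off from the explicit formula for $\mathbf{K}$, and parts (2) and (3) are deferred to \cite{FG} (with \cite{Go} for the pentagon). You in fact supply more of the mechanism than the paper's own proof, which is essentially a two-line citation.

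One imprecision worth flagging in (1): your claim that the two groups of variables appearing in $G_e$ and $G_{e'}$ are \emph{disjoint} is not correct in general. In the $(DC_1)$ case the two supporting quadrilaterals share an edge $f$, so $\varepsilon_{ef}\neq 0$ and $\varepsilon_{e'f}\neq 0$, and the coordinate $a_f$ appears in both kernels. What is true, and what actually gives commutativity, is that the \emph{integration variable} of each operator (namely $a_e$, respectively $a_{e'}$) does not appear in the other's kernel, since $\varepsilon_{ee'}=0$; the shared coordinates enter only as parameters, and Fubini applies. (One should also note that the mutation at $e$ leaves all entries $\varepsilon_{e's}$ unchanged, because $\varepsilon_{e'e}=0$, so the kernel $G_{e'}$ is the same before and after flipping $e$.) With this correction your argument for (1) goes through, and the rest is as in the paper.
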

\begin{proof}
The first condition is that images by $K$ of flips on disjoint edges 
should commute. This is obvious by the explicit formula for $K$. 
The second and third conditions are proved in \cite{FG}. 
\end{proof}

\begin{remark}
In \cite{fu-se} we considered the action of labelled flips in the 
pentagon relation. The composition of labelled flips is then the 
transposition of the labels of the two diagonals 
(see Proposition \ref{permut}). 
\end{remark}

\vspace{0.1cm}
\noindent 
Therefore the image by $\mathbf K$ of relations of the Ptolemy groupoid 
into $\C^*$ is the subgroup $U$ generated by $\exp(2\pi i h)$.  
We can view  the pentagon relation in the Ptolemy-Thompson 
group $T$ as a pentagon relation in the Ptolemy groupoid $Pt$. Thus  
the image by $\mathbf K$ of relations of the Ptolemy-Thompson group $T$ 
into $\C^*$  is  also the subgroup $U$. In particular the associated 
2-cocycle takes values in $U$.  
If $h$ is a formal parameter or an irrational 
real number we obtain then a 2-cocycle with values in $\Z$. 

\begin{definition}
The dilogarithmic central extension 
$\widehat{T}$ is the central extension of $T$ by $\Z$  
associated to the dilogarithmic almost-linear representation $\mathbf K$ 
of $T$, or equivalently, to the previous 2-cocycle. 
\end{definition}

\subsection{The relative abelianization of the 
braided Ptolemy-Thompson group $T^*$}
Recall from \cite{fu-ka2,fu-ka4} (see also \ref{ptdef}) that there 
exists a natural surjective homomorphism $T^*\to T$  
which is obtained by {\em forgetting the punctures} once 
these groups are considered as mapping class groups.  
Its kernel is the infinite braid group $B_{\infty}$ 
consisting of those braids in the punctures 
of $D^*$ that move non-trivially only finitely many punctures. 
In other words $B_{\infty}$ is the direct limit of an ascending 
sequence of braid groups associated to an exhaustion of $D^*$ by punctured 
disks.  This yields the  following exact sequence description of $T^*$:
\[ 1 \to B_{\infty} \to \T\to T \to 1. \]

\vspace{0.2cm}
\noindent 
Observe that $H_1(B_{\infty})=\Z$. Thus, the abelianization homomorphism  
$B_{\infty}\to H_1(B_{\infty})=\Z$  
induces a central extension $T^*_{\rm ab}$ of $T$, where one 
replaces $B_{\infty}$ by its abelianization 
$H_1(B_{\infty})$, as in the diagram below: 

\[ \begin{array}{ccccccc}
1  \to & B_{\infty} & \to & \T         & \to &  T        & \to 1 \\
       & \downarrow &     & \downarrow &     & \parallel &   \\ 
1  \to & \Z         & \to & T^*_{\rm ab}          & \to &   T       & \to 1  
\end{array}
\]

\vspace{0.2cm}
\noindent Then $T^*_{\rm ab}$ is the relative abelianization of $T^*$ over $T$. 
We are not only able to make computations in the 
mapping class group $T^*$ and thus in $T^*_{\rm ab}$,  but also 
to interpret the algebraic relations in $T^*_{\rm ab}$ in geometric terms. 
 
\begin{proposition}\label{abelcar}
The group $T^*_{\rm ab}$ has the presentation with three generators 
$\alpha^*_{\rm ab}$, $\beta^*_{\rm ab}$ and $z$ and the relations 
\[ {\alpha^*_{\rm ab}}^4={\beta^*_{\rm ab}}^3=1, (\beta^*_{\rm ab}\alpha^*_{\rm ab})^5=z, 
[\alpha^*_{\rm ab}, z]=1, [\beta^*_{\rm ab}, z]=1\]
\[ [\beta^*_{\rm ab}\alpha^*_{\rm ab}\beta^*_{\rm ab} \, , \, {\alpha^*_{\rm ab}}^2\beta^*_{\rm ab}\alpha^*_{\rm ab}\beta^*_{\rm ab}{\alpha^*_{\rm ab}}^2]=1\]
\[[\beta^*_{\rm ab}\alpha^*_{\rm ab}\beta^*_{\rm ab} \, , \, {\alpha^*_{\rm ab}}^2{\beta^*_{\rm ab}}^2 
{\alpha^*_{\rm ab}}^2\beta^*_{\rm ab}\alpha^*_{\rm ab}\beta^*_{\rm ab}{\alpha^*_{\rm ab}}^2{\beta^*_{\rm ab}}
{\alpha^*_{\rm ab}}^2]=1\]
Moreover the projection map $T^*_{\rm ab}\to T$ sends $\alpha^*_{\rm ab}$ to $\alpha$, 
$\beta^*_{\rm ab}$ to $\beta$ and $z$ to the identity.  
\end{proposition}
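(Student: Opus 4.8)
The plan is to combine the explicit finite presentation of $T^*$ established in \cite{fu-ka2} with the standard description of a presentation of a central extension. Recall that $T^*$ is generated by the two distinguished lifts $\alpha^*,\beta^*\in T^*$ of the Lochak--Schneps generators $\alpha,\beta$ of $T$, and that $T^*_{\rm ab}=T^*/[B_\infty,B_\infty]$ fits, by construction, in the exact sequence $1\to\Z\to T^*_{\rm ab}\to T\to 1$ with $\Z=H_1(B_\infty)$. The first point to record is that this extension is genuinely central: $T^*$ acts on $B_\infty$ by conjugation through orientation-preserving homeomorphisms of $D^*$ that permute the punctures, so a positive half-twist along an arc is carried to a positive half-twist along another arc, whence the induced action of $T$ on $H_1(B_\infty)=\Z$ is trivial. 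Writing $z$ for a generator of this copy of $\Z$ (the class of a standard braid generator), this already yields the relations $[\alpha^*_{\rm ab},z]=1$ and $[\beta^*_{\rm ab},z]=1$.

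Next I would invoke the elementary fact that, given a finite presentation $G=\langle X\mid R\rangle$ and a central extension $1\to\Z\to\widehat G\to G\to 1$, after choosing lifts $\widehat X$ of $X$ one has the presentation
\[
\widehat G=\bigl\langle \widehat X,\ z \ \bigm|\ [z,\hat x]=1\ (\hat x\in\widehat X),\ \ \hat r=z^{c_r}\ (r\in R)\bigr\rangle,
\]
where $\hat r\in\Z$ denotes the value in $\widehat G$ of the relator word $r$ evaluated on $\widehat X$, and $c_r\in\Z$ is its coordinate with respect to $z$; the tuple $(c_r)_{r\in R}$ represents the class of the extension in $H^2(G;\Z)$. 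Applying this with $G=T$ in the Lochak--Schneps presentation (five relations), the proposition reduces to identifying, for each of the five relators, the braid obtained by evaluating it on $\alpha^*,\beta^*$ and reading its image in $H_1(B_\infty)=\Z$.

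These five braids are computed geometrically inside the mapping class group $T^*$ of $D^*$, following \cite{fu-ka2}. One chooses $\alpha^*$ and $\beta^*$ realized by rotations of finite order $4$ and $3$ of the relevant admissible subsurfaces, carrying the enclosed punctures along, so that $\alpha^{*4}=\beta^{*3}=1$ already in $T^*$ and the corresponding defects vanish. For the two commutator relators $[\beta\alpha\beta,\alpha^2\beta\alpha\beta\alpha^2]$ and $[\beta\alpha\beta,\alpha^2\beta^2\alpha^2\beta\alpha\beta\alpha^2\beta\alpha^2]$ --- which under the forgetful map $\nu$ correspond to the commutation of two $A$-moves supported on essentially disjoint $4$-holed spheres (the cycles $(DC_1)$ and $(DC_2)$) --- the braidings involved can be taken with commuting supports, so the lifts commute in $T^*$ and these defects also vanish. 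Finally, evaluating the pentagon relator produces an element $(\beta^*\alpha^*)^5\in B_\infty$ which one identifies with the elementary braiding of the two punctures lying on the diagonals of the pentagon region; this is the braided counterpart of the identity $(\widetilde\beta\widetilde\alpha)^5=\sigma_{12}$ of Proposition \ref{permut}. Being a single standard braid generator, its class generates $H_1(B_\infty)$, and choosing $z$ to be exactly this class gives $(\beta^*_{\rm ab}\alpha^*_{\rm ab})^5=z$. This exhausts the relations, and the asserted behaviour of the projection $T^*_{\rm ab}\to T$ is immediate since $z\in B_\infty$ maps to $1\in T$.

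The main obstacle is this last, purely topological, bookkeeping: one must track the two punctures through the chain of five flips making up the pentagon cycle on $D^*$, and verify both that $(\beta^*\alpha^*)^5$ is precisely a half-twist interchanging two adjacent punctures (rather than the trivial braid or a more intricate element) and that the chosen lifts $\alpha^*,\beta^*$ satisfy $\alpha^{*4}=\beta^{*3}=1$ and the two commutation relations on the nose. This careful analysis of the hexagon-and-arc structure of $D^*$ is exactly what the finite presentation theorem for $T^*$ of \cite{fu-ka2} supplies; granting it, everything else is the formal central-extension computation sketched above.
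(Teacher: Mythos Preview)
Your proposal is correct and follows essentially the same approach as the paper: show centrality via the action on $H_1(B_\infty)$, then lift the five Lochak--Schneps relators of $T$ and compute their defects in $\Z$, finding $(\beta^*\alpha^*)^5$ equals a single braid generator while the other four relators already hold in $T^*$. The only cosmetic difference is that the paper justifies the vanishing of the two commutator defects by citing the splitting of $T^*$ over $F\subset T$, whereas you argue directly via disjointness of the supports of the $A$-moves; both arguments yield the same conclusion that these relations hold on the nose in $T^*$.
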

\begin{proof}
Recall from \cite{fu-ka2} that $T^*$ is generated by the two elements $\alpha^*$ and 
$\beta^*$ below. 
\begin{itemize} 
\item The support of the element $\beta^{*}$ of $T^{*}$ is the central hexagon.  Furthermore $\beta^{*}$ acts as the counterclockwise rotation of order  
three which permutes cyclically  
the punctures.  One has ${\beta^{*}}^{3}=1$.

\begin{center}
\includegraphics{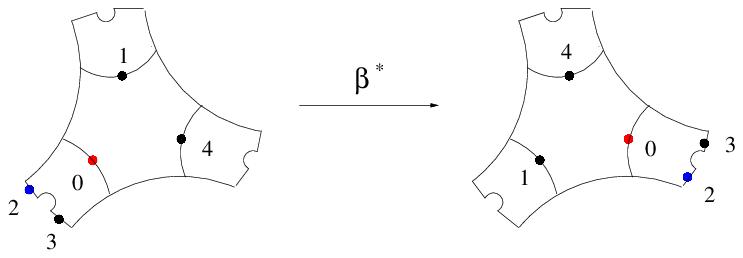}
\end{center}

\item  The support of the element $\alpha^{*}$ of $T^{*}$ is the union 
of two adjacent hexagons, 
one of them being the support of $\beta^{*}$.  
Then $\alpha^{*}$ rotates counterclockwise the support 
of angle $\frac{\pi}{2}$, by  
keeping fixed the central puncture. One has  ${\alpha^{*}}^4=1$. 

\begin{center}
\includegraphics{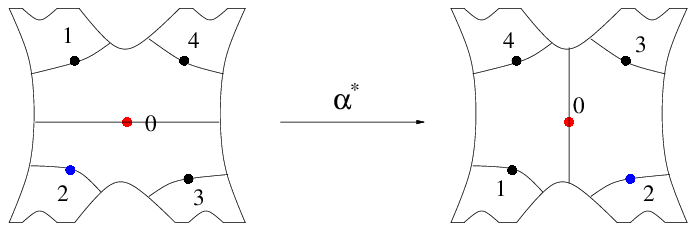}
\end{center}
\end{itemize}

\vspace{0.2cm}
\noindent
Let now $e$ be a simple arc in  $D^{*}$ which connects two punctures. 
We associate a braiding $\sigma_e\in B_{\infty}$ to $e$ by 
considering the homeomorphism
that moves clockwise the punctures at the 
endpoints of the edge $e$ in a small neighborhood of the edge,  
in order to interchange their positions. This means that if $\gamma$ is an arc transverse to $e$, then the braiding $\sigma_{e}$
moves $\gamma$ to the left when it approaches $e$. Such a braiding will be called {\it positive}, while $\sigma_{e}^{-1}$ is
{\it negative}.

\begin{center} 
\includegraphics{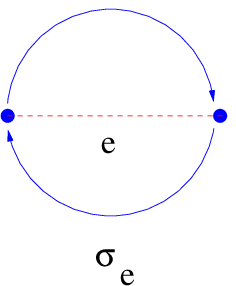}      
\end{center}

\vspace{0.2cm}
\noindent
It is known that $B_{\infty}$ is generated by the braids $\sigma_e$ 
where $e$ runs over the edges of the binary tree with vertices 
at the punctures. Let $\iota:B_{\infty}\to T^*$ be the inclusion. 
It is proved in \cite{fu-ka2} that 
the braid generator $\sigma_{[02]}$ associated to the edge joining the 
punctures numbered $0$ and $2$ has image 
\[ \iota(\sigma_{[02]})=(\bps\aps)^5\]
because we have 

\begin{center}
\includegraphics{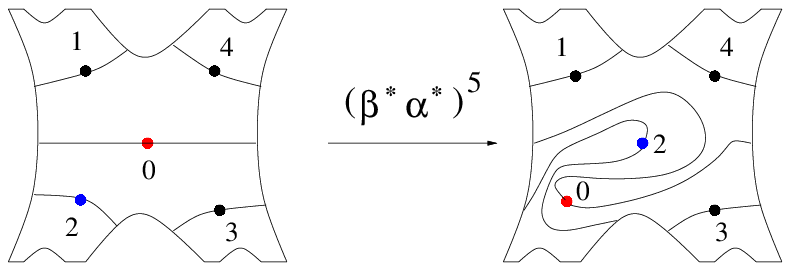}
\end{center}

\vspace{0.2cm}
\noindent
Recall next that all braid generators $\sigma_e$ are conjugate and 
call $z$ their image in $T^*_{\rm ab}$. It follows that $T^*_{\rm ab}$ is an 
extension of $T$ by $\Z$. Moreover, it is simple to check that 
$\aps\sigma_{[02]}\alpha^{-1}$ is also a braid generator, namely 
$\sigma_{[\aps(0)\aps(2)]}$. The same holds for 
$\bps\sigma_{[02]}\bps^{-1}=\sigma_{[\bps(0)\bps(2)]}$. 
This implies that the extension $T^*_{\rm ab}$ is central.  

\vspace{0.2cm}
\noindent
In particular, a presentation of $T^*_{\rm ab}$ can be obtained by  looking at 
the lifts of  relations in $T$, together with those coming from 
the fact that $z$ is central. 

\vspace{0.2cm}
\noindent
The first  set of relations above are obviously satisfied by $T^*_{\rm ab}$. 
Finally recall from \cite{fu-ka2} that $T^*$ splits over the smaller 
Thompson group $F$ and thus the following relations hold true in $T^*$:
\[ [\beta^*\alpha^*\beta^* \, , \, {\alpha^*}^2\beta^*\alpha^*\beta^*{\alpha^*}^2]=
[\beta^*\alpha^*\beta^* \, , \, {\alpha^*}^2{\beta^*}^2 
{\alpha^*}^2\beta^*\alpha^*\beta^*{\alpha^*}^2{\beta^*}
{\alpha^*}^2]=1.\]
Thus  relations from the second set are    
automatically verified in $T^*_{\rm ab}$. 
Since these relations form a complete set of lifts of  relations 
presenting $T$ and since $z$ is central, then they represent a complete 
system of relations in $T^*_{\rm ab}$. This ends the proof. 
\end{proof}

\subsection{Computing the class of $T^*_{\rm ab}$}\label{comput}

\begin{lemma}\label{multiple}
The class $c_{T^*_{\rm ab}}$ is a multiple of the Euler class. \index{extension class} 
\end{lemma}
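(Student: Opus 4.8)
The plan is to show that the class $c_{T^*_{\rm ab}}\in H^2(T;\Z)$ lies in the subgroup generated by the Euler class $\chi$, and the natural route is via the known structure of $H^2(T;\Z)$. Recall from Section~1.2 that the rational cohomology ring of $T$ is $\Q(\chi,\alpha)/(\chi\cdot\alpha)$ with $\chi$ the Euler class and $\alpha$ the discrete Godbillon--Vey class, both of degree $2$; in particular $H^2(T;\Q)$ is two-dimensional, spanned by $\chi$ and $\alpha$. So the first step is to invoke this computation and write $c_{T^*_{\rm ab}} = p\,\chi + q\,\alpha$ in $H^2(T;\Q)$ (and keep track of integrality, using that $H^2(T;\Z)$ injects into $H^2(T;\Q)$, which follows from $H_1(T)=0$ since $T$ is simple, so there is no torsion obstruction to worry about at the level we need). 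It then suffices to prove that the Godbillon--Vey component $q$ vanishes.

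The second and main step is to kill the $\alpha$-component. Here I would exploit the splitting of $T^*$ over the smaller Thompson group $F$, recorded in the proof of Proposition~\ref{abelcar}: the relations $[\beta^*\alpha^*\beta^*,{\alpha^*}^2\beta^*\alpha^*\beta^*{\alpha^*}^2]=1$ and its longer companion hold in $T^*$, so the extension $T^*_{\rm ab}$ restricted to $F\subset T$ is the trivial (split) central extension. Hence the restriction of $c_{T^*_{\rm ab}}$ to $H^2(F;\Z)$ is zero. Now the discrete Godbillon--Vey class $\alpha$ restricts nontrivially to $F$ — indeed from the description of $H^*(F;\Q)$ as the divided power algebra on a degree-$2$ generator tensored with $H^*(T^2)$, the generator of the divided power part is exactly (a nonzero multiple of) the restriction of $\alpha$, whereas $\chi$ restricts to zero because $\chi\cdot\alpha=0$ in $H^*(T;\Q)$ forces $\chi|_F$ to pair trivially and in fact the Euler class of the $T$-action is non-trivial only ``near the circle'' and dies on $F$ (the support condition: $f\in F$ has support in the open interval). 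So from $0 = c_{T^*_{\rm ab}}|_F = q\cdot(\alpha|_F)$ and $\alpha|_F\neq 0$ we get $q=0$, i.e. $c_{T^*_{\rm ab}}= p\,\chi$ is a multiple of the Euler class.

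I expect the main obstacle to be the second step — more precisely, pinning down cleanly that $\alpha|_F\neq 0$ while $\chi|_F=0$, so that the vanishing of the restriction to $F$ genuinely rules out a Godbillon--Vey contribution and not merely a redundant combination. The cleanest way to organize this is probably to phrase everything via the commutative square in the statement of Proposition~\ref{abelcar} (the projection $T^*_{\rm ab}\to T$ sending $\alpha^*_{\rm ab},\beta^*_{\rm ab},z$ to $\alpha,\beta,1$) together with the $F$-splitting, reducing the whole computation to: the pullback of $c_{T^*_{\rm ab}}$ to $F$ is trivial, and on the two-dimensional space $H^2(T;\Q)$ the only classes restricting trivially to $F$ are the multiples of $\chi$. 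One should be slightly careful about whether we want the integral statement or the rational one; for the lemma as stated (``a multiple of the Euler class'') the rational argument suffices, and the precise integral multiple $12\chi$ is determined afterwards in Section~\ref{comput} by an explicit cocycle computation inside the mapping class group $T^*$, which is outside the scope of this lemma.
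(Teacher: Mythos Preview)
Your approach is essentially the same as the paper's: use the splitting of $T^*$ (hence of $T^*_{\rm ab}$) over $F\subset T$ to conclude that $c_{T^*_{\rm ab}}$ lies in the kernel of the restriction $H^2(T)\to H^2(F)$, and then invoke the Ghys--Sergiescu computation to identify that kernel with $\Z\chi$. The paper compresses your second step into a single citation of \cite{gh-se}, whereas you unpack it as ``$\alpha|_F\neq 0$ and $\chi|_F=0$''; your justification for $\chi|_F=0$ via the cup-product relation $\chi\cdot\alpha=0$ is not a valid argument, but the parenthetical reason you give (that $F$ has a global fixed point on the circle, so the pulled-back Euler class vanishes) is the correct one and is all you need.
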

\begin{proof}
Since $T^*$ splits over the Thompson group $F\subset T$ (see \cite{fu-ka2}) it follows that 
$T^*_{\rm ab}$ also splits over $F$. Therefore the extension class  $c_{T^*_{\rm ab}}$ 
lies in the kernel of the restriction map $H^2(T)\to H^2(F)$. According to \cite{gh-se} 
the kernel is generated by the Euler class. \index{Euler class}
\end{proof}

\vspace{0.2cm}
\noindent
Let us introduce the group $T_{n,p,q}$ presented by  the generators  
$\als, \bls, z$ and the relations: 
\[ (\bls\als)^5=z^n \]  
\[ \als^4=z^p \]
\[ \bls^3=z^q\]
\[ [\bls\als\bls, \als^2\bls\als\bls\als^2]=1\]
\[[\bls\als\bls, \als^2\bls\als^2\bls\als\bls\als^2\bls^2\als^2]=1\]
and
\[ [\als,z]=[\bls,z]=1.\]
Recall from Proposition \ref{abelcar} 
that $T^*_{\rm ab}=T_{1,0,0}$. It is easy to see that $T_{n,p,q}$ are central extensions 
of $T$ by $\Z$.  Because of the last two commutation relations the extension 
$T_{n,p,q}$ splits over the Thompson group $F$. 
Thus the restriction of $c_{T_{n,p,q}}$ to $F$ vanishes and 
a fortiori the restriction to the commutator subgroup  $F'\subset F$.   
According to \cite{gh-se} we have $H^2(F')=\Z\alpha$ where 
$\alpha$ is the discrete  Godbillon-Vey class. Thus the map 
$H^2(T)\to H^2(F')$ is the projection $\Z\alpha\oplus \Z\chi\to \Z\alpha$. 
Since $c_{\widehat{T}}$ belongs to the 
kernel of $H^2(T)\to H^2(F')$ we derive that 
$c_{T_{n,p,q}}\in \Z\chi$. Set $c_{T_{n,p,q}}=\chi(n,p,q) \chi$.

\begin{proposition}[\cite{fu-se}]\label{linearform}
We have $\chi(n,p,q)= 12n-15p-20q$. 
\end{proposition}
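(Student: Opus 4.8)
The plan is to compute $c_{T_{n,p,q}}$ homologically via Hopf's formula and then pin down all three coefficients at once with an elementary divisibility argument. Write $T=F/R$ with $F$ free on $\alpha,\beta$ and $R$ the normal closure of the five relators of the presentation used above: $R_1=\alpha^4$, $R_2=\beta^3$, $R_3=(\beta\alpha)^5$, and the two commutator relators $R_4,R_5$. Since $T$ is simple, $H_1(T)=0$, so the universal coefficient theorem gives $H^2(T;\Z)\cong\mathrm{Hom}(H_2(T),\Z)$, and Hopf's formula identifies $H_2(T)$ with $(R\cap[F,F])/[F,R]\subseteq R/[F,R]$. Under these identifications, a central extension of $T$ by $\Z$ equipped with lifts $\als,\bls$ of $\alpha,\beta$ is the same datum as a homomorphism $\phi\colon R/[F,R]\to\Z$ sending each relator to the power of $z$ which it represents in the extension, the extension class being the restriction of $\phi$ to $H_2(T)$. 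For $T_{n,p,q}$ the defining relations say precisely that $\phi([R_1])=p$, $\phi([R_2])=q$, $\phi([R_3])=n$, $\phi([R_4])=\phi([R_5])=0$; write $\phi_{n,p,q}$ for this homomorphism.

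Next I would produce a single explicit homology class. Consider $E=R_1^{15}R_2^{20}R_3^{-12}\in R$. Its image in $F^{\mathrm{ab}}=\Z^2$ (coordinates being exponent sums in $\alpha$ and in $\beta$) is $15(4,0)+20(0,3)-12(5,5)=(0,0)$, since $R_1,R_2,R_3$ abelianize to $(4,0),(0,3),(5,5)$ and $R_4,R_5$ to $(0,0)$; hence $E\in R\cap[F,F]$ and $[E]$ is a well-defined class in $H_2(T)$. Evaluating, $\phi_{n,p,q}([E])=15p+20q-12n$. On the other hand, as established just before the statement, $c_{T_{n,p,q}}=\chi(n,p,q)\,\chi$, so pairing $H^2$ against $H_2$ gives
\[ \chi(n,p,q)\cdot\langle\chi,[E]\rangle = 15p+20q-12n, \]
where $\ell:=\langle\chi,[E]\rangle$ is a fixed integer independent of $(n,p,q)$. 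Specializing to $(1,0,0),(0,1,0),(0,0,1)$ shows $\ell$ divides $12$, $15$ and $20$, hence $\ell\mid\gcd(12,15,20)=1$; and $\ell\neq0$ because the right-hand side is not identically zero. Therefore $\ell=\pm1$ and $\chi(n,p,q)=\ell\,(15p+20q-12n)=\pm(12n-15p-20q)$ — note this simultaneously yields the $\Z$-linearity of $\chi(n,p,q)$ with no extra work.

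It remains to fix the sign, which is the one point where geometry enters. Realize $\chi$ as the class of the central extension obtained by pulling back $\widetilde{\mathrm{Homeo}}{}^+(S^1)\to\mathrm{Homeo}^+(S^1)$ along the standard circle action $T\hookrightarrow\mathrm{Homeo}^+(S^1)$, with kernel $\Z$ generated by the translation $z\colon t\mapsto t+1$. Taking the lifts $\widehat\alpha,\widehat\beta$ with translation number in $[0,1)$, one has $\widehat\alpha^{4}=z^{a}$, $\widehat\beta^{3}=z^{b}$, $(\widehat\beta\widehat\alpha)^{5}=z^{c}$, while both commutator relators evaluate to $z^{0}$, since lifts of commuting circle homeomorphisms commute (if $xy=yx$ then $\widehat x\widehat y\widehat x^{-1}$ is a lift of $y$, hence equals $\widehat y z^{k}$, and comparing translation numbers gives $k=0$). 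Thus $\ell=15a+20b-12c$. Because $\alpha$ has order $4$ and $\beta$ order $3$ we have $a\in\{1,3\}$ and $b\in\{1,2\}$; the orientation convention of the standard action — under which $\alpha$ is a positive quarter turn and $\beta$ a positive third of a turn — gives $a=b=1$, and then the constraint $\ell=\pm1$ forces $35-12c=\pm1$, i.e. $c=3$ and $\ell=-1$. Hence $\chi(n,p,q)=12n-15p-20q$, consistently with the value $c_{T^*_{\rm ab}}=c_{T_{1,0,0}}=12\chi$ highlighted in Section~5.

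The only substantive input is the geometric identification of the rotation numbers $\mathrm{rot}(\alpha)=1/4$, $\mathrm{rot}(\beta)=1/3$ (equivalently, the fixing of the orientation convention); everything else is formal manipulation with Hopf's formula plus the elementary fact $\gcd(12,15,20)=1$. One should also check, when writing $c_{T_{n,p,q}}=\chi(n,p,q)\chi$, that the Euler class spans a torsion-free line in $H^2(T;\Z)$, so that the coefficient $\chi(n,p,q)$ is unambiguous; this is immediate from the computation of $H^*(T)$ in \cite{gh-se}.
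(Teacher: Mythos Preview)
Your argument is correct and gives a clean, self-contained proof. The paper does not reproduce a proof here, citing \cite{fu-se}; the adjacent remarks indicate that the original computation proceeds by locating subgroups of $T$ over which $T_{n,p,q}$ (or $T^*_{\rm ab}$) splits --- $F$, the copy of ${\rm PSL}(2,\Z)=\langle\alpha^2,\beta\rangle$, and an embedded ${\rm SL}(2,\Z)$ built from $\alpha$ together with an explicit order-$6$ element $\gamma$ --- each splitting producing a divisibility constraint on $\chi(n,p,q)$. Your route is different and more uniform: you manufacture a single explicit class $[E]=[R_1^{15}R_2^{20}R_3^{-12}]\in H_2(T)$ via Hopf's formula, pair it against $c_{T_{n,p,q}}$ to get $\chi(n,p,q)\cdot\ell=15p+20q-12n$ with $\ell=\langle\chi,[E]\rangle$ fixed, and then force $\ell=\pm1$ from $\gcd(12,15,20)=1$. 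This delivers the entire linear formula in one stroke, with linearity in $(n,p,q)$ falling out automatically rather than having to be argued separately.

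The one point deserving care is the sign. You reduce it to the rotation numbers of $\alpha$ and $\beta$ under the circle action, and your case analysis is sharp: among $(a,b)\in\{1,3\}\times\{1,2\}$ only $(1,1)$ and $(3,2)$ make $15a+20b-12c=\pm1$ solvable, and these two cases are interchanged by orientation reversal, i.e.\ by $\chi\mapsto-\chi$. So the question is purely one of convention. Your identification $(a,b)=(1,1)$ is consistent with the paper's description of $\alpha^*$ and $\beta^*$ as counterclockwise rotations on their supports (see \S\ref{comput} and the figures there), but a fully rigorous treatment would trace this through the identification of $T$ with ${\rm PPSL}(2,\Z)\subset{\rm Homeo}^+(S^1)$ and the paper's normalization of $\chi$ from \cite{gh-se}. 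You correctly flag this as the one substantive geometric input; with that granted, the rest is purely formal.
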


\begin{corollary}\label{12}
We have $c_{T^*_{\rm ab}}=12\chi$. 
\end{corollary}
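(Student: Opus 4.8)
The plan is to obtain Corollary~\ref{12} immediately from Proposition~\ref{abelcar} together with Proposition~\ref{linearform}, so that no new computation is needed at this point. By Proposition~\ref{abelcar} the group $T^*_{\rm ab}$ is, in the notation introduced just before Proposition~\ref{linearform}, exactly $T_{1,0,0}$: the presentation of $T^*_{\rm ab}$ listed there is the presentation of $T_{n,p,q}$ specialized to $n=1$, $p=q=0$, since in it $(\beta^*_{\rm ab}\alpha^*_{\rm ab})^5=z$ while ${\alpha^*_{\rm ab}}^4={\beta^*_{\rm ab}}^3=1$. Hence $c_{T^*_{\rm ab}}=c_{T_{1,0,0}}=\chi(1,0,0)\,\chi$, and Proposition~\ref{linearform} gives $\chi(1,0,0)=12\cdot 1-15\cdot 0-20\cdot 0=12$, so $c_{T^*_{\rm ab}}=12\chi$. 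The substance therefore lies entirely in Proposition~\ref{linearform}, and I would organize its proof in two stages.

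First, I would check that $(n,p,q)\mapsto c_{T_{n,p,q}}\in H^2(T;\Z)$ is $\Z$-linear. Each $T_{n,p,q}$ arises from the single fixed Lochak--Schneps presentation $\langle\alpha,\beta\mid\alpha^4,\ \beta^3,\ (\beta\alpha)^5,\ R_1,\ R_2\rangle$ of $T$ by prescribing the lifts of the relators $\alpha^4,\ \beta^3,\ (\beta\alpha)^5,\ R_1,\ R_2$ to be $z^{p},\ z^{q},\ z^{n},\ 1,\ 1$ respectively in the central extension by $\Z=\langle z\rangle$; the associated $2$-cocycle then depends additively on the ``defect vector'' $(n,p,q)$, so Baer sum of extensions corresponds to addition of defect vectors and the assignment is linear. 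Combined with the fact, already recorded in the text after Lemma~\ref{multiple}, that every $c_{T_{n,p,q}}$ lies in $\Z\chi$ (its restriction to $F$, and a fortiori to $F'$, vanishes, killing the Godbillon--Vey summand of $H^2(T)=\Z\alpha\oplus\Z\chi$), this exhibits $\chi(\cdot,\cdot,\cdot)$ as a $\Z$-linear form. It then remains only to evaluate the three coefficients $\chi(1,0,0)$, $\chi(0,1,0)$, $\chi(0,0,1)$.

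Second, for $\chi(0,1,0)$ and $\chi(0,0,1)$ I would use the realization $T\subset\mathrm{Homeo}^+(S^1)$ and the bounded Euler cocycle: since $\alpha$, $\beta$, $\beta\alpha$ are conjugate to rotations of orders $4$, $3$, $5$, their $4$th, $3$rd, $5$th powers acquire prescribed translation numbers in the pullback extension $\widetilde{T}_{\Z}\subset\widetilde{\mathrm{Homeo}^+(S^1)}$ classified by $\chi$, which pins those two coefficients down. For $\chi(1,0,0)=c_{T^*_{\rm ab}}$, the genuinely new computation, I would work inside the mapping class group $T^*$: the identity $\iota(\sigma_{[02]})=(\beta^*\alpha^*)^5$ from the proof of Proposition~\ref{abelcar} already yields $n=1$, and one then evaluates the $T^*_{\rm ab}$-cocycle against a $2$-cycle of $T$ pairing nontrivially with $\chi$ — equivalently, one counts how many copies of the braid generator $z$ are created and absorbed along a relator cycle — to extract the coefficient. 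The main obstacle is exactly this last step: producing the precise value $12$ demands careful braid bookkeeping in the punctured surface $D^*$, and — no less delicate — a consistent choice of sign and normalization for the Euler class $\chi$, so that the three computed integers actually assemble into the single linear form $\chi(n,p,q)=12n-15p-20q$ rather than some rescaling of it.
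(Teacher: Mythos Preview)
Your deduction of the corollary from Proposition~\ref{abelcar} and Proposition~\ref{linearform} is exactly the paper's argument: the corollary is stated without proof precisely because it is the specialization $T^*_{\rm ab}=T_{1,0,0}$ followed by $\chi(1,0,0)=12$. The additional material you supply---the linearity of $(n,p,q)\mapsto c_{T_{n,p,q}}$ via relator defects, and the evaluation strategy for the three basis vectors---is a sketch of Proposition~\ref{linearform} itself, which the paper does not prove but cites to \cite{fu-se}; your outline is along the right lines, though note that in your scheme the computation of $\chi(1,0,0)$ \emph{is} the computation of $c_{T^*_{\rm ab}}$, so the corollary becomes a restatement of that step rather than a consequence of it.
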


\begin{remark}
The extension $T^*\to T$ splits also 
over the subgroup $\langle \alpha^2,\beta\rangle$ which is isomorphic to ${\rm PSL}(2,\Z)$ 
(see \cite{fu-ka2}). This implies that $c_{T^*_{\rm ab}}$ is a multiple of $6\chi$.  
\end{remark}

\begin{remark}
The following refinement of the above argument shows that 
 $c_{T^*_{\rm ab}}$ is a multiple of $12\chi$.  
Consider the element $\gamma=\alpha\beta\alpha\beta\alpha^2\beta\alpha\beta\alpha^2$ in $T$. Then $\gamma^3=\alpha^2$ so that $\gamma^6=\alpha^4=1$. 
Let then 
$\gamma^*_{\rm ab}=\alpha^*_{\rm ab}\beta^*_{\rm ab}\alpha^*_{\rm ab}\beta^*_{\rm ab}
{\alpha^*_{\rm ab}}^2\beta^*_{\rm ab}\alpha^*_{\rm ab}\beta^*_{\rm ab}{\alpha^*_{\rm ab}}^2$ be a lift of $\gamma$ to $T^*_{\rm ab}$. One can show that 
$(z^{-1}\gamma^*_{\rm ab})^3={\alpha^*_{\rm ab}}^2$. 
The elements $\gamma$ and $\alpha$ of orders 6 and 4 respectively 
determine an embedding of $\Z/6\Z *_{\Z/2\Z}\Z/4\Z$ into $T$. 
The relation from above shows that one can lift this embedding
to an embedding of  $\Z/6\Z *_{\Z/2\Z}\Z/4\Z$ into $T^*_{\rm ab}$ 
by using the lifts $z^{-1}\gamma^*_{\rm ab}$ and $\alpha^*_{\rm ab}$. 
Now we know that $\Z/6\Z *_{\Z/2\Z}\Z/4\Z$ is isomorphic to ${\rm SL}(2,\Z)$ and 
$H^2({\rm SL}(2,\Z),\Z)=\Z/12\Z$. Moreover the pull-back of the Euler 
class on ${\rm SL}(2,\Z) \subset T\subset {\rm Homeo}^+(S^1)$ is the generator 
of $Z/12\Z$. This implies that  $c_{T^*_{\rm ab}}$ is a multiple of $12\chi$.
\end{remark}

\subsection{Identifying the two central extensions of $T$}\label{ident}

\vspace{0.2cm}
\noindent 
The main result of this section is the following: 

\begin{proposition} \index{dilogarithmic extension}
The dilogarithmic extension $\widehat{T}$ is identified to $T^*_{\rm ab}$. 
\end{proposition}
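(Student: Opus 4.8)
The plan is to identify the two central extensions through their classes in $H^2(T;\Z)$: since central extensions of a group by $\Z$ are classified by its second integral cohomology, it is enough to prove $c_{\widehat T}=c_{T^*_{\rm ab}}$. Recall from Ghys--Sergiescu that $H^2(T;\Z)$ is free abelian of rank two, spanned by the Euler class $\chi$ and the discrete Godbillon--Vey class, and that by Corollary \ref{12} we already have $c_{T^*_{\rm ab}}=12\chi$; so everything reduces to computing the class of the dilogarithmic extension and checking it equals $12\chi$.

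First I would show $c_{\widehat T}\in\Z\chi$, i.e. that $\widehat T$ splits over the smaller Thompson group $F\subset T$. Present $\widehat T$, as in the construction preceding the statement, as the quotient of the free group on $\alpha,\beta$ by the intersection of $\ker\overline{\mathbf K}$ with the normal closure of the Lochak--Schneps relations. Among those relations, the two commutator relations are lifts of the defining relations of $F$, and geometrically they are the square cycles $(DC_1)$ and $(DC_2)$, i.e. commutations of two $A$-moves with disjoint (respectively separated) supports. Proposition \ref{dilogcar}(1) says that the images under $\mathbf K$ of disjoint flips commute on the nose, so both commutator relations lift to honest relations in $\widehat T$; hence the lifts of the generators of $F$ span a copy of $F$ in $\widehat T$, the extension splits over $F$, and $c_{\widehat T}$ has trivial Godbillon--Vey component. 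In particular $\widehat T$ is one of the groups $T_{n,p,q}$ of Section \ref{comput}, with the two commutator relations holding identically.

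Next I would pin down $(n,p,q)$ using Proposition \ref{dilogcar}. The relation $(\beta\alpha)^5=1$ of $T$ corresponds, under the dictionary between the Lochak--Schneps presentation and the moves of the Ptolemy groupoid, to the pentagon cycle of flips, and part (3) of Proposition \ref{dilogcar} says that $\mathbf K$ sends it to $\exp(2\pi i h)$ times the identity; with the normalization in which $z$ is the image of $\exp(2\pi i h)$ this gives $n=1$. The relation $\alpha^4=1$ corresponds to a four-cycle of $A$-moves and the relation $\beta^3=1$ to the order-three cycle of the marking move $B$; using the involutivity of flips (Proposition \ref{dilogcar}(2)), the commutation of disjoint flips, and the explicit effect of $\mathbf K$ on marking moves, one tracks the scalar corrections attached to these cycles and finds $p=q=0$. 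Hence $\widehat T\cong T_{1,0,0}$, which is precisely the presentation of $T^*_{\rm ab}$ from Proposition \ref{abelcar}, and $c_{\widehat T}=\chi(1,0,0)\,\chi=12\chi$ by Proposition \ref{linearform}; so $\widehat T$ and $T^*_{\rm ab}$ are isomorphic as central extensions of $T$.

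The hard part will be this last bookkeeping, namely showing that the scalar corrections attached to the relations $\alpha^4=1$ and $\beta^3=1$ vanish, equivalently that $\widehat\alpha$ has order $4$ and $\widehat\beta$ order $3$ in $\widehat T$. This requires writing each of these group relations as a cycle of elementary moves, rewriting that cycle as a product of conjugates of the ``fundamental'' relations (pentagons, flip-involutions, commutations of disjoint flips, and the order-three relation for the marking move), and summing the powers of $\exp(2\pi i h)$ that accumulate, using when needed the functional identities for the quantum logarithm and dilogarithm recorded earlier. A partial shortcut, in the spirit of the Remark following Corollary \ref{12}: once $\widehat\alpha^4=1$ is established one gets a lift of the subgroup ${\rm SL}(2,\Z)=\Z/6\Z *_{\Z/2\Z}\Z/4\Z\subset T$ into $\widehat T$, which already forces $c_{\widehat T}$ to be a multiple of $12\chi$; combined with $n=1$ this leaves only the vanishing of the correction to $\beta^3=1$ to be checked in order to conclude $c_{\widehat T}=12\chi$ exactly.
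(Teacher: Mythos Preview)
Your approach is essentially the paper's, just routed through $H^2(T;\Z)$ rather than directly through presentations. Both arguments boil down to checking that $\widehat T$ satisfies the defining relations of $T_{1,0,0}=T^*_{\rm ab}$: the two commutator relations (from commutation of disjoint flips, Proposition~\ref{dilogcar}(1)), the pentagon $(\widehat\beta\widehat\alpha)^5=z$ (from Proposition~\ref{dilogcar}(3)), and $\widehat\alpha^4=\widehat\beta^3=1$. Once these hold, there is a surjection $T^*_{\rm ab}\to\widehat T$ over $T$, and nontriviality of $\widehat T$ (the pentagon scalar $\exp(2\pi ih)\neq 1$ for irrational $h$) forces it to be an isomorphism. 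Your final detour through Proposition~\ref{linearform} is correct but unnecessary.

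Where you misjudge the difficulty is in calling $\widehat\alpha^4=\widehat\beta^3=1$ the ``hard part''. The paper dispatches both with the phrase ``by construction'', and here is why. The generator $\beta$ is a $B$-move: it changes only the distinguished oriented edge and involves \emph{no flip whatsoever}. Since the Hilbert space $\mathcal L_\tau$ depends only on the underlying tessellation (the coordinates $a_e$ are indexed by $E(\tau)$, not by the marking) and the intertwiner $\mathbf K$ is built entirely out of flips, a $B$-move acts as the identity operator; hence $\widehat\beta^3=1$ is automatic. For $\widehat\alpha$: the move $\alpha^2$ flips the d.o.e.\ and then flips the resulting edge back---this is precisely the square of a single flip (composed with a reversal of the marking, which again is invisible to $\mathbf K$). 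Proposition~\ref{dilogcar}(2) says the square of a flip is the identity operator, so $\widehat\alpha^2$ carries no scalar and $\widehat\alpha^4=1$. No rewriting into pentagons, no dilogarithm identities, and no ${\rm SL}(2,\Z)$ shortcut are needed.
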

\begin{proof}
The main step is to translate the properties of the dilogarithmic 
representation of the Ptolemy groupoid in terms of the Ptolemy-Thompson group.  
Since $\widehat{T}$ is a central extension 
of $T$ it is generated by the lifts $\al,\be$ of $\alpha$ and $\beta$ 
together with the generator $z$ of the center. 
Let us see what are the relations arising in the 
group $\widehat{T}$.  According to Proposition \ref{dilogcar} 
lifts of disjoint flips should commute. By a simple computation we can 
show that 
the elements $\beta\alpha\beta$, $\alpha^2\beta\alpha\beta\alpha^2$ and 
$\alpha^2\beta\alpha^2\beta\alpha\beta\alpha^2\beta^2\alpha^2$ act as 
disjoint flips on the Farey triangulation. 
In particular we have the relations 
\[  [\be\al\be,\al^2\be\al\be\al^2]=
 [\be\al\be,\al^2\be\al^2\be\al\be\al^2\be^2\al^2]=1\]
satisfied in $\widehat{T}$. Moreover, by construction we also have  
\[\be^3=\al^4=1\]
meaning that the $\al$ is still periodic of order $4$ while $\be$ is not deformed.

\vspace{0.2cm}
\noindent 
Eventually the only non-trivial lift of relations comes from the 
pentagon relation $(\be\al)^5$.  The element $(\be\al)^5$ is actually 
the permutation of the two edges in the pentagon times the composition of the five flips.  
The pentagon equation is not anymore satisfied 
but Proposition \ref{dilogcar}  shows that the dilogarithmic image of  
$(\be\al)^5$ is a scalar operator. 
Since $z$ is the generator of the 
kernel $\Z$ of $\widehat{T}\to T$  
it follows that the lift of the pentagon equation from $T$ to 
$\widehat{T}$ is given by 
\[ (\be\al)^5=z.\]
According to Proposition \ref{abelcar} all relations presenting 
$T^*_{\rm ab}$ are satisfied  in $\widehat{T}$. Since $\widehat{T}$ is a 
nontrivial  central extension of $T$ by $\Z$ 
it follows that the groups are isomorphic.  
\end{proof}

\begin{remark}
The key point in the above proof  is that all pentagon relations in $Pt$ 
are transformed in a single pentagon relation in $T$ and thus 
the scalars associated to the pentagons in $Pt$  should be the same.   
\end{remark}

\begin{remark}
The dilogarithmic representation of $T$ induces a projective representation 
of the smaller Thompson group $F\subset T$. The latter is equivalent to a 
linear representation since the braided Ptolemy-Thompson group splits 
over $F$. It is presently 
unknown whether the dilogarithmic representation can be 
extended to one of the groups $V$ or $\mathcal B$.    
\end{remark}

\subsection{Classification of central extensions of the group $T$}
Our main concern here is to  identify the cohomology classes  
of all central extensions of $T$ in $H^2(T)$. 
Before doing that we consider a series of 
central extensions $T_{n,p,q,r,s}$ of $T$ by $\Z$, having properties 
similar to those of  $\widehat{T}$. \index{classification central extensions of $T$}

\begin{definition}
The group $T_{n,p,q,r,s}$, is presented by  the generators  
$\als, \bls, z$ and the relations: 
\[ (\bls\als)^5=z^n \]  
\[ \als^4=z^p \]
\[ \bls^3=z^q\]
\[ [\bls\als\bls, \als^2\bls\als\bls\als^2]=z^r\]
\[[\bls\als\bls, \als^2\bls\als^2\bls\als\bls\als^2\bls^2\als^2]=z^s\]
\[ [\als,z]=[\bls,z]=1.\]
Let us denote  $T_{n,p,q,r}=T_{n,p,q,r,0}$ and 
$T_{n,p,q}=T_{n,p,q,0,0}$. 
\end{definition}

\vspace{0.2cm}
\noindent 
According to \cite{fu-ka2} we can identify $\widehat{T}$ with 
$T_{1,0,0}$. In fact the group $T^*$ is split over 
 the smaller Thompson group $F\subset T$ and thus $\widehat{T}$ is 
split over $F$. Furthermore $F$ is generated by the elements 
$\beta^2\alpha$ and $\beta\alpha^2$ and thus relations of $F$ are precisely 
given by the above commutation relations. 
Thus the last two relations hold, while 
$z$ is central and thus $\widehat{T}$ is given by the above 
 presentation. 

\begin{remark} 
We considered in \cite{fu-ka2} the twin group $T^{\sharp}$ 
and gave a presentation 
of it.  Then, using a similar procedure there is a group obtained from 
$T^{\sharp}$ 
by abelianizing the kernel $B_{\infty}$, which is identified  
to $T_{3,1,0}$. 
\end{remark}

\begin{theorem}[\cite{fu-se}]\label{exte0}
Every central extension of $T$ by $\Z$ is of the form $T_{n,p,q,r}$. Moreover,   
the class $c_{T_{n,p,q,r}}\in H^2(T)$ of the extension $T_{n,p,q,r}$  
is given by:  
\[ c_{T_{n,p,q,r}}= (12n-15p-20q-60r) \chi + r \alpha. \]
Moreover the central $T_{n,p,q}$ are precisely those central extensions 
whose restrictions on $F\subset T$ splits. 
\end{theorem}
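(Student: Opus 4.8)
The plan is to reduce everything to the presentation of $T$ on the generators $\alpha,\beta$ given by Lochak--Schneps, together with the cohomological fact $H^2(T)=\Z\chi\oplus\Z\alpha$ from \cite{gh-se}. First I would establish that every central extension $1\to\Z\to \widehat{T}\to T\to 1$ admits a presentation of the form $T_{n,p,q,r,s}$. Choose lifts $\als,\bls$ of $\alpha,\beta$ and a generator $z$ of the central $\Z$; then each of the five defining relators of $T$ becomes, after lifting, equal to some power of $z$, say $z^{n},z^{p},z^{q},z^{r},z^{s}$ respectively, and $z$ is central. One must check that these relations, plus centrality of $z$, are a complete set of relations for $\widehat{T}$: this follows from the standard fact that a central extension of a finitely presented group $G=\langle S\mid R\rangle$ by $\Z$ is presented by $\langle S,z\mid z\ \text{central},\ r=z^{c(r)}\ (r\in R)\rangle$ for the appropriate integers $c(r)$, since the relation module of such an extension is generated by the lifts of the relators together with the commutators $[s,z]$. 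So $\widehat{T}\cong T_{n,p,q,r,s}$ for some integers. Next I would eliminate the parameter $s$: the fifth relator $[\bls\als\bls,\als^2\bls\als^2\bls\als\bls\als^2\bls^2\als^2]$ and the fourth one both lie in (the lift of) the commutator subgroup, and in fact one can change the lift of $\beta$ (replace $\bls$ by $z^j\bls$) — or, more robustly, one uses that the two square-cycle relators $(DC_1)$ and $(DC_2)$ are, by Proposition~5.5 / the discussion after Proposition following \cite{fu-ka1}, not independent: every square cycle is a product of conjugates of pentagons and of $(DC_1),(DC_2)$-cycles, and a Mayer--Vietoris / direct computation in $H_2$ forces a linear relation among the corresponding $z$-powers, which can be normalized so that $s=0$. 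This gives the four-parameter family $T_{n,p,q,r}$.

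The second half is the cohomology class computation $c_{T_{n,p,q,r}}=(12n-15p-20q-60r)\chi+r\alpha$. The plan here is to pin down the class by restricting to suitable subgroups on which $H^2$ is understood, exactly as in the lemmas preceding this theorem. Restricting to $F'\subset F\subset T$: by \cite{gh-se}, $H^2(F')=\Z\alpha$ and the map $H^2(T)=\Z\chi\oplus\Z\alpha\to H^2(F')$ is projection onto the $\alpha$-summand. The two commutator relators of $F$ (those with exponents $r$ and $s=0$) encode precisely the Godbillon--Vey class on $F'$, so the $\alpha$-coefficient of $c_{T_{n,p,q,r}}$ equals $r$; when $r=0$ the extension splits over $F$ and one is in the situation of Lemma~\ref{multiple}, so $c_{T_{n,p,q}}\in\Z\chi$. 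Writing $c_{T_{n,p,q}}=\chi(n,p,q)\chi$, the coefficient $\chi(n,p,q)$ is linear in $(n,p,q)$ because the construction $T_{n,p,q}$ is functorial under the obvious homomorphisms (composing with $z\mapsto z^k$ multiplies all three parameters by $k$, and one can ``add'' extensions by Baer sum, which adds parameters). Hence $\chi(n,p,q)=an+bp+cq$ for integers $a,b,c$, and it remains to determine $(a,b,c)=(12,-15,-20)$, which is the content of Proposition~\ref{linearform} (quoted from \cite{fu-se}) — this is obtained by restricting to $\mathrm{PSL}(2,\Z)=\langle\alpha^2,\beta\rangle$ and to the $\mathrm{SL}(2,\Z)=\Z/6\Z *_{\Z/2\Z}\Z/4\Z$ copy as in the two remarks after Corollary~\ref{12}, where the Euler class generates $H^2(\mathrm{SL}(2,\Z),\Z)=\Z/12\Z$; the three evaluations (on $\alpha^4$, on $\beta^3$, and on the pentagon via $\widehat{T}=T^*_{\rm ab}$ where $c=12\chi$ by Corollary~\ref{12}) determine $a,b,c$. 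Finally, putting the two restriction computations together via the decomposition $H^2(T)=\Z\chi\oplus\Z\alpha$ yields $c_{T_{n,p,q,r}}=(12n-15p-20q-60r)\chi+r\alpha$; the $-60r$ term in the $\chi$-coefficient comes from expressing the fourth relator's contribution in the chosen basis, i.e. from the fact that the $(DC_1)$ square cycle equals (up to conjugation) a combination of pentagons whose net $\chi$-weight is $-60$ per unit of $r$. The last sentence of the theorem, that the $T_{n,p,q}$ are exactly the extensions splitting over $F$, is then immediate: splitting over $F$ forces the $\alpha$-coefficient to vanish (since $H^2(F)\to H^2(F')$ detects $\alpha$), i.e. $r=0$, and conversely $r=0$ makes the $F$-relations trivial so the extension splits over $F=\langle\beta^2\alpha,\beta\alpha^2\rangle$.

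The main obstacle I expect is the elimination of the fifth parameter $s$ and, relatedly, getting the precise integer coefficients (especially the $-60r$). Checking that $\langle S,z\mid\ \ldots\rangle$ is a complete presentation is routine, and the restriction arguments to $F'$ and $\mathrm{SL}(2,\Z)$ are exactly the lemmas already in the paper; but showing that the two square-cycle relators do not contribute a genuinely new parameter, and computing how the $(DC_1)$-cycle decomposes into pentagons and $(DC_2)$-cycles with the correct signs, requires a careful bookkeeping inside the Cayley $2$-complex of $T$ (equivalently inside $\mathcal{HT}_{red}(\mathscr{S}_{0,\infty})$, using Proposition~5.5 of \cite{fu-ka1}). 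This is where I would expect to spend most of the effort, and where I would lean most heavily on the explicit relation-by-relation analysis already carried out in \cite{fu-ka1,fu-se}.
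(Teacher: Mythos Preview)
Your overall strategy---linearity of the map $(n,p,q,r)\mapsto c_{T_{n,p,q,r}}$ together with restrictions to $F'$, to $\langle\alpha^2,\beta\rangle\cong{\rm PSL}(2,\Z)$, and to the ${\rm SL}(2,\Z)$ copy---is exactly the line of argument the paper sets up through Lemma~\ref{multiple}, Proposition~\ref{linearform}, Corollary~\ref{12} and the two subsequent remarks. So the skeleton is right.

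Two concrete issues. First, your ``elimination of $s$'' step is both unnecessary and, as written, wrong. Unnecessary: once the formula $c_{T_{n,p,q,r}}=(12n-15p-20q-60r)\chi+r\alpha$ is established, the map $\Z^4\to H^2(T)=\Z\chi\oplus\Z\alpha$ is visibly surjective (since $\gcd(12,15,20)=1$), so every central extension of $T$ by $\Z$ is isomorphic to some $T_{n,p,q,r}$ and the first assertion falls out of the second. There is no need to reduce a general $T_{n,p,q,r,s}$ to $s=0$ by hand before computing classes. Wrong: your suggested change of lift $\bls\mapsto z^j\bls$ (or $\als\mapsto z^k\als$) does \emph{not} touch $r$ or $s$, because both commutator words are balanced in $\als$ and in $\bls$, so the extra powers of $z$ cancel. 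Only $n,p,q$ move under such relabellings. So that particular mechanism cannot reduce $s$ to $0$; drop it and reorder the argument as above.

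Second, you are right that the crux is the $-60r$ coefficient, and your explanation (``the $(DC_1)$ square cycle equals a combination of pentagons whose net $\chi$-weight is $-60$'') is not yet an argument. The paper's route (in \cite{fu-se}, following the pattern of Proposition~\ref{linearform}) is a direct cocycle computation: write down an explicit $\Z$-valued $2$-cocycle on $T$ representing $\chi$ (respectively $\alpha$) and evaluate it on the word $[\beta\alpha\beta,\alpha^2\beta\alpha\beta\alpha^2]$. This is finite bookkeeping in the Cayley $2$-complex and gives the pair $(-60,1)$ directly. If you want to do it your way via ${\cal HT}_{red}$, that is equivalent, but you must actually produce the decomposition of the $(DC_1)$ square into pentagons and count---the number $60$ will not appear by inspection. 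The last statement (the $T_{n,p,q}$ are exactly the extensions splitting over $F$) is then immediate, as you say: $r=0$ is equivalent to the $\alpha$-component vanishing, which by the restriction map $H^2(T)\to H^2(F)$ is equivalent to splitting over $F$.
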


\section{More asymptotically rigid mapping class groups}

\subsection{Other planar surfaces and braided Houghton groups}
\vspace{0.2cm} 
\noindent 
The aim of this section is to use the previous methods  
in order to recover the braided Houghton groups as mapping class groups  
of surfaces of infinite type. In particular the braid group on infinitely 
many strands is realized as the commutator subgroup of an explicit 
finitely presented group. This has been done previously by Dynnikov  
who used the so-called three pages representations 
of braids and links in (\cite{Dy}). Our groups are slightly 
different from those considered by Dynnikov and their presentation is 
of a different nature, because it comes from a geometric description 
in terms of mapping classes. Moreover, we obtain that the word problem of the 
braided Houghton groups is solvable.  
A version of our construction was used by Degenhardt, who  
introduced the braided Houghton groups $BH_n$ in his 
(unpublished) thesis \cite{Deg}.  Then Kai-Uwe Bux described 
a conjectural approach to the finiteness properties of these 
groups in \cite{Bux}.

\vspace{0.2cm} 
\noindent 
In order to define the mapping class group of a surface of infinite type we need 
to fix the behavior of homeomorphisms at infinity. The main 
ingredient used in \cite{fu-ka2}  consists of adjoining rigid structures, 
as defined below in a slightly more general context: 

\begin{definition}
A {\em rigid structure} $d$  on the surface \index{rigid structure}
$\Sigma$ is a decomposition  of $\Sigma$ into 2-disks with disjoint 
interiors, called elementary pieces. We suppose that the closures 
of the elementary pieces are still 2-disks.  

\vspace{0.2cm}
\noindent
We assume that we are given a family $F$ of compact subsurfaces of $\Sigma$ 
such that each member of $F$ is a finite union of elementary pieces, 
and called the family of admissible subsurfaces of $\Sigma$. \index{admissible subsurface}
\end{definition}

\vspace{0.2cm}
\noindent
To the data $(\Sigma, d, F)$ we can associate an asymptotically rigid  
mapping class group \index{asymptotically rigid mapping class group} ${\cal M}(\Sigma, d, F)$ as follows. We first restrict to 
those homeomorphisms that act in the simplest possible way at infinity. 

\begin{definition}
A homeomorphism $\varphi$ between two surfaces endowed 
with rigid structures is {\em rigid} if it sends the rigid structure of one 
surface onto the rigid structure of the other. 

\vspace{0.2cm}
\noindent
The homeomorphism $\varphi:\Sigma\to \Sigma$ is  said to be 
{\em asymptotically rigid}  \index{asymptotically rigid homeomorphism}
if  there exists some admissible subsurface 
$C\subset \Sigma$, called a support for $\varphi$, 
such that $\varphi(C)\subset \Sigma$ is 
also an admissible subsurface of $\Sigma$ and  
the restriction $\varphi|_{\Sigma-C}:\Sigma-C\to \Sigma-\varphi(C)$ 
is rigid. 
\end{definition}

\vspace{0.2cm}
\noindent As it is customary when studying mapping class groups  
we now consider isotopy classes of such homeomorphisms. 

\begin{definition}
The group ${\cal M}(\Sigma, d, F)$ of isotopy classes of asymptotically rigid homeomorphisms is  called the 
{\em asymptotically rigid mapping class group} \index{asymptotically rigid mapping class group}
 of $\Sigma$ corresponding to the  
rigid structure $d$ and family  of admissible subsurfaces $F$. 
\end{definition}

\begin{remark}
Two asymptotically rigid homeomorphisms that are isotopic are  
isotopic among asymptotically rigid homeomorphisms. 
\end{remark}

\vspace{0.2cm}
\noindent 
The ribbon tree $D$ and punctured ribbon trees $D^*$ and $D^{\sharp}$ 
are particular examples of surfaces of infinite type with rigid structures.  
We want to turn to even simpler examples obtained from thickening 
trees in the plane and show that interesting groups could be 
obtained in this way.  
Consider the planar ribbon $Y_n$, which is a 2-dimensional neighborhood  
of the wedge of $n$ half-lines (or rays) 
in the plane that intersect at the origin. 
Assume that every half-line is endowed with a linear coordinates system 
in which the origin corresponds to $0$ and that the rotation of order $n$ 
sends them isometrically one into the other. 
 
\vspace{0.2cm}
\noindent 
Let  $Y_n^*$  (respectively $Y_n^{\sharp}$) be  the 
punctured ribbon obtained from 
$Y_n$  by puncturing it along the set of points of   positive 
(respectively nonnegative) integer coordinates on each 
half-line. Punctures are therefore identified with nonnegative 
integers along each ray.  
The origin has coordinates $0$ on all half-lines and does appear 
only in $Y_n^{\sharp}$.

\begin{center}
\includegraphics{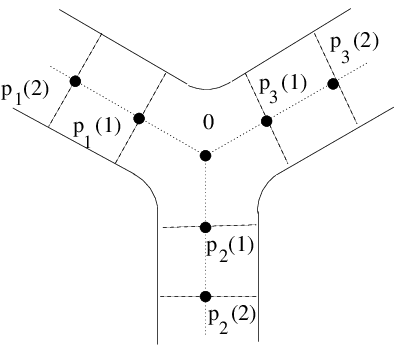}      
\end{center}

\vspace{0.2cm}
\noindent
There is a  family of parallel  arcs associated to each ray, 
obtained by drawing a properly  embedded segment orthogonal to the 
respective half-line and passing through the puncture labelled 
$n$, for every $n\in \Z_+-\{0\}$.   

\vspace{0.2cm}
\noindent
The surface $Y_n$ (respectively $Y_n^*$, $Y_n^{\sharp}$) is then divided by these arcs 
into elementary pieces, which are of two types: 
one central  (respectively punctured for $Y_n^*$) $2n$-gon  containing the  
origin and infinitely many (respectively punctured) 
squares which sit along the half-lines. 
One defines the admissible subsurfaces of $Y_n$ (respectively  $Y_n^*$, 
$Y_n^{\sharp}$)  
to be those (punctured)  $2n$-gons which contain the 
(punctured)  central $2n$-gon and are made of finitely many elementary 
pieces.   

\vspace{0.2cm}
\noindent
Let ${\mathcal M}(Y_n)$ (respectively ${\mathcal M}(Y_n^*)$, 
${\mathcal M}(Y_n^{\sharp})$) 
denote the asymptotically rigid mapping class 
group of $Y_n$ (respectively $Y_n^*$, $Y_n^{\sharp}$) with the above rigid structure. We also suppose that each element $\varphi$ of ${\mathcal M}(Y_n)$ 
(respectively ${\mathcal M}(Y_n^*)$, ${\mathcal M}(Y_n^{\sharp})$) 
is associated with pairs of admissible subsurfaces $C$ and $\varphi(C)$ 
containing {\it the same number} of punctures. 
This additional condition was automatically verified by 
pairs of admissible subsurfaces of the ribbon tree $D^*$ with homeomorphic 
complements.

\vspace{0.2cm}
\noindent
The group ${\mathcal M}(Y_n)$ has a particularly simple form. 
In fact any element of  ${\mathcal M}(Y_n)$ corresponds to a triple  
$((P,Q), r)$, where $P$ and $Q$ are admissible $2n$-gons and 
$r$ is an order $n$ rotation that gives the recipe for identifying the 
boundary arcs of $P$ and $Q$. Moreover, an admissible $2n$-gon $P\subset Y_n$ 
is completely determined by the vector  $v_P\in (\Z_+-\{0\})^n$ recording 
the coordinates of those punctures that lie on the boundary arcs 
of $P$, one coordinate for each ray. 
The cyclic group of rotations $\Z/n\Z$
acts on $\Z^n$ by permuting the coordinates and preserves the 
subgroup  $\Z^{n-1}\subset \Z^n$ of the vectors having the sum of 
their coordinates zero. The map that sends 
the pair $((P,Q),r)$ into $(v_Q-r(v_P), r)\in \Z^n\rtimes \Z/n\Z$ 
induces an isomorphism of  
${\mathcal M}(Y_n)$ onto the subgroup $\Z^{n-1}\rtimes \Z/n\Z$.

\vspace{0.2cm}
\noindent 
One expects ${\mathcal M}(Y_n^*)$ and 
${\mathcal M}(Y_n^{\sharp})$ to be  
extensions of ${\mathcal M}(Y_n)$ by an infinite braid group $B_{\infty}$. 
If ${\mathcal M}(Y_n)$ were abelian then the infinite braid  group 
$B_{\infty}$ would be the commutator  subgroup of the extension group. 
However the semi-direct product $\Z^{n-1}\rtimes \Z/n\Z$ is not direct 
for $n\geq 3$, and hence it is convenient to restrict 
to those  mapping classes in the above groups coming from 
end preserving homeomorphisms. 

\vspace{0.2cm}
\noindent
Consider therefore the 
subgroups  ${\mathcal M}_{\partial}(Y_n)$ (respectively ${\mathcal M}_{\partial}(Y_n^*)$, ${\mathcal M}_{\partial}(Y_n^{\sharp})$) generated by  those homeomorphisms 
which are end preserving i.e. inducing a trivial automorphism of 
the ends of $Y_n$.
Alternatively, the homeomorphisms should send each ray into itself, at 
least outside a large enough compact set. 

\vspace{0.2cm}
\noindent 
It follows from above that ${\mathcal M}_{\partial}(Y_n)$ is isomorphic 
to  $\Z^{n-1}$.

\vspace{0.2cm}
\noindent The groups ${\mathcal M}_{\partial}(Y_n^*)$  are  
isomorphic to the braided Houghton groups considered by \index{braided Houghton group}
Degenhardt (\cite{Deg}) and Dynnikov (\cite{Dy}). 
It is known that these are finitely presented 
groups for all $n\geq 3$. 
 The same result holds for the larger 
related groups ${\mathcal M}_{\partial}(Y_n^{\sharp})$, as it is proved in 
\cite{fu}:

\begin{theorem}(\cite{fu})
The groups ${\mathcal M}(Y_n^{\sharp})$ and 
${\mathcal M}_{\partial}(Y_n^{\sharp})$ are finitely presented for 
$n\geq 3$.  
The commutator subgroup of ${\mathcal M}_{\partial}(Y_n^{\sharp})$ 
is the infinite braid group $B_{\infty}$ in the punctures of $Y_n^{\sharp}$. 
Moreover, the groups ${\mathcal M}_{\partial}(Y_n^{\sharp})$ (and their versions) 
have solvable word problem. 
\end{theorem}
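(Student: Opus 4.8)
The plan is to treat the ribbon $Y_n^{\sharp}$ exactly as $T^{*}$ and $\mathcal B$ were treated before: deduce finite presentation from an action on a simply connected two-complex together with K. Brown's criterion \cite{br1}, get the commutator subgroup from an abelianization computation, and get solvability of the word problem from an Artin-type argument. I would first record the ambient algebra. Forgetting the braiding of the punctures gives short exact sequences
\[ 1\to B_{\infty}\to {\mathcal M}(Y_n^{\sharp})\to \Z^{n-1}\rtimes \Z/n\Z\to 1,\qquad 1\to B_{\infty}\to {\mathcal M}_{\partial}(Y_n^{\sharp})\to \Z^{n-1}\to 1, \]
the analogues of $1\to B_{\infty}\to T^{*}\to T\to 1$ of \cite{fu-ka2}; since the ends of $Y_n$ are the $n$ rays and the order-$n$ rotation permutes them, ${\mathcal M}_{\partial}(Y_n^{\sharp})$ is the preimage of $\Z^{n-1}$ and hence has index $n$ in ${\mathcal M}(Y_n^{\sharp})$. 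As finite presentation passes both to finite-index subgroups and to finite-index overgroups, it suffices to prove it for one of the two.

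For finite presentation I would introduce the braided Hatcher-Thurston type complex $\mathcal X_n$ of $Y_n^{\sharp}$, built from the canonical decomposition into a central $2n$-gon and squares exactly as the complexes of \cite{fu-ka2} were built from $D^{\sharp}$: a vertex is a decomposition coinciding with the canonical one outside an admissible subsurface and having one puncture in each square; the edges are the $A$-moves (replacing a separating arc by the transverse one) and the braiding $B$-moves exchanging two punctures along a separating arc; the $2$-cells fill in the commutation squares of disjoint $A$-moves, the braid relations among the $B$-moves, the mixed $AB$-relations of \cite{fu-ka2}, and the triangular and pentagonal Hatcher-Thurston cycles. Simple connectivity of $\mathcal X_n$ would be proved as in \cite{fu-ka2}, by forgetting the punctures: this produces a map onto the Hatcher-Thurston complex $\mathcal Y_n$ of $Y_n$ whose fibres over vertices are Cayley complexes of $B_{\infty}$ for the tree-adapted presentation of \cite{se}; since $\mathcal Y_n$ is simply connected (it is governed by the ``small'' group $\Z^{n-1}\rtimes\Z/n\Z$) and $B_{\infty}$ is presented by that presentation, $\mathcal X_n$ is simply connected. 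Now ${\mathcal M}(Y_n^{\sharp})$ acts cellularly on $\mathcal X_n$; as in the case of $\mathcal B$ there are infinitely many orbits of commutation-square $2$-cells (the two commuting $A$-moves can be arbitrarily far apart), so I would pass to a reduced subcomplex keeping only the squares of bounded separation type and check it remains simply connected by the method of \cite{fu-ka1} (writing an arbitrary square cycle as a product of conjugates of the retained squares and the pentagon). Vertex stabilizers are then, up to finite index, extensions of a copy of $\Z^{n-1}$ by a finite braid group, hence finitely presented, and edge stabilizers are finitely generated; for $n\ge 3$ the reduced complex is cocompact, so Brown's criterion \cite{br1} gives finite presentation of ${\mathcal M}(Y_n^{\sharp})$, hence of ${\mathcal M}_{\partial}(Y_n^{\sharp})$. (Alternatively one may feed in that ${\mathcal M}_{\partial}(Y_n^{*})=BH_n$ is finitely presented for $n\ge 3$ \cite{Deg} and view $Y_n^{\sharp}$ as a one-extra-puncture enlargement, but the complex argument is self-contained and produces explicit presentations.)

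For the commutator subgroup: since $\Z^{n-1}$ is abelian, $[{\mathcal M}_{\partial}(Y_n^{\sharp}),{\mathcal M}_{\partial}(Y_n^{\sharp})]\subseteq B_{\infty}$. Conversely fix two distinct rays (possible since $n\ge 2$) and lifts $\widetilde t_i,\widetilde t_j\in{\mathcal M}_{\partial}(Y_n^{\sharp})$ of the outward unit shifts $t_i,t_j$; as $t_i,t_j$ commute in $\Z^{n-1}$, the commutator $[\widetilde t_i,\widetilde t_j]$ lies in $B_{\infty}$, and a direct picture computation of the kind used to identify $\iota(\sigma_{[02]})=(\bps\aps)^5$ shows that $[\widetilde t_i,\widetilde t_j]$ is essentially a standard braid generator, so its class in $H_1(B_{\infty})=\Z$ generates. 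Since $[B_{\infty},B_{\infty}]$ is automatically contained in $[{\mathcal M}_{\partial}(Y_n^{\sharp}),{\mathcal M}_{\partial}(Y_n^{\sharp})]$ and $B_{\infty}/[B_{\infty},B_{\infty}]\cong\Z$ is then generated by the image of a commutator, we get $B_{\infty}\subseteq[{\mathcal M}_{\partial}(Y_n^{\sharp}),{\mathcal M}_{\partial}(Y_n^{\sharp})]$, hence equality. For the word problem I would adapt the Artin-type solution of Proposition 2.16 of \cite{fu-ka2}: given a word $w$ in the standard generators, first compute algorithmically its image in $\Z^{n-1}\rtimes\Z/n\Z$ (resp. $\Z^{n-1}$), whose word problem is trivially solvable; if nontrivial, $w\ne 1$, otherwise $w$ represents an element of $B_{\infty}$ and, reading off from $w$ an admissible subsurface containing its support, the problem reduces to the word problem in a finite braid group $B_m$, solved by Artin. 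This covers ${\mathcal M}_{\partial}(Y_n^{\sharp})$ and, verbatim, ${\mathcal M}(Y_n^{\sharp})$, ${\mathcal M}_{\partial}(Y_n^{*})$, ${\mathcal M}(Y_n^{*})$.

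The main obstacle, as in the finite presentations of $\mathcal B$ and of $T^{*}$, is the construction and analysis of the reduced braided complex of $Y_n^{\sharp}$: one must simultaneously check that it is simply connected, that ${\mathcal M}(Y_n^{\sharp})$ acts on it cocompactly --- this is precisely where the hypothesis $n\ge 3$ enters, matching the finiteness behaviour of the Houghton groups --- and that every cell stabilizer has the required finiteness property. Once that complex is in place, the remaining arguments are bookkeeping.
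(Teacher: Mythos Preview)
Your exact sequences and the word-problem argument match the paper's outline. For the commutator subgroup the paper is even more direct than you: its generators $d_j$ translate punctures along the \emph{pair} of rays $l_j\cup l_{j+1}$ (note that single-ray shifts $t_i$ as you write them do not lift, since the image $\Z^{n-1}\subset\Z^n$ consists of zero-sum vectors), and then $u_i=d_id_{i+1}d_i^{-1}d_{i+1}^{-1}$ is simultaneously a commutator in ${\mathcal M}_\partial(Y_n^{\sharp})$ and a standard braid generator, so $B_\infty=[{\mathcal M}_\partial,{\mathcal M}_\partial]$ drops out of the presentation.

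For finite presentation the paper takes a different and more direct route: it writes down an explicit finite presentation of ${\mathcal M}_\partial(Y_n^\sharp)$ in the generators $d_1,\dots,d_n$ (the relations are expressed through the $u_i$ and encode the braid relations together with their interaction with the shifts $d_j$); the verification is in \cite{fu}. Your complex approach could be made to work, but as written it has several errors. First, the geometry of $Y_n$ is a star, not a branching binary tree, so the base complex $\mathcal Y_n$ carries \emph{no} triangular or pentagonal Hatcher--Thurston cells, only commutation squares. Second, vertex stabilizers are not ``extensions of $\Z^{n-1}$ by a finite braid group'': an end-preserving asymptotically rigid class fixing a decomposition with one puncture per piece is the identity, so stabilizers are trivial (harmless for Brown, but it shows the action was not actually analysed). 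Third, and most importantly, you mislocate the hypothesis $n\ge 3$: the reduced complex is cocompact for every $n$ by construction; what fails for $n=2$ is the \emph{simple connectivity} of the reduced complex, reflecting the fact (noted in the paper) that the relations $[d^kud^{-k},d^mud^{-m}]=1$ are independent and ${\mathcal M}_\partial(Y_2^\sharp)$ is not finitely presented. Thus the genuine content of your approach --- proving simple connectivity of the reduced braided complex for $n\ge 3$ --- is precisely the step you have left unaddressed.
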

\begin{proof}
Let us outline the proof. 
First, we can express these groups as extensions by the infinite braid group, 
by means of the following exact sequence:  
\[ 1\to B_{\infty}\to {\mathcal M}_{\partial}(Y_n^{\sharp})\to \Z^{n-1}\to 1\]
where $B_{\infty}=\lim_{k\to\infty} B_{kn+1}$ is the limit of the braid 
groups of an exhausting sequence of admissible subsurfaces of $Y_n^{\sharp}$. 
In fact, a mapping class $\varphi\in  {\mathcal M}_{\partial}(Y_n^{\sharp})$ 
sends a  support $2n$-gon into another support $2n$-gon, 
by translating the arc on the half-line 
$l_j$ of $k_j$ units towards the center. 
Since the support hexagons should contain 
the same number of punctures we have $k_1+k_2+\cdots +k_n=0$.  
The map sending $\varphi$ to $(k_1,k_2,\ldots,k_n)$ is a surjection onto 
$\Z^{n-1}$. The claim follows.

\vspace{0.2cm}
\noindent
Let the line $l_j$ be punctured along the points $p_j(i)$ at distance 
$i$ from the origin. Consider the  mapping class of the 
homeomorphism $d_j$ which translates all punctures of the line 
$l_{j} \cup l_{j+1}$ one unit in the counterclockwise direction, 
as in the figure below:

\begin{center}
\includegraphics{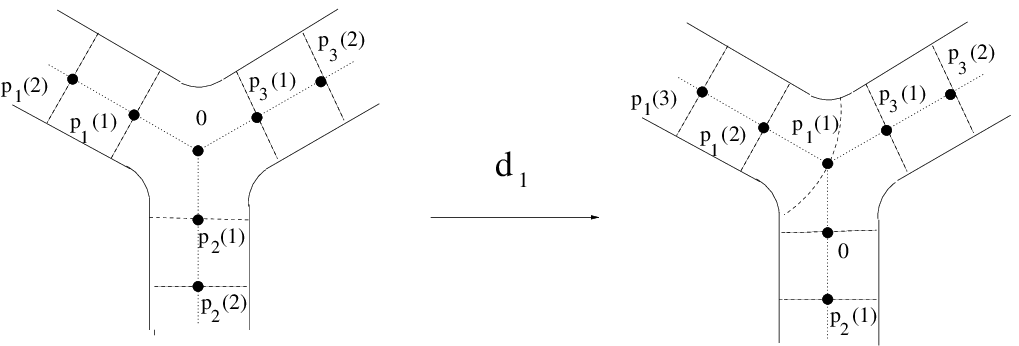}      
\end{center}

\vspace{0.2cm}
\noindent
We use the  convention that the groups  act on the right: thus the 
composition $ab$ denotes $a$ followed by $b$.  
Moreover, the set of subscripts corresponding to the 
rays is $\{1,2,\ldots,n\}$, which is  naturally identified to 
$\Z/n\Z$; let then $<$ denote the cyclic order on  $\Z/n\Z$. 
An explicit presentation is then provided in \cite{fu}:
 
\begin{proposition}[\cite{fu}]
 Set  $u_i=d_{i}d_{i+1}d_{i}^{-1}d_{i+1}^{-1}$. Then the group  
${\mathcal M}_{\partial}(Y_n^{\sharp})$ is generated by the $d_1,d_2,\ldots,d_n$ and admits the presentation: 
\[ d_nd_{n-1}d_{n-2}\cdots d_1=1\]
\[ u_{i_1}u_{i_2}u_{i_3}u_{i_1}=u_{i_2}u_{i_3}u_{i_1}u_{i_2}=
u_{i_3}u_{i_1}u_{i_2}u_{i_3}, {\rm \, if \,}\,  i_1 <i_2 <i_3\]  
\[ d_{i-1}^{-1}u_id_{i-1}=d_{i}u_id_{i}^{-1} \, {\rm for \,\, all }\,\,  i\]
\[ u_iu_ju_i=u_ju_iu_j, \,{\rm for \,\, all }\,\,  i,j \]
\[ d_{i-1}^{-1}u_id_{i-1} u_i  d_{i-1}^{-1}u_id_{i-1}=
u_id_{i-1}^{-1}u_id_{i-1} u_i, \,{\rm for \,\, all }\,\,  i\]
\[ [d_{i}u_id_{i}^{-1}, u_j]= 1, \,{\rm for \,\, all }\,\,  i\neq j \]
\[ [d_{i}u_id_{i}^{-1}, d_j]= 1, \,{\rm for \,\, all }\,\,  i <j< i-1 \]
\[ d_ju_id_j^{-1}=u_iu_ju_j^{-1}, \,{\rm for \,\, all }\,\,  i <j< i-1. \]
\end{proposition}
 
\vspace{0.2cm}
\noindent 
Finally the group ${\mathcal M}_{\partial}(Y_n^{\sharp})$ (and its versions) 
has solvable word problem. In fact, 
for any word $w$ in the generators $d_i$ there exists a 
support of $w$ made of elementary pieces not farther than $|w|+1$ units  
apart from the central $2n$-gon. Then the proof given in \cite{fu-ka2} can be adapted 
to our situation. Observe that we actually use the fact that 
the word problem is solvable in braid groups.  
\end{proof}

\begin{remark}
Let $S_{\infty}$ denote the infinite permutation group of punctures 
of $Y_n^*$ obtained as the direct limit of 
finite permutation groups of punctures in an ascending sequence of 
admissible subsurfaces. 

\vspace{0.2cm}
\noindent 
The Houghton groups $H_n$ considered by Brown (\cite{br1}) are quotients of 
${\mathcal M}_{\partial}(Y_n^*)$ induced from the obvious 
homomorphism $B_{\infty}\to S_{\infty}$ sending braids into the 
associated permutations. This means that we have natural exact sequences 

\[ \begin{array}{ccccccc}
1 \to & B_{\infty} &\to& {\mathcal M}_{\partial}(Y_n^{\sharp})&\to &\Z^{n-1}&\to 1\\
      & \downarrow &   &  \downarrow &  & \downarrow & \\ 
1 \to & S_{\infty} &\to& H_n&\to &\Z^{n-1}&\to 1
\end{array}
\]
\end{remark}

\begin{remark}
The group ${\mathcal M}_{\partial}(Y_2^{\sharp})$ (and its variants) 
is generated by 
two elements, namely $d=d_1=d_2^{-1}$ and $u_1=\sigma_{0p_1(1)}$. 
However,  ${\mathcal M}_{\partial}(Y_2^{\sharp})$ is not finitely presented  
since the commutativity relations coming from the braid group 
are independent, namely we have infinitely many relations  of the form 
$[d^kud^{-k},d^mud^{-m}]=1$, for all integers $m,k$ with $|m-k|\geq 1$. 
Also  ${\mathcal M}_{\partial}(Y_2^{\sharp})$ surjects onto the 
Houghton group $H_2$  which is known to be infinitely presented. 
In some sense  ${\mathcal M}_{\partial}(Y_2^{\sharp})$ is similar 
to the lamplighter groups. 
\end{remark}

\begin{remark}
Since all generators of $B_{\infty}$ are conjugate 
the abelianization of $B_{\infty}$ is $\Z$. The abelianization 
homomorphism $B_{\infty}\to \Z$ induces an extension 
${\mathcal M}_{\partial}(Y_n^{\sharp})^{ab}$ as follows:

\[ \begin{array}{ccccccc}
1 \to & B_{\infty} &\to& {\mathcal M}_{\partial}(Y_n^*)&\to &\Z^{n-1}&\to 1\\
      & \downarrow &   &  \downarrow &  & \downarrow & \\ 
1 \to & \Z &\to& {\mathcal M}_{\partial}(Y_n^*)^{ab} &\to &\Z^{n-1}&\to 1
\end{array}
\]
 
\vspace{0.2cm}
\noindent
For $n=2$ it follows that ${\mathcal M}_{\partial}(Y_2^*)^{ab}$ 
is abelian, generated by the images of $d$ and $u$. In particular, we obtain
 that ${\mathcal M}_{\partial}(Y_2^*)^{ab}\cong H_1({\mathcal M}_{\partial}(Y_2^*))=
\Z^2$.

\vspace{0.2cm}
\noindent
For $n\geq 3$ the group ${\mathcal M}_{\partial}(Y_n^*)^{ab}$ is a nontrivial 
(non-abelian) extension of $\Z^{n-1}$ by $\Z$. 
\end{remark}

\begin{remark}
Given three rays in the binary tree we can associate an embedding 
of $Y_3^{*}$ into $D^*$  that induces injective compatible  homomorphisms
$\Z^2\rtimes \Z/3\Z\to T$ and ${\mathcal M}(Y_3^*)\to T^*$.  
\end{remark}

\subsection{Infinite genus surfaces and mapping class groups}

In \cite{fu-ka1}, we proved that the Teichm\" uller tower of groupoids in  genus zero may be embodied 
in a very concrete group, the universal mapping class group in genus zero $\B$. Not only does 
$\B$ contain the tower, but its remarkable property of being finitely presented also realizes, in the category of groups, the analogous property for the tower of groupoids. 

\vspace{0.1cm}\noindent 
We give a partial solution of the problem of realizing 
the higher genus Teichm\"uller tower in the category of groups. 
We construct a {\it finitely generated} group  ${\cal M}$ 
that contains all the 
pure mapping class groups $P{\cal M}(g,n)$ (with $n>0$). 
The solution is partial as  ${\cal M}$ does not contain all 
the (non-pure) mapping class groups ${\cal M}(g,n)$.

\begin{definition}[following \cite{fu-ka3}]
Let ${\cal T}_{\infty}$ be the graph obtained from the planar dyadic tree by attaching a loop on each edge, based on its middle point. The three-dimensional thickening of ${\cal T}_{\infty}$ is a handlebody, whose boundary is an orientable surface ${\mathscr S}_{\infty}$ 
of infinite genus.

\begin{enumerate}
\item An orientation-preserving homeomorphism $g$ of ${\mathscr S}_{\infty}$ is {\em asymptotically rigid} \index{asymptotically rigid homeomorphism}
if there exist two connected subsurfaces $S_0$ and $S_1$ of ${\mathscr S}_{\infty}$ such that 
 $g$ induces, by restriction on each connected component of ${\cal T}_{\infty}\cap ({\mathscr S}_{\infty}\setminus S_0)$, an isomorphism 
 (of graphs) onto a connected component of ${\cal T}_{\infty}\cap ({\mathscr S}_{\infty}\setminus S_1)$, which respects the local orientation of the edges (coming from the planarity of the dyadic tree).

\item  The {\em asymptotically rigid mapping class 
group of infinite genus} \index{asymptotically rigid mapping class 
group of infinite genus}
${\cal M}$ is the group of mapping classes of isotopies of asymptotically rigid homeomorphisms of the surface $\mathscr{S}_{\infty}$.

\end{enumerate}

\begin{center}

\includegraphics[scale=0.8]{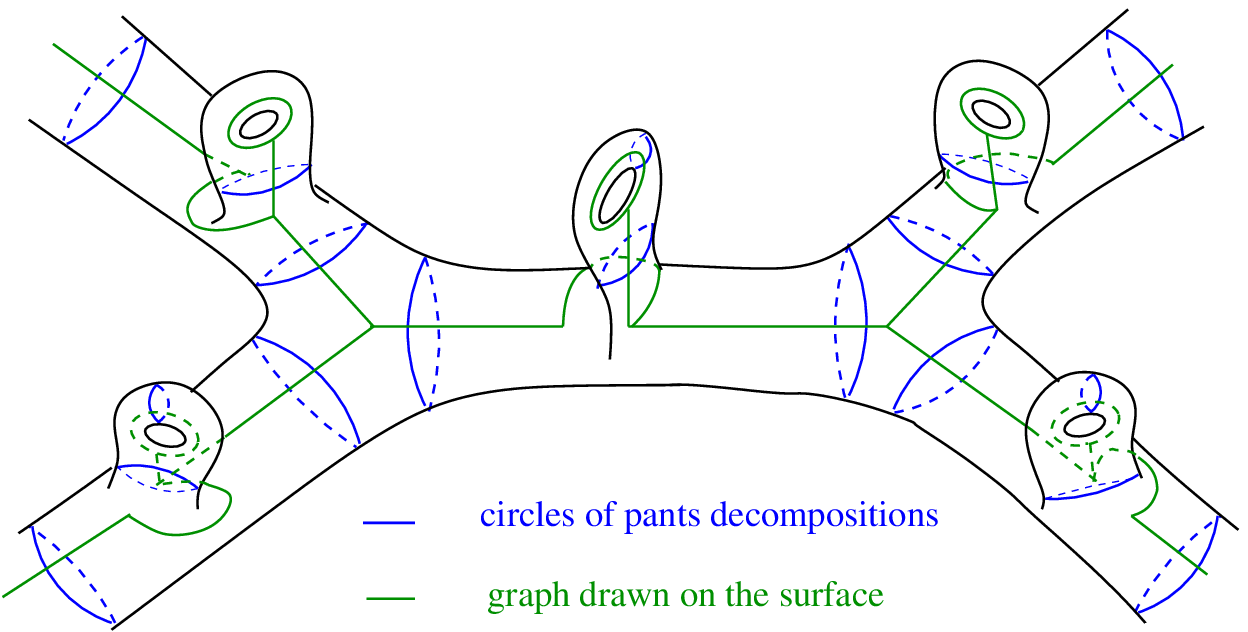}

\end{center}


\end{definition}

\vspace{0.1cm}\noindent 
Forgetting the loops of ${\cal T}_{\infty}$, one obtains a morphism from ${\cal M}$ to ${\rm Homeo}(\partial {\cal T})$, whose image is Thompson's group $V$.
It follows easily, as for ${\cal B}$, that ${\cal M}$ is an extension of Thompson's group $V$ by the 
pure mapping class group of the surface:
$$1\rightarrow P{\cal M}\longrightarrow {\cal M}\longrightarrow V\rightarrow 1$$

\vspace{0.1cm}\noindent
The pure mapping class group $P{\cal M}$ is countable, and generated by the Dehn twists 
around the closed simple curves embedded into  ${\cal S}_{\infty}$, and is not finitely generated.

\vspace{0.1cm}\noindent 
The first result of \cite{fu-ka3} is:

\begin{theorem}[ \cite{fu-ka3} Th. 1.1]
 The group ${\cal M}$  is finitely generated.
\end{theorem}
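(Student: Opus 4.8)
The plan is to exhibit an explicit finite set of asymptotically rigid homeomorphisms of $\mathscr{S}_{\infty}$ that generates $\mathcal{M}$, using the extension $1\to P\mathcal{M}\to \mathcal{M}\to V\to 1$ as the organizing principle. Since $V$ is finitely generated (indeed finitely presented, by Thompson's theorem recalled in Section 1), it suffices to find finitely many elements of $\mathcal{M}$ whose images generate $V$, together with finitely many elements of the kernel $P\mathcal{M}$ that, conjugated by the first set, generate all of $P\mathcal{M}$. First I would fix lifts to $\mathcal{M}$ of a finite generating set of $V$; by analogy with the case of $\mathcal{B}$ and of $T^\ast$ treated earlier, these can be chosen to be supported on a bounded admissible subsurface (the thickening of a small subtree of $\mathcal{T}_\infty$ together with its loops), acting rigidly outside. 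The natural candidates are the analogues of $\alpha,\beta$ (rotations of order $4$ and $3$ on one or two adjacent handles-with-boundary pieces), which already generate a copy of $T\subset V$, plus one further element realizing a transposition-type move, so that the images generate $V$ rather than merely $T$.

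Next I would reduce the generation of $P\mathcal{M}$ to a finite problem. Recall $P\mathcal{M}$ is generated by the Dehn twists along simple closed curves embedded in $\mathscr{S}_{\infty}$. The key observation is that $\mathscr{S}_{\infty}$ is built from a single repeating pattern (one handle per edge of the dyadic tree), so there is a bounded supply of "local" twist curves — say, the twist $t_0$ about the loop attached to the root edge, and twists $c_0,c_1,\dots$ about a bounded number of curves linking adjacent handles — such that every embedded simple closed curve in $\mathscr{S}_{\infty}$ is carried, by some element of $V$ lifted to $\mathcal{M}$, onto one of these finitely many model curves. Concretely: given a twist about an arbitrary curve $\gamma$ contained in an admissible subsurface, one first moves the supporting subsurface by a mapping class projecting into $V$ (the asymptotically rigid homeomorphisms act transitively enough on admissible subsurfaces of a given type), and then invokes the classical fact that the pure mapping class group of a fixed compact subsurface is finitely generated by twists, those generators being conjugate under the larger group to the chosen models. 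Hence $P\mathcal{M}$ is normally generated in $\mathcal{M}$ by a finite set of twists, and combined with the finite lift of generators of $V$ this produces a finite generating set for $\mathcal{M}$.

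The step I expect to be the main obstacle is the transitivity/conjugacy claim: showing that the $V$-action (lifted to $\mathcal{M}$) together with conjugation sweeps every relevant twist curve onto one of finitely many models. One must be careful that the asymptotically rigid constraint is not lost — the conjugating homeomorphism must itself be asymptotically rigid — and that the "planarity + local orientation" condition in the definition of asymptotic rigidity does not obstruct moving a given handle to the root handle. I would handle this by first proving a normal form lemma for admissible subsurfaces (any two admissible subsurfaces of the same combinatorial type are related by an asymptotically rigid homeomorphism, essentially because the underlying tree moves are exactly the $V$-moves), and then reducing an arbitrary twist curve inside such a subsurface to a model curve using the finite generation of the ordinary mapping class group $\mathcal{M}(g,n)$ of the (compact) subsurface — here one genuinely uses the classical Dehn–Lickorish type result. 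Once these two ingredients are in place, assembling the finite generating set and verifying it generates is routine, following the pattern already used in \cite{fu-ka1} and \cite{fu-ka2}.
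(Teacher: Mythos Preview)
Your overall architecture --- exploit the extension $1\to P\mathcal{M}\to\mathcal{M}\to V\to 1$, lift a finite generating set of $V$, and show that $P\mathcal{M}$ is generated by finitely many Dehn twists modulo conjugation by those lifts --- is exactly the paper's framework, and your conclusion (``$\mathcal{M}$ is generated by finitely many twists together with lifts of generators of $V$'') is the paper's conclusion. The divergence is entirely in how the crucial finiteness-of-twist-orbits step is executed.

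The paper does \emph{not} argue via transitivity on admissible subsurfaces followed by Dehn--Lickorish on each compact piece. It uses Gervais' presentation of $P\mathcal{M}(g,n)$ to single out a specific infinite family $\mathscr{F}$ of twist curves on $\mathscr{S}_\infty$, and then performs a geometric reduction you do not mention: it collapses each handle of $\mathscr{S}_\infty$ to a puncture, obtaining the punctured planar ribbon $D^*$, under which the curves of $\mathscr{F}$ become arcs joining pairs of punctures. The finiteness is then proved using the action of the braided Ptolemy--Thompson group $T^*$ (not $V$, not $T$) on homotopy classes of such arcs, and this is lifted back to $\mathscr{S}_\infty$ to exhibit a finitely generated subgroup of $\mathcal{M}$ modulo which $\mathscr{F}$ is finite. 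The braid-group part $B_\infty\subset T^*$ is doing essential work: it is what moves arcs with the same endpoints but different homotopy types into one another.

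Your direct route has a gap at precisely this spot. After moving the supporting subsurface to standard position by a $V$-lift, you invoke Dehn--Lickorish and assert the resulting generators are ``conjugate under the larger group to the chosen models.'' But as the standard subsurface grows in genus, the chain-type curves linking distinct handles proliferate, and elements of $\mathcal{M}$ projecting to $V$ are rigid outside a support --- they rearrange the tree combinatorics but cannot braid handles past one another. So lifts of $V$-generators alone do not obviously identify all these chain curves with finitely many models; the sentence ``every embedded simple closed curve is carried by some $V$-lift onto a model curve'' is false as stated, and the refined version still needs the braid-type moves that the paper packages into $T^*$ via the collapse map. Your plan is not wrong in spirit, but the mechanism you propose for the key reduction is underspecified, and the paper's collapse-to-$D^*$ trick together with the earlier finite-presentation result for $T^*$ is exactly the ingredient that fills this gap.
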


\vspace{0.1cm}\noindent 
The proof is based on a convenient presentation due to  S. Gervais of the mapping class groups $P{\cal M}(g,n)$ (\cite{ge}), from which we deduce that
$P{\cal M}$ is generated by a set of Dehn twists around some curves of ${\mathscr{S}}_{\infty}$ 
forming a certain set $\mathscr{F}$. By collapsing the handles of the surface ${\mathscr{S}}_{\infty}$ onto punctures, 
those curves may be identified with paths on $D^*$ -- the planar surface we have introduced for $T^*$ -- joining two punctures. 
Exploiting the action of $T^*$ on the homotopy classes of those paths, one shows that almost all of them are equivalent modulo $T^*$. 
It is possible to ``lift'' this result to ${\mathscr{S}}_{\infty}$,  and this enables us to prove that 
the family $\mathscr{F}$ is finite modulo the action of a finitely  generated subgroup of ${\cal M}$. It follows 
easily that ${\cal M}$ is generated by a finite number of Dehn twists and by the lifts to ${\cal M}$ of generators 
of $V$.

\vspace{0.1cm}\noindent 
Using the Lyndon-Hochschild-Serre spectral sequence associated to the short exact sequence 
\[1\rightarrow P{\cal M}\longrightarrow {\cal M}\longrightarrow V\rightarrow 1\]
and a theorem of Brown asserting that the rational homology of $V$ is trivial (\cite{br2}), we prove that the group ${\cal M}$ has the same rational homology as 
$P{\cal M}$. Describing $P{\cal M}$ as a direct limit of groups $P{\cal M}(g,n)$, 
it follows from Harer's stability theorem  (\cite{ha}) that the homology of $P{\cal M}$ is the stable homology 
of the mapping class group.

\begin{theorem}[\cite{fu-ka3}, Th. 1.2]
The rational homology of the group ${\cal M}$ is isomorphic to 
the rational stable homology of the mapping class group.

\end{theorem}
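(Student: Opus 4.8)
The plan is to run the Lyndon--Hochschild--Serre spectral sequence of the extension $1\to P{\cal M}\to {\cal M}\to V\to 1$ with rational coefficients,
\[ E^2_{p,q}=H_p\bigl(V;H_q(P{\cal M};\Q)\bigr)\Longrightarrow H_{p+q}({\cal M};\Q),\]
and then identify the $E^2$-term. Two facts feed into this. First, Brown's theorem (\cite{br2}) that $V$ is rationally acyclic, i.e. $\widetilde{H}_*(V;\Q)=0$; since group homology commutes with arbitrary direct sums and the coefficient module $H_q(P{\cal M};\Q)$ is, as a $\Q$-vector space, a direct sum of copies of $\Q$, each row $E^2_{p,q}$ with $p>0$ vanishes \emph{as soon as} the action of $V$ on $H_q(P{\cal M};\Q)$ is trivial. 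Second, one must therefore check that the outer action of ${\cal M}$ on $P{\cal M}$, which factors through $V$, acts trivially on $H_*(P{\cal M};\Q)$ in every degree. Granting this, the spectral sequence collapses onto its bottom edge and yields $H_n({\cal M};\Q)\cong E^2_{0,n}=H_n(P{\cal M};\Q)$ for all $n$.

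To compute $H_*(P{\cal M};\Q)$ I would exploit the description of $P{\cal M}$ as the direct limit of the pure mapping class groups $P{\cal M}(g,n)$ taken along an exhaustion of ${\mathscr S}_{\infty}$ by admissible subsurfaces of genus $g$ with $n$ boundary components, with $g,n\to\infty$. Since homology commutes with filtered colimits, $H_*(P{\cal M};\Q)=\varinjlim H_*(P{\cal M}(g,n);\Q)$. One must then verify that the connecting maps in this exhaustion are, in the relevant range of degrees, the standard genus- and boundary-stabilization maps, so that Harer's homological stability theorem (\cite{ha}) identifies the colimit with the stable rational homology of the mapping class group. Combined with the previous paragraph, this gives the statement.

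I expect the main obstacle to be the triviality of the $V$-action on $H_*(P{\cal M};\Q)$. The argument I would give is this: a rational homology class of $P{\cal M}$ is carried by a cycle representing a class of some $P{\cal M}(g,n)$ supported inside an admissible subsurface $S\subset{\mathscr S}_{\infty}$; an asymptotically rigid homeomorphism lifting $v\in V$ carries $S$ to another admissible subsurface $S'$, and conjugation sends our class to the corresponding class for $S'$. Because any two admissible inclusions $S\hookrightarrow{\mathscr S}_{\infty}$ and $S'\hookrightarrow{\mathscr S}_{\infty}$ become comparable after enlarging inside ${\mathscr S}_{\infty}$, and because in the stable range the induced maps on homology are independent of the embedding (this is precisely the homological well-definedness underlying Harer stability), the two classes coincide in $H_*(P{\cal M};\Q)$. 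Making this comparison precise --- in particular checking that the combinatorics of the chosen exhaustion genuinely realizes the usual stabilization maps and that one remains in the stable range throughout --- is the technical heart of the proof; the remainder is formal homological algebra, and the detailed verification is carried out in \cite{fu-ka3}.
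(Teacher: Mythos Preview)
Your approach is essentially the same as the paper's: run the Lyndon--Hochschild--Serre spectral sequence for $1\to P{\cal M}\to {\cal M}\to V\to 1$, use Brown's rational acyclicity of $V$ to collapse it (after checking triviality of the $V$-action on $H_*(P{\cal M};\Q)$), and then identify $H_*(P{\cal M};\Q)$ with the stable homology via the direct-limit description and Harer stability. You have correctly isolated the one genuine technical point the paper's sketch leaves implicit, namely the triviality of the $V$-action, and your outline for handling it is the right one.
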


\vspace{0.1cm}\noindent 
Our result proves, therefore, that there exists a finitely generated 
group whose rational homology is isomorphic to that of $BU$, the universal classifying space 
of complex fibre bundles. We compute that  $H^2({\cal M},\Z)=\Z$, and show that the generator 
of this second cohomology group may be identified with 
the first universal Chern class.

\vspace{0.1cm}\noindent 
\noindent As a corollary  of the argument of the proof 
the group $\M$ is perfect and 
$H_2(\M,\Z)=\Z$. For a reason that will become clear in what follows, the generator of $H^2(\M,\Z)\cong \Z$ is called 
{\em the first universal Chern class} of $\M$, and is denoted $c_1(\M)$. 
\index{universal Chern class}

\vspace{0.1cm}\noindent 
\noindent Let ${\cal M}_g$ be the mapping class group of a closed surface $\Sigma_g$ of genus $g$. We show that  the standard representation 
$\rho_g: {\cal M}_g\rightarrow {\rm Sp}(2g,\Z)$ in the symplectic group,  
deduced from the 
action of ${\cal M}_g$ on  $H_1(\Sigma_g,\Z)$, extends to 
the infinite genus case, by replacing the finite dimensional setting by concepts of Hilbert analysis. In particular, a key role is played by Shale's {\it restricted 
  symplectic group} ${\rm Sp_{res}}({\cal H}_r)$ on the real Hilbert space ${\cal 
  H}_r$ generated by the homology classes of non-separating closed curves of ${\cal 
  S}_{\infty}$. We then have:  
 
\begin{theorem}[\cite{fu-ka3}]\label{metab}
The action of $\M$ on $H_1({\cal S}_{\infty},\Z)$ induces a representation $\rho:\M\rightarrow {\rm Sp_{res}}({\cal H}_r)$. 
\end{theorem}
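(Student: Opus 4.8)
The plan is to make the homology action explicit relative to the handle decomposition of ${\cal S}_{\infty}$ coming from ${\cal T}_{\infty}$, and then to read the restricted symplectic condition off directly from asymptotic rigidity. Each edge $e$ of ${\cal T}_{\infty}$ carries a loop which, after thickening, contributes one handle of ${\cal S}_{\infty}$; this produces a symplectic basis $\{a_e,b_e\}_e$ of the subspace of $H_1({\cal S}_{\infty},\Z)$ generated by non-separating simple closed curves, with $a_e\cdot b_e=1$ and all other intersection numbers zero. I would take ${\cal H}_r$ to be the real Hilbert space completion of the $\R$-span for which $\{a_e,b_e\}$ is an orthonormal basis, equip it with the complex structure $J$ determined by $Ja_e=b_e$, $Jb_e=-a_e$, and observe that the intersection form $\omega$ extends continuously and satisfies $\langle x,y\rangle=\omega(x,Jy)$; this exhibits ${\cal H}_r$ as a polarized Hilbert space in Shale's sense. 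Since an orientation-preserving homeomorphism of ${\cal S}_{\infty}$ carries non-separating curves to non-separating curves and preserves $\omega$, and since the resulting action on $H_1$ depends only on the isotopy class, every $\varphi\in\M$ induces an $\R$-linear symplectic automorphism $\rho_0(\varphi)$ of the dense subspace spanned by the $a_e,b_e$.

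The heart of the argument is to show that $\rho_0(\varphi)$ is bounded and that its continuous extension $\rho(\varphi)$ lies in ${\rm Sp_{res}}({\cal H}_r)$. Pick admissible support subsurfaces $S_0,S_1$ with $\varphi(S_0)=S_1$ and with the restriction of $\varphi$ to ${\cal T}_{\infty}\cap({\cal S}_{\infty}\setminus S_0)\to {\cal T}_{\infty}\cap({\cal S}_{\infty}\setminus S_1)$ a graph isomorphism respecting the local orientation of edges. Let $V_i\subset{\cal H}_r$ be the finite-dimensional span of the handles whose loop lies inside $S_i$; since $S_i$ is a union of elementary pieces, $V_i$ contains $b_e$ whenever it contains $a_e$, so $V_i$ is $J$-stable. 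Outside $S_0$ the map $\varphi$ is rigid: it carries the handle of an edge $e$ onto the handle of the corresponding edge $e'$ while preserving local orientations and the orientation of the surface, hence it sends $a_e,b_e$ to a signed permutation of $a_{e'},b_{e'}$ in which the signs on $a_e$ and $b_e$ agree (the common sign being forced by $a_e\cdot b_e=1$). Thus $\rho_0(\varphi)$ restricted to $V_0^\perp$ is a signed permutation of paired orthonormal basis vectors: it is an isometry onto $V_1^\perp$ and it commutes with $J$. Consequently $\rho_0(\varphi)$ is the sum of the finite-rank piece $V_0\to V_1$ and an isometry $V_0^\perp\to V_1^\perp$, so it is bounded, extends to a bounded symplectic operator $\rho(\varphi)$, and $[\rho(\varphi),J]$ is supported on the finite-dimensional subspace $V_0+V_1$, hence of finite rank and a fortiori Hilbert--Schmidt. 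Therefore $\rho(\varphi)\in{\rm Sp_{res}}({\cal H}_r)$.

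Finally I would verify that $\varphi\mapsto\rho(\varphi)$ is a homomorphism, which follows from functoriality of $H_1$ on the dense subspace together with continuity, and that restricting $\rho$ to the mapping class group of an admissible subsurface of genus $g$ recovers the classical representation into ${\rm Sp}(2g,\Z)$, so that $\rho$ genuinely extends the standard representations $\rho_g$. The main obstacle, and the point requiring care, is the bookkeeping in the second step: one must pin down precisely which homology of the open surface ${\cal S}_{\infty}$ is used so that $\omega$ is nondegenerate and non-separating curves furnish an honest symplectic basis, and one must check that ``respecting the local orientation of edges'' is exactly the hypothesis making the rigid part of $\varphi$ commute with the chosen polarization $J$ rather than merely preserve $\omega$ — an orientation-reversing rigid behaviour would instead produce a part anti-commuting with $J$, which would break the Hilbert--Schmidt estimate.
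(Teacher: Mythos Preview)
The paper you are working from is a survey chapter and does not actually prove this theorem; it merely states the result and cites \cite{fu-ka3} for the proof. So there is nothing in the present text to compare your argument against.

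That said, your strategy is the natural one and is almost certainly what the original argument in \cite{fu-ka3} does: exhibit a symplectic basis indexed by the handles, take $J$ to be the complex structure adapted to that basis, and use asymptotic rigidity to see that $\rho(\varphi)$ agrees with a $J$-linear isometry outside a finite-dimensional subspace, so that $[\rho(\varphi),J]$ has finite rank and is a fortiori Hilbert--Schmidt. Your caveat at the end is exactly the point that needs a sentence of justification: you want the rigid part to send $(a_e,b_e)$ to $(\pm a_{e'},\pm b_{e'})$ with the \emph{same} sign, not to some other symplectic pair such as $(b_{e'},-a_{e'})$, since only the former commutes with $J$. The way to see this is to fix conventions so that $a_e$ is the class of the attached loop (the core of the handle) and $b_e$ runs once through the handle; a rigid, orientation-preserving map of elementary pieces then carries core to core and dual curve to dual curve, and the local-orientation hypothesis on the graph fixes the signs coherently. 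One minor indexing slip: the loops of ${\cal T}_\infty$ are attached to the edges of the underlying dyadic tree, so the handles are indexed by those tree edges, not by all edges of ${\cal T}_\infty$ (the loops themselves being edges of ${\cal T}_\infty$).
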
  

\vspace{0.2cm} \noindent
 The generator $c_1$ of $H^2({\cal M}_g,\Z)$ is called the 
first Chern class, since it may be obtained as follows 
(see, e.g., \cite{mo}). The group ${\rm Sp}(2g,\Z)$ is 
contained in the symplectic group ${\rm Sp}(2g,\R)$, whose maximal compact subgroup 
is the unitary group $U(g)$. Thus, the first Chern class may be viewed in 
$H^2(B {\rm Sp}(2g,\R),\Z)$. It can be  first pulled-back on 
$H^2(B{\rm Sp}(2g,\R)^{\delta},\Z)=H^2({\rm Sp}(2g,\R),\Z)$ and 
then on $H^2({\cal   M}_g,\Z)$ via $\rho_g$. 
This is the generator of $H^2({\cal M}_g,\Z)$. 
Here $B{\rm Sp}(2g,\R)^{\delta}$ 
denotes the classifying space of the group ${\rm Sp}(2g,\R)$ endowed with  
the discrete topology. 

\vspace{0.1cm}\noindent 
 \noindent The restricted symplectic group \index{restricted symplectic group}
${\rm Sp_{res}}({\cal H}_r)$ has a well-known 
2-cocycle, which measures the 
projectivity of the {\it Berezin-Segal-Shale-Weil metaplectic representation} 
in the bosonic Fock space (see \cite{ne}, Chapter 6 and Notes 
p. 171). Unlike the finite dimensional case, this cocycle is not directly related 
to the topology of ${\rm Sp_{res}}({\cal H}_r)$, 
since the latter is a contractible Banach-Lie group. 
However, ${\rm Sp_{res}}({\cal H}_r)$  admits an embedding into the 
restricted linear group of Pressley-Segal ${\rm {\rm GL}^0_{res}}({\cal H})$ (see 
\cite{pr-se}), where ${\cal 
  H}$ is the complexification of ${\cal H}_r$, which possesses a cohomology class of degree 2: the Pressley-Segal 
class $PS\in H^2({\rm {\rm GL}^0_{res}}({\cal H}),\C^*)$. The group ${\rm {\rm GL}^0_{res}}({\cal H})$ 
is a homotopic model of the classifying space $BU$, where 
$U=\displaystyle{\lim_{n\to \infty} U(n,\C)}$, and the class $PS$ does correspond to the 
universal first Chern class. Its restriction on ${\rm Sp_{res}}({\cal H}_r)$ is 
closely related to the Berezin-Segal-Shale-Weil cocycle, and reveals the topological 
origin of the latter. Via the composition of morphisms 
$$\M\longrightarrow {\rm Sp_{res}}({\cal H}_r) \hookrightarrow 
{\rm {\rm GL}^0_{res}}({\cal 
  H}),$$  we then derive from 
$PS$ an {\it integral} cohomology class on $\M$: 
 
\begin{theorem}[\cite{fu-ka3}]\label{chern} 
The Pressley-Segal class $PS\in H^2({\rm {\rm GL}^0_{res}}({\cal H}),\C^*)$ induces the first universal Chern 
class $c_1(\M)\in H^2({\cal M},\Z)$. 
\end{theorem}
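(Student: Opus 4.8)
The plan is to identify $\rho^{*}(PS)$ by restricting it to the mapping class groups of the compact subsurfaces of $\mathscr{S}_{\infty}$, where the matter reduces to the classical relation between the symplectic representation and the first Chern class of the Hodge bundle, and then to transport the conclusion back to $\M$ using the stability arguments already employed in \cite{fu-ka3}. First I would pin down what the statement asserts. Since $\M$ is perfect one has ${\rm Hom}(\M,\C^{*})=0$, hence $H^{1}(\M,\C^{*})=0$, and the exponential sequence $0\to\Z\to\C\to\C^{*}\to 0$ makes the coefficient homomorphism $H^{2}(\M,\Z)\to H^{2}(\M,\C^{*})$ injective. So ``$PS$ induces $c_{1}(\M)$'' means: the class $\rho^{*}(PS)\in H^{2}(\M,\C^{*})$ obtained by pulling back $PS$ along $\M\stackrel{\rho}{\longrightarrow}{\rm Sp_{res}}({\cal H}_{r})\hookrightarrow{\rm GL}^{0}_{res}({\cal H})$ lies in the image of that injection, and its unique integral preimage is the generator $c_{1}(\M)$ of $H^{2}(\M,\Z)\cong\Z$ (the sign being absorbed into the choice of generator). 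Equivalently, the Pressley--Segal $\C^{*}$-central extension of $\M$ reduces, along $\Z\hookrightarrow\C^{*}$, to the central $\Z$-extension classified by $c_{1}(\M)$.

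The core of the argument uses naturality with respect to finite genus together with the classical case. The handlebody model of \cite{fu-ka3} contains a cofinal family of compact subsurfaces $\Sigma_{g,1}\subset\mathscr{S}_{\infty}$, yielding embeddings $j_{g}\colon{\cal M}_{g,1}\hookrightarrow P{\cal M}\subset\M$. A non-separating simple closed curve of $\Sigma_{g,1}$ stays non-separating in $\mathscr{S}_{\infty}$ and its homology class lies in ${\cal H}_{r}$, with the intersection pairing restricting correctly; hence $\rho\circ j_{g}$ is, through the isometric symplectic inclusion $\R^{2g}\hookrightarrow{\cal H}_{r}$, the stabilization $\varphi\mapsto\rho_{g}(\varphi)\oplus{\rm id}$ of the classical symplectic representation $\rho_{g}\colon{\cal M}_{g,1}\to{\rm Sp}(2g,\Z)\subset{\rm Sp}(2g,\R)\hookrightarrow{\rm Sp_{res}}({\cal H}_{r})$. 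On the other hand the restriction of the Pressley--Segal extension of ${\rm GL}^{0}_{res}({\cal H})$ (equivalently, of its determinant line bundle) to the block-diagonal copy of ${\rm Sp}(2g,\R)$ is the $\C^{*}$-extension underlying the metaplectic (Shale--Weil) representation, whose class, pulled to $H^{2}(B{\rm Sp}(2g,\R)^{\delta},\Z)=H^{2}({\rm Sp}(2g,\R),\Z)$, is the image of the standard generator of $H^{2}(BU(g),\Z)$; by the recollection in \cite{mo} this is precisely the class that $\rho_{g}$ carries to the first Chern class of the Hodge bundle in $H^{2}({\cal M}_{g,1},\Z)$. Combining the two, $j_{g}^{*}\rho^{*}(PS)$ is the image in $H^{2}({\cal M}_{g,1},\C^{*})$ of the \emph{integral} class $\rho_{g}^{*}(c_{1})$, which for $g$ large is a \emph{generator} of $H^{2}({\cal M}_{g,1},\Z)\cong\Z$ by Harer stability \cite{ha}.

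To finish I would pass to the limit. The computation $H^{2}(\M,\Z)\cong\Z$ of \cite{fu-ka3} is obtained, via the Lyndon--Hochschild--Serre spectral sequence of $1\to P{\cal M}\to\M\to V\to 1$ and the triviality of the rational homology of $V$, by showing that the restriction maps $H^{2}(\M,\Z)\to H^{2}(P{\cal M},\Z)\to H^{2}({\cal M}_{g,1},\Z)$ are isomorphisms in the stable range (the relevant homology groups being torsion-free). Consequently the compatible classes $\rho_{g}^{*}(c_{1})$, being integral preimages of $j_{g}^{*}\rho^{*}(PS)$, glue to an integral preimage of $\rho^{*}(PS)$, so $\rho^{*}(PS)$ is integral; and since each $\rho_{g}^{*}(c_{1})$ generates $H^{2}({\cal M}_{g,1},\Z)$ while $j_{g}^{*}$ is an isomorphism, that preimage generates $H^{2}(\M,\Z)$. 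Choosing the generator accordingly, $\rho^{*}(PS)$ is the image of $c_{1}(\M)$, which is the claim.

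\noindent The step I expect to be the main obstacle is the middle one: rigorously comparing the two incarnations of the ``first Chern class'' --- the Pressley--Segal / Shale determinant $2$-cocycle on the contractible Banach--Lie group ${\rm GL}^{0}_{res}({\cal H})$ (where $PS$ is a genuine group-cohomology class, not the Chern class of a bundle on the space itself) and the universal $c_{1}\in H^{2}(BU,\Z)$ detected by the finite-dimensional unitary subgroups $U(g)\subset{\rm Sp}(2g,\R)\hookrightarrow{\rm Sp_{res}}({\cal H}_{r})$ --- and, entwined with it, the integrality statement that the restriction of $PS$ to the image of $\M$ can be normalized to take values in $\Z\subset\C^{*}$. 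This needs the Fredholm-index interpretation of the Shale--Weil cocycle and a careful treatment of how ${\rm Sp_{res}}({\cal H}_{r})$ is approximated by its finite-dimensional truncations, for which one must enter the constructions of \cite{pr-se} and \cite{ne}; once these compatibilities are in place, the remainder is a routine assembly of naturality and stability.
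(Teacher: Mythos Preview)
This paper is a survey and does not itself prove the theorem; it is stated with attribution to \cite{fu-ka3}. The paragraphs surrounding the statement, however, lay out exactly the ingredients you assemble: the finite-genus identification of the generator of $H^2({\cal M}_g,\Z)$ as $\rho_g^*(c_1)$ via ${\rm Sp}(2g,\R)\supset U(g)$, the interpretation of ${\rm GL}^0_{res}({\cal H})$ as a homotopy model of $BU$ with $PS$ corresponding to the universal first Chern class, and the link between the restriction of $PS$ to ${\rm Sp_{res}}({\cal H}_r)$ and the Berezin--Segal--Shale--Weil cocycle. Your plan---restrict along $j_g:{\cal M}_{g,1}\hookrightarrow\M$, use the classical identification there, then pass to the limit via Harer stability and the spectral-sequence computation of $H^2(\M,\Z)$ from \cite{fu-ka3}---is the natural one and presumably the argument of \cite{fu-ka3}. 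You also correctly isolate the comparison between the group-cohomological $PS$ on the contractible Banach--Lie group and the topological $c_1$ on the finite-dimensional blocks ${\rm Sp}(2g,\R)$ as the technical heart.

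One point of formulation to fix: the exponential sequence $0\to\Z\to\C\to\C^*\to 0$ does \emph{not} give an injection $H^2(\M,\Z)\to H^2(\M,\C^*)$; the composite $\Z\to\C\to\C^*$ is the trivial homomorphism, so the induced map on $H^2$ is zero, and exactness at $H^2(\M,\C)$ says the same thing. What you actually need (and essentially state in your final paragraph) is that the $\C^*$-central extension classified by $\rho^*(PS)$ admits a \emph{reduction} to a $\Z$-central extension---equivalently, a representing $2$-cocycle taking values in an infinite cyclic subgroup of $\C^*$---and that any two such reductions agree up to sign because $\M$ is perfect with $H^2(\M,\Z)\cong\Z$. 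Phrased this way, integrality is a genuine input from the Shale--Weil/Pressley--Segal construction (the discreteness of the Maslov-type index on the image of $\M$) rather than a formality about coefficient sequences.
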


\section{ Cosimplicial extensions for  the Thompson group $V$}
 
Various extensions of Thompson's group $V$ have been encountered in what precedes. 
 
\begin{enumerate}
\item The extension $1\rightarrow P_{\infty}\longrightarrow BV\longrightarrow V\rightarrow 1$, where $BV$ is 
the braided Thompson group of Brin-Dehornoy (\cite{bri1}, \cite{bri2}, \cite{de1}, \cite{de2}).
\item The extension $1\rightarrow K^*_{\infty}\longrightarrow \B\longrightarrow V\rightarrow 1$, where 
$\B$ is the universal mapping class group of (\cite{fu-ka1}).
\item The extension  $1\rightarrow P{\cal M}\longrightarrow {\cal M}\longrightarrow V\rightarrow 1$, where
${\cal M}$ is the asymptotically rigid mapping class group of infinite genus (\cite{fu-ka3}).
\end{enumerate}
Each one appears in a specific context. It turns out that it is possible to recover all of them by means of a very general and algebraic formalism. More precisely, one may describe 
a functorial construction which produces this type of extensions for $V$. It is defined on a category 
whose objects are called  {\em cosimplicial $\mathfrak{S}$-extensions}, \index{cosimplicial extensions}
where the letter $\mathfrak{S}$ stands for the  ``symmetric group''. 
This formalism, which is inspired from a non-simplicial construction in 
\cite{FiLo}, seems to be 
quite useful when the appropriate language of the problem is algebraic. This is the case when one wishes to define a convenient profinite completion 
of the groups $BV$ or $\B$.

\subsection{Strand doubling maps}

For $n\geq1$, let $S_n$ denote the set $\{1,\ldots,n\}$, and $\mathfrak S_{n}$ denote the symmetric group acting on $S_n$. 

\begin{definition}
For each integer $n\geq 1$ and each $i=1,\ldots,n$, the $i^{th}$ strand doubling map $\partial^{i}_{n}:\mathfrak{S}_{n}\rightarrow \mathfrak{S}_{n+1}$ is defined as follows. For any $\sigma\in \mathfrak{S}_{n}$, $\partial^{i}_{n}(\sigma)$ is the natural extension of $\sigma$ as a permutation
of $S_{n+1}$ when one  simultaneously duplicates $i$ at the source and the $\sigma(i)$ at the target. More precisely, let $\Omega^{i}_n$ be the set ${\bf n}\setminus\{i\}\cup \{i_l,i_r\}$, whose elements are those of
$S_{n}$ except $i$, which is replaced by two elements, $i_{l}$ and $i_{r}$ (where the index $l$ stands for ``left", and $r$ for ``right"). It is ordered by\\
$$1<2<\ldots<i-1<i_l<i_r<i+1<\ldots<n.$$ 
If $j=\sigma(i)$, let $\tau:\Omega^{i}_n\rightarrow \Omega^{j}_n$ be the bijection which is the natural extension of $\sigma$:
$\tau(k)=k$ if $k\notin \{i_{l}, i_{r}\}$, $\tau(i_{l})=j_{l}$ and $\tau(i_{r})=j_{r}$.

\vspace{0.1cm}\noindent 
The permutation  $\partial^{i}_{n}(\sigma)\in \mathfrak{S}_{n+1}$ is the bijection $f^j_n\circ \tau\circ {(f^{i}_n)}^{-1}$, where $f^{k}_n:\Omega_{n}^k\rightarrow S_{n+1}$, for $k=i$ or $j$, is the unique
isomorphism between the ordered sets $\Omega^{k}_n$ and $ S_{n+1}$.\\
\end{definition}

\begin{remark} The maps $\pa^{i}_{n}$ are not homomorphisms in the category of groups. Nevertheless, they verify the coherence relations:
$$\pa^{i}_{n}(\s\circ \tau)=\pa^{\tau(i)}_{n}(\s)\pa^{i}_{n}(\tau).$$
\end{remark}

\begin{example} Let $\s_{i}\in \mathfrak{S}_{n}$ be the transposition $(i, i+1)$. Then $\partial^{i}_{n}(\s_{i})=\s_{i} \s_{i+1}$, 
$\partial^{i}_{n}(\s_{i-1})=\s_{i} \s_{i-1}$, $\partial^{i}_{n}(\s_{j})=\s_{j+1}$ if $i<j$ and
 $\partial^{i}_{n}(\s_{j})=\s_{j}$ if $j<i-1$.
 \end{example}
 
 \subsection{Cosimplicial $\mathfrak{S}$-extensions}

\begin{definition}\label{cosimpl}
A cosimplicial $\mathfrak{S}$-extension is a family of group extensions
$$1\rightarrow K_{n}\longrightarrow G_{n}\longrightarrow \mathfrak{S}_{n}\rightarrow 1$$
indexed by $n\in \N^*$, such that: 
\begin{enumerate}
\item For all $i=1,\ldots, n$, the strand doubling map $\partial^{i}_{n}:\mathfrak{S}_{n}\rightarrow \mathfrak{S}_{n+1}$ admits a lift, still denoted
$\partial^{i}_{n}: G_{n}\rightarrow G_{n+1}$, which verifies, for all $g,h\in G_n$:
$$\pa^{i}_{n} (gh)=\pa^{\bar{h}(i)}_{n}(g )\pa^{i}_{n}(h),$$
where $\bar{h}$ denotes the image of $h\in G_{n}$ in $\mathfrak{S}_{n}$.

\vspace{0.1cm}\noindent 
In particular, each $\pa^{i}_{n}$ restricts to a morphism of groups $\pa^{i}_{n}:K_{n}\rightarrow K_{n+1}$.

\item There exist morphisms (called codegeneracy morphisms) $\varepsilon^{i}_{n}: K_n\rightarrow K_{n-1}$, for
$i=1,\ldots, n$, such that $(K_{n}, \pa^{i}, \varepsilon^{i},n\geq 1)$ is a cosimplicial group.

\end{enumerate}
\end{definition}

\begin{remark}
Since $\varepsilon^{i}_{n+1}\circ \partial^{i}_{n}=id$, the morphisms $\partial ^{i}_{n}:K_{n}\rightarrow K_{n+1}$
are injective.
\end{remark}

\begin{figure}
\begin{center}
\includegraphics[scale=0.8]{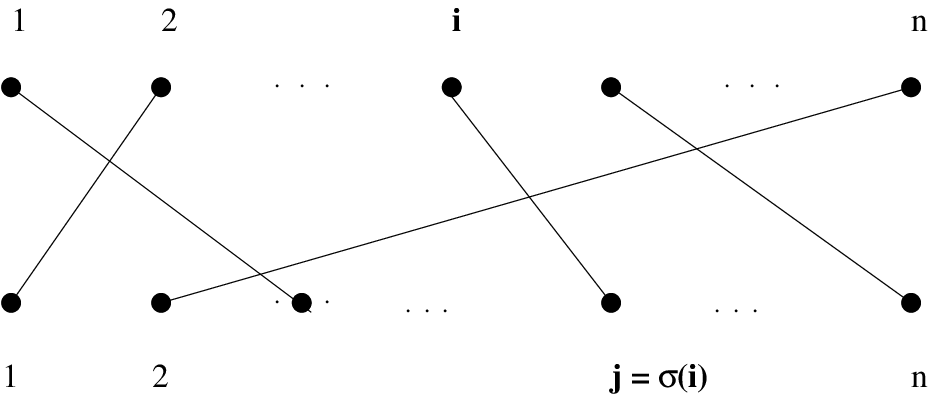}
\caption{Diagram of strands representing a permutation $\sigma$}
\end{center}
\end{figure}

\begin{figure}

\begin{center}
\includegraphics[scale=0.8]{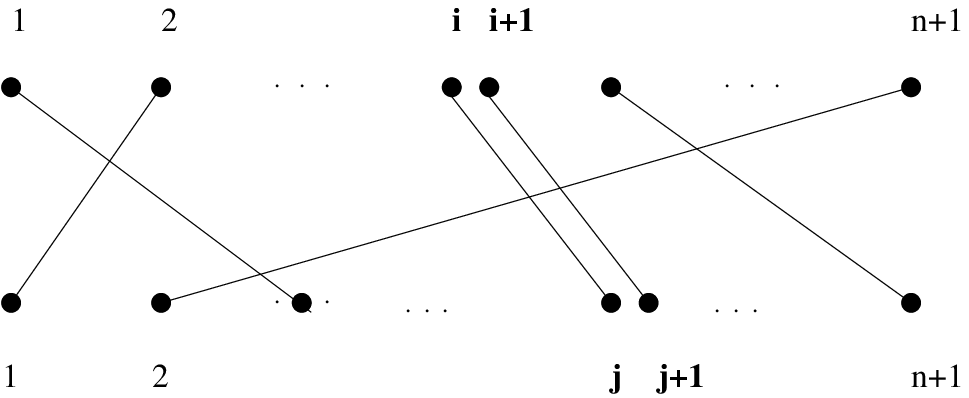}
\caption{Diagram of strands representing the permutation $\partial^{i}_{n}(\sigma)$}
\end{center}
\end{figure}

 \subsection{Dyadic trees and the functor $\bf{K}$}

\begin{definition}
\begin{enumerate}
\item
Let ${\cal T}_0$ be the planar rooted dyadic tree, whose vertices except the root are 3-valent. Let also ${\cal T}$ be the planar unrooted regular tree, whose vertices are all 3-valent. One may view ${\cal T}_0$ as a subtree of ${\cal T}$. The edge $e$ of ${\cal T}$ which is not contained in ${\cal T}_0$ but is incident to the root of ${\cal T}_0$ is called the reference edge of ${\cal T}$.      
\item A planar rooted finite dyadic  $n$-tree is a finite subtree of ${\cal T}_0$ which contains the root, has $n$ leaves, and whose vertices, other than the root and the leaves, are 3-valent. The leaves of such a tree are canonically labelled by $1,\ldots,n$ from left to right, according to the given orientation of the plane.
\item  A planar unrooted finite dyadic  $n$-tree is a finite subtree of ${\cal T}$ which contains the reference 
edge  $e$, has $n$ leaves, and whose vertices, other than leaves, are 3-valent. The leaves of such a tree are canonically labelled by $1,\ldots,n$ from left to right, according to the given orientation of the plane, assuming
that its leftmost leaf belonging to ${\cal T}_0$ is  labelled 1.\\
In short, a planar rooted or unrooted finite dyadic $n$-tree will be called a (rooted or unrooted) labelled tree.
\item If $\tau$ is a rooted or unrooted finite $n$-tree, 
then $\mid \tau\mid$ will denote the number of its leaves.

\end{enumerate}

\end{definition}

\begin{definition}[Category of trees]

 Let $\bf T_0$ (resp. $\bf T$) be the small category of rooted (resp. unrooted) finite dyadic labelled trees defined as follows.\\
\indent $\bullet$ Its objects are the rooted (resp. unrooted) finite dyadic labelled trees.\\
\indent $\bullet$ Let $\tau$ be an object of $\bf T_0$ (resp. $\bf T$) with $n$ leaves.  If $1\leq i\leq n$,
 let $\partial^{i}_{n}(\tau)$ be the dyadic tree obtained from $\tau$ by grafting two edges at its $i$-th leaf (such a pair of grafted edges is
 sometimes called a carret). Since $\partial^{i}_{n}(\tau)$ is planar, it inherits a canonical labelling.
 The tree $\partial^{i}_{n}(\tau)$ is called a simple expansion of $\tau$.\\
  Let now $\tau$ and $\tau'$ be two rooted (resp. unrooted) finite dyadic labelled trees. One says that $\tau'$ is an expansion of $\tau$ if  there exits a chain of
  simple expansions connecting $\tau$ to $\tau'$. This means that $n=\mid \tau\mid \leq m=\mid \tau' \mid$, and either $\tau=\tau'$, or
$\tau'=\partial^{i_{k+1}}_{n+k}\cdots \partial^{i_2}_{n+1} \partial^{i_1}_{n}(\tau)$, for some $i_{1},\ldots, i_{k+1}$,  with $m=n+k+1$.\\
By definition,  ${\rm Hom}(\tau,\tau')$ is nonempty if and only if $\tau'$ is an expansion of $\tau$, in which case it has a single element. Therefore, the set
of all morphisms is the set of pairs $(\tau,\tau')$, where $\tau'$ is an expansion of $\tau$.

\end{definition}

\begin{remark}\label{presque}

\begin{enumerate}

\item $Ob(\bf T_0)$ and $Ob(\bf T)$ are partially ordered sets by setting $\tau\leq \tau'$ if and only if $\tau'$ is an expansion of $\tau$. They are directed ordered sets, since any two trees have a 
common expansion.

\item The categories $\bf T$ and $\bf T_0$ are ``almost'' isomorphic in the following sense. Let ${\bf \tau_3}\in Ob({\bf T})$ be the tripod
 whose three edges are incident to the root of ${\cal T}_0$. Let $\tau_0\in Ob({\bf T_0})$ be one of the two rooted 3-trees in $\bf T_{0}$. Denote by $\bf T_{\tau\geq \tau_3 }$ the full sub-category of $\bf T$ whose objects are the $n$-trees which contain $\tau_3$, and denote similarly by $\bf T_{\tau\geq \tau_0 }$ the full sub-category of $\bf T_0$ whose objects are the $n$-trees which contain $\tau_0$. Plainly, the sub-categories $\bf T_{\tau\geq \tau_3 }$ and ${\bf T_0}_{\tau\geq \tau_0 }$ are isomorphic.
\end{enumerate}

\end{remark}

\begin{proposition-definition}
Let $\bf G$ be the category of groups, and ${\bf T}_*$ stand for the category ${\bf T_0}$ or ${\bf T}$.
Let  $(K_{n},\pa^{i}_{n}, \varepsilon^{i}_{n},n\geq 1)$ be a cosimplicial group.  
The functor $\bf K:\bf T_*\rightarrow \bf G$ is defined as follows:
\begin{itemize}
\item Let $\tau\in Ob(\bf T_*)$. Set ${\bf K}(\tau)=K_{\mid \tau\mid}$.\\
\item Let $\varphi\in {\rm Hom}(\tau,\tau')$. Then  $n=\mid \tau\mid \leq m=\mid \tau' \mid$.
Either $\tau=\tau'$, in which case one sets ${\bf K}(\varphi)=1$, the neutral element of $K_{n}$. 
Or $\tau'=\partial^{i_{k+1}}_{n+k}\cdots \partial^{i_2}_{n+1} \partial^{i_1}_{n}(\tau)$, for some $i_{1},\ldots, i_{k+1}$,
 with $m=n+k+1$. In that case, one sets ${\bf K}(\varphi)=\partial^{i_{k+1}}_{n+k}\circ\cdots \circ\partial^{i_2}_{n+1} \circ\partial^{i_1}_{n}\in {\rm Hom} (K_{n}, K_{m})$,
 and this does not depend on the choice of simple expansions which connect $\tau$ to $\tau'$.
The functor $\bf K$ yields a  group ${\bf K}_{\infty}[\bf T_*]$ which is the colimit
   $$K_{\infty}[\bf T_*]=\displaystyle \lim_{\stackrel{\longrightarrow}{\tau \in {\bf T_*}}} {\bf K}(\tau).$$
\end{itemize}

\vspace{0.1cm}\noindent 
The groups $K_{\infty}[\bf T]$ and $K_{\infty}[\bf T_0]$ are isomorphic. We denote them by the same symbol $K_{\infty}$.
\end{proposition-definition}

\begin{proof}
Two chains of simple expansions between two trees $\tau$ and $\tau'$ such that $\tau\leq \tau'$
may only differ by a repetition of the relation $\partial^j_{N+1}\circ \partial^{i}_{N}(\tau)=
\partial^{i+1}_{N+1}\circ \partial ^{j}_{N}(\tau)$, where $\tau$ is a tree such that $\mid \tau \mid=N$,  and  $1\leq j<i\leq N$. Since
$(K_{n}, \pa^{i}_{n}, \varepsilon^{i}_{n},n\geq 1)$ is a cosimplicial group, one has $\partial^j_{N+1}\circ \partial^{i}_{N}=
\partial^{i+1}_{N+1}\circ \partial ^{j}_{N}$ in ${\rm Hom}(K_{N},K_{N+1})$. This proves
that ${\bf K}(\varphi)$ is well defined.\\
The isomorphism between $K_{\infty}[\bf T]$ and $K_{\infty}[\bf T_0]$ is a consequence of the second remark of \ref{presque}. 
\end{proof}

\subsection{Extensions of Thompson's group $V$}

Let $VExt$ be the category of extensions of Thompson's group $V$. An object is therefore a short exact sequence
of groups
$1\rightarrow K\longrightarrow G\longrightarrow V\rightarrow 1.$ 
 A morphism between to
objects is a commutative diagram of short exact sequences.

\begin{proposition}\label{construction}
There exists a functor from the category $\mathfrak{S}Ext$ to the category $VExt$, 
which maps a cosimplicial $\mathfrak{S}$-extension $(1\rightarrow K_{n}\longrightarrow G_{n}\longrightarrow \mathfrak{S}_{n}\rightarrow 1)$
to an extension of Thompson's group $V$ by $K_{\infty}$:\\

$$1\rightarrow K_{\infty}\longrightarrow G[V]\longrightarrow V\rightarrow 1$$
which is split over Thompson's group $F$.

\vspace{0.1cm}\noindent 
In particular, the correspondence which associates the group $G[V]$ to the cosimplicial $\mathfrak{S}$-extension
is a functor from $\mathfrak{S}Ext$ to the category of groups.

\end{proposition}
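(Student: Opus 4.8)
The plan is to construct the group $G[V]$ directly and functorially from the given cosimplicial $\mathfrak{S}$-extension, mimicking the way $V$ itself is realized as equivalence classes of symbols $(\tau_1,\tau_0,\sigma)$ with $\sigma\in\mathfrak{S}_n$. First I would define the underlying set of $G[V]$ as the set of equivalence classes of symbols $(\tau_1,\tau_0,g)$, where $\tau_0,\tau_1$ are finite dyadic trees with the same number of leaves $n$ and $g\in G_n$ is a lift of a permutation $\sigma\in\mathfrak{S}_n$; the image of $(\tau_1,\tau_0,g)$ in $V$ will be the class of $(\tau_1,\tau_0,\bar g)$. The equivalence relation is generated by simultaneous simple expansions: $(\tau_1,\tau_0,g)\sim(\partial^{\bar g(i)}_n(\tau_1),\partial^i_n(\tau_0),\partial^i_n(g))$ for each leaf $i$. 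Here the choice of which doubling map to apply on the target tree, namely $\partial^{\bar g(i)}_n$, is forced precisely so that the new symbol still represents a bijection of subdivisions compatible with $g$; this is exactly the bookkeeping encoded in the definition of the strand doubling maps and their coherence relation $\partial^i_n(\sigma\circ\tau)=\partial^{\tau(i)}_n(\sigma)\partial^i_n(\tau)$.

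Next I would define the multiplication. Given two classes, refine both symbols (using expansions) until the source tree of the first equals the target tree of the second; on that common tree the group elements live in the same $G_N$ and one multiplies them there, exactly as in the definition of composition in $V$. The main work is to check that this is well defined: independence of the refinement used to align the two symbols, and independence of the chosen representatives within each equivalence class. Both reduce to a single compatibility statement, namely that expanding and then multiplying agrees with multiplying and then expanding, which in turn is a direct consequence of the identity $\pa^{i}_{n}(gh)=\pa^{\bar h(i)}_{n}(g)\pa^{i}_{n}(h)$ from Definition \ref{cosimpl}(1), together with the fact (Proposition-Definition) that the colimit $K_\infty$ is well defined because $\partial^j_{N+1}\circ\partial^i_N=\partial^{i+1}_{N+1}\circ\partial^j_N$ on the $K_n$'s. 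One then identifies the kernel of $G[V]\to V$: an element maps to $1$ in $V$ iff it can be represented by a symbol $(\tau,\tau,g)$ with $g\in K_n$, and the resulting subgroup is, by construction, the colimit $K_\infty=\varinjlim K_n$ along the doubling maps — this is where the cosimplicial structure on the $K_n$ (not merely on the $\mathfrak{S}_n$) is used. The splitting over $F$ is immediate: for symbols representing elements of $F$ the permutation is the identity, so one may lift it by the identity of $G_n$, and this lift is compatible with expansions.

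Finally, functoriality: a morphism of cosimplicial $\mathfrak{S}$-extensions is a family of compatible maps $G_n\to G_n'$ commuting with the $\partial^i_n$ and covering the identity of $\mathfrak{S}_n$; applying it leafwise to symbols gives a well-defined homomorphism $G[V]\to G'[V]$ fitting into a morphism of extensions, and this assignment respects composition and identities because it does so on each $G_n$. Specializing the objects recovers the three extensions listed: the Brin–Dehornoy $BV$ comes from $G_n=B_n$ (the braid groups, with $K_n=P_n$ the pure braid groups and the doubling maps the strand-doubling of braids), the genus-zero group $\B$ from $G_n=\mathcal{M}(0,n)$-type mapping class data, and $\mathcal{M}$ from the infinite-genus mapping class groups — in each case the relevant strand-doubling lift and codegeneracies are the geometric "cut along a curve / collapse a handle" operations.

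\textbf{Main obstacle.} I expect the crux to be verifying that the multiplication on $G[V]$ is well defined, i.e.\ that aligning two symbols by different common refinements yields the same product in $K_\infty$. Unlike the case of $V$, where the group elements are plain permutations and the check is combinatorial, here one must track how a non-trivial element of $G_n$ interacts with the two distinct doubling maps $\partial^i_n$ (source side) and $\partial^{\bar g(i)}_n$ (target side); the coherence relation $\pa^{i}_{n}(gh)=\pa^{\bar h(i)}_{n}(g)\pa^{i}_{n}(h)$ is exactly what makes this work, but assembling it into a clean proof that the colimit construction is associative and independent of choices is the delicate point. Everything else — the exact sequence, the identification of the kernel with $K_\infty$, the splitting over $F$, and functoriality — then follows formally.
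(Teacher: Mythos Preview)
Your proposal is correct and follows essentially the same construction as the paper: the paper defines $G[V]$ as equivalence classes of symbols $(\tau_1,\tau_0,g)$ under the relation generated by $(\tau_1,\tau_0,g)\sim(\partial^{\bar g(i)}_n(\tau_1),\partial^i_n(\tau_0),\partial^i_n(g))$, with multiplication by common refinement, and derives the exact sequence, the splitting over $F$, and functoriality just as you outline. Your identification of the coherence relation $\pa^{i}_{n}(gh)=\pa^{\bar h(i)}_{n}(g)\pa^{i}_{n}(h)$ as the key to well-definedness of the product is exactly right, and in fact more explicit than what the paper writes (it simply asserts well-definedness without spelling out this step).
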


\begin{proof}
In the following, ${\bf T}_{*}$ will equally denote the category ${\bf T}_{0}$ or the category ${\bf T}$ . The isomorphism
classes of groups or short exact sequence of groups constructed below will not depend on the choice for ${\bf T}_{*}$.

\vspace{0.1cm}\noindent 
Let $\mathscr{S}(G)$ be the following set, whose elements are called $G$-symbols. A $G$-symbol
is a triple $(\tau_{1},\tau_{0}, g)$, where $\tau_{0}$ and $\tau_{1}$ are two objects of 
$\bf T$ with the same number of leaves, say $n\geq 1$, and $g$ is an element of the group $G_{n}$.
The integer $n$ is called the level of the $G$-symbol  $(\tau_{1},\tau_{0}, g)$.

\vspace{0.1cm}\noindent 
One defines a set of binary relations denoted $\sim_{n,i}$, for each integer $n\geq 1$ and $1\leq i\leq n$. By definition,
 $\sim_{n,i}$ relates a symbol $(\tau_{1},\tau_{0}, g)$ of level $n$ to the symbol 
 $(\partial^{\bar{g}(i)}_{n}(\tau_{1}),\partial^{i}_{n}(\tau_{0}), \partial^{i}_{n}(g))$ of level $n+1$.

\vspace{0.1cm}\noindent 
 Let now $\mathscr{R}$ be the equivalence relation generated by the set of relations $\sim_{n,i}$ on  $\mathscr{S}(G)$.
 An equivalence class is denoted $[\tau_{1},\tau_{0}, g]$.
 
\vspace{0.1cm}\noindent 
 The group $G[V]$ is defined as follows. Its elements are the equivalence classes of symbols
 $\mathscr{S}(G)/\mathscr{R}$. Let $[\tau_{1},\tau_{0},h]$ and $[\tau_{2},\tau_{1}',g]$
 be two elements. At the price of replacing both symbols by equivalent ones, one may assume that
 $\tau_{1}'=\tau_{1}$. Set $$[\tau_{2},\tau_{1},g]\cdot [\tau_{1},\tau_{0},h]=[\tau_{2},\tau_{0}, gh].$$
 The point is that the above  definition of a product of two elements of  $\mathscr{S}(G)/\mathscr{R}$
 only depends on the equivalence classes, not on  the choice of the symbols.\\
 The neutral element is the class of any symbol $(\tau,\tau, 1_{n})$, where $\tau$ is any tree, $n$ is its level, and $1_{n}\in G_{n}$ is
 the neutral element of $G_{n}$.

\vspace{0.1cm}\noindent  
 For each  $\tau\in Ob({\bf T}_{*})$, there is a morphism $\iota_{\tau}:{\bf K}(\tau)\rightarrow G[V]$, $k\mapsto [\tau,\tau, k]$. If
 $\tau\leq \tau'$ and $\varphi$ is the unique morphism from $\tau$ to $\tau'$, then $\iota_{\tau}=\iota_{\tau'}\circ {\bf K}(\varphi)$. This implies
 the existence of a morphism $\iota_{\infty}:K_{\infty}\rightarrow G[V]$ induced by the $\iota_{\tau}$'s. Since the morphisms 
 $\partial^{i}_{n}$ are injective, so is the morphism $\iota_{\infty}$.

 \vspace{0.1cm}\noindent 
When $G_{n}=\mathfrak{S}_{n}$ (hence $K_{n}=\{1\}$), the group $G[V]$ is denoted $V$, and is the
Thompson group acting on the Cantor set. The subgroup of $V$ whose elements are of the form $[\tau_{1},\tau_{0},1_{n}]$, for any pair of $n$-trees $(\tau_{1},\tau_{0})$, and any $n\geq 1$, is
Thompson's group $F$.

\vspace{0.1cm}\noindent 
The morphism $G[V]\rightarrow V$ is defined by $[\tau_{1},\tau_{0},g]\rightarrow [\tau_{1},\tau_{0},\bar{g}]$. It is surjective,
and its kernel is $K_{\infty}$. The splitting over Thompson's group $F$ is the morphism $[\tau_{1},\tau_{0},1_{n}]\in F\mapsto [\tau_{1},\tau_{0},1_{n}]\in G[V]$
(where $1_{n}$ denotes the neutral element of $\mathfrak{S}_{n}$ in the left symbol, and
the neutral element of $G_{n}$ in the right symbol).
The functoriality of this construction can be easily checked.

\end{proof}

\subsection{Main examples}


\subsubsection*{The braided Thompson group $BV$}
\index{braided Thompson group $BV$}

The family of extensions  $1\rightarrow P_n\longrightarrow B_n \longrightarrow {\mathfrak S}_n \rightarrow 1$, $n\geq 1$, 
is a cosimplicial $\mathfrak S$-extension, with $\partial_{n}^{i}$ the obvious geometric strand doubling map, and $\varepsilon_{n}^{i}: P_{n}\rightarrow P_{n-1}$ the morphism obtained by deleting the $i^{th}$ braid.

\begin{figure}
\begin{center}
\includegraphics[scale=0.8]{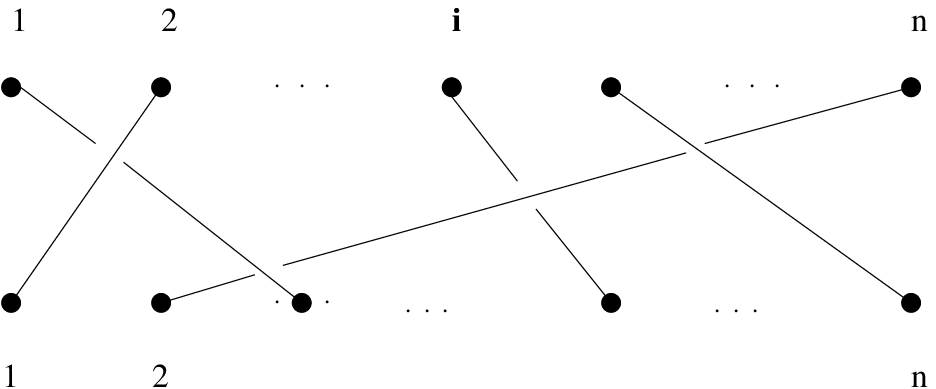}
\caption{Diagram of strands representing a braid $\sigma$}
\end{center}
\end{figure}

\begin{figure}
\begin{center}
\includegraphics[scale=0.8]{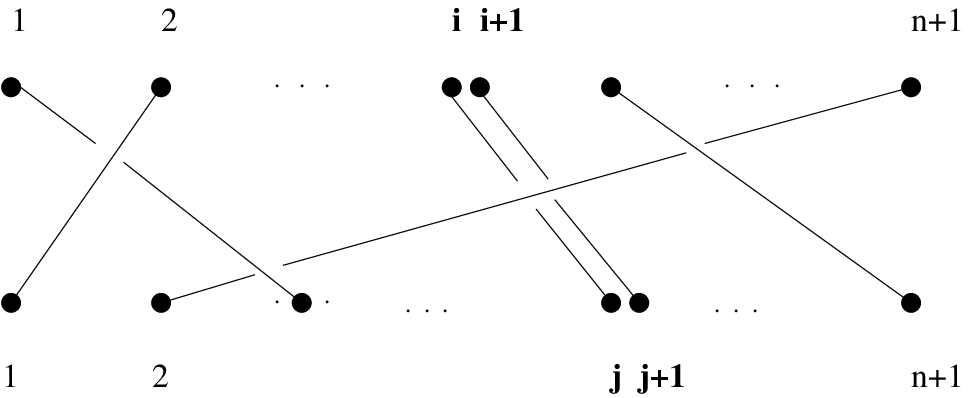}
\caption{Diagram of strands representing the braid $\partial^{i}_{n}(\sigma)$}
\end{center}
\end{figure}

\vspace{0.1cm}\noindent 
The group  $B[V]$ is the braided Thompson group $BV$ discovered independently by Brin and Dehornoy 
(\cite{bri1}, \cite{bri2},\cite{de1}, \cite{de2}). 

\subsubsection*{The universal mapping class group in genus zero ${\cal B}$}

\index{universal mapping class group in genus zero}
In \cite{fu-ka1} we construct the group ${\cal B}$ as a mapping class group of a sphere minus 
a Cantor set. The surface $\mathscr{S}_{0,\infty}$ is the boundary of the 3-dimensional thickening
of a regular (unrooted) dyadic tree. The definition of $\B$ given in $\cite{fu-ka1}$ is therefore 
completely topological. We wish to give an equivalent one using the  formalism
of cosimplicial $\mathfrak{S}$-extensions.\\

\subsubsection*{Mapping class groups $M^*(0,n)$}

\begin{definition}\label{M}
\begin{enumerate}
\item Let $S$ be an $n$-holed sphere, that is, a sphere minus $n$
  disjoint embedded open disks.  Its {\em pure mapping class group} $K^*(S)$  is the group of isotopy classes of
  orientation-preserving homeomorphisms of $S$. The homeomorphisms and the isotopies are assumed
  to fix pointwise the $n$ boundary circles.
\item Suppose we have chosen two  base points on each boundary circle of
  $S$, $p_{+}$ and $p_{-}$. The full mapping class group $M^*(S)$ is the group of isotopy classes of
  orientation-preserving homeomorphisms of the holed sphere $S$ which
  permute the boundary circles and the set of base points, preserving their signs. The
  isotopies are only assumed to fix the base points.\\
If the set of boundary components is labelled by $\{1,\ldots,n\}$, then $K^*(S)$ and $M^*(S)$ are related by a short exact sequence
$$1\rightarrow
  K^*(S)\longrightarrow M^*(S) \longrightarrow \mathfrak{S}_n\rightarrow
  1.$$

\end{enumerate}
 \end{definition}

 \begin{proposition}
 The family of group extensions  $1\rightarrow
  K^*(0,n)\longrightarrow M^*(0,n) \longrightarrow \mathfrak{S}_n\rightarrow
  1$, $n\geq 1$, forms a cosimplicial $\mathfrak{S}$-extension.
\end{proposition}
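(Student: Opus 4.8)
The plan is to produce the whole cosimplicial structure from two elementary surgeries on holed spheres: \emph{gluing in a pair of pants} for the coface maps $\partial^i_n$, and \emph{capping a boundary circle with a disk} for the codegeneracies $\varepsilon^i_n$. First I would fix once and for all a model pair of pants $P$, with its three boundary circles named $0,l,r$ and each equipped with two signed base points, together with a distinguished orientation-preserving homeomorphism between any two such models matching names and signs. Given $\varphi\in M^*(0,n)$ with underlying permutation $\sigma=\bar\varphi$ and $j=\sigma(i)$, glue a copy $P_i$ of $P$ to the $i$-th boundary circle of the source copy of $S$ and a copy $P_j$ of $P$ to the $j$-th boundary circle of the target copy; since any representative of $\varphi$ may be taken to fix a collar of every boundary circle, it extends across $P_i\to P_j$ by the model homeomorphism. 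Re-indexing the boundary circles of $S\cup P_i$ and of $S\cup P_j$ by the order-preserving bijections $f^i_n$, $f^j_n$ used in the definition of $\partial^i_n$ on $\mathfrak{S}_n$ then yields $\partial^i_n(\varphi)\in M^*(0,n+1)$. By construction this is independent of the chosen representative homeomorphism and of the base-point-fixing isotopy, it covers the set-theoretic $\partial^i_n:\mathfrak{S}_n\to\mathfrak{S}_{n+1}$, and on the pure subgroup (where $j=i$) it is a genuine group homomorphism $K^*(0,n)\to K^*(0,n+1)$.

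Next I would verify the coherence relation $\partial^i_n(gh)=\partial^{\bar h(i)}_n(g)\,\partial^i_n(h)$. This is pure bookkeeping: in the composite $gh$ the factor $h$ carries the $i$-th boundary circle to the $\bar h(i)$-th one, which is exactly the circle to which the source pants of $g$ is attached, so tracking the glued pants through the two successive extensions shows the two sides agree as homeomorphisms; their underlying permutations agree by the already recorded identity $\partial^i_n(\sigma\tau)=\partial^{\tau(i)}_n(\sigma)\partial^i_n(\tau)$ for $\mathfrak{S}_n$. Together with the previous paragraph this establishes condition (1) of Definition \ref{cosimpl}, including that each $\partial^i_n$ restricts to a homomorphism $K^*(0,n)\to K^*(0,n+1)$.

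For the codegeneracies I would define $\varepsilon^i_n:K^*(0,n)\to K^*(0,n-1)$ by capping the $i$-th boundary circle with a disk and extending every boundary-fixing homeomorphism by the identity on that disk; this is visibly a group homomorphism. It then remains to check that $(K^*(0,n),\partial^i,\varepsilon^i)$ satisfies the cosimplicial identities, and each of these is a statement about surgeries on disjoint or nested pieces of the surface that is verified by inspecting the underlying surfaces. Gluing pants at two distinct holes commutes up to the index shift $\partial^j_{N+1}\partial^i_N=\partial^{i+1}_{N+1}\partial^j_N$ ($j<i$) forced by the order-preserving re-indexing; capping two distinct holes commutes (with the symmetric shift for the codegeneracies); capping a disk on the hole at position $i$ right after a simple expansion at $i$ turns the glued pants into an annulus and recovers the original surface and map, giving $\varepsilon^i_{n+1}\partial^i_n=\mathrm{id}$; and a capping and a pants-gluing at distinct holes commute with the appropriate shift. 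Because every extended homeomorphism is the identity away from the region being modified, all these identities hold strictly, not merely up to isotopy.

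The main obstacle I anticipate is not conceptual but notational: one must pin down the model pair of pants, the signed base points, and the order-preserving identifications $f^i_n$ precisely enough that the doubling maps are literally well defined and the cosimplicial identities hold on the nose — the permutation of boundary circles together with two base points per circle forces exactly the $\Omega^i_n$/$f^i_n$ bookkeeping of the combinatorial definition of $\partial^i_n$, and one must check that the pants-gluing respects it (in particular that forgetting everything but the induced permutation recovers the given $\partial^i_n$). Once these conventions are set with enough care, conditions (1) and (2) of Definition \ref{cosimpl} follow from the geometric observations above, and the family $1\to K^*(0,n)\to M^*(0,n)\to\mathfrak{S}_n\to1$ is a cosimplicial $\mathfrak{S}$-extension.
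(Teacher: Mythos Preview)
Your proposal is correct and follows essentially the same approach as the paper: the coface maps $\partial^i_n$ are defined by gluing a pair of pants along the $i$-th (resp.\ $\bar\sigma(i)$-th) boundary circle and re-identifying with $\Sigma_{0,n+1}$, while the codegeneracies $\varepsilon^i_n$ come from capping the $i$-th hole with a disk. The paper in fact states only these constructions and leaves the verification of the coherence relation and the cosimplicial identities implicit, so your explicit checks of $\partial^i_n(gh)=\partial^{\bar h(i)}_n(g)\partial^i_n(h)$ and of the simplicial relations supply detail the paper omits.
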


\vspace{0.1cm}\noindent 
The strand doubling map $\partial^{i}_{n}: \sigma\in M^*(0,n)\mapsto \partial^{i}_{n}(\sigma)\in M^*(0,n+1)$ is deduced from the topological operation consisting in 
gluing a pair of pants $P_{i}$ (resp. $P_{\bar{\sigma}(i)}$) along the $i^{th}$ (resp. $\sigma(i)^{th}$) boundary circle of the $n$-holed  reference 
surface $\Sigma_{0,n}$, then defining the natural extension of $\sigma$ as a 
(mapping class of) homeomorphism  $\Sigma_{0,n}\cup P_{i}\rightarrow \Sigma_{0,n}\cup P_{\sigma(i)}$, and finally identifying  $\Sigma_{0,n}\cup P_{i}$ and  $\Sigma_{0,n}\cup P_{\sigma(i)}$ to $\Sigma_{0,n+1}$ to obtain the expected 
$\partial^{i}_{n}(\sigma)$.

\begin{figure}
\begin{center}
\includegraphics{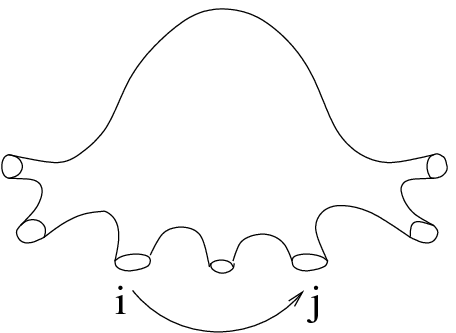}
\caption{The mapping class $\sigma\in M^*(0,n)$ permuting the $i^{th}$ and $j^{th}$ boundary circles}
\end{center}
\end{figure}

\begin{figure}
\begin{center}
\includegraphics[scale=0.9]{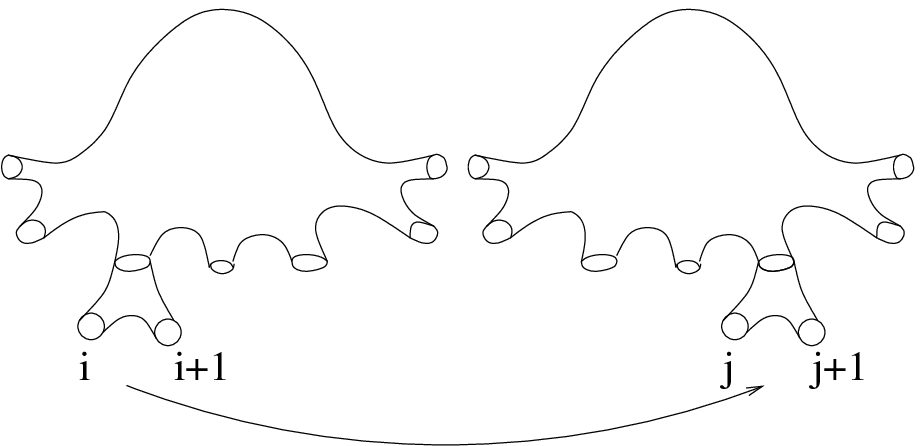}
\caption{The mapping class $\partial^{i}_{n}(\sigma)\in M^*(0,n+1)$}
\end{center}
\end{figure}

\vspace{0.1cm}\noindent 
The codegeneracy morphisms $\varepsilon^{i}:K^*(0,n)\rightarrow K^*(0,n-1)$ are
induced by the topological operation consisting in filling up the $i^{th}$ hole of $\Sigma_{0,n}$ with
a disk, and identifying the resulting surface to $\Sigma_{0,n-1}$.

\begin{proposition}
The  cosimplicial $\mathfrak{S}$-extension $(1\rightarrow
  K^*(0,n)\longrightarrow M^*(0,n) \longrightarrow \mathfrak{S}_n\rightarrow
  1, n\geq 1)$, yields the extension of Thompson's group $V$

$$1\rightarrow K^*_{\infty}\longrightarrow M^*[V]\longrightarrow V\rightarrow 1$$
by the construction of Proposition \ref{construction}. The group $M^*[V]$ is isomorphic to
the universal mapping class group $\B$ defined in $\cite{fu-ka1}$.

\end{proposition}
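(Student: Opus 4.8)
The strategy is to build an explicit morphism of extensions $\Phi\colon M^*[V]\to\B$ over $V$ and to conclude by the short five lemma. By Proposition~\ref{construction}, the group $M^*[V]$ fits into $1\to K^*_{\infty}\to M^*[V]\to V\to 1$ with $K^*_{\infty}=\varinjlim_n K^*(0,n)$, the colimit being taken along the strand doubling morphisms $\partial^i_n$; by the preceding proposition these $\partial^i_n$ on $K^*(0,n)$ are precisely the maps induced by gluing a pair of pants along the $i$-th boundary circle. On the other hand \cite{fu-ka1} presents $\B$ as $1\to K^*_{\infty}\to\B\to V\to 1$, whose kernel is the compactly supported mapping class group of $\si$, which is again $\varinjlim_n K^*(0,n)$ along the pants-gluing inclusions. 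Thus the two kernels are canonically the same group $K^*_{\infty}$, and it suffices to produce $\Phi$ which is the identity on $K^*_{\infty}$ and covers the identity of $V$.

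For the construction, fix, for each finite dyadic labelled tree $\tau$ with $n$ leaves, the admissible subsurface $S_\tau\subset\si$ whose trace of the dual tree ${\cal T}_2$ is $\tau$, equipped with its inherited rigid structure: the decomposition into hexagons, together with on each boundary circle the two signed base points $p_+,p_-$, which we identify with the two points where that circle meets the fixed-point lines of the involution $j$. The canonical rigid structure of $\si$ then yields a canonical homeomorphism $\theta_\tau\colon\Sigma_{0,n}\to S_\tau$ carrying the reference base points and hexagon structure to those of $S_\tau$. To a symbol $(\tau_1,\tau_0,g)$ with $g\in M^*(0,n)$ we associate the homeomorphism of $\si$ that equals $\theta_{\tau_1}\circ g\circ\theta_{\tau_0}^{-1}$ on $S_{\tau_0}$ and, outside $S_{\tau_0}$, is the rigid identification of $\si\setminus S_{\tau_0}$ with $\si\setminus S_{\tau_1}$ dictated by the underlying permutation $\bar g\in\mathfrak S_n$; this is asymptotically rigid, and we let $\Phi[\tau_1,\tau_0,g]$ be its mapping class.

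The formal verifications then run as follows. $\Phi$ respects the generating relation $\sim_{n,i}$: passing from $(\tau_1,\tau_0,g)$ to $(\partial^{\bar g(i)}_n(\tau_1),\partial^i_n(\tau_0),\partial^i_n(g))$ amounts to enlarging the support by the pair of pants glued at the $i$-th boundary circle, and since $\partial^i_n$ is by definition the induced map on mapping class groups the asymptotically rigid homeomorphism is unchanged, so $\Phi$ descends to $\mathscr R$-classes. It is a homomorphism: refining two given symbols to $[\tau_2,\tau_1,g]$ and $[\tau_1,\tau_0,h]$ with a common middle tree, the product homeomorphism equals $\theta_{\tau_2}(gh)\theta_{\tau_0}^{-1}$ on $S_{\tau_0}$ while the rigid pieces compose rigidly, so $\Phi[\tau_2,\tau_1,g]\,\Phi[\tau_1,\tau_0,h]=\Phi[\tau_2,\tau_0,gh]$. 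On $K^*_{\infty}$, i.e.\ on classes $[\tau,\tau,k]$ with $k\in K^*(0,n)$, $\Phi$ returns the compactly supported class $\theta_\tau k\theta_\tau^{-1}$, which is exactly the image of $k$ under the standard identification $\varinjlim_n K^*(0,n)\cong K_\infty(\si)$; and the projection $\B\to V$ of \cite{fu-ka1} applied to $\Phi[\tau_1,\tau_0,g]$ returns $[\tau_1,\tau_0,\bar g]$, the image of $[\tau_1,\tau_0,g]$ under $M^*[V]\to V$. Hence $\Phi$ is a morphism of short exact sequences inducing the identity on kernel and quotient, and the five lemma forces it to be an isomorphism.

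The point that requires genuine care, rather than the bookkeeping above, is the compatibility of the two notions of rigidity at infinity, which is what makes the assignments just used well defined. On the $M^*(0,n)$ side one fixes two signed base points per boundary circle and allows only isotopies fixing them; on the $\B$ side one demands that homeomorphisms, and isotopies between them, be rigid (pants-to-pants) outside a compact admissible subsurface. One must check that, under the identification of $p_\pm$ with the two hexagon points of each circle, ``rigid on $\si\setminus S$'' is equivalent to ``signed-base-point preserving on the complementary pants'', so that the mapping class group of $\si\setminus S$ relative to its rigid structure is trivial; this makes restriction-to-support faithful and makes the gluing of a rigid complement to an $M^*$-map of $S$ consistent up to isotopy, since an orientation-preserving homeomorphism of a circle fixing two distinct signed points is isotopic, rel those points, to the identity. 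Finally one uses the standard fact that isotopic asymptotically rigid homeomorphisms are isotopic through asymptotically rigid ones, pushed to an isotopy fixing a support pointwise, so that the element of $M^*(0,n)$ read off a given asymptotically rigid homeomorphism is unambiguous, which also gives surjectivity of $\Phi$ directly. Once these point-set topology matters are dispatched, $\Phi$ is manifestly the desired isomorphism.
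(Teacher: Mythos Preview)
The paper does not actually supply a proof of this proposition; it is stated and left to the reader, the point being that once the cosimplicial machinery of Proposition~\ref{construction} and the topological description of the strand doubling maps $\partial^i_n$ (gluing a pair of pants) and codegeneracies $\varepsilon^i_n$ (filling a hole) are in place, the identification with $\B$ is meant to be transparent. Your argument is exactly the natural one the paper is gesturing at: build the map symbol-by-symbol via a choice of reference identifications $\theta_\tau:\Sigma_{0,n}\to S_\tau$, check compatibility with $\sim_{n,i}$ by unravelling the definition of $\partial^i_n$, and conclude by the five lemma over the common quotient $V$.

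Your treatment is in fact more careful than what the paper offers, since you flag the one genuinely delicate point: that the two notions of ``rigidity at infinity'' --- signed base points on boundary circles in Definition~\ref{M} versus the visible/hidden decomposition in the canonical rigid structure of $\si$ --- must be reconciled, and that this comes down to identifying $p_\pm$ with the two points where each separating circle meets the fixed-point set of the involution $j$. This is correct and is precisely what makes the rigid extension outside the support well defined up to isotopy. The remark that isotopic asymptotically rigid homeomorphisms are isotopic through asymptotically rigid ones is also the right ingredient for well-definedness of the inverse; it is implicit in the paper's earlier assertion (see the Remark following Definition~7.3) that ``two asymptotically rigid homeomorphisms that are isotopic are isotopic among asymptotically rigid homeomorphisms''. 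So your proof is complete and in the spirit of the paper.
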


\subsubsection*{Profinite completions}

\begin{proposition}
Each strand doubling map  $\partial^{i}_{n}: M^*(0,n)\rightarrow M^*(0,n+1)$ 
extends to a map
$\widehat{\partial^{i}_{n}}: \widehat{M^*}(0,n)\rightarrow \widehat{M^*}(0,n+1)$ between the profinite completions of the corresponding groups.
 \end{proposition}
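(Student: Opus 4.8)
The plan is to show that each strand doubling map $\partial^i_n$ is continuous for the profinite topologies, hence extends uniquely to the completions. Recall that the profinite completion $\widehat{G}$ of a group $G$ is the inverse limit $\varprojlim_{N} G/N$ over finite-index normal subgroups $N$, and it carries a canonical dense embedding $G\hookrightarrow \widehat{G}$. The universal property we will invoke is standard: a group homomorphism $f:G\to H$ induces $\widehat{f}:\widehat{G}\to\widehat{H}$ provided $f$ is continuous, which, since the domain topology is the full profinite topology, amounts to the condition that for every finite-index normal subgroup $M\trianglelefteq H$ the preimage $f^{-1}(M)$ has finite index in $G$. The only subtlety here is that the maps $\partial^i_n$ are \emph{not} group homomorphisms — by the Remark following their definition they satisfy only the twisted coherence relation $\partial^i_n(gh)=\partial^{\bar h(i)}_n(g)\,\partial^i_n(h)$. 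So the first step is to isolate the structural feature of the $\partial^i_n$ that makes them extend anyway.

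First I would observe that, although $\partial^i_n$ is not a homomorphism, it restricts to an injective group homomorphism $\partial^i_n:K_n=K^*(0,n)\to K_{n+1}=K^*(0,n+1)$ (this is exactly item (1) of Definition \ref{cosimpl}, which holds for the mapping class group family by the Proposition identifying it as a cosimplicial $\mathfrak S$-extension), and that modulo $K_n$ it covers the combinatorial strand doubling $\partial^i_n:\mathfrak S_n\to\mathfrak S_{n+1}$, which trivially extends to profinite completions since $\mathfrak S_n$ is finite (so $\widehat{\mathfrak S_n}=\mathfrak S_n$). Thus the second step is a two-out-of-three argument along the rows of the cosimplicial $\mathfrak S$-extension: the diagram
\[
\begin{array}{ccccccc}
1 & \to & K^*(0,n) & \to & M^*(0,n) & \to & \mathfrak S_n \to 1\\
  &     & \downarrow\partial^i_n & & \downarrow\partial^i_n & & \downarrow\partial^i_n\\
1 & \to & K^*(0,n+1) & \to & M^*(0,n+1) & \to & \mathfrak S_{n+1} \to 1
\end{array}
\]
commutes in the obvious set-theoretic sense, and the outer two columns are morphisms (the left one a group homomorphism, the right one between finite groups). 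To get continuity of the middle column, I would take a finite-index normal subgroup $M\trianglelefteq M^*(0,n+1)$; then $M\cap K^*(0,n+1)$ has finite index in $K^*(0,n+1)$, so its preimage under the homomorphism $\partial^i_n|_{K}$ has finite index in $K^*(0,n)$; combining with the finiteness of $\mathfrak S_n$ and a diagram chase (using that $M^*(0,n)$ is generated, as a set, by a section of $\mathfrak S_n$ together with $K^*(0,n)$, and that the twisted multiplicativity relation lets one control $\partial^i_n$ of a product by $\partial^i_n$ of the factors up to reindexing $i$) one concludes that $(\partial^i_n)^{-1}(M)$ contains a finite-index subgroup of $M^*(0,n)$. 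By compatibility of the profinite topology with subgroups, $\partial^i_n$ is then uniformly continuous, and so extends to the completions. Finally I would note that uniqueness of the extension is automatic from density of $M^*(0,n)$ in $\widehat{M^*}(0,n)$.

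The main obstacle I anticipate is precisely the bookkeeping forced by the non-homomorphic nature of $\partial^i_n$: one cannot simply say ``a homomorphism extends to completions,'' and must instead verify the finite-index-preimage criterion by hand, being careful that the twisting superscript $\bar h(i)$ in $\partial^{\bar h(i)}_n(g)\partial^i_n(h)$ ranges only over the finite set $\{1,\dots,n\}$ and hence does not destroy the finite-index control. A clean way to package this is to note that $\partial^i_n$ is the composite of the honest group homomorphism ``glue in a pair of pants at a fixed boundary circle'' $M^*(0,n)\to M^*(0,n+1)$ postcomposed with conjugation-type bookkeeping that is already visible at the level of $\mathfrak S_n$; since conjugation and the finite-group layer are harmless for completions, continuity reduces to that of an actual homomorphism. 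Once this reduction is made the extension statement is immediate, and in fact one gets for free that the extended maps $\widehat{\partial^i_n}$ still satisfy the corresponding twisted coherence relations on $\widehat{M^*}(0,n)$, which is what one needs downstream to build the profinite completion $\widehat{\cal B}$ acted on by the Grothendieck--Teichm\"uller group.
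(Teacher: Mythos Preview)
The paper itself does not give a proof here; it defers to \cite{ka2}. So there is no ``paper's own proof'' to compare against, and I will evaluate your argument on its own.

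Your overall strategy is sound: exploit that $\partial^i_n$ restricts to a genuine homomorphism on $K^*(0,n)$, that the quotient $\mathfrak S_n$ is finite, and that the twisting index ranges over a finite set. However, two points deserve sharpening.

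First, the step ``$(\partial^i_n)^{-1}(M)$ contains a finite-index subgroup, hence $\partial^i_n$ is uniformly continuous'' conflates continuity at the identity with \emph{uniform} continuity; for a map that is not a homomorphism these are different. What you actually need is the identity
\[
\partial^i_n(g)\,\partial^i_n(h)^{-1}=\partial^{\bar h(i)}_n(gh^{-1}),
\]
obtained from the twisted multiplicativity with $a=gh^{-1}$, $b=h$. Then, setting $N=\bigcap_{j=1}^{n}(\partial^j_n|_{K^*(0,n)})^{-1}\bigl(M\cap K^*(0,n+1)\bigr)$, a finite-index subgroup of $K^*(0,n)$ (hence of $M^*(0,n)$), one gets $gh^{-1}\in N\Rightarrow \partial^i_n(g)\partial^i_n(h)^{-1}\in M$. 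This is uniform continuity, and the extension to completions follows. Your prose gestures at this (``the twisting superscript ranges only over the finite set'') but never writes the formula, and only pulls back along the single $\partial^i_n|_K$ rather than all $\partial^j_n|_K$.

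Second, your proposed ``clean'' packaging---an honest homomorphism $M^*(0,n)\to M^*(0,n+1)$ given by gluing a pants at a fixed circle---does not exist: an element of $M^*(0,n)$ that moves that circle cannot be extended over a pants attached only there. The correct realization of your intuition is to bundle all indices at once. The map
\[
\Delta_n: M^*(0,n)\longrightarrow M^*(0,n+1)\wr\mathfrak S_n,\qquad g\longmapsto\bigl((\partial^1_n(g),\ldots,\partial^n_n(g)),\,\bar g\bigr),
\]
\emph{is} a group homomorphism (the twisted relation is exactly the wreath-product law), so it extends to $\widehat{\Delta_n}:\widehat{M^*}(0,n)\to\widehat{M^*}(0,n+1)\wr\mathfrak S_n$; projecting onto the $i$-th coordinate yields $\widehat{\partial^i_n}$. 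This also immediately gives that the extended maps satisfy the same twisted coherence relations, which you correctly note is what is needed downstream for $\widehat{\cal B}$.
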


\vspace{0.1cm}\noindent 
See \cite{ka2} for a proof.

\begin{proposition-definition}

The  cosimplicial $\mathfrak{S}$-extension $(1\rightarrow
  \widehat{K^*}(0,n)\longrightarrow \widehat{M^*}(0,n) \longrightarrow \mathfrak{S}_n\rightarrow
  1, n\geq 1)$, yields the extension of Thompson's group $V$

$$1\rightarrow \widehat{K}^*_{\infty}\longrightarrow \widehat{M^*}[V]\longrightarrow V\rightarrow 1$$
by the construction of Proposition \ref{construction}.

\vspace{0.1cm}\noindent 
The group $\widehat{M^*}[V]$, which will be  denoted $\widehat{\B}$ in the sequel,  is called the $V$-profinite
completion of the universal mapping class group of genus zero. The group 
$\widehat{K}^*_{\infty}$ is an inductive limit of the profinite completions of the pure mapping class groups 
$K^*(0,n)$. The group $\widehat{\cal B}$ is related to ${\cal B}$ via the commutative
diagram

\begin{center}
\begin{picture}(0,0)%
\includegraphics{pro.pstex}%
\end{picture}%
\setlength{\unitlength}{4144sp}%
\begingroup\makeatletter\ifx\SetFigFont\undefined%
\gdef\SetFigFont#1#2#3#4#5{%
  \reset@font\fontsize{#1}{#2pt}%
  \fontfamily{#3}\fontseries{#4}\fontshape{#5}%
  \selectfont}%
\fi\endgroup%
\begin{picture}(3105,1009)(451,-830)
\put(451, 29){\makebox(0,0)[lb]{\smash{\SetFigFont{10}{12.0}{\rmdefault}{\mddefault}{\updefault}$1$}}}
\put(2071, 29){\makebox(0,0)[lb]{\smash{\SetFigFont{10}{12.0}{\rmdefault}{\mddefault}{\updefault}$\widehat{\cal B}$     }}}
\put(2971, 29){\makebox(0,0)[lb]{\smash{\SetFigFont{10}{12.0}{\rmdefault}{\mddefault}{\updefault}$V$}}}
\put(3556, 29){\makebox(0,0)[lb]{\smash{\SetFigFont{10}{12.0}{\rmdefault}{\mddefault}{\updefault}$1$}}}
\put(1081, 29){\makebox(0,0)[lb]{\smash{\SetFigFont{10}{12.0}{\rmdefault}{\mddefault}{\updefault}$\widehat{K}^*_{\infty}$ }}}
\put(2971,-781){\makebox(0,0)[lb]{\smash{\SetFigFont{10}{12.0}{\rmdefault}{\mddefault}{\updefault}$V$}}}
\put(3556,-781){\makebox(0,0)[lb]{\smash{\SetFigFont{10}{12.0}{\rmdefault}{\mddefault}{\updefault}$1$}}}
\put(2071,-781){\makebox(0,0)[lb]{\smash{\SetFigFont{10}{12.0}{\rmdefault}{\mddefault}{\updefault}${\cal B}$     }}}
\put(1081,-781){\makebox(0,0)[lb]{\smash{\SetFigFont{10}{12.0}{\rmdefault}{\mddefault}{\updefault}$K^*_{\infty}$ }}}
\put(451,-781){\makebox(0,0)[lb]{\smash{\SetFigFont{10}{12.0}{\rmdefault}{\mddefault}{\updefault}$1$}}}
\end{picture}

\end{center}
The morphism  ${K}^*_{\infty}\rightarrow \widehat{K}^*_{\infty}$ is induced by
the collection of natural morphisms ${K}^*(0,n)\rightarrow
\widehat{K^*}(0,n)$. All vertical arrows are injective.

\end{proposition-definition}

\subsubsection*{Embeddings $BV\subset \B$ and $\widehat{BV}\subset \widehat{\B}$}

The Artin braid group $B_{n}$ embeds into $M^*(0,n+1)$. The family of embeddings 
 $B_{n}\rightarrowtail M^*(0,n+1)$ indexed by $n$ 
induces in turn an embedding 
$BV\subset \B$. After applying the functor of profinite completion, one obtains morphisms
$\widehat{B_{n}}\rightarrow \widehat{M^*}(0,n+1)$ which are still injective. Hence an embedding 
 $\widehat{BV}\subset \widehat{\B}$.

\subsection{The universal mapping class group in genus zero and 
the Grothendieck-Teichm\"uller group}
Not only does the group  ${\cal B}$ of \cite{fu-ka1} contain all the mapping class groups of compact surfaces of genus zero,  but it also encodes their 
mutual relations. 
Otherwise stated, all the information contained in the modular tower of genus zero 
 is encoded in the group ${\cal B}$. 
We wish to give a remarkable illustration of this idea, as well as an example of application of the formalism of cosimplicial ${\mathfrak S}$-extensions.\\

\vspace{0.1cm}\noindent 
In the article \cite{dr}, Drinfeld defined a group 
containing the absolute Galois group $Gal(\overline{\Q},\Q)$, and called it the 
 Grothendieck-Teichm\"uller  group $\widehat{GT}$. \index{Grothendieck-Teichm\"uller group}
 Explicit formulas of the action of 
 the Grothendieck-Teichm\"uller group (in its $k$-pro-unipotent version) 
 on the $k$-pro-unipotent completions of the braid groups were given.
 Similar formulas hold for the profinite version, and the corresponding action of $\widehat{GT}$ 
extends the natural action of $Gal(\overline{\Q},\Q)$ on the profinite completions $\widehat{B_{n}}$  
of the braid groups. A fundamental remark is that the action of $\widehat{GT}$ on the
$\widehat{B_{n}}$'s respects not only the obvious embeddings 
$\widehat{B}_{n}\rightarrow \widehat{B}_{n+1}$ (``adding one strand''), but also 
the strand doubling maps $\widehat{\partial^{i}}: \widehat{B_{n}} \rightarrow \widehat{B}_{n+1}$. 
The action of $\widehat{GT}$ extends to the profinite completions 
 $\widehat{M^*}(0,n)$ of the mapping class groups of holed spheres, preserving the analogous ``strand doubling maps'',
 (topologically corresponding to a pair of pants glueing)  $\widehat{\partial^{i}}$. The action of  
 $\widehat{GT}$ on the Teichm\"uller tower in genus zero 
 has been systematically studied in \cite{bu-fe-lo-sc-vo}.

\vspace{0.1cm}\noindent 
Since $\widehat{GT}$ acts on the $\widehat{M^*}(0,n)$'s respecting 
the maps $\widehat{\partial^{i}}$, the formalism of cosimplicial $\mathfrak S$-extensions 
enables us easily to prove that it extends 
to the completion $\widehat{\cal B}$.  Denoting  $\alpha,\beta, \pi$ and $t$ 
the images in $\widehat{\cal B}$ 
of the generators of ${\cal B}$, one can easily obtain the following :

\begin{theorem}[\cite{ka2}] The Grothendieck-Teichm\"uller group $\widehat{GT}$ acts  
   on the group $\widehat{\B}$. Moreover, denoting by 
   $(\lambda,f)\in \hat{\Z}\times \hat{F_{2}}$ an element of $\widehat{GT}$, 
   the action reads on the generators as follows:
   $$(\lambda,f).(\pi)=\pi^{\lambda}$$
   $$(\lambda,f).(t)=t^{\lambda}$$
   $$(\lambda,f)(\alpha)=\alpha f(t,\alpha t\alpha^{-1})$$
   $$(\lambda,f)(\beta)=\beta.$$
   \end{theorem}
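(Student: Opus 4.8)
<br>

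The plan is to construct the action of $\widehat{GT}$ on $\widehat{\B}$ directly from the cosimplicial formalism, and then read off the formulas on generators. First I would recall that $\widehat{GT}$ acts on each profinite mapping class group $\widehat{M^*}(0,n)$ compatibly with the strand doubling maps $\widehat{\partial^i_n}$; this is precisely the content of \cite{bu-fe-lo-sc-vo} (building on Drinfeld's original formulas for the $\widehat{B_n}$). The key observation is that this compatibility is exactly the statement that $\widehat{GT}$ acts by automorphisms on the cosimplicial $\mathfrak{S}$-extension $(1\to \widehat{K^*}(0,n)\to \widehat{M^*}(0,n)\to \mathfrak{S}_n\to 1)$, where the action on the base $\mathfrak{S}_n$ is trivial. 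By functoriality of the construction in Proposition \ref{construction} — more precisely, by the fact that an automorphism of a cosimplicial $\mathfrak{S}$-extension induces an automorphism of the associated $V$-extension $G[V]$ — we obtain an action of $\widehat{GT}$ on $\widehat{M^*}[V]=\widehat{\B}$, inducing the identity on the quotient $V$. So the existence part of the theorem is essentially a matter of unwinding definitions, once the functoriality (with respect to automorphisms of the cosimplicial datum) has been checked.

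Next I would identify the generators. Recall that $\widehat{\B}$ is generated by the images of the generators of $\B$; the relevant ones here are $\alpha,\beta$ coming from the Ptolemy-Thompson copy of $T\subset \B$, the puncture-type element $\pi$, and the element $t$ coming from the braiding/twist data. The element $t$ (resp. $\pi$) is a $\hat{\Z}$-power-type generator on which $\widehat{GT}=(\lambda,f)$ acts by the cyclotomic character, exactly as it does on the generator of the center of the relevant $\widehat{B_n}$; hence $(\lambda,f).t=t^\lambda$ and $(\lambda,f).\pi=\pi^\lambda$. The generator $\beta$ has order $3$ and is supported in a single hexagon/pair of pants where no braiding occurs; its image under the $\widehat{GT}$-action is unchanged by the explicit Drinfeld formula, so $(\lambda,f)(\beta)=\beta$. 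The generator $\alpha$ is the interesting one: it involves an $A$-move across two adjacent pairs of pants, and the Drinfeld cocycle $f\in\hat{F_2}$ enters exactly as in the braid-group case, giving $(\lambda,f)(\alpha)=\alpha\, f(t,\alpha t\alpha^{-1})$, where $t$ and $\alpha t\alpha^{-1}$ play the role of the two standard generators of the relevant free profinite group of rank $2$.

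The main obstacle, and the step requiring genuine work, is the computation of the action on $\alpha$: one must trace precisely how the $\widehat{GT}$-element, acting on the profinite completions $\widehat{M^*}(0,n)$ and compatibly with the $\widehat{\partial^i_n}$, descends through the colimit/symbol construction of $G[V]$ to a formula on the distinguished lift $\alpha$ of the Ptolemy-Thompson generator. This amounts to choosing a convenient $4$-holed (or $5$-holed) sphere in the exhaustion, writing $\alpha$ as a specific word in the mapping class group of that subsurface, applying the known $\widehat{GT}$-formula there, and checking that the result is stable under the doubling maps so that it represents a well-defined element of $\widehat{\B}$. The verification that $f(t,\alpha t\alpha^{-1})$ is the correct expression — i.e. that the pair $(t,\alpha t\alpha^{-1})$ topologically corresponds to the pair of ``$x$'' and ``$y$'' generators in Drinfeld's conventions for this particular configuration — is where all the care lies; everything else (the cyclotomic action on $\pi,t$, the triviality on $\beta$, and the existence of the action) follows formally from the cosimplicial machinery and the cited results. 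I would refer to \cite{ka2} for the detailed bookkeeping.
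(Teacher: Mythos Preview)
Your approach is essentially the same as the paper's: the paper only sketches the argument, noting that since $\widehat{GT}$ acts on the $\widehat{M^*}(0,n)$'s compatibly with the $\widehat{\partial^i}$, the cosimplicial $\mathfrak{S}$-extension formalism (Proposition \ref{construction}) yields the action on $\widehat{\B}$, with the explicit formulas on generators then following; full details are deferred to \cite{ka2}. You have correctly fleshed out this outline, including the identification of the $\alpha$-computation as the substantive step.
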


\vspace{0.1cm}\noindent 
 It is quite remarkable that the action of $\widehat{GT}$ 
 turns out to be completely
 defined by 4 formulas (3 would suffice, as the generator $t$ is redundant). Moreover, 
   if one assumes that a pair $(\lambda,f)$ defines an automorphism of $\widehat{\B}$ given by the above formulas on the generators,
   then  $(\lambda,f)$ satisfies the three relations characterizing its belonging to $\widehat{\B}$.
  Therefore, the presentation of the group $\B$ encodes the definition of the Grothendieck-Teichm\"uller group $\widehat{GT}$.

\section{Problems}

\begin{problem}
Are the groups $F,T, V$ or their generalizations $T^*, \B$ automatic? 
More generally are they (synchronously) combable? In the affirmative 
case this would imply that braided Thompson groups are of type $F_{\infty}$. 
\end{problem}

\vspace{0.1cm}\noindent 
A directly related question is the one for the braided Houghton groups. 

\begin{problem}
Find out whether ${\mathcal M}_{\partial}(Y_n^{\sharp})$ 
are $F_{n-1}$ but not $F_n$.
\end{problem}

\vspace{0.1cm}\noindent 
Degenhardt (\cite{Deg}) proved that the braided Houghton groups 
are $F_{n-1}$ but not $F_n$ for $n\leq 3$ and conjectured that this holds for 
all $n$. This would be a parallel to the results obtained by Brown  
(see \cite{br1}) for the usual Houghton groups $H_n$.  
Progress towards the settlement of this conjecture was made 
by Kai-Uwe Bux in \cite{Bux}.

\vspace{0.2cm}
\noindent
This behavior  is in contrast with the case of the Thompson group $T$ (which is ${\rm FP}_{\infty}$) and its braided version $T^*$ (which is at least ${\rm FP}_3$ (see \cite{fu-ka4}) 
and expected to be ${\rm FP}_{\infty}$).  It is therefore likely that 
${\mathcal M}_{\partial}(Y_n^{\sharp})$ are not combable 
(hence not automatic) although the result of \cite{fu-ka4} would suggest 
that they might be asynchronously combable with quadratic Dehn function. 
If the similarity with the braid groups is pushed one step further 
then the braided Houghton groups should have solvable 
conjugacy problem as well.

\vspace{0.1cm}
\noindent
One does not know which other planar graphs yield 
finitely presented asymptotically rigid mapping class groups. 
One may enlarge 
the category of graphs to that of colored graphs, in which automorphisms 
and almost automorphisms are required to preserve the coloring. 

\vspace{0.1cm}
\noindent 
An interesting class of colored planar trees 
comes from universal coverings of ribbon graphs associated to punctured 
surfaces and 2-dimensional orbifolds. The ribbon structure of the graph 
is a cyclic order around each vertex. 
There is a natural coloring of 
vertices and edges of the graph  and this 
induces a coloring on the universal covering tree. Moreover, the tree 
has a natural embedding in the plane which uses the induced cyclic order   
around the vertices. 

\vspace{0.1cm}
\noindent 
However P. Greenberg (\cite{Gr}) 
showed that asymptotically rigid mapping class groups of universal coverings 
of (colored) ribbon graphs (called projective Thompson groups) 
have infinitely many  generators, as soon as the genus of the surface 
is positive. Moreover, if the genus is zero,  Laget \cite{L} proved that 
the asymptotic mapping class groups are finitely presented groups. 
It seems that the finite presentability holds more generally for 
all the groups obtained from the 2-orbifolds of genus zero. 
The basic example in this respect is the Thompson group $T$ which arises 
from the 2-orbifold associated to the group ${\rm PSL}(2,\Z)$, namely 
the sphere with a cusp, one singular point of order 2 and another one of 
order 3.

\vspace{0.1cm}\noindent 
We proved in \cite{fu-ka3} that the universal mapping class group 
$\M$ of infinite genus is finitely generated. 
Also the Greenberg-Sergiescu acyclic extension ${A}_T$ was proved 
in \cite{fu-se} to be finitely generated. The present methods 
do not permit to settle the following

\begin{problem}
Are the groups $\M$ or $A_{T}$ finitely presented or even 
of type $F_{\infty}$ ? 
\end{problem}

\vspace{0.1cm}\noindent 
A fundamental theorem of Tillmann (see \cite{ti1,ti2}) states that 
the plus construction of the classifying space $B\Gamma_{\infty}$ 
of the infinite genus  mapping class group 
$\Gamma_{\infty}=\lim_{g\to \infty}\Gamma_{g,1}$ is an infinite loop space. 
This is a key ingredient in the Madsen-Weiss proof of the Mumford conjecture  
(see \cite{ma-we,mad}). The second proof of Tillmann's theorem (\cite{ti2}) 
uses Segal's  surfaces category whose objects are compact  
oriented 1-manifolds and whose morphisms are Riemann surfaces cobording 
the respective objects. Tillmann actually shows that the operad 
associated to Segal's category detects the infinite loop spaces. 
In this context it would be interesting to know whether we could replace 
$\Gamma_{\infty}$ by $\M$ and we also propose the following: 

\begin{problem}
Find a geometric interpretation of the acyclicity of the group $A$. 
Is the plus construction of the classifying space 
$B\M$ an infinite loop space? 
\end{problem}

\vspace{0.2cm}\noindent 
In \cite{dgh} the authors considered the Teichm\"uller space 
of  quasiconformal asymptotically conformal structures 
on $\Sigma_{0,\infty}$ minus a disk. They showed that $F$ is the automorphism group of this Teichm\"uller space. 

\begin{problem}
Is there a similar 
interpretation for the groups $\B$ and $\M$, for instance? 
\end{problem}

\vspace{0.2cm}\noindent 
Another setting where the group $T$ acts as a mapping class group is that 
of Greenberg's space ${\mathcal Gr}={\rm CPP}_{\Q}/{\rm PSL}(2,\R)$, which is 
sometimes called the Teichm\"uller space associated to $T$ (see \cite{gr,ma}). 
 Here ${\rm CPP}_{\Q}$ is the space of piecewise ${\rm PSL}(2,\R)$ 
functions $f:P_{\R}^1\to P_{\R}^1$ on the projective circle $P_{\R}^1$ 
whose breaking points are rational. 
The space ${\mathcal Gr}$ is contractible and the action of $F$ on 
it is free. Thus ${\mathcal Gr}/F'$ is a $BF'$ and one could use this model 
to build a homology equivalence $BF'\to \Omega S^3$, by making further use of 
James' model of a loop space. 

\begin{problem}
Is it possible to interpret $T$ as the group of automorphisms of the space ${\mathcal Gr}$ equipped with some convenient structure? 
\end{problem}

\begin{small}

\end{small}

\end{document}